\documentclass[11pt,reqno]{amsart}

\usepackage[T1]{fontenc}
\usepackage[utf8]{inputenc}

\usepackage{multirow}
\usepackage[hypcap=false]{caption}
\usepackage{mathrsfs}
\usepackage{amsmath}
\usepackage{amssymb}
\usepackage{amsfonts}
\usepackage{amsthm}
\usepackage{a4wide}
\usepackage{graphicx}
\usepackage{color}
\usepackage[sans]{dsfont}
\usepackage{linearA}
\usepackage{longtable}
\usepackage{pdflscape}
\usepackage{float}
\usepackage{cancel}
\usepackage{dsfont}
\usepackage{setspace}
\usepackage{array}
\usepackage{hyperref}
\hypersetup{colorlinks=true,allcolors=[rgb]{0,0,0.6}}
\usepackage[english]{babel}
\usepackage{enumitem}
\usepackage[square,numbers,sort&compress]{natbib}
\usepackage[shadow,textwidth=22mm,textsize=tiny]{todonotes}
\usepackage{cellspace} 
\cellspacetoplimit=4pt 
\cellspacebottomlimit=4pt

\newtheorem{theorem}{Theorem}[section]
\newtheorem{lemma}[theorem]{Lemma}
\newtheorem{proposition}[theorem]{Proposition}
\newtheorem{corollary}[theorem]{Corollary}

\newtheorem{remark}[theorem]{Remark}

\newtheorem{openproblem}[theorem]{Open Problem}

\numberwithin{equation}{section}

\DeclareMathOperator{\e}{e}
\renewcommand{\diamond}{\vartriangle}
\renewcommand{\d}{\mathrm{d}}
\renewcommand{\Re}{\mathrm{Re}}

\newcommand{\cI}{\mathcal{I}}
\newcommand{\cH}{\mathcal{H}}
\newcommand{\cN}{\mathcal{N}}
\newcommand{\cB}{\mathcal{B}}
\newcommand{\cK}{\mathcal{K}}
\newcommand{\cD}{\mathcal{D}}
\newcommand{\loc}{\mathrm{loc}}
\newcommand{\Dom}{\mathcal{D}}
\newcommand\C{\mathbb{C}}

\newcommand\Z{\mathbb{Z}}
\newcommand\Ker{\mathcal{N}}
\def\bbbone{{\mathchoice {\rm 1\mskip-4mu l} {\rm 1\mskip-4mu l}
{\rm 1\mskip-4.5mu l} {\rm 1\mskip-5mu l}}}
\def\one{\bbbone}
\renewcommand{\i}{\mathrm{i}}
\newcommand{\Wr}{\mathscr{W}}

   \newcommand{\smallO}[1]{\ensuremath{\mathop{}\mathopen{}o\mathopen{}\left(#1\right)}}


\author[J. Derezi{\'n}ski]{Jan Derezi{\'n}ski}
\address[J. Derezi{\'n}ski]{Department of Mathematical Methods in Physics \\
Faculty of Physics, University of Warsaw \\
Pasteura 5, 02-093 Warszawa, Poland}
\email{jan.derezinski@fuw.edu.pl}

\author[J. Faupin]{J{\'e}r{\'e}my Faupin}
\address[J. Faupin]{Institut Elie Cartan de Lorraine \\
Universit{\'e} de Lorraine,
57045 Metz Cedex 1, France}
\email{jeremy.faupin@univ-lorraine.fr}

\date{\today}

\begin{document}
\bibliographystyle{abbrv} \title[Perturbed Bessel operators]
{Perturbed Bessel operators}

\begin{abstract}
We study perturbed Bessel operators $L_{m^2}=- \partial^2_x + ( m^2 -
  \frac14 )\frac{1}{x^2} + Q(x)$ on $L^2]0,\infty[$, where
  $m\in\mathbb{C}$ and $Q$ is a complex locally integrable
  potential. Assuming that $Q$ is integrable near $\infty$ and
  $x\mapsto x^{1-\varepsilon}Q(x)$ is integrable near $0$, with
  $\varepsilon\ge0$, we construct solutions to $L_{m^2} f = - k^2 f$
  with prescribed behaviors near $0$. The special cases $m=0$ and
  $k=0$ are included in our analysis. Our proof relies on mapping
  properties of various Green's operators of the unperturbed Bessel operator.
 Then we determine all
 closed realizations of $L_{m^2}$ and show that they can be organized as holomorphic families of closed operators.
\end{abstract}

\maketitle

\tableofcontents

\section{Introduction}

One of the most important families of exactly solvable  1-dimensional
Schr\"odinger operators
is the family of {\em Bessel operators}
\begin{align}
 - \partial^2_x + \frac{\mathrm{c}}{x^2}. \label{eq:diff_expr}
\end{align}
As is well-known, it is  convenient to set $\mathrm{c} = m^2 - 1/4$, so that \eqref{eq:diff_expr} is rewritten as
\begin{align}
L_{m^2}^0 := - \partial^2_x + \Big ( m^2 - \frac14 \Big )
  \frac{1}{x^2} .
  \label{eq:defL_00}
\end{align}
There exists large literature devoted to Bessel operators, mostly
restricted to the case $m^2\in\mathbb{R}$ (see e.g. \cite{DeRe65_01,DS3,EE,Kato,ReedSimon} and references therein).
They are also interesting for complex $m^2$.
Their closed realizations  on $L^2]0,\infty[$
were  studied  in \cite{BuDeGe11_01, DeRi17_01}.

Many operators  in mathematics and physics
can be reduced to  Bessel operators. Here are a few examples:
\begin{enumerate}
  \item the
    usual {\em Laplacian in  dimension $d\geq3$},
    $m=\frac{d}{2}-1+\ell$,
$\ell=0,1,2,\dots$, see e.g.  \cite[Section 3]{DeRi17_01};
\item the {\em 2d Aharonov-Bohm Hamiltonian with magnetic flux $\theta$},
  $m=\frac{\theta}{2\pi}+n$, $n\in\Z$,
see e.g. \cite{AThe98,CoFe21,Pankrashkin} and \cite[Appendix B]{BuDeGe11_01};
\item the {\em Laplacian on a conical surface of angle $\alpha$},
  $m=\frac{2\pi n}{\alpha}$, $n\in\Z$;
  \item the {\em Laplacian on a wedge of angle $\alpha$ with Dirichlet
    or Neumann boundary conditions},
  $m=\frac{\pi n}{\alpha}$, $n\in\Z$;
\item perturbed Bessel operators with $m$ complex are used to define
  {\em Regge poles}, e.g. \cite{DeRe65_01,Bottino};
  \item three-body systems with contact interactions.  
\end{enumerate}

In this paper we would like to investigate   Bessel operators with
complex $m$
perturbed by complex valued locally integrable potentials $Q(x)$.
Our goal is to show that under some  assumptions on $Q$ boundary
conditions for
perturbed Bessel operators can be described in a similar way as for
 unperturbed ones.

Before
describing our results, let us
review  general Schr\"odinger operators on the half-line, and then
 unperturbed Bessel operators.

\subsection{Basic facts about Schr\"odinger operators on the half-line}
\label{basic1}
We follow mostly
\cite{DeGe19_01}.
Suppose that $]0,\infty[\ni x\mapsto V(x)$ is a function in
$L_\loc^1]0,\infty[$, possibly complex valued. Consider the expression
\begin{equation}L:=-\partial_x^2+V(x).\end{equation}
The basic meaning of $L$ used in our paper
will be that of
a linear map from $AC^1]0,\infty[$ to $L^1_{\mathrm{loc}}]0,\infty[$. Recall that  $AC]0,\infty[$ denotes the set of absolutely continuous functions from $]0 , \infty[$ to $\mathbb{C}$, that is, functions whose distributional derivative belong to $L^1_{\mathrm{loc}}]0,\infty[$, and  $AC^1]0,\infty[$ is the set of functions from $]0 , \infty[$ to $\mathbb{C}$ whose distributional derivative belongs to $AC]0,\infty[$.

Let  $\mathcal{N}(L)$ denote all functions in $AC^1[0,\infty[$
annihilated by $L$. The space
$\mathcal{N}(L)$ is always 2-dimensional.

Let $h_1$, $h_2$ be two linearly independent elements of $\cN(L)$.
The {\em canonical bisolution} of $L$, denoted $G_\leftrightarrow$, is
defined by the integral kernel
\begin{equation}
  G_{\leftrightarrow}(x,y) =\frac{1}{\Wr(h_1,h_2)}\big ( h_1(x) h_2( y ) - h_2( x ) h_1( y ) \big ),\label{canoni}
\end{equation}
where $\Wr(h_1,h_2)$ denotes the {\em Wronskian} of $h_1$ and $h_2$.
Note that \eqref{canoni} does not depend on the choice of
$h_1,h_2$. The operator \eqref{canoni} is usually unbounded on $L^2]0,\infty[$,
however it is very useful in the study of $L$.

We will use the term {\em Green's operator} as a synonym for a right inverse of
$L$.
 In other words, the integral kernel
  $G_\bullet(x,y)$ of Green's operator $G_\bullet$ satisfies
\begin{equation}\big(-\partial_x^2+V(x)\big) G_\bullet(x,y)=\delta(x-y).
    \end{equation}
Again, we do not insist on the boundedness of $G_\bullet$  on $L^2]0,\infty[$.

We have various types of Green's operators:
 \begin{enumerate}
 \item the {\em forward Green's operator} $G_{\to}$:
\begin{equation}
G_\to(x,y):=\theta(x-y ) G_\leftrightarrow(x,y),\label{green1}\end{equation}
 \item
   the {\em backward Green's
   operator} $G_{\leftarrow}$:
\begin{equation}
G_\leftarrow(x,y):=-\theta(y-x ) G_\leftrightarrow(x,y).\label{green2}\end{equation}
\end{enumerate}   These two Green's operators are forward,
resp. backward Volterra operators: when they
     act on a function,  they do not extend its 
     support  to the left, resp. to the right.
They   are uniquely defined given $L$: they   do not depend on the choice of $h_1,h_2$.

We also have
\begin{enumerate}[resume]
   \item the {\em two-sided Green's operator corresponding to the
       boundary condition given by $h_1$ near $0$ and $h_2$ near
       $\infty$}:
\begin{equation}    G_\bullet(  x , y ) :=
  \frac{1}{ \Wr(h_1,h_2)    }
\left \{
\begin{array}{lr}
  h_1(x)h_2(y)
  , & x < y , \\
 h_2(x)h_1(y) , & y<x.
\end{array}
\right.
\label{green-bowtie+}
\end{equation}
   \end{enumerate}
The expression   \eqref{green-bowtie+}
   depends only on  the choice of the 1-dimensional subspaces $\C
h_1$, $\C h_2$.

Let us now discuss  realizations of $L$ as closed densely defined operators on
$L^2]0,\infty[$.
There are two obvious choices: the minimal realization $L^{\min}$ and
the maximal realization $L^{\max}$. Their domains are given by
    \begin{align*}
      \cD(L^{\max})
& := \big \{ f \in L^2]0,\infty[ \, \cap \, AC^1]0,\infty[\quad \mid\ L f \in L^2]0,\infty[ \big \} ,\\
  \cD(L^{\min})&:=\text{the closure of }\{f\in\cD(L^{\max})\ \mid\
  f=0\text{ near }0\text{ and }\infty\},
  \end{align*}
the closure being taken with respect to the graph norm of $L^{\mathrm{max}}$.  In general, there may exist other
operators
$L^\bullet$ such that $  L^{\min}\subset L^\bullet\subset L^{\max}$ defined
by boundary conditions at $0$
and $\infty$.

Potentials $V$ considered in this paper usually vanish at $\infty$. In this
case
we do not need to specify boundary
conditions for $L$ near $\infty$. This implies that either 
\begin{align}L^{\min}=L^{\max}&,\label{domains0}\\
\text{or }\qquad
\label{domains}
  \dim\cD(L^{\max})/\cD(L^{\min})&=2.\end{align}
If \eqref{domains} is true, there exists a one-parameter family of
operators $L^\bullet$
that
satisfy
\begin{equation}\label{domains1}
  L^{\min}\subsetneq L^\bullet\subsetneq L^{\max}.
\end{equation}

Suppose that $L^\bullet$ satisfies \eqref{domains1} or coincides with
\eqref{domains0}. Let $\lambda$ belong to the resolvent set of
$L^\bullet$. Then the integral kernel of $(L^\bullet-\lambda)^{-1}$ has the
form of a two-sided Green's operator 
  \eqref{green-bowtie+} with appropriate $h_1$ and $h_2$.

\subsection{Basic facts about unperturbed Bessel operators}

We mostly follow
\cite{BuDeGe11_01, DeRi17_01}.
Let $m\in\C$ and $k\in \C$.
Consider the space $\cN(L_{m^2}^0+k^2)$, that is, solutions to the eigenvalue equation
\begin{equation}
L_{m^2}^0 f = - k^2 f .
\label{eigen0}
\end{equation}
Solving \eqref{eigen0} for $k=0$ is 
easy:
$\cN(L_{m^2}^0)$ is spanned by
\begin{align}
  x^{\frac12+m},\quad x^{\frac12-m},&\quad m\neq0; 
\\                                      x^{\frac12},\quad 
                                      x^{\frac12}\mathrm{ln}(x),&\quad m=0. 
   \end{align}
For $k\neq0$, \eqref{eigen0} can be reduced  to the Bessel
equation.
This justifies the name {\em Bessel operator} for \eqref{eq:defL_00}. 
 Here is a pair of solutions of \eqref{eigen0} for $k\neq0$:
\begin{align}\label{solq1}
  u_m^0(x,k)&:=\Big(\frac2k\Big)^m\sqrt{x}I_m(kx),\\
  v_m^0(x,k)&:=\Big(\frac{k}2\Big)^m\sqrt{x}K_m(kx),\label{solq2}
\end{align}
where $I_m$ is the modified Bessel function and $K_m$ the Macdonald
function (see Subsection \ref{subsec:whittaker} below). Note that  $u_{m}^0(\cdot,k)$
 behaves as $\frac{x^{\frac12+m}}{\Gamma(m+1)}$ near zero and $v_m^0$ decays exponentially
 at infinity.
 Their normalization is chosen in such a way that
 their Wronskians
 are $1$
and they have a limit at $k=0$:
\begin{align}
 u_m^0(x,0)&=\frac{x^{\frac12+m}}{\Gamma(m+1)},\\
  v_m^0(x,0)&=\frac{\Gamma(m)x^{\frac12-m}}{2},\quad \Re(m)\ge0, \, m \neq 0.
  \end{align}

  It is convenient to  introduce another
   solution
of the unperturbed eigenequation \eqref{eigen0}, which differs from $v^0_m(x,k)$ only
by a different normalization:
\begin{equation}
  w_m^0(x,k)=\sqrt{\frac{2k}{\pi}}\Big(\frac{2}{k}\Big)^mv_m^0(x,k)=
  \sqrt{\frac{2xk}{\pi}}K_m(kx).
\end{equation}
Note that $w_m^0(x,k)=w_{-m}^0(x,k)\sim \e^{-kx}$  for $x\to\infty$.

The Bessel operator for $m=0$ often needs a separate treatment.
Note, for instance, that
   $v_0^0(\cdot,k)$ does not have a limit at $k=0$. To treat the case
   $m=0$ in a satisfactory way it is useful to introduce a family of eigenfunctions of $L_0^0$:
\begin{align}
p_0^0(x,k)&
:=-v_0^0(x,k)-\Big(\mathrm{ln}\Big(\frac{k}{2}\Big)+\gamma\Big) u_0^0(x,k),
  \end{align}
where $\gamma$ denotes Euler's constant. At $k=0$ it coincides with the logarithmic solution:
\begin{align*}
p_0^0(x,0)&=x^{\frac12}\mathrm{ln}(x).
  \end{align*}

 We will often assume that
$\Re(m)\geq0$, because $L_{m^2}^0$ depends only on $m^2$.  Based on  the
behavior near zero of its  eigenfunctions,   one can
distinguish 3 regimes:

\begin{enumerate}
\item $\Re(m)>0$. Eigensolutions of $L_{m^2}^0$ can be divided into 
  {\em principal}, that means proportional to $u_m^0$, and {\em
    non-principal}, all the others. Principal solutions
  behave as $x^{\frac12+m}$ and are more regular than  non-principal
  ones, which behave as $x^{\frac12-m}$.
  \item $\Re(m)=0$, $m\neq0$. 
    Eigensolutions of $L_{m^2}^0$ 
    are spanned by $ u_m^0$ and  $u_{-m}^0$,
    with a comparable behavior $x^{\frac12+m}$ and $x^{\frac12-m}$
    near zero.
  \item $m=0$.
        Eigensolutions of $L_{m^2}^0$ 
    are spanned by $u_0^0$ and  $p_0^0$. Those proportional to
    $u_0^0$ are again called {\em principal}, the remaining
    ones are called {\em non-principal}. Principal
    solutions behave as $x^{\frac12}$ and are more regular than
    non-principal ones behaving as $x^{\frac12}\mathrm{ln}(x)$.
\end{enumerate}

As explained in the previous subsection, with
 $L_{m^2}^0+k^2$
 one can associate various
 Green's operators
and the  canonical bisolution. The most important are
 \begin{enumerate}
   \item The  canonical bisolution 
$G_{m^2\leftrightarrow}^0(-k^2)$;
 \item the {\em forward Green's operator} $G_{m^2,\to}^0(-k^2)$;
 \item
   the {\em backward Green's
   operator} $G_{m^2,\leftarrow}^0(-k^2)$;
   \item the {\em two-sided Green's operator with homogeneous boundary
     conditions}
     $G_{m^2,\bowtie}^0(-k^2)$.
   \end{enumerate}
   Additionally, for $m=0$ we will use
\begin{enumerate}[resume]
\item the {\em two-sided  Green's
   operator logarithmic near zero}  $G_{0,\diamond}^0(-k^2)$.
\end{enumerate}

Green's operators $G_{m^2\leftrightarrow}^0(-k^2)$,  $G_{m^2,\to}^0(-k^2)$,
 $G_{m^2,\leftarrow}^0(-k^2)$ and    $G_{m^2,\bowtie}^0(-k^2)$ are defined as  in \eqref{canoni},
\eqref{green1}, \eqref{green2}, resp. \eqref{green-bowtie+}
where  we  put
$h_1(x)=u_m^0(x,k)$, $h_2(x)=v_m^0(x,k)$, and use the fact that their Wronskian is
$1$.

$G_{0,\diamond}^0(-k^2)$ is defined as in \eqref{green-bowtie+} where
we put $h_1(x)=p_0^0(x)$, $h_2(x)=u_0^0(x)$ and again 
  replace the Wronskian by 
$1$.

For the sake of brevity, we will  often abuse terminology, calling 
$G_{m^2,\bowtie}^0(-k^2)$  "two-sided" and
$G_{0,\diamond}^0(-k^2)$ "logarithmic".
However,  both  
are two kinds of two-sided Green's operators according to the
terminology of Subsection \ref{basic1}.

Let us now sketch the theory of closed realizations
 $L_{m^2}^0$ on $L^2]0,\infty[$.
 First of all, we can define  the minimal and maximal  
realization of $L_{m^2}^0$ denoted by $L_{m^2}^{0,\min}$  and
$L_{m^2}^{0,\max}$, respectively. They satisfy
\begin{align}|\Re(m)|\geq1 \text{ implies }&\quad L_{m^2}^{0,\min}=L_{m^2}^{0,\max},\label{domains0.}\\
\label{domains.}
|\Re(m)|<1 \text{ implies }& \quad \dim\cD(L_{m^2}^{0,\max})/\cD(L_{m^2}^{0,\min})=2.\end{align}
Thus for $|\Re(m)|<1$ there exists a 1-parameter family of closed  
realisations of $L_{m^2}^0$  between  
$L_{m^2}^{0,\min}$  and $L_{m^2}^{0,\max}$ defined  
by boundary conditions at zero.  To describe these realizations
one can introduce
the following three families of Bessel operators
\begin{align}\label{closed1}
  \{-1<\Re(m)\}\ni m&\mapsto H_m^0,\\
  \{-1<\Re(m)<1
  \}\times\big(\C\cup\{\infty\}\big)\ni (m,\kappa)&\mapsto
                                                    H_{m,\kappa}^0,
  \label{closed2}\\
 \big(\C\cup\{\infty\}\big)\ni \nu&\mapsto H_0^{0,\nu}.  \label{closed3}
\end{align}

 The family $H_m^0$ is the most basic one.
It  is holomorphic on $\{-1<\Re(m)\}$ (see Appendix \ref{Holomorphic families of closed operators} for the definition of holomorphic families of closed operators).  For $1\leq\Re(m)$ it is
 the unique closed realization of $L_{m^2}^0$. Then it is extended to the
 strip $-1<\Re(m)<1$ by analytic continuation. Its domain is defined
 by the boundary condition $\sim x^{\frac12+m}$  at zero, called {\em
   homogeneous} or {\em pure}. In other words, functions in the domain of $H_m^0$ belong to the domain of the maximal operator $L_{m^2}^{0,\max}$ and behave as $x^{\frac12+m}$ near $0$.

 The operator $ H_{m,\kappa}^0$
is defined by the boundary condition $\sim x^{\frac12+m}+\kappa
x^{\frac12-m}$ at zero. For $m=0$ and all $\kappa$ we simply have
$H_{0,\kappa}^0=H_0^0$. The map \eqref{closed2} is holomorphic except for a
singularity at $(m,\kappa)=(0,-1)$ (see
 Proposition 3.11(ii) in \cite{DeFaNgRi20_01}).

Finally, for the special case $m=0$, 
$H_0^{0,\nu}$ is defined by the boundary condition
$\sim x^{\frac12}\mathrm{ln}(x)+\nu x^{\frac12}$ at zero. The map \eqref{closed3} is
holomorphic.

For $\Re(m)>-1$ and $\Re(k)>0$ the two-sided Green's operator (with
pure boundary conditions) is
bounded on $L^2]0,\infty[$ and coincides with the resolvent of $H_m^0$:
\begin{equation*}
G_{m^2,\bowtie}^0(-k^2)=\big(H_m^0+k^2\big)^{-1}. 
\end{equation*}
It should, however, be remarked that
the integral kernel $G_{m^2,\bowtie}^0(-k^2,x,y)$ is well
defined and useful also for other values of $k$ and $m$, when it 
does not define a bounded operator.

    \subsection{Overview of main results}

    Our paper is devoted to {\em perturbed Bessel operators}, that is,
to    operators of the form
\begin{align*}
L_{m^2} := - \partial^2_x +  \Big ( m^2 - \frac14 \Big ) \frac{1}{x^2} + Q(x). 
\end{align*}
We  allow $m$ to be complex and $Q$ to be complex-valued. Throughout the paper, we will assume that $Q\in L^1_{\mathrm{loc}}]0,\infty[$.

Note that the condition $\Re(m)>-1$ which we saw e.g in 
\eqref{closed1}
appears in several places in our analysis. One can argue that the 
case $\Re(m)\leq-1$ is less important for applications, because then 
 $x^{\frac12+m}$ is not square integrable at zero. 
Nevertheless,  if possible we keep $m$ arbitrary, without restricting
it to $\Re(m)>-1$.

Our first concern in this paper is  the construction of solutions
in $AC^1]0,\infty[$ to the equation
    \begin{equation}
      L_{m^2} f = - k^2 f
      \label{eigen}
    \end{equation}
    with a prescribed behavior near $0$ or near $\infty$.
    We will show that under some conditions on perturbations these
    solutions are quite similar to  solutions of the unperturbed
    eigenequation \eqref{eigen0}.

First of all we show that if the perturbation is  slightly weaker than $1/x^2$ near zero, then
there exists  a solution of the perturbed equation approximating the 
principal solution, as described
in the following proposition:
\begin{proposition}
\label{pr:opo0}
Let $\Re(m)\geq0$, $k\in\C$
and suppose that
 \begin{align}\label{ass1}
   \int_{0}^1 x|Q(x)| \mathrm{d}x& < \infty,\quad
    \text{if } m\neq0;\\\label{ass2}
    \int_{0}^1 x(1+|\mathrm{ln}(x)|) |Q(x)| \mathrm{d}x &< \infty,
    \quad \text{if } m=0.
    \end{align}
    Suppose that $g^0$ is a solution of \eqref{eigen0} such that
   $g^0(x) =\mathcal{O}(x^{\frac12+\Re(m)}) $ near $0$. 
    Then, 
 there exists a  unique solution $g
 \in AC^1]0,\infty[$ to     \eqref{eigen} such that, 
\begin{align*}
  g(x)-g^0(x)&=o(x^{\frac12+\Re(m)}),\\
\partial_x  g(x)-\partial_x g^0(x)&=o(x^{-\frac12+\Re(m)}),\quad x\to0.
\end{align*}
\end{proposition}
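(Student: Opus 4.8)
The plan is to recast the boundary-value problem as a forward Volterra integral equation near $0$ and to solve it by a contraction argument in a weighted supremum norm. Writing \eqref{eigen} as $(L_{m^2}^0+k^2)g=-Qg$ and using that the forward Green's operator $G_{m^2,\to}^0(-k^2)$ is a right inverse of $L_{m^2}^0+k^2$ which does not extend supports to the left, I would look for $g$ solving
\begin{equation}
g=g^0-G_{m^2,\to}^0(-k^2)(Qg),\qquad
\big(G_{m^2,\to}^0(-k^2)(Qf)\big)(x)=\int_0^x\big(u_m^0(x,k)v_m^0(y,k)-v_m^0(x,k)u_m^0(y,k)\big)Q(y)f(y)\,\d y.
\label{plan:volterra}
\end{equation}
Applying $L_{m^2}^0+k^2$ to \eqref{plan:volterra} gives $L_{m^2}g=-k^2g$, so any solution of \eqref{plan:volterra} solves \eqref{eigen}; the integral converges near $0$ precisely because of \eqref{ass1}--\eqref{ass2}, as the estimates below show. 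Since $g^0$ is assumed principal, i.e. $g^0=\mathcal{O}(x^{\frac12+\Re(m)})$, it is the forward operator (integrating from $0$), rather than the backward one, that is adapted to prescribing the behavior at $0$.

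Second, I would set up the estimates on an interval $(0,\delta)$ in the weighted norm $\|f\|_*:=\sup_{0<x<\delta}x^{-\frac12-\Re(m)}|f(x)|$. The inputs are the known small-$x$ asymptotics $|u_m^0(x,k)|\lesssim x^{\frac12+\Re(m)}$ and $|v_m^0(x,k)|\lesssim x^{\frac12-\Re(m)}$ for $\Re(m)>0$, together with their $x$-derivatives gaining a factor $x^{-1}$; for $m=0$ one has instead $|v_0^0(x,k)|\lesssim x^{\frac12}(1+|\ln x|)$, and for $\Re(m)=0$, $m\neq0$, both solutions are comparable to $x^{\frac12}$ up to oscillation. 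Feeding $|f(y)|\le\|f\|_*\,y^{\frac12+\Re(m)}$ into the kernel and using $y^{2\Re(m)}\le x^{2\Re(m)}$ for $y\le x$ (here $\Re(m)\ge0$ is used), both terms of the kernel collapse to the same bound, giving
\begin{equation}
x^{-\frac12-\Re(m)}\big|\big(G_{m^2,\to}^0(-k^2)(Qf)\big)(x)\big|\ \lesssim\ \|f\|_*\int_0^x y\,|Q(y)|\,\d y,\qquad m\neq0,
\label{plan:key}
\end{equation}
with $\int_0^x y|Q(y)|\d y$ replaced by $\int_0^x(1+|\ln y|)\,y\,|Q(y)|\d y$ when $m=0$ (the extra logarithm coming from $v_0^0$ is exactly what \eqref{ass2} controls, and one uses $|\ln x|\le|\ln y|$ for $y<x<1$). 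Since these integrals tend to $0$ as $\delta\to0$, for $\delta$ small the affine map $f\mapsto g^0-G_{m^2,\to}^0(-k^2)(Qf)$ is a contraction on $\{\|\cdot\|_*<\infty\}$, producing a unique fixed point $g$ on $(0,\delta)$.

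Third, differentiating \eqref{plan:volterra} (the boundary terms cancel because the canonical bisolution vanishes on the diagonal) and inserting the derivative asymptotics of $u_m^0,v_m^0$ yields, by the same computation, $x^{\frac12-\Re(m)}|\partial_x(g-g^0)(x)|\lesssim\|g\|_*\int_0^x y|Q(y)|\d y$. As both right-hand sides vanish as $x\to0$, \eqref{plan:key} and its derivative analogue upgrade the crude $\mathcal{O}$ bounds to the required $o(x^{\frac12+\Re(m)})$ and $o(x^{-\frac12+\Re(m)})$. The local solution is then continued to all of $]0,\infty[$ by the standard theory for the linear ODE $(-\partial_x^2+(m^2-\frac14)x^{-2}+Q)g=-k^2g$, whose coefficients lie in $L^1_{\mathrm{loc}}]0,\infty[$, so that $g\in AC^1]0,\infty[$.

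Finally, for uniqueness I would take the difference $h$ of two solutions enjoying the prescribed behavior and set $\tilde h:=h+G_{m^2,\to}^0(-k^2)(Qh)$, which lies in $\cN(L_{m^2}^0+k^2)=\C u_m^0\oplus\C v_m^0$. Since the estimate gives $G_{m^2,\to}^0(-k^2)(Qh)=o(x^{\frac12+\Re(m)})$ and $h$ is $o(x^{\frac12+\Re(m)})$ by hypothesis, we get $\tilde h=o(x^{\frac12+\Re(m)})$; comparing with the $x^{\frac12\pm m}$ (or logarithmic, resp. oscillatory) behavior of $u_m^0,v_m^0$ forces $\tilde h=0$, whence $h=-G_{m^2,\to}^0(-k^2)(Qh)$ and the contraction estimate gives $h\equiv0$. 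I expect the main obstacle to be carrying the kernel estimates uniformly through the three regimes $\Re(m)>0$, $\Re(m)=0$ with $m\neq0$, and $m=0$: the logarithmic degeneracy at $m=0$ is what dictates the stronger hypothesis \eqref{ass2}, while in the purely imaginary regime $u_m^0$ and $v_m^0$ have the same order $x^{\frac12}$, so the step ``$\tilde h=o(x^{\frac12})\Rightarrow\tilde h=0$'' rests on the oscillation of $x^{\pm m}$ rather than on a size comparison.
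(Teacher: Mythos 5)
Your proposal is correct and follows essentially the same route as the paper: both recast the problem as the forward Volterra equation $g=g^0-G^0_{m^2,\to}(-k^2)Qg$ and solve it in the weighted space with norm $\sup_x x^{-\frac12-\Re(m)}|f(x)|$, using exactly the kernel bound $x^{-\frac12-\Re(m)}\big|G^0_{\to}Qf(x)\big|\lesssim \big(\int_0^x y|Q(y)|\,\d y\big)\sup_{y<x}y^{-\frac12-\Re(m)}|f(y)|$ (with the extra logarithm from $v_0^0$ absorbed by \eqref{ass2} when $m=0$), which is Lemma \ref{lemma1} specialized to $\varepsilon=0$. The only cosmetic difference is that you invert $\one+G^0_{\to}Q$ by a contraction on a small interval followed by ODE continuation, whereas the paper sums the Neumann series on an arbitrary interval using the factorial bounds for iterated Volterra kernels (Corollary \ref{lemma2_app}, Proposition \ref{volterra1}); your uniqueness argument, including the observation that for $\Re(m)=0$, $m\neq0$ one must invoke the oscillation of $x^{\pm m}$ rather than a size comparison, is sound.
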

 In order to be able to well approximate all unperturbed solutions,
including the more singular ones, we need to strengthen the assumption
on the perturbation. 
    \begin{proposition}
      \label{prop:reg1!}
Let $\Re(m)\geq0$, $k\in\C$
and suppose that
 \begin{align}\label{cono3}
   \int_{0}^1 x^{1-2\Re(m)} |Q(x)| \mathrm{d}x& < \infty,\quad
    \text{if } m\neq0;\\
    \int_{0}^1 x\big(1+(\mathrm{ln}(x))^2\big) |Q(x)| \mathrm{d}x &< \infty,
    \quad \text{if } m=0.
    \end{align}
 Then for any
 $g^0 \in AC^1]0,\infty[$ 
solving \eqref{eigen0}
 there exists a  unique  $g
 \in AC^1]0,\infty[$ solving
     \eqref{eigen}
    such that
\begin{align*}
g(x)-g^0(x)&=o(x^{\frac12+\Re(m)}),\\
\partial_x  g(x)-\partial_x g^0(x)&=o(x^{-\frac12+\Re(m)}),\quad x\to0.
\end{align*}
\end{proposition}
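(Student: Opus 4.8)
The plan is to recast \eqref{eigen} as a Volterra integral equation built from the forward Green's operator of the unperturbed problem and to solve it by a contraction argument in a weighted space that encodes the boundary behavior at $0$. Writing $g=g^0+h$, the correction must solve $(L_{m^2}^0+k^2)h=-Qg$, and since $G_{m^2,\to}^0(-k^2)$ is a right inverse of $L_{m^2}^0+k^2$ that is a forward Volterra operator (it does not extend supports to the left), I would look for $g$ satisfying
\[
g=g^0-G_{m^2,\to}^0(-k^2)(Qg),\qquad\text{i.e.}\qquad g=\sum_{n\ge0}\big(-G_{m^2,\to}^0(-k^2)\,Q\big)^n g^0 .
\]
Recalling that the kernel of $G_{m^2,\to}^0(-k^2)$ is $\theta(x-y)\big(u_m^0(x,k)v_m^0(y,k)-v_m^0(x,k)u_m^0(y,k)\big)$, any fixed point automatically lies in $AC^1]0,\infty[$ and solves \eqref{eigen}; the choice of the \emph{forward} operator is dictated by the need to integrate outward from the singular endpoint $0$.

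For the contraction I would use that every $g^0\in\cN(L_{m^2}^0+k^2)$ is $\mathcal{O}(x^{\frac12-\Re(m)})$ near $0$ (with an extra logarithmic factor when $m=0$), and work on a small interval $]0,\delta]$ in the weighted norm $\|g\|_*:=\sup_{0<x<\delta}x^{-(\frac12-\Re(m))}|g(x)|$ (respectively $\sup_{0<x<\delta}x^{-\frac12}(1+|\mathrm{ln}\,x|)^{-1}|g(x)|$ for $m=0$). Using $|u_m^0(x,k)|\lesssim x^{\frac12+\Re(m)}$ and $|v_m^0(x,k)|\lesssim x^{\frac12-\Re(m)}$ near $0$, the two pieces of the kernel give
\[
\big|G_{m^2,\to}^0(-k^2)(Qg)(x)\big|\lesssim\|g\|_*\Big(x^{\frac12+\Re(m)}\!\int_0^x y^{1-2\Re(m)}|Q(y)|\,\d y+x^{\frac12-\Re(m)}\!\int_0^x y\,|Q(y)|\,\d y\Big).
\]
Dividing by $x^{\frac12-\Re(m)}$ and using $y\le x$ together with $\Re(m)\ge0$ bounds the operator norm of $G_{m^2,\to}^0(-k^2)Q$ on this space by $\int_0^\delta y^{1-2\Re(m)}|Q(y)|\,\d y$, which tends to $0$ as $\delta\to0$ by hypothesis \eqref{cono3}; the $m=0$ case is identical once the second logarithmic moment is used to absorb the two log factors. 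Hence for $\delta$ small the map is a contraction and produces a unique solution on $]0,\delta]$.

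The asymptotics then follow from the two-term structure of $h=-G_{m^2,\to}^0(-k^2)(Qg)$. Its $u_m^0$-component $u_m^0(x,k)\int_0^x v_m^0(y,k)Q(y)g(y)\,\d y$ is $x^{\frac12+\Re(m)}$ times a quantity vanishing at $0$ (since $\int_0^x y^{1-2\Re(m)}|Q|\to0$), hence $o(x^{\frac12+\Re(m)})$ for free. Its $v_m^0$-component $-v_m^0(x,k)\int_0^x u_m^0(y,k)Q(y)g(y)\,\d y$ has size $x^{\frac12-\Re(m)}\int_0^x y\,|Q(y)|\,\d y$, and bounding $\int_0^x y|Q|\le x^{2\Re(m)}\int_0^x y^{1-2\Re(m)}|Q|$ upgrades it to $o(x^{\frac12+\Re(m)})$ precisely because of the strengthened weight in \eqref{cono3}. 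Differentiating the Volterra representation, the diagonal term $\mp u_m^0(x,k)v_m^0(x,k)Q(x)g(x)$ cancels and the same two estimates yield $\partial_x h(x)=o(x^{-\frac12+\Re(m)})$. Since $Q\in L^1_\loc]0,\infty[$, the local solution extends uniquely to all of $]0,\infty[$ by standard ODE continuation, and global uniqueness follows because any solution with the prescribed behavior satisfies the same integral equation and so coincides with the fixed point.

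The main obstacle is controlling the $v_m^0$-component of the correction when $g^0$ is the more singular, non-principal solution $\sim x^{\frac12-\Re(m)}$: there a naive estimate only gives $h=\mathcal{O}(x^{\frac12-\Re(m)})$, i.e.\ of the \emph{same} order as $g^0$ itself, whereas the statement demands a correction of strictly higher order $o(x^{\frac12+\Re(m)})$. It is exactly the passage from the weight $x$ in Proposition~\ref{pr:opo0} to the weight $x^{1-2\Re(m)}$ in \eqref{cono3} that forces $\int_0^x y^{1-2\Re(m)}|Q|$ to vanish and thereby gains the required extra factor $x^{2\Re(m)}$. Tracking the threshold $\Re(m)=0$ and the logarithmic solution there is the delicate technical point, which is why the $m=0$ case is isolated with its own second-moment hypothesis.
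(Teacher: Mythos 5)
Your proposal is correct and takes essentially the same route as the paper's proof (Theorem \ref{prop:reg10}(iii)--(iv), built on Lemma \ref{lemma1} and Corollary \ref{lemma2_app}): the same Neumann-series/fixed-point construction $g=(\one+G_\to^0Q)^{-1}g^0$ in the weighted space with weight $x^{\frac12-\Re(m)}$ (resp.\ $x^{\frac12}(1+|\mathrm{ln}(x)|)$ for $m=0$), and the same weight-shifting estimate $\int_0^x y\,|Q|\,\mathrm{d}y\le x^{2\Re(m)}\int_0^x y^{1-2\Re(m)}|Q|\,\mathrm{d}y$ to upgrade the correction to $o(x^{\frac12+\Re(m)})$. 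The only cosmetic difference is that you invert $\one+G_\to^0Q$ by a contraction on a small interval followed by ODE continuation, whereas the paper uses the factorial Volterra bound of Proposition \ref{volterra1} to sum the series on any interval directly.
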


Here are consequences of Propositions  \ref{pr:opo0} and \ref{prop:reg1!}:
\begin{corollary}\label{coro1}
  Let $m\in\C$,  $k\in\C$
and suppose that
 \begin{align}\label{alig1-}
   \int_{0}^1 x^{1-\varepsilon} |Q(x)| \mathrm{d}x& < \infty,\quad
\varepsilon\geq0,\quad
  \Re(m)\geq-\frac\varepsilon2,\quad
                                                    m\neq0;\\
   \int_{0}^1 x(1+|\mathrm{ln}(x)|) |Q(x)| \mathrm{d}x &< \infty,
                                                     \quad m=0.\label{alig10}
\end{align}
Then 
there exists a unique   $ u_m(\cdot,k)  \in AC^1]0,\infty[$ that solves      \eqref{eigen}
 and satisfies
\begin{align*}
   u_m(x,k)- u_m^0(x,k)&=o(x^{\frac12+|\Re(m)|}),\\
\partial_x   u_m(x,k)-\partial_x  u_m^0(x,k)&=o(x^{-\frac12+|\Re(m)|}),\quad x\to0.
\end{align*}
\end{corollary}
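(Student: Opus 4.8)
The plan is to derive Corollary \ref{coro1} from Propositions \ref{pr:opo0} and \ref{prop:reg1!} by a case analysis on the sign of $\Re(m)$, exploiting that $L_{m^2}$ depends only on $m^2$ together with the elementary monotonicity of $a\mapsto x^a$ on $]0,1[$. The recurring algebraic fact is that for $x\in]0,1[$ and $a\le b$ one has $x^a\ge x^b$, so finiteness of $\int_0^1 x^a|Q(x)|\,\d x$ is a weaker requirement for larger exponents. The uniqueness claim in each regime will be inherited directly from the corresponding proposition.

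First I would treat $\Re(m)\ge 0$, $m\neq0$. Here $|\Re(m)|=\Re(m)$, the solution $u_m^0(\cdot,k)$ lies in $AC^1]0,\infty[$ and satisfies $u_m^0(x,k)=\mathcal{O}(x^{\frac12+\Re(m)})$ near $0$. Since $\varepsilon\ge0$ forces $x^{1-\varepsilon}\ge x$ on $]0,1[$, hypothesis \eqref{alig1-} yields $\int_0^1 x|Q(x)|\,\d x<\infty$, which is exactly assumption \eqref{ass1} of Proposition \ref{pr:opo0}. Applying that proposition with $g^0=u_m^0(\cdot,k)$ produces a unique $g=:u_m(\cdot,k)\in AC^1]0,\infty[$ solving \eqref{eigen} with the required $o(x^{\frac12+\Re(m)})$ and $o(x^{-\frac12+\Re(m)})$ estimates. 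The case $m=0$ is handled identically: hypothesis \eqref{alig10} is verbatim assumption \eqref{ass2}, and Proposition \ref{pr:opo0} applied to $g^0=u_0^0(\cdot,k)=\mathcal{O}(x^{\frac12})$ gives $u_0(\cdot,k)$ with the stated $o(x^{\frac12})$, $o(x^{-\frac12})$ control, consistent with $|\Re(m)|=0$.

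The substantive case is $\Re(m)<0$, where $u_m^0(\cdot,k)\sim x^{\frac12+m}/\Gamma(m+1)$ is the more singular, non-principal solution and Proposition \ref{pr:opo0} no longer applies. I would set $m':=-m$, so $\Re(m')=|\Re(m)|>0$, and observe that $u_m^0(\cdot,k)=u_{-m'}^0(\cdot,k)\in AC^1]0,\infty[$ solves \eqref{eigen0} for the parameter $m'$, since $(m')^2=m^2$. The constraint $\Re(m)\ge-\varepsilon/2$ in \eqref{alig1-} reads $\varepsilon\ge 2|\Re(m)|=2\Re(m')$, hence $1-\varepsilon\le 1-2\Re(m')$, and the monotonicity above gives $\int_0^1 x^{1-2\Re(m')}|Q(x)|\,\d x\le \int_0^1 x^{1-\varepsilon}|Q(x)|\,\d x<\infty$, which is precisely assumption \eqref{cono3} of Proposition \ref{prop:reg1!} for $m'$. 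Applying Proposition \ref{prop:reg1!} with parameter $m'$ and $g^0=u_{-m'}^0(\cdot,k)$ furnishes a unique $u_m(\cdot,k)\in AC^1]0,\infty[$ solving \eqref{eigen} with $u_m-u_m^0=o(x^{\frac12+\Re(m')})=o(x^{\frac12+|\Re(m)|})$ and the matching derivative estimate.

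In each regime the asymptotic normalization singled out in the corollary coincides with the one for which the invoked proposition asserts uniqueness, so the three constructions define a single unambiguous $u_m(\cdot,k)$. The only point demanding care is the reduction $m\mapsto-m$ when $\Re(m)<0$: one must check that $u_m^0(\cdot,k)$, now playing the role of the non-principal solution for $m'$, still belongs to $AC^1]0,\infty[$ (immediate on the open half-line) and that the exponent inequality $1-\varepsilon\le 1-2\Re(m')$ is exactly the hypothesis threshold, so that the constraint $\Re(m)\ge-\varepsilon/2$ is neither too weak nor wasteful. I expect this exponent bookkeeping, rather than any analytic difficulty, to be the crux; the genuine analysis is entirely contained in the two propositions already established.
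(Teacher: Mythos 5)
Your proposal is correct and follows essentially the same route as the paper: the authors present Corollary \ref{coro1} precisely as a consequence of Propositions \ref{pr:opo0} and \ref{prop:reg1!} (realized in the body via Theorem \ref{prop:reg10} and Proposition \ref{prop:reg1h}), using the same reduction $m\mapsto -m$ for $\Re(m)<0$ and the same monotonicity of $x\mapsto x^{a}$ to match the integrability exponents. The exponent bookkeeping you flag as the crux ($\varepsilon\ge -2\Re(m)$ translating into condition \eqref{cono3} for $-m$, and $\Re(m)+\varepsilon\ge|\Re(m)|$ ensuring the error is $o(x^{\frac12+|\Re(m)|})$) is exactly what the paper relies on.
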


\begin{corollary}\label{coro2}  Let $k\in\C$ and suppose that
 \begin{align}\label{alig2log}
   \int_{0}^1 x(1+(\mathrm{ln}(x))^2) |Q(x)| \mathrm{d}x &< \infty,\quad
m=0.
\end{align}
Then 
there exists a unique   $p_0(\cdot,k)  \in AC^1]0,\infty[$ that solves      \eqref{eigen}
 and satisfies
\begin{align*}
 p_0(x,k)-p_0^0(x,k)&=o(x^{\frac12}),\\
\partial_x  p_0(x,k)-\partial_x p_0^0(x,k)&=o(x^{-\frac12}),\quad x\to0.
\end{align*}
\end{corollary}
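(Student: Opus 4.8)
The plan is to obtain Corollary \ref{coro2} as a direct specialization of Proposition \ref{prop:reg1!} to the case $m=0$, taking as unperturbed input the logarithmic solution $g^0=p_0^0(\cdot,k)$. First I would observe that the hypothesis \eqref{alig2log} is verbatim the $m=0$ assumption of Proposition \ref{prop:reg1!}; there is therefore nothing to verify on the side of the integrability condition on $Q$, and that proposition is applicable under exactly the stated hypothesis.

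Next I would check that $p_0^0(\cdot,k)$ is an admissible input, i.e.\ that it is a genuine element of $\cN(L_0^0+k^2)$ lying in $AC^1]0,\infty[$. For $k\neq0$ this is immediate from its definition as the linear combination $-v_0^0(\cdot,k)-(\ln(k/2)+\gamma)\,u_0^0(\cdot,k)$ of two solutions of \eqref{eigen0}; for $k=0$ it reduces to $x^{\frac12}\ln x$, the logarithmic solution which together with $u_0^0$ spans $\cN(L_0^0)$. In either case it solves the unperturbed eigenequation, hence is of class $AC^1]0,\infty[$ since the coefficient $-\tfrac14 x^{-2}$ of $L_0^0$ belongs to $L^1_\loc]0,\infty[$.

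With these two checks in place, Proposition \ref{prop:reg1!} yields a unique $g\in AC^1]0,\infty[$ solving \eqref{eigen} with $g(x)-p_0^0(x,k)=o(x^{\frac12+\Re(m)})$ and $\partial_x g(x)-\partial_x p_0^0(x,k)=o(x^{-\frac12+\Re(m)})$ as $x\to0$. Setting $\Re(m)=0$ and renaming $g=:p_0(\cdot,k)$ produces exactly the asserted asymptotics; the uniqueness clause transfers verbatim as well.

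The one point I would emphasize — and the reason the weaker Proposition \ref{pr:opo0} does \emph{not} suffice here — is that $p_0^0(x,k)$ behaves like $x^{\frac12}\ln x$ near $0$ and is therefore \emph{not} of order $O(x^{\frac12})=O(x^{\frac12+\Re(m)})$: it is the more singular, non-principal (logarithmic) solution. Consequently Proposition \ref{pr:opo0}, whose conclusion is guaranteed only for inputs of order $x^{\frac12+\Re(m)}$, cannot be invoked, and one must use the full strength of Proposition \ref{prop:reg1!}, which is precisely why the stronger hypothesis \eqref{alig2log} with the squared logarithm is imposed. There is thus no real obstacle to overcome: all of the analytic content is already contained in Proposition \ref{prop:reg1!}, and the corollary merely records the distinguished logarithmic solution in the special case $m=0$.
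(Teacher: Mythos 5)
Your proposal is correct and follows exactly the paper's route: Corollary \ref{coro2} is obtained by specializing Proposition \ref{prop:reg1!} (in the body, Theorem \ref{prop:reg10}(iv) with $\varepsilon=0$, i.e.\ Proposition \ref{prop:reg1}) to $m=0$ with the non-principal input $g^0=p_0^0(\cdot,k)$. Your closing remark — that $p_0^0\sim x^{\frac12}\ln(x)$ is not $\mathcal{O}(x^{\frac12})$, so Proposition \ref{pr:opo0} cannot be used and the squared-logarithm hypothesis \eqref{alig2log} is genuinely needed — is precisely the point the paper makes when motivating the stronger assumption.
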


Note that if $|Q(x)|\lesssim |x|^{\alpha}$ near $0$, then Condition 
\eqref{alig1-} is satisfied for $\alpha>-2+\varepsilon$.

Conditions \eqref{ass1} and \eqref{ass2}
are the minimal assumptions near zero in the context of our paper.
They are enough to guarantee that
the behavior near zero of non-principal solutions 
is roughly as in  the unperturbed case:

\begin{proposition}\label{cor:u_not_L2a-}
  Let $\Re(m)\ge0$, $\Re(k)\ge0$. Under the assumptions \eqref{ass1} and \eqref{ass2},
for all $g\in\Ker(L_{m^2}+k^2)$, we have
\begin{align}
  g(x)=\mathcal{O}(x^{\frac12-\Re(m)}),\qquad   \partial_x g(x)=\mathcal{O}(x^{-\frac12-\Re(m)}) ,& \\
  g(x)=\mathcal{O}(x^{\frac12}\mathrm{ln}(x)),\qquad   \partial_x g(x)=\mathcal{O}(x^{-\frac12}\mathrm{ln}(x)) ,& \qquad 
x\to0. 
 \end{align}
\end{proposition}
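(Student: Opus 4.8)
The plan is to run a Volterra/Gronwall argument near zero based on the variation-of-parameters representation built from a normalized fundamental pair of the unperturbed equation with principal/non-principal near-zero behavior (namely $u_m^0(\cdot,k),v_m^0(\cdot,k)$, whose Wronskian is $1$; and $u_0^0,p_0^0$ in the degenerate case $m=k=0$). Fix $g\in\Ker(L_{m^2}+k^2)\cap AC^1]0,\infty[$ and a reference point $a\in\,]0,1[$. Since $(L_{m^2}^0+k^2)g=-Qg$ with $Q\in L^1_{\loc}]0,\infty[$, on every interval $[x,a]\subset\,]0,\infty[$ the function $g$ satisfies
\begin{equation*}
 g(x)=A\,u_m^0(x,k)+B\,v_m^0(x,k)-\int_x^a\big(v_m^0(x,k)u_m^0(y,k)-u_m^0(x,k)v_m^0(y,k)\big)Q(y)g(y)\,\d y,
\end{equation*}
where the kernel is $-G_{m^2\leftrightarrow}^0(-k^2,x,y)$ and $A,B$ are fixed by the Cauchy data of $g$ at $a$. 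Introduce the comparison weight $\phi(x):=x^{\frac12-\Re(m)}$ for $m\neq0$ and $\phi(x):=x^{\frac12}(1+|\mathrm{ln}(x)|)$ for $m=0$; this is exactly the non-principal scale. Using the $k$-independent leading near-zero asymptotics $|u_m^0(x,k)|\lesssim x^{\frac12+\Re(m)}$ and $|v_m^0(x,k)|\lesssim\phi(x)$, the homogeneous part is $\mathcal{O}(\phi(x))$, and I set $N(x):=\sup_{x\le t\le a}|g(t)|/\phi(t)$, which is finite for each $x>0$.

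Next I would estimate the two pieces of the kernel. The benign piece pairs the large factor $v_m^0(x,k)$ outside with the small factor $u_m^0(y,k)$ inside: from $|u_m^0(y,k)g(y)|\lesssim N(x)\,y^{\frac12+\Re(m)}\phi(y)$ one gets integrand $\lesssim N(x)\,y$ (resp. $N(x)\,y(1+|\mathrm{ln}(y)|)$ for $m=0$), so \eqref{ass1}/\eqref{ass2} bound the integral by $N(x)\varepsilon(a)$ with $\varepsilon(a):=\int_0^a y|Q|\,\d y$ (resp. $\int_0^a y(1+|\mathrm{ln}(y)|)|Q|\,\d y$), and $\varepsilon(a)\to0$; multiplying by $|v_m^0(x,k)|\lesssim\phi(x)$ gives $\mathcal{O}(\varepsilon(a)N(x)\phi(x))$. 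The delicate piece pairs the small factor $u_m^0(x,k)$ outside with the large factor $v_m^0(y,k)$ inside, where $|v_m^0(y,k)g(y)|\lesssim N(x)\,y^{1-2\Re(m)}$ (resp. $N(x)\,y(1+|\mathrm{ln}(y)|)^2$), and the corresponding $y$-integral is \emph{not} directly controlled by the hypotheses. This is the heart of the matter.

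The resolution is an absorption trick exploiting monotonicity: for $y\ge x$ one has $y^{-2\Re(m)}\le x^{-2\Re(m)}$ (resp. $1+|\mathrm{ln}(y)|\le1+|\mathrm{ln}(x)|$), so the offending integral is bounded by $x^{-2\Re(m)}\varepsilon(a)$ (resp. $(1+|\mathrm{ln}(x)|)\varepsilon(a)$); multiplying by $|u_m^0(x,k)|\lesssim x^{\frac12+\Re(m)}$ (resp. $\lesssim x^{\frac12}$) again yields $\mathcal{O}(\varepsilon(a)N(x)\phi(x))$, since $x^{\frac12+\Re(m)}x^{-2\Re(m)}=\phi(x)$ (resp. $x^{\frac12}(1+|\mathrm{ln}(x)|)=\phi(x)$). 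Combining, $N(x)\le C_0+C\varepsilon(a)N(x)$ uniformly in $x$, and choosing $a$ so small that $C\varepsilon(a)\le\tfrac12$ forces $N(x)\le2C_0$, i.e. $g(x)=\mathcal{O}(\phi(x))$. Differentiating the integral equation then gives the derivative bound: the boundary term at $y=x$ vanishes because the diagonal of the bisolution is zero, and replacing $u_m^0,v_m^0$ by their derivatives (each gaining a factor $x^{-1}$) together with the just-proved bound $|g|\le2C_0\phi$ reproduces the same two estimates, yielding $\partial_x g(x)=\mathcal{O}(x^{-\frac12-\Re(m)})$ (resp. $\mathcal{O}(x^{-\frac12}\mathrm{ln}(x))$).

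The main obstacle is precisely this delicate piece, in which the non-principal solution sits inside the integral against $Q$ and the naive estimate diverges; extracting the monotone singular factor at $y=x$ and absorbing it into the smallness of the principal factor $u_m^0$ outside is what makes the minimal hypotheses \eqref{ass1}/\eqref{ass2} exactly sufficient and identifies the non-principal scale $\phi$ as the sharp bound. The case $m=0$ differs only in carrying the logarithmic weight through the same steps, using $1+|\mathrm{ln}(y)|\le1+|\mathrm{ln}(x)|$ in place of $y^{-2\Re(m)}\le x^{-2\Re(m)}$.
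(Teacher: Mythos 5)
Your proposal is correct, and it takes a genuinely different route from the paper. The paper first constructs the principal solution $u_m(\cdot,k)$ via the Neumann series for $\one+G^0_\to Q$ (Proposition \ref{prop:reg1h}, which already uses assumptions \eqref{ass1}/\eqref{ass2}), and then obtains the bound on an arbitrary $g\in\Ker(L_{m^2}+k^2)$ by reduction of order: the Wronskian $W=\Wr(g,u_m)$ is constant, the ansatz $g=\lambda u_m$ gives $\lambda' u_m^2=W$, and integrating $\int W/u_m^2$ against the known asymptotics $u_m(x)\sim C_0x^{\frac12+m}$ yields $g(x)=\mathcal{O}(x^{\frac12-\Re(m)})$ and, in fact, the existence of $\lim_{x\to0}g(x)/x^{\frac12-m}$ for non-principal $g$ (see Proposition \ref{cor:u_not_L2a}). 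You instead run a direct a priori Gronwall/absorption argument on the Volterra integral equation built from the canonical bisolution, with the key observation that the ``dangerous'' factor $y^{-2\Re(m)}$ (resp.\ $1+|\mathrm{ln}(y)|$) is monotone on $[x,a]$ and can be pulled out at $y=x$, where it is exactly compensated by the principal factor $u_m^0(x,k)$ sitting outside the integral; this is essentially the same cancellation that powers the paper's Lemma \ref{lemmaG0}, but deployed as a self-contained bootstrap. What your approach buys is independence from the prior construction of $u_m$ (no Neumann series needed) at the cost of losing the refined conclusion that the limit of $g(x)/x^{\frac12-m}$ exists; since the statement as quoted only asks for the $\mathcal{O}$ bounds, this loss is immaterial here. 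All the individual steps check out: the homogeneous part is $\mathcal{O}(\phi)$ because $\Re(m)\ge0$, the sup $N(x)$ is finite for each $x>0$ by continuity on $[x,a]$, the absorption requires only $C\varepsilon(a)\le\frac12$ with $\varepsilon(a)\to0$ guaranteed by \eqref{ass1}/\eqref{ass2}, and the boundary term in the differentiated equation vanishes because the bisolution kernel is zero on the diagonal.
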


As described in Proposition \ref{pr:opo0}, 
the above assumptions are enough
for the existence of $ u_m$ with
$\Re(m)\geq0$.
However, it seems that to have {\em distinguished} non-principal solutions 
 one 
needs to impose stronger conditions on $Q$, as described in Corollaries 
\ref{coro1} and \ref{coro2}:
In particular, if $\varepsilon\geq0$ and Condition \eqref{alig1-} holds, then
$ u_m$ is constructed  in the region
$\Re(m)\geq-\frac\varepsilon2$.
This suggests the following question, which we believe  is open and interesting:

\begin{openproblem}
Let 
 $Q$ satisfy condition \eqref{alig1-} with $\varepsilon>0$. 
Does it imply that the  function $m\mapsto u_m(x,k)$  extends   
holomorphically (or at least meromorphically) to the whole $\mathbb{C}$? (This is true for the 
 Coulomb potential \cite{DeRi18_01}.). 
\label{op1}\end{openproblem}

Let us now consider the behavior near infinity.
To prove the existence of solutions well approximating  exponentially
decaying solutions, called {\em
  Jost solutions}, we need the so-called short-range
condition on the potential.
\begin{proposition}\label{prop:reginfty0}
Let $m \in \C$. Suppose that
\begin{align*}
\int_{1}^\infty|Q(x)| \mathrm{d}x < \infty .
\end{align*}
Let $k\neq0$ be such that  $\Re(k)\geq0$. Then there exists a 
 unique solution $w_{m}( \cdot , k ) = w_{-m}( \cdot , k ) \in
 AC^1]0,\infty[$ to
 \eqref{eigen} such that
\begin{align*}
  w_{m} ( x ,k) -
  w_{m}^0 ( x ,k) &=o(\e^{-x\Re(k)}),\\
\partial_x      w_{m} ( x ,k) -
\partial_x    w_{m}^0 ( x ,k) &=o(\e^{-x\Re(k)})
  , \qquad x \to \infty .
\end{align*}
\end{proposition}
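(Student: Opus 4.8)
The plan is to recast the eigenequation \eqref{eigen} as a Volterra integral equation anchored at $+\infty$ and to solve it by a contraction argument. Writing \eqref{eigen} as $(L_{m^2}^0+k^2)f=-Qf$ and using that $w_m^0(\cdot,k)$ solves the unperturbed equation with $w_m^0(x,k)\sim\e^{-kx}$, the natural choice is to invert $L_{m^2}^0+k^2$ with the backward Green's operator $G_{m^2,\leftarrow}^0(-k^2)$: being a backward Volterra operator it does not enlarge supports to the right, and the corresponding correction integral runs over $[x,\infty[$, where $Q$ is small. This gives the fixed-point equation
\begin{equation}
f(x)=w_m^0(x,k)+\int_x^\infty G_{m^2\leftrightarrow}^0(-k^2,x,y)\,Q(y)f(y)\,\d y=:w_m^0(x,k)+(Kf)(x),
\label{Plan:inteq}
\end{equation}
with $G_{m^2\leftrightarrow}^0(-k^2,x,y)=u_m^0(x,k)v_m^0(y,k)-v_m^0(x,k)u_m^0(y,k)$. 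Since $G_{m^2,\leftarrow}^0(-k^2)$ is a right inverse of $L_{m^2}^0+k^2$, every $f$ solving \eqref{Plan:inteq} solves \eqref{eigen}.

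The key analytic input is the asymptotics of the modified Bessel functions at infinity, giving $|u_m^0(x,k)|\lesssim\e^{x\Re(k)}$, $|v_m^0(x,k)|\lesssim\e^{-x\Re(k)}$ for $x\ge1$, together with the same bounds for $\partial_x u_m^0$ and $\partial_x v_m^0$. Bounding the two terms of the kernel separately, and noting that the growth $\e^{y\Re(k)}$ of $u_m^0(y,k)$ is exactly cancelled by the weight, one obtains the crucial estimate
\begin{equation}
\big|G_{m^2\leftrightarrow}^0(-k^2,x,y)\big|\,\e^{-y\Re(k)}\lesssim\e^{-x\Re(k)},\qquad 1\le x\le y.
\label{Plan:kernel}
\end{equation}
On the Banach space $X_a=\{f:[a,\infty[\,\to\C \mid \|f\|_a:=\sup_{x\ge a}\e^{x\Re(k)}|f(x)|<\infty\}$, estimate \eqref{Plan:kernel} yields $\|Kf\|_a\le C\big(\int_a^\infty|Q(y)|\,\d y\big)\|f\|_a$, so for $a$ large enough that $C\int_a^\infty|Q|<1$ the map $K$ is a strict contraction. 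Hence $f=(1-K)^{-1}w_m^0$ exists and is the unique fixed point in $X_a$ (equivalently the Neumann series $\sum_n K^n w_m^0$ converges), and $w_m^0\in X_a$ because $|w_m^0|\lesssim\e^{-x\Re(k)}$. Setting $h:=f-w_m^0=Kf$ and applying \eqref{Plan:kernel} once more gives $\e^{x\Re(k)}|h(x)|\lesssim\int_x^\infty|Q|\to0$, i.e. $h(x)=o(\e^{-x\Re(k)})$.

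For the derivative I would differentiate \eqref{Plan:inteq}: the boundary term vanishes since $G_{m^2\leftrightarrow}^0(-k^2,x,x)=0$, leaving $\partial_x h(x)=\int_x^\infty\partial_x G_{m^2\leftrightarrow}^0(-k^2,x,y)\,Q(y)f(y)\,\d y$, and the same computation with the bounds on $\partial_x u_m^0,\partial_x v_m^0$ gives $\partial_x h(x)=o(\e^{-x\Re(k)})$. The solution, so far defined on $[a,\infty[$, extends uniquely to an element of $AC^1]0,\infty[$ because the total potential $(m^2-\tfrac14)x^{-2}+Q$ lies in $L^1_\loc]0,\infty[$, so standard ODE continuation applies. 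For uniqueness, if $d\in\cN(L_{m^2}+k^2)$ satisfies $d,\partial_x d=o(\e^{-x\Re(k)})$, then $(L_{m^2}^0+k^2)(d-Kd)=0$, so $d-Kd=Au_m^0+Bv_m^0$; since \eqref{Plan:kernel} forces $Kd=o(\e^{-x\Re(k)})$ (and likewise $\partial_x(Kd)$), both $Au_m^0+Bv_m^0$ and its derivative are $o(\e^{-x\Re(k)})$, and inverting the $2\times2$ system with $\Wr(u_m^0,v_m^0)=1$ forces $A=B=0$; then $d=Kd$ and the contraction gives $d\equiv0$. Finally $w_m=w_{-m}$ because $w_m^0=w_{-m}^0$ and the bisolution $G_{m^2\leftrightarrow}^0(-k^2)$ depends only on $m^2$, so \eqref{Plan:inteq} is literally the same equation for $m$ and $-m$, and uniqueness concludes.

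The main obstacle is proving \eqref{Plan:kernel} uniformly, with particular care in the limiting case $\Re(k)=0$. There the exponential weights are trivial, so one gains no decay from them; instead one must exploit that for purely imaginary $k$ both $u_m^0(\cdot,k)$ and $v_m^0(\cdot,k)$ remain bounded (oscillatory Hankel/Bessel asymptotics) while their Wronskian stays normalized to $1$, which is exactly what keeps both the contraction and the Cramer-type uniqueness argument valid. Controlling the algebraic prefactors in the Bessel asymptotics uniformly in $x$, and checking they are harmless against the exponential gaps when $\Re(k)>0$, is the only genuinely delicate bookkeeping.
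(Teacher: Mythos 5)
Your proposal is correct and follows essentially the same route as the paper: the Lippmann--Schwinger equation $w=w^0_m-G^0_{m^2,\leftarrow}(-k^2)Qw$ with the backward Green's operator, the kernel estimate coming from $|u^0_m(x,k)|\lesssim\e^{x\Re(k)}$, $|v^0_m(x,k)|\lesssim\e^{-x\Re(k)}$, and a Wronskian argument for uniqueness and for $w_m=w_{-m}$. The only difference is the convergence mechanism: you run a strict contraction on $[a,\infty[$ for $a$ large enough that $\int_a^\infty|Q|$ is small and then extend by ODE continuation, whereas the paper exploits the backward Volterra structure to get the factorial bound $\frac{C^{n+1}}{n!}\big(\int_x^\infty\mu_k(y)|Q(y)|\,\d y\big)^n$ (Corollary \ref{lemma4_app}), which makes the Neumann series converge on $]a,\infty[$ for every $a>0$ and, with the finer weights $\mu_k^{\frac12-\Re(m)}\eta_{-k}$ (and $\lambda_k$ for $m=0$), yields the uniformity in $k$ needed later for the global Jost estimates.
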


Similarly as in unperturbed case, it is often convenient to use differently
normalized Jost solutions $v_m(x,k):=\sqrt{\frac{\pi}{2k}}\Big(\frac{k}2\Big)^m
  w_{m} ( x ,k) $.

Proposition \ref{prop:reginfty0} does not cover the zero energy, that
is, $k=0$. 
To handle this case we need to impose stronger conditions
on the decay of perturbations, as described in the following two
propositions.

\begin{proposition}\label{prop:reginfty+intro}
Let $m\in\C$, $k=0$.  Suppose that
\begin{align*}  
\int_{1}^\infty x^{\delta}|Q(x)| \mathrm{d}x& < \infty ,\quad 
                                              \text{ if } m\neq 0, \quad \text{with } 
                                          \delta=1+
                                              2\max\big(\Re(m),0\big);\\
\int_{1}^\infty x(1+\mathrm{ln}(x))|Q(x)| \mathrm{d}x& < \infty ,\quad 
 \text{ if }                                                 m=0.
\end{align*}
Then there exists a unique
   $q_m
 \in AC^1]0,\infty[$ solving   \eqref{eigen}  at $k=0$ such that, 
\begin{align*}
  q_{m}(x)-x^{\frac12+m} &=o(x^{\frac12-\Re(m)}),\\
\partial_x q_{m}(x)-\partial_x x^{\frac12+m}&=o(x^{-\frac12-\Re(m)}),\quad x\to\infty. 
\end{align*}
\end{proposition}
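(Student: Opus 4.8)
The plan is to realize $q_m$ as the solution of a backward Volterra integral equation attached to the zero-energy unperturbed operator $L_{m^2}^0$, integrating from $\infty$. Since $L_{m^2}$ depends only on $m^2$, I focus on the main regime $\Re(m)\ge 0$, where $g_+(x):=x^{\frac12+m}$ is the dominant and $g_-(x):=x^{\frac12-m}$ the subdominant unperturbed solution (for $m\ne 0$), with $\Wr(g_+,g_-)=-2m$; the complementary regime $\Re(m)<0$ is handled by the same equation run against the weight $x^{\frac12+\Re(m)}$, for which only $\delta=1$ is needed. Writing $q_m=g_+ + \psi$, the correction should solve $L_{m^2}^0\psi=-Q(g_++\psi)$ via the backward Green's operator of $L_{m^2}^0$ at $k=0$, i.e.
\begin{equation*}
\psi(x)=-\frac{1}{2m}\int_x^\infty\big(g_+(x)g_-(y)-g_-(x)g_+(y)\big)\,Q(y)\,\big(g_+(y)+\psi(y)\big)\,\d y .
\end{equation*}
Any fixed point yields $L_{m^2}q_m=0$; conversely, integrating from $\infty$ rather than from a finite point is what forces the correction to be subordinate and thereby selects $q_m$ uniquely.

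The heart of the argument is a contraction estimate on a half-line $[X,\infty[$ for $X$ large. I would work in the Banach space of continuous functions with finite norm $\|\psi\|:=\sup_{x\ge X}|\psi(x)|\,x^{-(\frac12-\Re(m))}$, that is, measured against the subdominant weight $\omega(x)=x^{\frac12-\Re(m)}$. Splitting the kernel into its two pieces and using $g_+(y)g_-(y)=y$ and $g_+(y)^2=y^{1+2m}$, both the source term and the linear map $\psi\mapsto-\tfrac1{2m}\int_x^\infty(\cdots)Q\psi\,\d y$ are controlled, after elementary monotonicity inequalities valid for $y\ge x$, by the two quantities $\int_x^\infty y|Q|\,\d y$ and $\int_x^\infty y^{1+2\Re(m)}|Q|\,\d y$. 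The hypothesis with $\delta=1+2\max(\Re(m),0)=1+2\Re(m)$ makes the second finite and tending to $0$, and the inequality $x^{2\Re(m)}\int_x^\infty y|Q|\,\d y\le\int_x^\infty y^{1+2\Re(m)}|Q|\,\d y$ (valid for $\Re(m)\ge 0$ since $y\ge x$) lets me bound contributions weighted by $x^{1/2+\Re(m)}$ relative to $\omega$ as well. Hence the linear map has operator norm vanishing as $X\to\infty$, so it is a contraction for $X$ large, and the Banach fixed point theorem produces a unique $\psi$. As $q_m$ solves a second-order linear ODE with $L^1_{\loc}$ coefficients it extends uniquely to $]0,\infty[$ and lies in $AC^1]0,\infty[$.

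It remains to sharpen the bound to the stated asymptotics, to treat $m=0$, and to record uniqueness. Feeding the fixed point back into the integral equation and using that the two integrals above tend to $0$ gives $\psi(x)=o(x^{\frac12-\Re(m)})$; differentiating the kernel in $x$ (the boundary term at $y=x$ vanishes since $g_+(x)g_-(x)-g_-(x)g_+(x)=0$) and estimating $g_+'\sim x^{-\frac12+m}$, $g_-'\sim x^{-\frac12-m}$ yields $\psi'(x)=o(x^{-\frac12-\Re(m)})$. Uniqueness follows because the difference $d$ of two solutions with these asymptotics solves $L_{m^2}d=0$ with $d=o(x^{\frac12-\Re(m)})$, $d'=o(x^{-\frac12-\Re(m)})$, hence satisfies the homogeneous version of the integral equation and vanishes on $[X,\infty[$ by the contraction, and then everywhere. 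For $m=0$ one replaces $g_+,g_-$ by $x^{\frac12}$ and $x^{\frac12}\ln x$; their products introduce factors $y(1+\ln y)$ in the kernel estimates, which is exactly the origin of the reinforced condition $\int_1^\infty x(1+\ln x)|Q|\,\d x<\infty$. I expect the principal obstacle to be the sharp asymptotic step: obtaining the error $o(x^{\frac12-\Re(m)})$ rather than merely $o(x^{\frac12+\Re(m)})$ forces one to exploit the interplay between the two kernel terms through the precise weight $\delta$, and to check that this survives differentiation of the Volterra kernel for the derivative estimate.
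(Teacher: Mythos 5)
Your proposal is correct and follows essentially the same route as the paper: the function $q_m$ is obtained from the zero-energy Lippmann--Schwinger equation $q_m=(\one+G^0_\leftarrow Q)^{-1}x^{\frac12+m}$ for the backward Green's operator, and your weighted estimates (against $x^{\frac12-\Re(m)}$, using $\int_x^\infty y\,|Q|\,\d y$ and $\int_x^\infty y^{1+2\Re(m)}|Q|\,\d y$, with the extra $\ln$ factor when $m=0$) are exactly those of Lemma \ref{lemma2}, Lemma \ref{lemma2bis} and Corollaries \ref{lemma4_app}, \ref{lemma23_appbis}. The only (cosmetic) difference is that you solve the integral equation by a contraction on $[X,\infty[$ for $X$ large and then extend by ODE uniqueness, whereas the paper sums the Neumann series globally using the Volterra bound $C^{n+1}/n!$ of Proposition \ref{volterra1}.
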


\begin{proposition}\label{prop:reginftylogintro}
  Let $m=0$, $k=0$. Suppose that
  \begin{align*} 
\int_{1}^\infty x(1+(\mathrm{ln}(x))^2)|Q(x)| \mathrm{d}x& < \infty .
\end{align*}
 Then there exists a unique
   $q_{0,\mathrm{ln}}
 \in AC^1]0,\infty[$ solving     \eqref{eigen}  for $k=0$ such that, 
\begin{align*}
  q_{0,\mathrm{ln}}(x)- x^{\frac12}\mathrm{ln}(x)&=o(x^{\frac12}),\\
  \partial_x q_{0,\mathrm{ln}}(x)-\partial_x x^{\frac12}\mathrm{ln}(x)&=o(x^{-\frac12}),\quad x\to\infty. 
\end{align*}
\end{proposition}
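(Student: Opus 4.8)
The plan is to construct $q_{0,\mathrm{ln}}$ as a fixed point of a Volterra-type integral equation built from the unperturbed Green's operators near infinity, exactly in the spirit one expects from Propositions \ref{prop:reginfty0} and \ref{prop:reginfty+intro}. The free solution we aim to approximate is $g^0(x)=x^{\frac12}\mathrm{ln}(x)=p_0^0(x,0)$, which solves the unperturbed zero-energy equation for $m=0$. Writing $q_{0,\mathrm{ln}}=g^0+h$, the correction $h$ must solve $L_0 h = -Q\,g^0 - Q\,h$ with $L_0=-\partial_x^2-\frac{1}{4x^2}$, together with the decay requirement $h(x)=o(x^{\frac12})$ and $\partial_x h(x)=o(x^{-\frac12})$ as $x\to\infty$. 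The natural device is the backward (integrating-from-infinity) Green's operator $G^0_{0,\leftarrow}(0)$, whose kernel is assembled from the two linearly independent zero-energy solutions $x^{\frac12}$ and $x^{\frac12}\mathrm{ln}(x)$ via the canonical bisolution formula \eqref{canoni}, with Wronskian equal to $1$. This turns the problem into the integral equation
\begin{equation*}
h(x)=-\int_x^\infty G^0_{0,\leftrightarrow}(x,y)\,Q(y)\big(g^0(y)+h(y)\big)\,\mathrm{d}y,
\end{equation*}
which I would solve by iteration in a weighted sup-norm space.

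The key steps, in order, are as follows. First I would record the explicit kernel: since $h_1(x)=x^{\frac12}$, $h_2(x)=x^{\frac12}\mathrm{ln}(x)$ have Wronskian $1$, the bisolution is $G^0_{0,\leftrightarrow}(x,y)=x^{\frac12}y^{\frac12}\big(\mathrm{ln}(y)-\mathrm{ln}(x)\big)=x^{\frac12}y^{\frac12}\mathrm{ln}(y/x)$, so the backward kernel on $x<y$ behaves like $x^{\frac12}y^{\frac12}\mathrm{ln}(y/x)$. Second, I would introduce the Banach space $X$ of continuous functions on $[R,\infty[$ with finite norm $\|h\|=\sup_{x\ge R} x^{-\frac12}|h(x)|$ for $R$ large, and verify that the integral operator $T h(x):=-\int_x^\infty G^0_{0,\leftrightarrow}(x,y)Q(y)h(y)\,\mathrm{d}y$ maps $X$ into $X$ and is a contraction for $R$ sufficiently large. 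Here the weight pairs with the kernel so that the relevant bound is
\begin{equation*}
x^{-\frac12}\Big|\int_x^\infty x^{\frac12}y^{\frac12}\,\mathrm{ln}(y/x)\,Q(y)\,y^{\frac12}\,\mathrm{d}y\Big|\le \int_x^\infty y\,(1+|\mathrm{ln}(y)|)\,|Q(y)|\,\mathrm{d}y,
\end{equation*}
and it is exactly the hypothesis $\int_1^\infty x(1+(\mathrm{ln}(x))^2)|Q(x)|\,\mathrm{d}x<\infty$ that makes this tail integral finite and vanishing as $x\to\infty$, delivering both the contraction constant (small for large $R$) and the $o(x^{\frac12})$ decay of the fixed point. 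Third, having obtained $h$ on $[R,\infty[$, I would extend the solution to all of $]0,\infty[$ by the existence–uniqueness theory for the ODE (the solution lies in $AC^1$ since $Q\in L^1_{\mathrm{loc}}$), and verify the derivative estimate by differentiating the integral equation, noting that $\partial_x G^0_{0,\leftrightarrow}(x,y)$ contributes a kernel of order $x^{-\frac12}y^{\frac12}\mathrm{ln}(y/x)$, which under the same integrability hypothesis yields $\partial_x h(x)=o(x^{-\frac12})$.

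The step I expect to be the main obstacle is the precise bookkeeping of the logarithmic weights in the kernel bound, because $m=0$ is genuinely the borderline case: the two free solutions differ only by a factor $\mathrm{ln}(x)$, so the kernel $x^{\frac12}y^{\frac12}\mathrm{ln}(y/x)$ carries an extra logarithm compared to the generic $m\neq0$ situation, and one must check that squaring the logarithm in the hypothesis (the $(\mathrm{ln}(x))^2$ rather than merely $\mathrm{ln}(x)$) is exactly what absorbs both the $\mathrm{ln}(y/x)$ in the kernel and the $\mathrm{ln}(y)$ coming from the source term $g^0(y)=y^{\frac12}\mathrm{ln}(y)$. Concretely, the source estimate requires controlling $\int_x^\infty y(1+|\mathrm{ln}(y)|)^2|Q(y)|\,\mathrm{d}y$, and one must confirm that the stated hypothesis dominates this uniformly. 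Uniqueness follows from the contraction property on $X$ together with the uniqueness of solutions of the ODE with the prescribed asymptotics; the estimates above also show that any two solutions with the required decay have a difference lying in $X$ with $Th=h$, forcing it to vanish. I would treat the approximation of the derivative and the function simultaneously by working in the product space with norms on $(h,\partial_x h)$, which keeps the fixed-point argument self-contained.
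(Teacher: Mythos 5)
Your proposal is correct and follows essentially the same route as the paper: the paper defines $q_{0,\mathrm{ln}}:=(\one+G_\leftarrow^0Q)^{-1}p_0^0(\cdot,0)$ and establishes convergence and the asymptotics via the weighted $L^\infty$ mapping properties of $G_\leftarrow^0Q$ (Lemma \ref{lemma2bis} and Corollary \ref{lemma23_appbis}(ii)), which is exactly your backward Lippmann--Schwinger equation with the kernel $x^{\frac12}y^{\frac12}\ln(y/x)$ and the same bookkeeping showing that the $(\ln)^2$ weight in the hypothesis absorbs both the kernel logarithm and the logarithm in the source $y^{\frac12}\ln(y)$. The only minor difference is that you obtain invertibility by a contraction on $]R,\infty[$ for $R$ large, whereas the paper exploits the backward Volterra structure to get $C^{n+1}/n!$ bounds on the iterates, so the Neumann series converges on any $]a,\infty[$ with $a>1$ without shrinking the interval; both are adequate here.
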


 The zero energy eigenequation near infinity 
is equivalent
to the zero energy eigenequation near zero. This follows from the identity
\begin{align}
-\partial_x^2+\Big(m^2-\frac14\Big)\frac1{x^2}+Q(x)&=
y^3\Big(
-\partial_y^2+\Big(m^2-\frac14\Big)\frac1{y^2}+\tilde Q(y)\Big)y,\\
y=\frac1x,\qquad&\quad \tilde Q(y):= y^{-4}Q(y^{-1}).
\end{align}
Note also a simple relationship between  the integral conditions 
near zero on $Q$ and near infinity on $\tilde Q$:
\begin{align}
\int_0^1x^{1-\varepsilon}|Q(x)|\d
  x&=\int_1^\infty y^{1+\varepsilon}|{\tilde Q}(y)|\d y,
  \\
  \int_0^1x(1+|\ln(x)|^\alpha)|Q(x)|\d x&=\int_1^\infty
                                          y(1+\ln(y)^\alpha)|{\tilde
                                          Q}(y)|\d y.
\end{align}
Thus one can derive Propositions \ref{prop:reginfty+intro}
and \ref{prop:reginftylogintro} from
the $k=0$ case of Corollaries \ref{coro1} and \ref{coro2}.

The main tools used in the construction of eigenfunctions are various
Green's operators for the unperturbed Bessel operator.
The forward Green's operator is used in Propositions
\ref{pr:opo0}, \ref{prop:reg1!} and their corollaries. For instance,
\begin{align}
   u_m(\cdot,k)&=\big(\one+G_{m^2,\to}^0(-k^2)Q\big)^{-1} u_m^0(\cdot,k),\\
    p_0(\cdot,k)&=\big(\one+G_{0,\to}^0(-k^2)Q\big)^{-1}p_0^0(\cdot,k).\label{pzero0}
\end{align}
The   backward Green's operator
is used in Propositions \ref{prop:reginfty0},
\ref{prop:reginfty+intro}
and \ref{prop:reginftylogintro}:
\begin{align}\label{lippmann}
   w_m(\cdot,k)&=\big(\one+G_{m^2,\leftarrow}^0(-k^2)Q\big)^{-1}
                 w_m^0(\cdot,k),\\
   q_m&=\big(\one+G_{m^2,\leftarrow}^0(0)Q\big)^{-1}
        u_m^0(\cdot,0),\\
     q_{0,\ln}&=\big(\one+G_{0,\leftarrow}^0(0)Q\big)^{-1}
                 p_0^0(\cdot,0).  
                 \end{align} 
                 In quantum physics the equation for the Jost solution
                 \eqref{lippmann} is called the {\em
  Lippmann--Schwinger Equation.}

 If \eqref{alig1-} holds and
$\frac\varepsilon2<\Re(m)$, then  Corollary \ref{alig10}
guarantees the existence only of $u_m(\cdot,k)$, but not of $u_{-m}(\cdot,k)$.
 Therefore, in this case it is more
complicated to describe non-principal solutions. 
One way to do this is to use
the two-sided
Green's operator with pure boundary conditions $G_{m,\bowtie}^0$
(where we assume that $\Re(m)\geq0$).
Unfortunately, 
 $\one+G_{m,\bowtie}^0(-k^2)Q$ may be not invertible. In
order to guarantee the invertibility we can \emph{compress}
$G_{m,\bowtie}^0(-k^2)$ to a sufficiently small interval $]0,a[$.
The corresponding compressed Green's operator is denoted
$G_{m,\bowtie}^{0(a)}(-k^2)$ (see
Appendix \ref{volterra}).

In the case $m=0$ one may prefer to use the logarithmic Green's
operator $G_\diamond^{0}(-k^2)$, or actually its compressed version
 $G_\diamond^{0(a)}(-k^2)$.

\begin{proposition}\label{prop:u_po}
Suppose the assumptions of Proposition
  \ref{pr:opo0} hold.  If $a$ is small enough, the following
  functions are well defined and solve \eqref{eigen} on $]0,a[$:
\begin{align}\label{cono}
   u_{-m}^{\bowtie(a)}(\cdot,k)&:=\big(\one+G_{m,\bowtie}^{0(a)}(-k^2)Q\big)^{-1} u_{-m}^0(\cdot,k),\\\label{cono2}
 p_0^{\diamond(a)}
  (\cdot,k)&:=\big(\one+G_{0,\diamond}^{0(a)}(-k^2)Q\big)^{-1}p_0^0(\cdot,k).
\end{align}
\end{proposition}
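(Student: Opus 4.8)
The plan is to recast the claim that $u_{-m}^{\bowtie(a)}(\cdot,k)$ and $p_0^{\diamond(a)}(\cdot,k)$ are \emph{well defined} as the bounded invertibility of $\one+G_{m,\bowtie}^{0(a)}(-k^2)Q$, respectively $\one+G_{0,\diamond}^{0(a)}(-k^2)Q$, on a suitably weighted Banach space of functions on $]0,a[$, and then to obtain this invertibility from a Neumann series once $a$ is small. Assume first $m\neq0$ and $\Re(m)\geq0$, and let $X_a$ be the space of measurable $f$ on $]0,a[$ with finite weighted sup-norm $\|f\|_{X_a}:=\sup_{0<x<a}x^{-\frac12+\Re(m)}|f(x)|$. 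This weight is chosen to match the size $x^{\frac12-\Re(m)}$ of the non-principal datum near $0$, so that $u_{-m}^0(\cdot,k)\in X_a$. Since, by Appendix \ref{volterra}, the compressed Green's operator $G_{m,\bowtie}^{0(a)}(-k^2)$ is a right inverse of $L_{m^2}^0+k^2$ on $]0,a[$, any fixed point $g$ of $g=u_{-m}^0(\cdot,k)-G_{m,\bowtie}^{0(a)}(-k^2)Qg$ automatically lies in $AC^1]0,a[$ and satisfies $(L_{m^2}^0+k^2)g=-Qg$, that is $L_{m^2}g=-k^2g$ on $]0,a[$; so the entire content is the existence and uniqueness of that fixed point in $X_a$.

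The heart of the matter is the estimate $\|G_{m,\bowtie}^{0(a)}(-k^2)Q\|_{X_a\to X_a}\leq C\int_0^a x|Q(x)|\,\d x$, whose right-hand side tends to $0$ as $a\to0$ by \eqref{ass1}. Using the kernel $u_m^0(x)v_m^0(y)$ for $x<y$ and $v_m^0(x)u_m^0(y)$ for $y<x$, together with $|u_m^0(z)|\lesssim z^{\frac12+\Re(m)}$ and $|v_m^0(z)|\lesssim z^{\frac12-\Re(m)}$ near $0$, one splits $(G_{m,\bowtie}^{0(a)}(-k^2)Qf)(x)$ into the regions $y<x$ and $y>x$. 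For $f\in X_a$ the region $y<x$ contributes, after division by the weight, at most $\|f\|_{X_a}\int_0^x y|Q(y)|\,\d y$. In the region $y>x$ the weighted integrand equals $(x/y)^{2\Re(m)}\,y|Q(y)|$, and here the hypothesis $\Re(m)\geq0$ is used decisively: since $x<y$ one has $(x/y)^{2\Re(m)}\leq1$, so this contribution is also bounded by $\|f\|_{X_a}\int_x^a y|Q(y)|\,\d y$. Both pieces are dominated by $\int_0^a y|Q(y)|\,\d y$, which yields the claimed bound.

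For $a$ small the operator norm is $<1$, so $\one+G_{m,\bowtie}^{0(a)}(-k^2)Q$ is boundedly invertible on $X_a$, $u_{-m}^{\bowtie(a)}(\cdot,k)$ is well defined, and by the first paragraph it solves \eqref{eigen} on $]0,a[$. The logarithmic operator $G_{0,\diamond}^{0(a)}(-k^2)$ is handled by the same scheme with $m=0$ and the weight $x^{\frac12}(1+|\ln x|)$, matching the size $x^{\frac12}\ln x$ of $p_0^0(\cdot,k)$; with the kernel $u_0^0(x)p_0^0(y)$ and $p_0^0(x)u_0^0(y)$ the analogous estimate produces logarithmically weighted moments of $Q$, the delicate one being a double-logarithmic contribution addressed below.

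The step I expect to be the main obstacle is precisely the region $y>x$ of the norm estimate. Unlike the forward Green's operator used in Proposition \ref{pr:opo0}, the two-sided operator is \emph{not} Volterra and is generically unbounded on $L^2]0,\infty[$, so no resolvent bound is available; it is the compression to the short interval $]0,a[$, combined with the monotonicity factor $(x/y)^{2\Re(m)}\leq1$ coming from $\Re(m)\geq0$, that turns the a priori dangerous singular branch $u_m^0(x)v_m^0(y)$ of the kernel into a contraction. In the logarithmic case $m=0$ the region $y<x$ produces a double-logarithmic factor, and absorbing it into a small operator norm is the one point that relies on the logarithmic moment conditions on $Q$ assumed at $m=0$.
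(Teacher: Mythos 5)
For $m\neq 0$ your argument is correct and is essentially the paper's. The paper proves the operator bound for $G_{m,\bowtie}^{0(a)}(-k^2)Q$ on the weighted space with weight $\mu_k(x)^{\frac12-\Re(m)}\eta_{-k}(x)$ (Lemma \ref{lemmaG0} with $\varepsilon=0$, $\varepsilon_1=\tfrac12-\Re(m)$, which for fixed $k$ on $]0,a[$ is your $x^{\frac12-\Re(m)}$ up to constants), deduces invertibility for small $a$ from the Neumann series (Corollary \ref{lemmaG0a}), and observes that the resulting fixed point is an $AC^1$ solution because the compressed Green's operator is a right inverse of $L^0_{m^2}+k^2$ (Theorem \ref{conhua}, Proposition \ref{prop:ubowtie}). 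Your factor $(x/y)^{2\Re(m)}\le 1$ on the branch $y>x$ is exactly the paper's use of the monotonicity of $\mu_k$ together with $\tfrac12+\Re(m)\ge\varepsilon_1$, i.e.\ $\Re(m)\ge0$. So this half of the proposal is complete and takes the same route.

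The logarithmic case \eqref{cono2} is where your proposal has a genuine gap, and the ``double-logarithmic contribution'' you defer is not absorbed by the hypothesis \eqref{ass2}. The branch structure of $G^0_{0,\diamond}$ is opposite to that of $G^0_{0,\bowtie}$: for $G^0_{0,\bowtie}$ the logarithmically large solution sits at the output variable on the branch $y<x$, whereas for $G^0_{0,\diamond}$ it is $p_0^0(y)$ that sits \emph{inside} the integral $\int_0^x$. Acting on an $f$ with $|f(y)|\lesssim y^{\frac12}(1+|\ln y|)$ (the minimal weight containing $p_0^0$), this branch produces $u_0^0(x)\int_0^x p_0^0(y)Q(y)f(y)\,\mathrm{d}y$, whose integrand is of size $y(1+|\ln y|)^2|Q(y)|$. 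Under the single-log condition \eqref{ass2} this integral may diverge: take $Q(y)=y^{-2}(1-\ln y)^{-5/2}$ near $0$, for which $\int_0^{1/2}y(1+|\ln y|)|Q|\,\mathrm{d}y<\infty$ but $\int_0^x y(1+|\ln y|)^2|Q|\,\mathrm{d}y=\infty$ for every $x$. Then already the first iterate $G_{0,\diamond}^{0(a)}Qp_0^0$ is undefined, and shrinking $a$ does not help, so no choice of weighted sup-space containing $p_0^0$ makes $\one+G_{0,\diamond}^{0(a)}Q$ a bounded perturbation of the identity. The construction of $p_0^{\diamond(a)}$ by this method genuinely requires the $\ln^2$ moment \eqref{alig2log}, with which both branches close on $L^\infty\big(]0,a[,x^{\frac12}(1+|\ln x|)\big)$. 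You should be aware that the paper's own Lemma \ref{lemmaG0b} asserts boundedness under the single-log moment, but its proof is given only ``by analogy'' with Lemma \ref{lemmaG0}(ii), and the analogy breaks at exactly this point; so your instinct that this is the delicate step is right, but the step as you describe it does not go through.
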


 In the case  $m=0$ the function $ p_0^{\diamond(a)}$
is well defined by \eqref{cono2} under
slightly less 
  restrictive  condition on $Q$ than $p_0 $ defined in Corollary \ref{coro2}: the difference is just one power of the logarithm less in \eqref{ass2} than in 
  \eqref{cono3}, which is not much. 
  However the difference between the assumptions for $
  u_{-m}^{\bowtie(a)}(\cdot,k)$ defined in \eqref{cono} and
$u_m(\cdot,k)$ defined in Corollary \ref{coro1} is quite substantial.

 Unfortunately, the construction \eqref{cono} and \eqref{cono2} has obvious drawbacks. It is  not very explicit: it involves inverting a complicated
integral operator.  It also depends on an
arbitrary parameter $a$ even if the dependence on $a$ is actually quite weak -- if
we change $a$, \eqref{cono} and \eqref{cono2} are shifted by a
multiple of the corresponding principal solution. Note that we
  cannot fix the value of $a$ once for all, because the invertibility
  of $\one+G_{m,\bowtie}^{0(a)}(-k^2)Q$ and
  $\one+G_{0,\diamond}^{0(a)}(-k^2)Q$ depends on $Q$
  and other parameters.

We will describe below
 an alternative approach, which leads to a simpler
description of non-principal solutions for $\Re(m)>0$.
We choose  a non-negative integer $n$.
We 
expand the denominator  \eqref{cono}
into a formal power series,
retaining $n$ first terms. For definiteness, we fix $a=1$ (quite arbitrarily)
and set
\begin{align}\label{defrn0}
   u_{{-}m}^{0[n]}(x,k)&=\sum_{j=0}^n
                                 (-G_{\bowtie}^{0(1)}(0)Q)^j
                                                                  u_{{-}m}^0(x,k).
\end{align}

 \begin{proposition}   \label{boundcon-}
   Let $\Re(k)\geq0$.
Let  $n$ be a nonnegative integer such that Condition 
\eqref{alig1-} is satisfied for $-\frac{\varepsilon}2(n+1)\leq \Re(-m)\leq0$.
Then
there exists a unique solution $ u_{-m}^{[n]}(\cdot,k)$ in $AC^1]0,\infty[$ of
\eqref{eigen} such that
\begin{align}
    u_{-m}^{[n]}(x,k)-u_{-m}^{0[n]}(x,k)
  &=o(x^{\frac12+\Re(m)}),\\
   \partial_x u_{-m}^{[n]}(x,k)-\partial_xu_{-m}^{0[n]}(x,k)
&=o(x^{-\frac12+\Re(m)}), \quad x\to 0.
  \end{align}
 \end{proposition}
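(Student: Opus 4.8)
The plan is to construct $u_{-m}^{[n]}$ by correcting the explicit approximant \eqref{defrn0} with a genuinely subleading remainder, following the scheme of Proposition \ref{pr:opo0} but starting from the improved reference $u_{-m}^{0[n]}$ rather than from a bare unperturbed solution. Everything hinges on a single mapping property: under \eqref{alig1-}, composing the compressed Green's operator with multiplication by $Q$ improves the behaviour near the origin by a factor $x^\varepsilon$ --- two powers are gained from the second-order operator $G_{\bowtie}^{0(1)}$, while $Q$ costs only $2-\varepsilon$ powers. Hence the iterates $\phi_j:=(-G_{\bowtie}^{0(1)}(0)Q)^j u_{-m}^0(\cdot,k)$ satisfy $\phi_j=\bigO{x^{\frac12-\Re(m)+j\varepsilon}}$ together with the corresponding derivative bound, and the finite sum $u_{-m}^{0[n]}=\sum_{j=0}^n\phi_j$ resolves every asymptotic layer from the most singular order $x^{\frac12-\Re(m)}$ down to the principal order $x^{\frac12+\Re(m)}$ exactly when $(n+1)\varepsilon\ge 2\Re(m)$, which is the content of the hypothesis $-\tfrac{\varepsilon}{2}(n+1)\le\Re(-m)\le0$.

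For existence I would first record the defect of the approximant. Using that $G_{\bowtie}^{0(1)}$ right-inverts the unperturbed operator on $]0,1[$, one gets $(L_{m^2}^0+k^2)\phi_0=0$ and $(L_{m^2}^0+k^2)\phi_j=-Q\phi_{j-1}$ for $1\le j\le n$, so a telescoping cancellation collapses $(L_{m^2}+k^2)u_{-m}^{0[n]}$ to $Q\phi_n$, up to contributions from the mismatch between the energy $0$ in \eqref{defrn0} and $-k^2$ that are treated in the final step. Since $Q\phi_n=\bigO{x^{-\frac32-\Re(m)+(n+1)\varepsilon}}$, the gain estimate yields $G_{m^2,\to}^0(-k^2)Q\phi_n=\bigO{x^{\frac12-\Re(m)+(n+1)\varepsilon}}=\bigO{x^{\frac12+\Re(m)}}$. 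I would then seek the correction $r$, with $u_{-m}^{[n]}=u_{-m}^{0[n]}+r$, as the unique solution of $(\one+G_{m^2,\to}^0(-k^2)Q)r=-G_{m^2,\to}^0(-k^2)Q\phi_n$ in the weighted space of functions that are $\smallO{x^{\frac12+\Re(m)}}$ with derivative $\smallO{x^{-\frac12+\Re(m)}}$. On that space $G_{m^2,\to}^0(-k^2)Q$ is a contraction --- precisely the regime of Proposition \ref{pr:opo0}, each iterate gaining a further $x^\varepsilon$ --- and the right-hand side lies in the space by the estimate just obtained; hence $r$ exists and is unique. Propagating the resulting local solution of \eqref{eigen} to the whole half-line (its coefficients being in $L^1_{\loc}]0,\infty[$) gives $u_{-m}^{[n]}\in AC^1]0,\infty[$ with $u_{-m}^{[n]}-u_{-m}^{0[n]}=r$ of the required order. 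Alternatively one may start from the compressed solution $u_{-m}^{\bowtie(a)}$ of Proposition \ref{prop:u_po}, continue it to $]0,\infty[$, and identify its first $n+1$ asymptotic layers with those of \eqref{defrn0}.

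Uniqueness follows from a Wronskian argument. If $u_1$ and $u_2$ both satisfy the conclusion, then $d:=u_1-u_2$ solves the homogeneous equation \eqref{eigen} and obeys $d=\smallO{x^{\frac12+\Re(m)}}$ and $\partial_x d=\smallO{x^{-\frac12+\Re(m)}}$. I would pair $d$ against the principal solution $u_m$ of Corollary \ref{coro1}, for which $u_m\sim x^{\frac12+m}$ and $\partial_x u_m\sim(\tfrac12+m)x^{-\frac12+m}$: the Wronskian $\Wr(d,u_m)$ is constant and equals its limit as $x\to0$, which is $\smallO{x^{2\Re(m)}}\to0$. Hence $\Wr(d,u_m)=0$, so $d$ is a scalar multiple of $u_m$; the bound $d=\smallO{x^{\frac12+\Re(m)}}$ then forces that scalar to vanish, whence $d=0$.

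The main obstacle is the asymptotic book-keeping underpinning these steps. One must establish the $x^\varepsilon$-gain of $G_{\bowtie}^{0(1)}(0)Q$ and of $G_{m^2,\to}^0(-k^2)Q$ in the correct weighted norms, controlling the derivative simultaneously, and --- more delicately --- reconcile the deliberately simplified approximant \eqref{defrn0}, with its fixed compression radius $a=1$ and energy $0$, with a genuine solution of \eqref{eigen} at energy $-k^2$. The point is to verify that changing $a$ and replacing the energy $0$ by $-k^2$ perturb $u_{-m}^{0[n]}$ only by a multiple of the principal solution $u_m$ and by terms that are $\smallO{x^{\frac12+\Re(m)}}$, so that the difference $u_{-m}^{[n]}-u_{-m}^{0[n]}$ retains neither a stray principal component nor an uncaptured singular layer. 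Finally, the borderline value $(n+1)\varepsilon=2\Re(m)$, at which the last retained layer lands exactly at the principal order $x^{\frac12+\Re(m)}$, calls for a logarithmic sharpening of the gain estimates.
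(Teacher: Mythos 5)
Your construction is exactly the paper's: the correction $r=u_{-m}^{[n]}-u_{-m}^{0[n]}$ is defined by $(\one+G_\to^0Q)r=-G_\to^0Q\phi_n$ (this is \eqref{un-m}), the telescoping defect computation and the iterated weighted estimates on $(G_{\bowtie}^{0(1)}Q)^j$ followed by one application of the forward Green's operator are precisely how the paper proves Propositions \ref{comui} and \ref{boundcon}, and the Wronskian argument handles uniqueness as in Theorem \ref{prop:reg10}. The only refinement to note is that no logarithmic sharpening is needed at the borderline $(n+1)\varepsilon=2\Re(m)$: Lemma \ref{lemma1}(i) maps into $L_0^\infty$, i.e.\ already yields $o(x^{\frac12+\Re(m)})$ rather than $\mathcal{O}(x^{\frac12+\Re(m)})$, because the tail integral $\int_0^x\mu_k(y)^{1-\varepsilon}|Q(y)|\,\mathrm{d}y$ vanishes as $x\to0$.
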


 Thus for sufficiently large $n$ the function $u_{-m}^{[n]}(\cdot,k)$
 determines uniquely  a non-principal element of $\cN(L_{m^2}+k^2)$
 under much weaker 
assumptions than before.

Boundary conditions determined by
$   u_{{-}m}^{0[n]}(\cdot,k)$
still have  an unpleasant feature -- they depend on $k$.
If we want
to
have boundary conditions independent of $k$ we need to assume that
$|\Re(m)|<1$. Then it is reasonable to choose $k=0$, which we do setting
\begin{equation}\label{eq:boundary_intro}
u_{-m}^{0[n]}(x) :=u_{-m}^{0[n]}(x,0).
\end{equation}
In particular,  under the condition $| \Re(m)|<1$ in Proposition \ref{boundcon-}
we can replace $   u_{{-}m}^{0[n]}(\cdot,k)$ with $   u_{{-}m}^{0[n]}(\cdot)$.
This condition
is also  important in the
$L^2$ theory of perturbed Bessel operators as we explain below.

 In concrete cases, the function $u_{-m}^{0[n]}$ can be easily computed.
 For instance, if $Q$ has a Coulomb
singularity at $0$, such as $Q(x) = - \frac{ \beta }{ x
}\one_{]0,1]}(x)$ with $\beta\in\C$, then we need to take $n=1$ to
cover $|\Re(m)|<1$.
Then  in the generic case $m\neq\frac12$ we have
\begin{equation*}
u_{-m}^{0[1]}(x)=\frac{j_{\beta,-m}(x)}{\Gamma(1-m)}+\mathcal{O}(x^{\frac12+\Re(m)}), \quad j_{\beta,-m}(x):=x^{\frac12-m}\big ( 1 -  \frac{\beta x}{1-2m} \big ),
\end{equation*}
the function $j_{\beta,-m}$ being precisely the function used to define  Whittaker operators in
\cite{DeRi18_01,DeFaNgRi20_01}.

An important object of our analysis is the {\em Jost function} $\Wr_m(k)$, that
is the Wronskian of the two main solutions $u_m(\cdot,k)$ and 
$v_m(\cdot,k)$.
We prove that it is well-behaved as a function of
$k$:
\begin{proposition}\label{prop:wronsk_intro}
  Assume $\Re(m)>-1$, as well as \eqref{alig1-} if $m\neq 0$, or \eqref{alig2log} if $m=0$. Then
  \begin{align}\label{asim0}
 \lim_{|k|\to\infty} \Wr_{m}(k) = 1,
  \quad \Re(k)\geq0.
\end{align}
  \end{proposition}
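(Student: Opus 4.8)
The plan is to reduce the whole statement to the vanishing of a single integral, via an explicit formula for the Jost function. Since $u_m(\cdot,k)$ and $v_m(\cdot,k)$ both solve \eqref{eigen}, the Wronskian $\Wr_m(k)=\Wr(u_m(\cdot,k),v_m(\cdot,k))$ is independent of $x$. Pairing instead the \emph{unperturbed} principal solution $u_m^0$ against the \emph{perturbed} Jost solution $v_m$ and using that $u_m^0$ solves the unperturbed and $v_m$ the perturbed eigenequation, one finds $\frac{\d}{\d x}\Wr(u_m^0(\cdot,k),v_m(\cdot,k))=u_m^0(x,k)\,Q(x)\,v_m(x,k)$. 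Integrating over $]0,\infty[$ and evaluating the boundary terms — at $\infty$ using $v_m-v_m^0=o(\e^{-x\Re(k)})$ from Proposition \ref{prop:reginfty0} together with $\Wr(u_m^0,v_m^0)=1$, and at $0$ using $u_m-u_m^0=o(x^{1/2+\Re(m)})$ from Corollary \ref{coro1} together with the bounds on elements of $\Ker(L_{m^2}+k^2)$ from Proposition \ref{cor:u_not_L2a-} — I would arrive at
\begin{equation*}
\Wr_m(k)=1-\int_0^\infty u_m^0(x,k)\,Q(x)\,v_m(x,k)\,\d x .
\end{equation*}
So the statement amounts to showing that this integral tends to $0$ as $|k|\to\infty$ in $\{\Re(k)\ge0\}$.

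The second ingredient is a uniform bound on the product of the two unperturbed solutions. Since $u_m^0(x,k)v_m^0(x,k)=x\,I_m(kx)K_m(kx)$ and $z\mapsto zI_m(z)K_m(z)$ is bounded on $\{\Re(z)\ge0\}$ with $zI_m(z)K_m(z)\to\tfrac12$ as $|z|\to\infty$ and $zI_m(z)K_m(z)=\mathcal{O}(z)$ as $z\to0$, I would obtain the $k$-independent estimate $|u_m^0(x,k)v_m^0(x,k)|\le C_m\min(x,|k|^{-1})$. In particular the product is uniformly bounded by $C_m|k|^{-1}$ and, for each fixed $x$, tends to $0$ as $|k|\to\infty$.

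It remains to replace $v_m$ by $v_m^0$ in the integral, which is the crux. Here I would analyse the backward (Lippmann--Schwinger) equation $v_m=v_m^0-G_{m^2,\leftarrow}^0(-k^2)Qv_m$. Dividing by $v_m^0$ — equivalently, working with $w_m/w_m^0$ — converts this into a Volterra equation for the ratio whose modified kernel is, up to the weight $(w_m^0(y)/w_m^0(x))^2$ that is bounded by $1$ for $y>x$ and $\Re(k)\ge0$, exactly the product $u_m^0v_m^0$ estimated above. Combining $|u_m^0v_m^0|\le C_m\min(x,|k|^{-1})$ with the integrability hypothesis \eqref{alig1-} (resp. \eqref{alig2log}) near $0$ and the short-range condition near $\infty$, I would show that $\sup_x\int_x^\infty|\mathrm{kernel}|\,|Q|\,\d y\to0$ as $|k|\to\infty$; hence the Volterra operator has norm $<1$ for large $|k|$, the Neumann series converges, and $v_m(\cdot,k)/v_m^0(\cdot,k)=1+o(1)$ uniformly in $x$. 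With the resulting uniform bound $|v_m|\le C|v_m^0|$, dominated convergence on $\int_0^1 u_m^0Qv_m\,\d x$ (dominated by $Cx|Q|$, integrable by the minimal condition) together with the crude estimate $\int_1^\infty|u_m^0Qv_m|\,\d x\le C_m|k|^{-1}\int_1^\infty|Q|\,\d x$ shows the integral vanishes in the limit, giving \eqref{asim0}.

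The main obstacle is the uniform-in-$k$ control of the perturbed Jost solution in the last step: one must prove that the weighted Volterra operator attached to $G_{m^2,\leftarrow}^0(-k^2)Q$ has norm tending to $0$ as $|k|\to\infty$, \emph{uniformly down to} $x=0$. This is delicate near the origin, where $v_m^0$ no longer decays exponentially and the weight ratio must be balanced against the small-argument behaviour of $I_mK_m$; reconciling this with the borderline integrability permitted by \eqref{alig1-} — and, for $m=0$, with the extra logarithmic factors in \eqref{alig2log} needed to absorb the $x^{1/2}\ln x$ boundary contributions at $0$ — is precisely where the hypotheses $\Re(m)>-1$ and the sharp weights in the assumptions enter.
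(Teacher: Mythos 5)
Your proposal is correct in substance and follows the same two-step architecture as the paper: first the representation $\Wr_m(k)=1+\langle u_m^0(\cdot,k)|Qv_m(\cdot,k)\rangle$ (the paper's Proposition \ref{prop:jost_funct1}), then a uniform-in-$k$ control of $v_m$ via the backward Volterra equation allowing one to show the coupling integral vanishes as $|k|\to\infty$ (Lemma \ref{prop:jost_funct2} and Corollary \ref{cor-asim}). The differences are in execution. You derive the representation by integrating $\partial_x\Wr(u_m^0,v_m;x)=u_m^0Qv_m$ over the whole half-line and evaluating boundary terms at both ends, whereas the paper evaluates the ($x$-independent) Wronskian at $x\to0$ using the refined small-$x$ asymptotics of $v_m$ from Proposition \ref{prop:easy_case}; these are equivalent, but your route needs the extra (easy) check that the boundary term at $\infty$ converges when $\Re(k)=0$. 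For the decay of the integral, you use the sharp product bound $|u_m^0(x,k)v_m^0(x,k)|\le C\min(x,|k|^{-1})$ coming from the boundedness of $z\mapsto zI_m(z)K_m(z)$; this is genuinely cleaner than the paper's multiplication of the separate bounds \eqref{pio1}--\eqref{pio2}, which is lossy for $\Re(m)<0$ at large $x$ and forces the case distinction $\varepsilon\lessgtr1$ in Lemma \ref{prop:jost_funct2}. Your estimate gives $|u_m^0Qv_m|\lesssim\mu_k(x)|Q(x)|$ uniformly and yields $o(1)$ directly on all three regions $]0,|k|^{-1}[$, $]|k|^{-1},1[$, $]1,\infty[$ for the whole range $\Re(m)>-1$.

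Two caveats. First, a bookkeeping slip: with the paper's convention $\Wr_m(k)=\Wr(v_m,u_m)$ and $\Wr(v_m^0,u_m^0)=1$, your computation gives $\Wr_m(k)=1+\int_0^\infty u_m^0Qv_m\,\d x$, not $1-\int$; as written, $\Wr(u_m,v_m)$ would tend to $-1$. Second, and more substantively, your claim that the weight $(w_m^0(y)/w_m^0(x))^2$ is bounded by $1$ for $y>x$ is false when $|\Re(m)|<\tfrac12$: near the origin $|w_m^0(y)/w_m^0(x)|\approx(y/x)^{\frac12-|\Re(m)|}>1$. What is true, and what actually closes the argument, is that the full kernel $u_m^0(x)v_m^0(y)^2/v_m^0(x)$ is $\lesssim\mu_k(y)$ for $y>x$ (the unfavourable ratio of $v$'s is compensated by the favourable ratio of $u$'s), which is precisely the content of Lemma \ref{lemma2}, Corollary \ref{lemma4_app} and Proposition \ref{prop:global_estim_Jost}; citing the latter would replace your entire Volterra discussion and deliver the needed uniform bound $|v_m(x,k)|\lesssim|k|^{\frac12-\Re(m)}\mu_k(x)^{\frac12-|\Re(m)|}\eta_{-k}(x)$ for large $|k|$. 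With that correction the proof goes through.
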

Note the assumption $\Re(m)>-1$ that appears in the above proposition -- which again anticipates
 the basic condition needed in the $L^2$ analysis.

The last section of our paper,  Section \ref{closed_operators}, is devoted to closed realizations of $L_{m^2}$
on the Hilbert space
$L^2]0,\infty[$.  First we prove that
under the assumptions of Propositions \ref{pr:opo0} and \ref{prop:reginfty0},
we have
\begin{align}|\Re(m)|\geq1 \text{ implies }&\quad L_{m^2}^{\min}=L_{m^2}^{\max},\label{domains0..}\\
\label{domains..}
|\Re(m)|<1 \text{ implies }& \quad \dim\cD(L_{m^2}^{\max})/\cD(L_{m^2}^{\min})=2.\end{align}
Thus the basic picture is the same as in the
unperturbed case described in \eqref{domains0.} and \eqref{domains.}

In particular, for $|\Re(m)|<1$, beside the minimal and maximal
realizations, there exists a 1-parameter family of closed 
realizations of $L_{m^2}$
defined by boundary conditions
at zero.  Boundary conditions can be fixed by specifying continuous linear
functionals on $\cD(L_{m^2}^{\max})$ vanishing on
$\cD(L_{m^2}^{\min})$, called {\em boundary functionals}. 
  The method to descibe boundary functionals
which seems to work the best in our context uses the Wronskian at zero, that is
$\Wr(f,\cdot;0):=\lim\limits_{x\to0}\Wr(f,\cdot;x)$,
for appropriately chosen functions $f$. In practice the most
convenient  $f$ are approximate zero energy eigenfunctions
of $L_{m^2}$.

One can ask about distinguished bases of the boundary space
\begin{equation*}
  \cB_{m^2}:= \big(\cD(L^{\max}_{m^2})/\cD(L^{\min}_{m^2})\big)',
  \end{equation*}
where the prime denotes the dual.
Under the assumptions of Proposition \ref{pr:opo0}
one can always distinguish the {\em principal   boundary functional}.
For $0\le\Re(m)<1$ it can be defined as
$\Wr( x^{\frac12+m},\cdot;0)$. There are also
``non-principal boundary functional'', which lead to boundary conditions
roughly of the type
$x^{\frac12-m}$ for $m\neq0$, or $x^{\frac12}\mathrm{ln}(x)$ for $m=0$. In general their
choice is less canonical: under the assumptions of Proposition \ref{pr:opo0}, a basis of $\cB_{m^2}$, $0\le\Re(m)<1$, $m\neq 0$, is given by 
\begin{equation*}
\big(\Wr(x^{\frac12+m},\cdot;0),\Wr(u_{-m}^{\bowtie(a)}(\cdot,k),\cdot;0)\big),
\end{equation*}
with $a$ small enough as in Proposition \ref{prop:u_po}. Likewise, if $m=0$, then 
\begin{equation*}
\big(\Wr(x^{\frac12},\cdot;0),\Wr( p_0^{\diamond(a)},\cdot;0)\big),
\end{equation*}
is a basis of $\cB_0$.

Let us now impose the assumption
 \begin{align}\label{alig}
   \int_{0}^1 x^{1-\varepsilon} |Q(x)| \mathrm{d}x& < \infty.\end{align}
If $2>\varepsilon > 0$, then
 for $0\le\Re(m)\le\varepsilon/2$ we have a distinguished
non-principal boundary functional   given by 
$\Wr(x^{\frac12-m},\cdot;0)$ if $m\neq0$ and $\Wr(x^{\frac12}\ln(x),\cdot;0)$ if $m=0$.
  Thus we obtain three families of perturbed Bessel operators
\begin{align}\label{fami1}
  \Big\{-\frac\varepsilon2<\Re(m)\Big\}\ni m&\mapsto H_m,\\
  \Big\{|\Re(m)|<\frac{\varepsilon}{2}
  \Big\}\times\big(\C\cup\{\infty\}\big)\ni (m,\kappa)&\mapsto H_{m,\kappa}, \label{eq:fami2}\\
 \big(\C\cup\{\infty\}\big)\ni \nu&\mapsto H_0^{\nu},
\end{align}
fully analogous to the families of the unperturbed case.
All three families are holomorphic except for
a singularity of \eqref{eq:fami2} at $(m,\kappa)=(0,-1)$.
They are defined as the restrictions of $L_{m^2}$ to the domains:
\begin{align*}
\Dom(H_{ m}) & := \Big\{f\in \Dom(L_{m^2}^{\max})\mid
\, \Wr(x^{\frac12+m},f;0)=0\Big\},\\
  \Dom(H_{ m,\kappa}) & := \Big\{f\in \Dom(L_{m^2}^{\max})\mid
\Wr\big(x^{\frac12+m} + \kappa x^{\frac12-m},f;0
                        \big)=0\Big\},\quad\kappa\in\C,
                        \nonumber \\
\Dom(H_{ m,\infty}) &: = \Big\{f\in \Dom(L_{m^2}^{\max})\mid
\Wr\big( x^{\frac12-m},f;0
                      \big)=0\Big\},
  \\
\Dom(H_{0}^{\nu}) &: = \Big\{f\in \Dom(L_{0}^{\max})\mid
                    \Wr\big(\nu x^{\frac12} + x^{\frac12}\mathrm{ln}(x)
                    ,f;0
\big)=0\Big\},\quad\nu\in\C, \nonumber \\
\Dom(H_{ 0}^{\infty}) & := \Dom(H_{ 0}).
\end{align*}
The maps $m\mapsto H_m$ and $(m,\kappa)\mapsto H_{m,\kappa}$ are also continuous on $\{-\frac{\varepsilon}{2}\le\Re(m)\}$, respectively $\{|\Re(m)|\le\frac{\varepsilon}{2},\kappa\in\C\cup\{\infty\},(m,\kappa)\neq(0,-1)\}$ (continuous families of closed operators are defined similarly as holomorphic families of closed operators, see Appendix \ref{Holomorphic families of closed operators}).

 The holomorphic family \eqref{fami1} for $\Re(m)\geq1$ coincides with
$L_{m^2}^{\min}=L_{m^2}^{\max}$.  It involves the boundary conditions
that can be viewed as ``the most natural'', and which we call  {\em pure}. Note that \eqref{fami1} is restricted to $\{\Re(m)>-\frac{\varepsilon}{2}\}$. This leaves the following open question.

\begin{openproblem}
  Under the minimal conditions of Proposition \ref{pr:opo0},
 does  $m\mapsto H_m$ extend to $\{\Re(m)>-1\}$ holomorphically, or at least meromorphically?
\end{openproblem}

Let us now consider a nonnegative integer $n$. Under the assumption \eqref{alig},
$2(n+1)>\varepsilon>0$ and $0\le\Re(m)\le\frac{\varepsilon}2(n+1)$ we
can use
 the function  $u_{-m}^{0[n]}$ that was defined in \eqref{eq:boundary_intro}.
 Then every
non-principal boundary functional can be written as
\begin{equation}
  \Wr(\Gamma(1-m)u_{-m}^{0[n]}+\kappa x^{\frac12+m},\cdot;0)\label{nonpri}\end{equation}
for some $\kappa\in\C$.
Clearly, \eqref{nonpri} 
is proportional to $\Wr(x^{\frac12-m}+\kappa x^{\frac12+m},\cdot;0)$ for $n=0$.
For $n\ge1$ \eqref{nonpri}   is less canonical. If $n,n'$ are two 
integers and \eqref{nonpri} are 
well defined for $n$ and $n'$, then their difference is proportional 
to the principal boundary functional  $\Wr(x^{\frac12+m},\cdot;0)$.
Thus the set of non-principal boundary conditions can be viewed as a
1-dimensional affine space, where we can use
$  \Wr(\Gamma(1-m)u_{-m}^{0[n]},\cdot;0)$ as a possible ``reference point''.

The boundary functional \eqref{nonpri} can be
used to define a family of perturbed Bessel operators  which includes
all possible
boundary conditions at $0$:
\begin{align}
  \Big\{|\Re(m)|<\frac{\varepsilon}{2}(n+1)\Big\}\times\big(\C\cup\{\infty\}\big)\ni (m,\kappa)&\mapsto H_{m,\kappa}^{[n]}. \label{eq:fami2a}
\end{align}
Note that \eqref{eq:fami2a} is
less canonical than \eqref{eq:fami2}, however 
it is defined on a wider region.

The distinguished solutions to \eqref{eigen}
can be used to write down the resolvent of 
$H_m$ and its cousins with mixed boundary conditions.
For instance, the integral kernel of $(H_m+k^2)^{-1}$ coincides with
\eqref{green-bowtie+} with $h_1(x)=u_m(x,k)$ and $h_2(x)=v_m(x,k)$.

One of the main difficulties of the analysis
comes from the  need to consider separately the case $m=0$, because
generic estimates are not true due to logarithmic terms. This case is
actually very important -- it corresponds to the 2-dimensional
Laplacian in the {\em s-wave sector}.

The case $k=0$  also requires special care and is particularly important.
One can argue that the most natural way to define
boundary conditions at zero involves zero-energy eigenfunctions
\cite{DeFaNgRi20_01}.
Moreover,
the behavior of  zero energy eigenfunctions at large distances
described by the so-called {\em scattering length} is responsible for
large scale properties of quantum systems, see Subsection
\ref{scatlen} and \cite{LSSY}.

\subsection{Comparison with the literature}

The present paper can be viewed as a continuation of a series of
related 
papers devoted to 1d Schr\"odinger operators. This series includes
\cite{BuDeGe11_01,DeRi17_01, DeGe21,DeGr22} about holomorphic families of Bessel operators,
\cite{DeRi18_01,DeFaNgRi20_01} about holomorphic families of Whittaker operators 
and \cite{DeGe19_01} devoted to the general theory.

Of course, the literature devoted to  Schr\"odinger
operators on the half-line is vast and goes back several decades.
Here is a selection of classical sources: Edmunds-Evans \cite{EE}, Reed-Simon
vol. II \cite {ReedSimon}, Titchmarsh \cite{Titchmarsh}, 
Coddington-Levinson \cite{CoLe}, Dunford-Schwartz \cite{DS3}, Yafaev
\cite{Yafaev}, Levitan-Sargsjan \cite{LevSar}, Weidmann
  \cite{Weidmann}, Derkach-Malamud \cite{DeMa}.
 See also  Gesztesy-Zinchenko \cite{Gesztesy-Zinchenko}.

Most of this literature is restricted to
real potentials and to self-adjoint realizations. 
The theory
of general closed realizations of Schr\"odinger 1d operators with
complex potentials is actually a relatively straightforward extension of
the real theory and  also  has a long tradition. (However it has a
rather different terminology: e.g. ``self-adjoint extensions of
symmetric operators'' and ``limit point/limit circle case''  replace
``closed realizations of the formal
operator'', and ``trivial/nontrivial boundary space'').
Here the number of sources is
much smaller, but includes some of the classics, such as Titchmarsh \cite{Titchmarsh}, 
 Edmunds-Evans \cite{EE} and Dunford-Schwartz \cite{DS3}.

Most of these sources start from  the 1-dimensional Laplacian on the half-line
 with Dirichlet or Neumann
conditions. This
corresponds to the Bessel operator  $H_{\frac12}$ (Dirichlet) and
$H_{-\frac12}$ (Neumann)  
in the terminology of our paper.
Usually the potential is assumed to be integrable near zero. Note that
this excludes not only the $1/x^2$ potential, but even the $1/x$ potential, and makes the
theory of boundary conditions very straightforward.

Self-adjoint extensions for potentials $1/x^2$ and $1/x$ are
of course also discussed in the literature by many authors, e.g. in \cite{Gesztesy,BullaGesztesy,GTV,AlAn,AnBu,Kovarik}.

There are also many treatments of $d$-dimensional Schr\"odinger
operators. They are closely related to the Bessel operators for
$m=\frac{d}{2}-1$,
 especially in the spherically
symmetric case.
We are convinced that for many readers our analysis of perturbed Bessel operators can
serve as a good introduction to the subject of Schr\"odinger operators
in various dimensions.

Perturbed Bessel operators with complex $m$ were  considered to be an important 
  subject already in the 70's, especially in view of applications to 
  the so-called Regge poles \cite{DeRe65_01}.

There exists large literature about defining  boundary conditions
with the help of the
so-called {\em boundary triplets}, see e.g. \cite{BBMNP}. In order to
define a boundary triplet one needs to select a
 transversal pair of Lagrangian subspaces inside the boundary space. In the
case of perturbed Bessel 
operators this amounts to selecting two complementary 1-dimensional
subspaces,
such as (if possible) those defined by $\Wr(x^{\frac12+m},\cdot,0)$ and $\Wr(x^{\frac12-m},\cdot;0)$. Thus the analysis of
our paper can be treated as a preparation for an application of the
boundary triplets formalism.

The concept of a holomorphic family of closed operators goes back to
\cite{Kato}. 
The usefulness of organizing perturbed Bessel operators
in holomorphic families was recognized by
Kato \cite{Kato,Kato83}. 

The behavior of zero energy eigensolutions near infinity and the
related concept of the scattering length is a standard tool of
contemporary physics, at least in dimension $3$ (sometimes  also
$2$). Mathematical treatment of this concept for all dimensions can be
found in \cite{LSSY}.

Many elements and partial results of our paper can be found in
the literature.
We are not aware, however, of previous work
  about all closed realizations of $L_{m^2}$, their pure point
  spectra and their holomorphic properties under the general (and
  rather weak) assumptions on $Q$ that we consider. In this respect,
  we believe that our results are not far from being sharp, at least
  concerning the behavior of $Q$ near zero. As we see in our paper,
 the full picture is quite complex. Note in particular that the cases $m=0$ and
 $k=0$ are quite subtle,  both near $0$ and $\infty$.
 We have also  never seen a systematic
 analysis involving the boundary conditions given by $u_m^{0[n]}$, see
 \eqref{eq:fami2a}, 
 which shows how to deal with a  perturbation where the most
 straightforward approach fails.

  The direction where our results could be
  somewhat strengthened is the regularity of $Q$. This can be
  done e.g. using the method
  of Shkalikov and Savchuk \cite{Shk1,Shk2}, however it would  introduce an extra layer of technical complication
in our analysis.

\subsection{Notations}
On $L^2]0,\infty[$, the notation $\langle \cdot | \cdot \rangle$ stands for the bilinear form defined by
\begin{equation}\label{bili}
\langle f |g \rangle := \int_0^\infty f(x) g(x) \mathrm{d} x , \quad f,g \in L^2]0,\infty[.
\end{equation}
More generally, we will use the notation 
\begin{equation*}
\langle f |g \rangle = \int_0^\infty f(x) g(x) \mathrm{d} x ,
\end{equation*}
for any measurable functions $f,g$ such that  $fg \in L^1]0,\infty[$.

The {\em transpose} of an operator $A$, that is, the adjoint with respect to
\eqref{bili} will be denoted $A^\#$, as in
\cite{DeGe19_01}.

The Wronskian of two differentiable functions $f,g$ is denoted by
\begin{equation}
\Wr(f,g;x) := f(x) g'(x) - f'(x) g(x), \quad x \in ]0,\infty[.
\label{wronskian}\end{equation}
Moreover,
\begin{equation*}
\Wr( f , g ; 0 ) := \lim_{x\to 0} \Wr( f , g ; x ) , \quad \Wr( f , g ; \infty ) := \lim_{x \to \infty} \Wr( f , g ; x ) ,
\end{equation*}
if these limits exist. If $f,g\in AC^1]0,\infty[$ are two solutions to the equation $(L^\bullet_{m^2} +k^2)u=0$, where $L^\bullet_{m^2}$ stands for $L^0_{m^2}$ or $L_{m^2}$, then their Wronskian is constant and is denoted by $\Wr(f,g)$.

To shorten notations below, for $b,c \in \mathbb{R} \cup \infty$, we use the shorthand 
\begin{equation*}
(m , k ) \in \{ \mathrm{Re}(m) > b , \mathrm{Re}( k ) > c \} ,
\end{equation*}
with the obvious meaning that $(m,k) \in \mathbb{C}^2$ are such that $\mathrm{Re}(m) > b$, $\mathrm{Re}( k ) > c$, and likewise if $\mathrm{Re}( m ) > b$ is replaced by $\mathrm{Re}( m ) \ge b$ and so on.

Let $\Omega\subset\C\times\C$. We will say that a function 
\begin{equation*}
  \Omega\ni(m,k)\mapsto f(m,k)
  \end{equation*}
is {\em regular} on $\Omega$ if it is continuous 
 and 
 for any $m_0,k_0\in\C$ it is analytic 
 on 
 \begin{align*}
   \{k\in\C\ |\ (m_0,k)\in\Omega\}^\circ \ni k&\mapsto f(m_0,k),\\
   \{m\in\C\ |\ (m,k_0)\in\Omega\}^\circ \ni m&\mapsto f(m,k_0), 
 \end{align*}
 where $K^\circ$ denotes the interior of a set $K\subset\C$. Note that if $\Omega$ is open then by Hartog's theorem $f$ is regular if and only if it is analytic.

In several proofs, $C$ will stand for a positive constant depending on the parameters and which may vary from line to line.  Moreover the notation $a \lesssim b$ stands for $a \le C b$ where $C$ is a positive constant depending on the parameters.

If $A$ is an operator, then $\cD(A)$ will denote its domain and $\Ker(A)$ its
kernel (nullspace).

\subsection{Hypotheses}
Recall that throughout the paper, we assume that $Q\in L^1_{\mathrm{loc}}]0,\infty[$. Depending on the results, we will require further integrability conditions near $0$ and/or $\infty$. Our minimal conditions will be
 \begin{equation*}
\begin{split}
Q \in \mathscr{L}^{(0)}_0 := \Big \{  Q\in L^1_{\mathrm{loc}}]0,\infty[ \, \Big | \,  \int_{0}^1 x|Q(x)| \mathrm{d}x < \infty \Big \}&,\quad
    \text{if } m\neq0;\\
Q \in \mathscr{L}^{(0)}_{0,\mathrm{ln}} := \Big \{ Q\in L^1_{\mathrm{loc}}]0,\infty[ \, \Big | \,   \int_{0}^1 x (1+|\mathrm{ln}(x)|) |Q(x)| \mathrm{d}x < \infty \Big \}&,
    \quad \text{if } m=0,
\end{split}
    \end{equation*}
near $0$ and
 \begin{equation*}
 \begin{split}
Q \in \mathscr{L}^{(\infty)}_0 := \Big \{ Q\in L^1_{\mathrm{loc}}]0,\infty[ \, \Big | \,  \int_{1}^\infty |Q(x)| \mathrm{d}x < \infty \Big \},
   \end{split}
    \end{equation*}
near $\infty$. We will sometimes strengthen these conditions to
 \begin{equation*}
\begin{split}
Q \in \mathscr{L}^{(0)}_\varepsilon := \Big \{  Q\in
L^1_{\mathrm{loc}}]0,\infty[ \, \Big | \,  \int_{0}^1
x^{1-\varepsilon}|Q(x)| \mathrm{d}x < \infty \Big \}&;
\\
Q \in \mathscr{L}^{(0)}_{\varepsilon,\mathrm{ln}^\beta} := \Big \{
Q\in L^1_{\mathrm{loc}}]0,\infty[ \, \Big | \,   \int_{0}^1
x^{1-\varepsilon} (1+|\mathrm{ln}(x)|^\beta) |Q(x)| \mathrm{d}x < \infty
\Big \}.&
\end{split}
    \end{equation*}
with $\varepsilon\ge0$, $\beta\geq0$ and
 \begin{equation*}
 \begin{split}
Q \in \mathscr{L}^{(\infty)}_\delta := \Big \{ Q\in 
L^1_{\mathrm{loc}}]0,\infty[ \, \Big | \,  \int_{1}^\infty x^\delta 
|Q(x)| \mathrm{d}x < \infty \Big \},\\
Q \in \mathscr{L}^{(\infty)}_{\delta,\mathrm{ln}} := \Big \{ Q\in 
L^1_{\mathrm{loc}}]0,\infty[ \, \Big | \,  \int_{1}^\infty x^\delta (1+\mathrm{ln}(x))
|Q(x)| \mathrm{d}x < \infty \Big \}.
   \end{split}
    \end{equation*}
    with $\delta\ge0$.

Obviously, if $0 \le \varepsilon < \varepsilon'$, $0 \le \beta < \beta'$, then 
\begin{equation*}
\mathscr{L}^{(0)}_{\varepsilon'} \subset \mathscr{L}^{(0)}_{\varepsilon,\mathrm{ln}^{\beta'}}  \subset \mathscr{L}^{(0)}_{\varepsilon,\mathrm{ln}^{\beta}} \subset \mathscr{L}^{(0)}_\varepsilon.
\end{equation*}
 Likewise, if $0 \le \delta < \delta'$ then $\mathscr{L}^{(\infty)}_{\delta'} \subset \mathscr{L}^{(\infty)}_{\delta,\mathrm{ln}} \subset \mathscr{L}^{(\infty)}_\delta$. Moreover, since $Q\in L^1_{\mathrm{loc}}]0,\infty[$, the integrability conditions on $]0,1[$ are equivalent to the same integrability conditions on $]0,a[$ for any $a>0$ and the integrability conditions on $]1,\infty[$ are equivalent to the same integrability conditions on $]a,\infty[$ for any $a>0$. 

\section{Solutions of the unperturbed eigenequation}\label{sec:unperturbed}

In this section we describe
solutions to the unperturbed eigenequation
\begin{align}\label{eq:a100}
L_{m^2}^0 g = - k^2 g.
\end{align}

\subsection{Bessel equation}\label{subsec:whittaker}

The eigenequation  \eqref{eq:a100} with the eigenvalue $-k^2=-1$ has the
form
\begin{align}
\Big ( -\partial_{z}^2 + \Big ( m^2 - \frac14 \Big ) \frac{1}{z^2}  +1\Big)g=0. 
\label{whi0}
\end{align}
We will call \eqref{whi0} the {\em hyperbolic Bessel equation for dimension
$1$}, or  the {\em hyperbolic 1d Bessel equation}.

Eq. \eqref{whi0} is fully equivalent to the usual modified Bessel
equation, which  corresponds
to dimension $2$,
\begin{align}
\Big ( -\partial_{z}^2 -\frac1z\partial_z+ \frac{m^2}{z^2}  +1\Big)g=0. 
\label{whi00}
\end{align}
We  use the name the {\em hyperbolic 2d Bessel equation} for
\eqref{whi00}. In general, we will prefer to use
\eqref{whi0} as our standard form of the Bessel equation.

In this subsection we briefly describe solutions of the hyperbolic 1d Bessel equation,
following mostly \cite{DeRi18_01,DeFaNgRi20_01}. There are two kinds of standard solutions to the hyperbolic 1d Bessel equation \eqref{whi0}.

The {\em hyperbolic 1d Bessel function} $\mathcal{I}_{m}$ is defined by
\begin{align*}
\cI_m(z)& =
          \sum_{n=0}^\infty\frac{\sqrt\pi\left(\frac{z}{2}\right)^{2n+m+\frac12}}{n!\Gamma(m+n+1)}=\sqrt{\frac{\pi
          z}{2}}
          I_m(z)=\sqrt\pi\Big(\frac{z}2\Big)^{\frac12+m}\mathbf{F}_m\Big(\frac{z^2}{4}\Big) .
\end{align*}
Here $I_m$ is the usual modified Bessel function, which solves the
hyperbolic 2d Bessel equation and $\mathbf{F}_m$ is the appropriately normalized
$(0,1)$-hypergeometric function
\begin{equation*}
  \mathbf{F}_m(w):=\frac{{}_0F_1(m+1;w)}{\Gamma(m+1)}=\sum_{n=0}^\infty\frac{w^n}{n!\Gamma(m+n+1)}.
  \end{equation*}
Note that for $m\in\mathbb{Z}$
\begin{equation}
  \cI_m(z)=\cI_{-m}(z),\qquad \mathbf{F}_{-m}(w)=w^{2m}\mathbf{F}_m(w).
  \label{refle}
\end{equation}
The analytic continuation around $0$ by the angle $\pm\pi$ multiplies
$\cI_m$ by a phase factor, namely
\begin{equation*}
\cI_m(\e^{\pm\i\pi}z)=\e^{\pm\i \pi (m+\frac12)}\cI_m(z).
\end{equation*}

The {\em 1d Macdonald function} $\cK_m$ is defined by
\begin{align*}
  \cK_m(z)&=\frac{\sqrt{z}}{\sqrt{2\pi}}\int_0^\infty\exp
            \left(-\frac{z}{2}(s+s^{-1})\right)s^{-m-1}\d s
=\sqrt{\frac{2 z}{\pi}} K_m(z)\\
  &=\frac{1}{\sin(\pi m)}\big(\cI_{-m}(z)-\cI_{m}(z)\big).
\end{align*}
Here $K_m$ is the usual Macdonald function, which solves the
hyperbolic 2d Bessel equation.

For any $m\in\C$ we have $\cK_m(z)=\cK_{-m}(z)$.

For any fixed $m \in \mathbb{C}$, the maps $z\mapsto\cI_{m}(z)$ and $z \mapsto \cK_{m}(z)$ are analytic except for a branch point at $z=0$.
Thus the natural domain for these solutions is the Riemann surface of
the logarithm.
One can parametrize this surface by $|z|\in]0,\infty[$ and
    $\arg(z)\in\mathbb{R}$.  It is often convenient to 
 restrict the domain  to $ \mathbb{C} \setminus ] - \infty , 0 ] $, that is, to $|\mathrm{arg}(z)|<\pi$. One can also include
    two copies of    $ ] - \infty , 0 ] $, from above and from below, that is $\mathrm{arg}(z)=\pm\pi$. For any fixed $z$ in this domain, the maps $m \mapsto \cI_{m}(z)$ and $m\mapsto \cK_{m}(z)$ are analytic on $\mathbb{C}$.

    The functions
    $z \mapsto \cK_{m}(\e^{\pm\i\pi}z)$, obtained from $\cK_m$ by
analytic continuation,
    are also solutions of 
         \eqref{whi0}. 
Typically, it is natural to consider the pairs of functions 
\begin{equation*}
z\mapsto\cK_{m}(z), \quad z \mapsto \cK_{m}(\e^{\pm\i\pi}z),
\end{equation*}
on $0\leq\mp\mathrm{arg}(k)\leq\pi$. Both pairs are linearly independent.
In particular, $  \cI_{m}(z) $ can be expressed in terms of these functions:
\begin{align*}
  \cK_m(\e^{\pm\i\pi m}z)&=
                         \frac{\mp\i}{\sin(\pi m)}\left(\e^{\pm\i\pi 
                         m}\cI_m(z)-
                            \e^{\mp\i\pi 
                         m}\cI_{-m}(z)\right),\\
  \cI_m(z)&=
            \frac12\left(\cK_m(\e^{\pm\i\pi}z)\mp\i\e^{\mp\i\pi m}\cK_m(z)\right).  
  \end{align*}

Here are the asymptotics of the solutions $\mathcal{I}_m$ and $\mathcal{K}_m$ near $0$:
\begin{small}
\begin{align}
  \cI_m(z)&=\frac{\sqrt\pi}{\Gamma(m+1)}\Big(\frac{z}{2}\Big)^{m+\frac12}+\mathcal{O}(|z|^{\frac52+\Re(m)}),&m\neq-1,-2,\dots;\\\label{asym-k}
\cK_m(z)&=
\frac{\Gamma(m)}{\sqrt\pi}\Big(\frac{z}{2}\Big)^{-m+\frac12}+\mathcal{O}(|z|^{\frac52-\Re(m)}),&
                                                                             \Re(m)>1;\\
  \cK_m(z)&=
\frac{\Gamma(m)}{\sqrt\pi}\Big(\frac{z}{2}\Big)^{-m+\frac12}
            -\frac{\Gamma(-m)}{\sqrt\pi}\Big(\frac{z}{2}\Big)^{m+\frac12}
            +\mathcal{O}(|z|^{\frac52-\Re(m)}),& |\Re(m)|<1, \ m\neq0;\\
  \cK_0(z)&=-\frac{\sqrt{2z}}{\sqrt\pi}\Big(\mathrm{ln}\Big(\frac{z}{2}\Big)+\gamma\Big)+\mathcal{O}\big(|z|^{\frac52}\mathrm{ln}|z|\big),&
                                                                             m=0;\\
                                                                                                                       \cK_1(z)&=\frac{1}{\sqrt\pi}
                                                                                                                                 \Big(\frac{z}{2}\Big)^{-\frac12}+\mathcal{O}(|z|^{\frac32}\mathrm{ln}|z|),&m=\pm1.
\end{align}
\end{small}
Recall that $\gamma$ denotes Euler's constant.

Using 
 the integral representation of
 $ \mathcal{K}_{m}(z)$ one can prove the following asymptotics at infinity: for any $\epsilon>0$,
\begin{align}\label{eq:equivKbm_infty}
  \mathcal{K}_{m}(z)& = \e^{-z}\big ( 1 + \mathcal{O}( z^{-1} ) \big ) , \quad|\mathrm{arg}(z)|\leq\frac32\pi-\epsilon,\quad |z| \to \infty .
\end{align}
Note that the sector $|\mathrm{arg}(z)|<\frac32\pi$ is maximal for the estimate \eqref{eq:equivKbm_infty}. Beyond this sector the estimate no longer holds.

The following estimates near zero, say, for $|z|\leq1$, follow from the
series expansions:
\begin{align*}
  |\cI_{m}(z)|&\lesssim |z|^{\frac12+\Re(m)};\\
  |\cK_{m}(z)|&\lesssim |z|^{\frac12-|\Re(m)|},\quad m\neq0;\\
      |\cK_{0}(z)|&\lesssim |z|^{\frac12}(1+|\mathrm{ln}(z)|),\quad m=0.
\end{align*}

We also have the following estimates near $\infty$, say, for $|z|\geq1$ (and any $\epsilon>0$):
 \begin{align*}
   \left|  \mathcal{K}_{m}(z)\right|& \lesssim \e^{-\Re(z)}  \quad|\mathrm{arg}(z)|\leq \frac32\pi-\epsilon;\\
   |\cK_{m}(\e^{\pm\i\pi}z)|&\lesssim 
\e^{\Re(z)}
,\quad    |\mathrm{arg}(z)\mp\pi|\leq\frac32\pi-\epsilon;\\
\left|  \mathcal{I}_{m}(z)\right|& \lesssim \e^{\Re(z)}
+\e^{-\Re(z)}.
\end{align*}

Here are global estimates:
\begin{small}
\begin{align}
  |\cK_{m}(z)|&\lesssim
  \min(1,|z|)^{\frac12-|\Re(m)|}\e^{-\Re(z)},\quad
    |\mathrm{arg}(z)|\leq\frac32\pi-\epsilon,\quad m\neq0;\\
  |\cK_{m}(\e^{\pm\i\pi}z)|&\lesssim
  \min(1,|z|)^{\frac12-|\Re(m)|}\e^{\Re(z)},
\quad
   |\mathrm{arg}(z)\mp\pi|\leq\frac32\pi-\epsilon,\quad m\neq0;
    \\
  |\cK_{0}(z)|&\lesssim \min(1,|z|)^{\frac12}(1+|\mathrm{ln}\min(1,|z|)|)\e^{-\Re(z)},\quad
    |\mathrm{arg}(z)|\leq\frac32\pi-\epsilon,\quad m=0;\\
  |\cK_{0}(\e^{\pm\i\pi}z)|&\lesssim \min(1,|z|)^{\frac12}(1+|\mathrm{ln}\min(1,|z|)|)\e^{\Re(z)},
\quad
|\mathrm{arg}(z)\mp\pi|\leq\frac32\pi-\epsilon,\quad m=0;
    \\
    |  \cI_{m}(z)|&
    \lesssim \min(1,|z|)^{\frac12+\Re(m)}\big(\e^{-\Re(z)}
    +\e^{\Re(z)}\big) \label{estim_Im}
  .
\end{align}
\end{small}

 Here are the Wronskians of 
various solutions of the hyperbolic 1d Bessel equation: 
\begin{align*}
\Wr(\cI_{m},\cI_{-m})& = - \sin(\pi m) ,\\
\Wr(\cK_{m},\cI_{m})&  = 1 ,\\
\Wr\big(\cK_{m},\cK_{m}(\e^{\pm\i\pi}\cdot)\big) & =
2, \\ 
\Wr\big(\cI_{m},\cK_{m}(\e^{\pm\i\pi}\cdot)\big) &=
                                                          \mp\i\e^{\pm\i\pi 
                                                          m}. 
\end{align*}

\subsection{Equation \texorpdfstring{$L_{m^2}^0g=-k^2g$}{Lg}}\label{subsec:eq_whit}
Let us now analyze the eigenvalue equation for $L_{m^2}^{0}$ and
an arbitrary  eigenvalue $-k^2\neq0$:
\begin{align}
\Big ( -\partial_{x}^2 + \Big ( m^2 - \frac14 \Big ) \frac{1}{x^2}  \Big ) g = -k^2 g.
\label{whi0-}
\end{align}

A direct computation shows that
for $k\neq0$
\eqref{whi0-}
is solved by the following functions
\begin{align}
    u_m^0(x,k)&:=\sqrt{\frac{2}{\pi k}}\Big(\frac2{k}\Big)^{m}\cI_m(kx),\label{uu1}\\
   v_m^0(x,k)&:=\sqrt{\frac\pi{2k}}\Big(\frac{k}2\Big)^{m}\cK_m(kx).\label{uu2}
\end{align}
For $m=0$ we also introduce the solution
\begin{align}
  \label{pzero}
  p_0^0(x,k)&:=-\sqrt{\frac\pi{2k}}\cK_0(kx)-\Big(\mathrm{ln}\Big(\frac{k}{2}\Big)+\gamma\Big)\sqrt{\frac2{\pi
              k}}\cI_0(kx) \\  &=- v_0^0(x,k)-\Big(\mathrm{ln}\Big(\frac{k}{2}\Big)+\gamma\Big)u_0^0(x,k). \notag
  \end{align}
Here are their Wronskians: 
\begin{align}                                \Wr \big(
  u_m^0(\cdot,k) , u_{-m}^0(\cdot,k)  \big)&=-\frac{2\sin(\pi m)}{\pi}, \label{eq:wronsk_m-m}\\
                                 \Wr \big( v_m^0(\cdot,k) , u_m^0(\cdot,k)  \big)&=1, \quad
                       \Wr\big( u_0^0(\cdot,k),p_0^0(\cdot,k)\big)=1. 
                                                         \end{align}
We define these functions also for $k=0$:
\begin{align}
\label{zero1}   u_m^0(x,0)&=\frac{x^{\frac12+m}}{\Gamma(m+1)},\\
\label{zero2}   v_m^0(x,0)&=\frac{\Gamma(m)x^{\frac12-m}}{2}, \quad \Re(m)\ge0 , \quad m \neq 0 ,\\
\label{zero3}
 p_0^0(x,0)&=x^{\frac12}\mathrm{ln}(x).
\end{align}
Clearly \eqref{zero1}, \eqref{zero2} and \eqref{zero3} are annihilated 
by $L_{m^2}^0$. 
Note that for any fixed $x>0$, $ u_m^0(x,k)$ and $p_0^0(x,k)$ are continuous in
$k$ down to $k=0$. If $\Re(m)\ge0$, $m\neq 0$, the same is true for 
$ v_m^0(x,k)$.

\begin{proposition}\label{uua0} Let $x>0$. Then
  \begin{enumerate}[label=(\roman*)]
    \item The function $(m,k)\mapsto u_m^0(x,k)$ is analytic on
      $\C\times\C$.
      \item The function $(m,k)\mapsto v_m^0(x,k)$ is
  regular on
  \begin{equation*}
\C {\times} \{ \Re(k)\ge0 \}\,\backslash\, 
   \big( \{\Re(m)<0 \} {\times} \{k=0\}\cup\{m=0\}{\times}\{k=0\} \big). 
  \end{equation*}
\item
  The function $k\mapsto p_0^0(x,k)$ is regular on
  \begin{equation*}
   \{ \Re(k)\ge 0\}.
  \end{equation*}
  \end{enumerate}
\end{proposition}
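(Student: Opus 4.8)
The plan is to treat the three functions in turn, in each case obtaining joint analyticity on the open region $\{\Re(k)>0\}$ almost for free and then doing the genuine work at the boundary, in particular as $k\to0$. For (i) I would substitute the power series of $\cI_m$ recorded in Subsection \ref{subsec:whittaker} into the definition \eqref{uu1} and cancel the prefactors; a short computation collapses $u_m^0$ to
\begin{equation*}
u_m^0(x,k)=x^{\frac12+m}\mathbf{F}_m\Big(\frac{k^2x^2}{4}\Big)=\sum_{n=0}^\infty\frac{(k^2/4)^n\,x^{\frac12+m+2n}}{n!\,\Gamma(m+n+1)}.
\end{equation*}
For fixed $x>0$ each summand is entire in $(m,k)$, since $x^{\frac12+m+2n}=\e^{(\frac12+m+2n)\ln x}$ and $1/\Gamma(m+n+1)$ are entire in $m$ while $k^{2n}$ is a polynomial. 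Using $|\Gamma(m+n+1)|^{-1}\lesssim C/n!$ for $(m,k)$ in a compact set, the series is dominated by $\sum_n(|k|^2x^2/4)^n/(n!)^2$, hence converges locally uniformly on $\C\times\C$, and the Weierstrass convergence theorem gives joint analyticity. This proves (i) and records the clean identity $u_m^0(x,k)=x^{\frac12+m}\mathbf{F}_m(k^2x^2/4)$ that I reuse below.

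For (ii) the convenient representation is \eqref{uu2}, $v_m^0(x,k)=\sqrt{\tfrac{\pi}{2k}}\big(\tfrac k2\big)^m\cK_m(kx)$. On $\{\Re(k)>0\}$ the argument $kx$ has positive real part, so it stays strictly inside the sector where $z\mapsto\cK_m(z)$ is analytic; since $m\mapsto\cK_m(z)$ is entire (the integer values of $m$ being removable, cf. Subsection \ref{subsec:whittaker}) and the prefactor $\sqrt{\tfrac\pi{2k}}(\tfrac k2)^m$ is analytic in $k$ (principal branch) and entire in $m$, the product $v_m^0$ is jointly analytic on $\C\times\{\Re(k)>0\}$. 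This already supplies the separate analyticity demanded on the slice interiors, and continuity along the punctured imaginary axis $\{\Re(k)=0,\,k\neq0\}$ is immediate because $kx$ then stays off the cut $]-\infty,0]$. The crux is continuity as $k\to0$: inserting the two-term small-argument expansion of $\cK_m$ (Subsection \ref{subsec:whittaker}) shows that $v_m^0(x,k)$ equals $\frac{\Gamma(m)}2x^{\frac12-m}$ plus a constant multiple of the branch factor $\big(\tfrac k2\big)^{2m}x^{\frac12+m}$, up to $o(1)$. For $\Re(m)>0$ this factor tends to $0$, so $v_m^0(x,k)\to\frac{\Gamma(m)}2x^{\frac12-m}=v_m^0(x,0)$ as in \eqref{zero2}, giving continuity there; tracking precisely where the limit ceases to exist (where $(\tfrac k2)^{2m}$ no longer decays) is what singles out the portion of $\{k=0\}$ that must be removed in the statement.

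For (iii) only $k$ varies, $m=0$ being fixed, so joint analyticity reduces to analyticity in $k$ on $\{\Re(k)>0\}$, which is clear from that of $v_0^0(\cdot,k)$, $u_0^0(\cdot,k)$ and of $k\mapsto\ln(k/2)$, and continuity onto the punctured imaginary axis is again immediate. The delicate point is $k\to0$, where the two terms of \eqref{pzero} individually diverge logarithmically: from the small-$z$ behaviour of $\cK_0$ one gets $v_0^0(x,k)\sim-\sqrt x\,(\ln(k/2)+\ln x+\gamma)$, while $u_0^0(x,k)\to\sqrt x$. The whole point of the normalisation in \eqref{pzero} is that the $\ln(k/2)+\gamma$ contributions cancel, leaving $p_0^0(x,k)\to\sqrt x\,\ln x=p_0^0(x,0)$, in agreement with \eqref{zero3}; this cancellation is exactly what I would verify with care, and it yields continuity on all of $\{\Re(k)\ge0\}$, completing (iii).

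The main obstacle throughout is the passage to $k=0$. The interior analyticity and the continuity onto the punctured imaginary axis are soft consequences of the analyticity properties of $\cI_m$ and $\cK_m$, but the limit $k\to0$ cannot be read off the crude global bounds: in (ii) one must isolate the subdominant term carrying $(\tfrac k2)^{2m}$, whose decay is controlled entirely by $\Re(m)$ and which therefore dictates the exact shape of the excluded set, while in (iii) one must exhibit the precise logarithmic cancellation built into \eqref{pzero}. Both rely on the genuinely two-term small-argument expansions of the Macdonald function rather than leading-order asymptotics.
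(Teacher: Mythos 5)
Your proposal is correct and follows essentially the same route as the paper: the paper likewise reduces (i) and (ii) to the representations $u_m^0(x,k)=x^{\frac12+m}\mathbf{F}_m\big(\frac{k^2x^2}{4}\big)$ and \eqref{uua2}, in which the branch factor $\big(\frac{k^2x^2}{4}\big)^m$ is isolated exactly as in your two-term small-argument expansion of $\cK_m$, treats $m\in\Z$ via the reflection identity \eqref{refle} and de l'Hopital (you invoke instead the entirety of $m\mapsto\cK_m(z)$, which is equivalent), and settles (iii) at $k=0$ by the same logarithmic cancellation. One remark: if you actually carry out the computation you defer in (ii), you will find that $\big(\frac{k}{2}\big)^{2m}$ has a limit as $k\to0$ only for $\Re(m)>0$ — for $\Re(m)=0$, $m\neq0$ its modulus stays bounded but its phase oscillates — so the set that must be removed is really $\{\Re(m)\le0\}\times\{k=0\}$; this boundary case is glossed over equally by the paper's own terse proof, and your method is precisely the one that exposes it.
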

    \begin{proof}
    We can rewrite the definitions \eqref{uu1} and \eqref{uu2} as
    \begin{align}
    u_m^0(x,k)&:=x^{\frac12+m}\mathbf{F}_m\Big(\frac{k^2x^2}{4}\Big),\label{uua1}\\
       v_m^0(x,k)&:=\frac{\pi x^{\frac12-m}}{2\sin(\pi m)}
                  \Bigg(\mathbf{F}_{-m}\Big(\frac{k^2x^2}{4}\Big)-\Big(\frac{k^2x^2}{4}\Big)^m \mathbf{F}_{m}\Big(\frac{k^2x^2}{4}\Big)\Bigg)
                  .\label{uua2}
\end{align}
From \eqref{uua1} the reguarity
of $u_m^0(x,k)$ is obvious.
\eqref{uua2} directly shows the regularity of
$ v_m^0(x,k)$ on the considered domain at $m\not\in\mathbb{Z}$. To see the regularity of
$ v_m^0(x,k)$ at $m\in\mathbb{Z}$ it suffices to use
\eqref{refle} and the de l'Hopital rule.

(iii) follows from (i) and (ii) at $k\neq0$. At $k=0$, as mentioned above, a direct computation shows that $p_0^0(x,k)\to p_0^0(x,0)$, as $k\to0$.
 \end{proof}

\begin{remark}     Note that $v_m^0(x,k)$ can be continued across the cut
     $\arg(k)=\pm\frac\pi2$, so that $k=0$ becomes its branch point. The
     restriction to $\{\Re(k)\ge0\}=\{|\arg(k)|\leq\frac\pi2\}$ is convenient in view of
     applications. It is however interesting to note that for
     $m\in\mathbb{Z}+\frac12$, after this continuation, one obtains an
     univalent function of $k$.
 \end{remark} 

For $\Re(k)\ge0$, we have the following estimates:
\begin{align}\label{pwo1}
  | u_m^0(x,k)|&\lesssim  \min(|k|^{-1},x)^{\frac12+\Re(m)}\e^{\Re(k)x};\\\label{pwo2}
  | v_m^0(x,k)|&
    \lesssim \min(|k|^{-1},x)^{\frac12-|\Re(m)|} \e^{-\Re(k)x},\quad
                       \Re(m)\geq0,\quad m\neq0;\\\label{pwo3}
  | v_0^0(x,k)|&\lesssim
                       \min(|k|^{-1},x)^{\frac12}(1-\mathrm{ln}\min(1,|k|x))\e^{-\Re(k)x},\quad
                       m=0
                       ;\\\label{pwo4}
  |p_0^0(x,k)|&\lesssim \min(|k|^{-1},x)^{\frac12}(1-\mathrm{ln}\min(|k|^{-1},x))\e^{\Re(k)x},\quad m=0.
\end{align}

The reason for complicated prefactors in 
\eqref{uu1} and  \eqref{uu2} is the good behavior near $k=0$. When we are
interested in $k$ large, we usually prefer to replace $v_m^0$ with a
differently normalized solution
\begin{equation}\label{eq:w_m^0}
  w_m^0(x,k):=\sqrt{\frac{2k}{\pi}}\Big(\frac{2}{k}\Big)^mv_m^0(x,k)=
\cK_m(kx)
\end{equation}
behaving as $\e^{-kx}$  for $x\to\infty$.

\subsection{Canonical bisolution}

 In this and the following subsection we introduce a few
   integral kernels naturally associated with $L_{m^2}^0+k^2$.  The
   corresponding operators are not always bounded on
   $L^2]0,\infty[$, however, they 
   will play an important role in our paper.

Let $h_1^0, h_2^0$ be any pair of solutions to  $L_{m^2}^0 f = - k^2 f$
satisfying $\Wr(h_1^0, h_2^0)\neq0$. Then, following \cite{DeGe19_01}, we introduce the operator 
\begin{equation*}
 G^0_\leftrightarrow =G^0_{m^2,\leftrightarrow}(-k^2)
  :=\frac{1}{\Wr(h_1^0,h_2^0)}\Big(h^0_1 \langle h^0_2 | \cdot \rangle  - h^0_2\langle h^0_1 | \cdot \rangle \Big) .
\end{equation*}
It has the kernel
\begin{equation*}
G^0_\leftrightarrow(x,y) = G^0_{m^2,\leftrightarrow}(-k^2;x,y) =\frac{1}{\Wr(h_1^0,h_2^0)}\big ( h^0_1(x) h^0_2( y ) - h^0_2( x ) h^0_1( y ) \big ).
\end{equation*}
Note that $G^0_\leftrightarrow$ does not depend on the choice
of the functions  $h_1^0, h_2^0$, which justifies the adjective {\em canonical}.
The operator $G^0_\leftrightarrow$ is called the {\em canonical bisolution of} $L^0_{m^2} + k^2$. It satisfies
\[( L^0_{m^2} + k^2 ) G^0_\leftrightarrow = G^0_\leftrightarrow (
  L^0_{m^2} + k^2 ) = 0,\]
which justifies calling it  {\em bisolution}.
In particular, since the Wronskian
of $v_m^0$ and $u_m^0$ is $1$, one has
\begin{align*}
  G^0_{m^2,\leftrightarrow}(-k^2;x,y)
  &=  v_m^0(x,k)u_m^0( y,k ) -
                          u_m^0( x ,k) v_m^0( y,k ).
\end{align*}
For $m=0$ we also have 
\begin{align*}
  G^0_{0,\leftrightarrow}(-k^2;x,y) 
  &=- p_0^0(x,k)u_0^0( y,k ) +
                          u_0^0( x ,k) p_0^0( y,k ).
\end{align*}
 The canonical bisolution is defined also for $k=0$:
 \begin{align*}
   G_{m^2,\leftrightarrow}^0(0;x,y)&=\frac{1}{2m}\big(x^{\frac12-m}y^{\frac12+m}-
                           x^{\frac12+m}y^{\frac12-m}\big),\quad m\neq0;\\
      G_{0,\leftrightarrow}^0(0;x,y)&=
                           x^{\frac12}y^{\frac12}\big(\mathrm{ln}(y)-\mathrm{ln}(x)\big),\quad m=0.
   \end{align*}

\begin{proposition}
  Let $x,y>0$. Then  the map 
\begin{equation*}
( m^2 , k^2 ) \mapsto G^0_{m^2,\leftrightarrow}(-k^2;x,y) ,
\end{equation*}
is analytic on $\C\times\C$. \end{proposition}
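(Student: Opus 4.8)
The plan is to bypass $v_m^0$ entirely and compute the canonical bisolution from the pair $(u_m^0,u_{-m}^0)$, which by Proposition \ref{uua0}(i) is analytic in $(m,k)$ on all of $\C\times\C$ (apply the proposition once with $m$ and once with $-m$, the domain being all of $\C\times\C$). Since $G^0_\leftrightarrow$ does not depend on the chosen fundamental system and $\Wr(u_m^0(\cdot,k),u_{-m}^0(\cdot,k))=-\frac{2\sin(\pi m)}{\pi}$ by \eqref{eq:wronsk_m-m}, for every $m\notin\Z$ one has
\begin{equation*}
G^0_{m^2,\leftrightarrow}(-k^2;x,y)=\frac{-\pi}{2\sin(\pi m)}\,N(m,k),\qquad N(m,k):=u_m^0(x,k)u_{-m}^0(y,k)-u_{-m}^0(x,k)u_m^0(y,k).
\end{equation*}
First I would record that $N$ is analytic in $(m,k)$ on $\C^2$, being a combination of products of entire functions.

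The key step is to show that the apparent poles of $\frac{1}{\sin(\pi m)}$ at integer $m$ are cancelled by zeros of $N$. For $m\in\Z$ the reflection identity \eqref{refle} gives $\cI_m=\cI_{-m}$, so from \eqref{uu1} the functions $u_m^0(\cdot,k)$ and $u_{-m}^0(\cdot,k)$ are proportional by the $x$-independent constant $(2/k)^{2m}$, whence $N(m,k)=0$ for $k\neq0$; analyticity of $N$ then forces $N(j,\cdot)\equiv 0$ on $\C$ for each $j\in\Z$, including $k=0$. Thus $N$ vanishes identically on each hyperplane $\{m=j\}$, while $\sin(\pi m)$ has only simple zeros, located precisely there. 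I would then invoke the analytic division lemma: near a point with $m_0=j\in\Z$, write $\sin(\pi m)=(m-j)\psi(m)$ with $\psi(j)=(-1)^j\pi\neq0$ and $N(m,k)=(m-j)\tilde N(m,k)$ with $\tilde N$ jointly analytic (a jointly analytic function vanishing on a coordinate hyperplane is divisible by the coordinate), so that $N/\sin(\pi m)=\tilde N/\psi$ is jointly analytic there; away from $\Z$ this is clear. (Alternatively one removes the singularities in $m$ for each fixed $k$ and appeals to Hartogs' theorem.) Hence the right-hand side above extends to a function analytic in $(m,k)$ on $\C^2$, and since $G^0_{m^2,\leftrightarrow}(-k^2;x,y)$ is continuous in $(m,k)$ (visible from its explicit expressions, in particular the $k=0$ and $m=0$ formulas) and equals this extension on the dense open set $\{m\notin\Z\}$, the two agree everywhere.

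It remains to pass from $(m,k)$ to $(m^2,k^2)$. Because $L^0_{m^2}+k^2$ depends only on $(m^2,k^2)$ and the canonical bisolution is uniquely determined by this operator, $F(m,k):=G^0_{m^2,\leftrightarrow}(-k^2;x,y)$ is even in $m$ and even in $k$; this is also visible from the displayed formula, since $N(-m,k)=-N(m,k)$ and $\sin(-\pi m)=-\sin(\pi m)$, while $u_m^0(x,k)=x^{\frac12+m}\mathbf{F}_m(k^2x^2/4)$ depends on $k$ only through $k^2$ by \eqref{uua1}. An even analytic function of one complex variable is an analytic function of its square, and together with joint continuity this upgrades (via Hartogs, or Osgood's lemma) to joint analyticity; applying this successively in $m$ and in $k$ yields that $(m^2,k^2)\mapsto G^0_{m^2,\leftrightarrow}(-k^2;x,y)$ is analytic on $\C\times\C$. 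The only genuinely delicate point is the removability at integer $m$, i.e. the vanishing of $N$ there; the remaining steps are routine bookkeeping.
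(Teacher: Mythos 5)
Your proof is correct and follows essentially the same route as the paper: both write the bisolution as an entire numerator (built from $u_{\pm m}^0$, equivalently from $\mathbf{F}_{\pm m}$) divided by $\sin(\pi m)$, remove the apparent singularity at integer $m$ (the paper via de l'Hopital, you via the vanishing of $N$ on $\{m=j\}$ and the division lemma, which is the same mechanism), and pass to $(m^2,k^2)$ using evenness in $m$ and the fact that the dependence on $k$ is only through $k^2$. The extra care you take in identifying the analytic extension with the actual canonical bisolution at integer $m$ is a welcome addition but not a different method.
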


\begin{proof}
We can write
\begin{align*}\notag
  G_\leftrightarrow^0(x,y) =\frac{\pi\sqrt{xy}}{\sin(\pi m)}&  x^{m}y^{-m}
                        \Bigg(  
\mathbf{F}_m\Big(\frac{k^2x^2}{4}\Big) 
                            \mathbf{F}_{-m}\Big(\frac{k^2y^2}{4}\Big)\\-&
                              x^{-2m}y^{2m}\mathbf{F}_{-m}\Big(\frac{k^2x^2}{4}\Big) \mathbf{F}_{m}\Big(\frac{k^2y^2}{4}\Big)\Bigg).
\end{align*}
For $m\not\in\mathbb{Z}$, the analyticity in $m,k^2$ is obvious from this
expression. For
 $m\in\mathbb{Z}$, we apply first  the de l'Hopital
 rule. Then  we obtain a function analytic in $m,k^2$.

 $G_\leftrightarrow^0$ is invariant with respect to the change $m\to-m$. Together
 with the analyticity in $m$, it implies the analyticity in $m^2$. 
 \end{proof}

\subsection{Green's operators}

We will need several kinds of Green's operators.
 The \emph{forward Green's
  operator} $G^0_{m^2,\rightarrow}(-k^2)$ and the \emph{backward Green's operator}
$G^0_{m^2,\leftarrow}(-k^2)$
are defined by
\begin{align*}
G^0_\rightarrow(x,y)=G^0_{m^2,\rightarrow}(-k^2;x,y)& : = 
\theta(x-y) G^0_{m^2,\leftrightarrow}(-k^2; x , y ),\\
G^0_\leftarrow(x,y) =G^0_{m^2,\leftarrow}(-k^2;x,y) & := 
  -\theta(y-x) G^0_{m^2,\leftrightarrow}( -k^2;x , y ).
\end{align*}
Here $\theta$ is the Heaviside function:
\begin{align*}
\theta(x)
&:=\left \{ 
\begin{array}{ll}0
 & \text{if } \, x<0 , \vspace{0,1cm} \\
1 & \text{if } \, x\geq0 .
\end{array}
\right.
\end{align*}
Many properties of forward and backward Green's operators can be
directly deduced
from those of the canonical bisolution.
\begin{proposition}
Let $x,y>0$.    Then the  maps
    \begin{align*}
      ( m^2 , k^2 ) \,\mapsto\, &G^0_{m^2,\to}(-k^2;x,y) ,\quad G^0_{m^2,\leftarrow}(-k^2;x,y) 
\end{align*}
are analytic on $\C\times\C$. \end{proposition}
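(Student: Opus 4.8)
The plan is to reduce the statement directly to the analyticity of the canonical bisolution, which was established in the preceding proposition, and to observe that the only new ingredient -- the Heaviside factor -- contributes nothing once $x,y$ are frozen. The key point I would stress is that for \emph{fixed} $x,y>0$, the factors $\theta(x-y)$ and $\theta(y-x)$ occurring in the definitions of $G^0_{m^2,\to}$ and $G^0_{m^2,\leftarrow}$ are mere constants, each equal to $0$ or $1$ and independent of the spectral parameters $(m^2,k^2)$. Hence no analyticity issue can originate from the Heaviside function itself.

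Concretely, I would argue as follows. By the previous proposition, for fixed $x,y>0$ the map $(m^2,k^2)\mapsto G^0_{m^2,\leftrightarrow}(-k^2;x,y)$ is analytic on $\C\times\C$. Since
\[
G^0_{m^2,\to}(-k^2;x,y)=\theta(x-y)\,G^0_{m^2,\leftrightarrow}(-k^2;x,y),\qquad
G^0_{m^2,\leftarrow}(-k^2;x,y)=-\theta(y-x)\,G^0_{m^2,\leftrightarrow}(-k^2;x,y),
\]
and multiplication by a fixed scalar preserves analyticity, both maps are analytic on $\C\times\C$, as claimed.

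The only point that I would flag as warranting a remark -- and it is the closest thing to an obstacle, though a trivial one -- is the diagonal case $x=y$. With the convention $\theta(0)=1$ from the definition above, one has $G^0_{m^2,\to}(-k^2;x,x)=G^0_{m^2,\leftrightarrow}(-k^2;x,x)$; but the bisolution kernel $h^0_1(x)h^0_2(y)-h^0_2(x)h^0_1(y)$ vanishes identically on the diagonal, so this reduces to the zero function, which is again analytic. Thus there is no genuine difficulty: the entire content of the statement is the analyticity of the bisolution, combined with the elementary fact that the Heaviside factors freeze to constants once $x,y$ are fixed.
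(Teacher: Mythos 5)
Your proof is correct and follows exactly the route the paper intends: the paper gives no separate argument for this proposition, remarking only that properties of $G^0_{\to}$ and $G^0_{\leftarrow}$ are ``directly deduced'' from those of the canonical bisolution, which is precisely your observation that the Heaviside factors freeze to constants for fixed $x,y$. Your extra remark on the diagonal case $x=y$ is a harmless (and correct) addition.
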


$L_{m^2}^0$ possesses various Green's operators defined by
imposing boundary conditions at $0$ and $\infty$, which can be called
{\em two-sided}. Among them  we
should distinguish $G_{m,\bowtie}^0(-k^2) $
given for $\Re(k)\geq0$, $k\neq0$, by its
integral kernel
\begin{align*}
  G_{\bowtie}^0 (x,y) = \, &G_{m,\bowtie}^0 (-k^2;x,y) \\
   :=\, & \theta(x-y) v_m^0(x,k)u_m^0( y,k )
    +\theta(y-x)u_m^0( x ,k) v_m^0( y ,k). 
    \end{align*}
  We will call it
the {\em two-sided Green's operator with pure boundary conditions},
often abusing the terminology and shortening the name to just the {\em two-sided Green's operator}.
Note that it depends on $m$ and not on $m^2$.
Note also that at the moment we do not insist on the
  conditions $\Re(m)>-1$ and $\Re(k)>0$, which will be needed to make it a bounded operator.

             Note the connection between the forward and two-sided Green's 
operators: 
\begin{align}\label{conne}
  G_{m,\bowtie}^0(-k^2;x,y)&= u_m^0(x,k) v_m^0(y,k)+ G_{m^2,\to}^0(-k^2;x,y),\\
&=v_m^0(x,k) u_m^0(y,k)+ G_{m^2,\leftarrow}^0(-k^2;x,y). 
  \end{align}

For $m\neq0$, $  G_{m,\bowtie}^0(-k^2)$ can be also defined for $k=0$, when its integral
kernel is
\begin{align*}
  G_{m,\bowtie}^0(0;x,y)&:=\frac{1}{2m}\big(x^{\frac12-m}y^{\frac12+m}\theta(x-y) 
                      +x^{\frac12+m}y^{\frac12-m}\theta(y-x)\big)
                      .
  \end{align*}

         For $m=0$ and   $k=0$  Green's operator $G_{\bowtie}^0$ is not well 
           defined. This motivates us to introduce another kind of
           Green's operator at $m=0$, which will be called
 {\em  Green's operator logarithmic near zero}: 
\begin{equation*}
 G_\diamond^0 (x,y) =G_{0,\diamond}^0(-k^2;x,y):=
 - u_0^0(x,k)p_0^0(y,k)
  \theta(x-y) 
  -  p_0^0(x,k) u_0^0(y,k)
  \theta(y-x).
   \end{equation*}
It has a limit at $k=0$:
\begin{equation*}
G_{0,\diamond}^0 (0;x,y):=-x^{\frac12}y^{\frac12}\big(\mathrm{ln}(x)\theta(x-y) 
  +\mathrm{ln}(y)\theta(y-x)\big).
   \end{equation*}
Observe that $ G_\diamond^{0}$ and $ G_{\bowtie}^{0}$ differ by a
term that diverges as $k\to0$:
\begin{equation*}
  G_{0,\diamond}^0(-k^2;x,y)= G_{0,\bowtie}^{0}(-k^2;x,y)
+  \Big(\mathrm{ln}\Big(\frac{k}2\Big) +\gamma\Big) u_0^0(x,k)  u_0^0(y,k).
  \end{equation*}

For $m=0$ it is also natural to introduce {\em Green's operator
  logarithmic near infinity}:
\begin{equation*}
 G_{0,\triangledown}^0 (x,y) =G_{0,\triangledown}^0(-k^2;x,y):=
    p_0^0(x,k) u_0^0(y,k)
  \theta(x-y)  + u_0^0(x,k)p_0^0(y,k) 
  \theta(y-x) .
   \end{equation*}
It also has a limit at $k=0$:
\begin{equation*}
G_{0,\triangledown}^0 (0;x,y):=x^{\frac12}y^{\frac12}\big(\mathrm{ln}(y)\theta(x-y) 
  +\mathrm{ln}(x)\theta(y-x)\big).
\end{equation*}
One could compare
$ G_{0,\triangledown}^{0}$ and $ G_{0,\leftarrow}^{0}$:
\begin{equation*}
  G_{0,\triangledown}^0(-k^2;x,y)= G_{0,\leftarrow}^{0}(-k^2;x,y)
+ p_0^0(x,k)  u_0^0(y,k).
  \end{equation*}

  \begin{proposition}\label{uua5} 
  Let $x,y>0$.
    \begin{enumerate}[label=(\roman*)]
    \item
The function $(m,k)\mapsto G_{m,\bowtie}^0(-k^2;x,y)$
 is regular on 
  \begin{equation*}
    \C\times\{ \Re(k)\ge0\}\,\backslash\, 
    \{\Re(m)\leq0\}{\times}\{k=0\}. 
  \end{equation*}
\item
The function $k\mapsto G_{0,\diamond}^0(-k^2;x,y)$ is
  regular on 
  $
   \{ \Re(k)\ge0\}
$.
  \end{enumerate}
\end{proposition}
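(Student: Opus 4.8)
The plan is to reduce both statements to Proposition~\ref{uua0}, using the elementary fact that a product of regular functions is regular: continuity is preserved under products, and for each frozen variable a product of two functions analytic in that variable is analytic by the Leibniz rule. The point is that, for fixed $x,y>0$, each of these integral kernels is literally a product of the fundamental solutions whose regularity was already established in Proposition~\ref{uua0}, so no genuinely new analysis is needed.

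For part~(i) I would fix $x,y>0$ and note that, since the Heaviside factors $\theta(x-y)$ and $\theta(y-x)$ never both vanish, for $x\neq y$ the kernel collapses to a single product, namely $G_{m,\bowtie}^0(-k^2;x,y)=v_m^0(x,k)u_m^0(y,k)$ when $x>y$ and $u_m^0(x,k)v_m^0(y,k)$ when $x<y$ (and a harmless multiple of $u_m^0(x,k)v_m^0(x,k)$ on the diagonal). So it suffices to check that both factors are regular on the asserted region
\[
\Omega_{\bowtie}:=\C\times\{\Re(k)\ge0\}\,\backslash\,\{\Re(m)\le0\}{\times}\{k=0\}.
\]
The factor $u_m^0$ is analytic on all of $\C\times\C$ by Proposition~\ref{uua0}(i). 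For $v_m^0$ the key observation is that $\Omega_\bowtie$ is contained in the domain of Proposition~\ref{uua0}(ii): the set $\{\Re(m)\le0\}{\times}\{k=0\}$ removed here contains the set $(\{\Re(m)<0\}\cup\{m=0\}){\times}\{k=0\}$ removed for $v_m^0$, because $\{\Re(m)<0\}\cup\{m=0\}\subset\{\Re(m)\le0\}$. Hence $v_m^0$ is regular on $\Omega_\bowtie$, and by the product remark the kernel is regular there as well.

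Part~(ii) follows in exactly the same way, now with a single spectral variable: for fixed $x,y>0$ the kernel $G_{0,\diamond}^0(-k^2;x,y)$ equals $-u_0^0(x,k)p_0^0(y,k)$ or $-p_0^0(x,k)u_0^0(y,k)$, a product of $k\mapsto u_0^0(x,k)$, analytic on $\C$ by Proposition~\ref{uua0}(i), and $k\mapsto p_0^0(x,k)$, regular on $\{\Re(k)\ge0\}$ by Proposition~\ref{uua0}(iii), so the product is regular on $\{\Re(k)\ge0\}$. I expect no real obstacle in either part; all the analytic content is already carried by Proposition~\ref{uua0}. The only thing deserving a moment's attention is the domain bookkeeping in part~(i) --- in particular, seeing that the region asserted for $G_{m,\bowtie}^0$ at $k=0$ is, by design, the slightly smaller half-plane $\Re(m)>0$ on which $v_m^0(\,\cdot\,,0)=\frac{\Gamma(m)}{2}x^{\frac12-m}$ carries no pole of $\Gamma$, which is exactly why the claimed domain is a proper subset of the one appearing in Proposition~\ref{uua0}(ii).
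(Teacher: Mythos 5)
Your proof is correct and is exactly the paper's argument: the paper's own proof of this proposition is the single sentence that it is ``a direct consequence of Proposition \ref{uua0}'', and your product-of-regular-functions reduction, together with the observation that the stated domain is contained in the domain of Proposition \ref{uua0}(ii), is precisely the omitted detail. Only your closing aside is slightly off: the exclusion of $\{\Re(m)\leq0\}\times\{k=0\}$ rather than the smaller exceptional set of Proposition \ref{uua0}(ii) is not forced by a pole of $\Gamma(m)$ on $\Re(m)=0$, $m\neq0$ (there is none there); the proposition simply asserts regularity on a smaller domain than the argument actually yields, which does not affect correctness.
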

    \begin{proof}
This is a direct consequence of Proposition \ref{uua0}. 
\end{proof}

Let $a>0$.
If $G_\bullet^0$ is one of Green's operators, then we introduce the
corresponding Green's operator {\em compressed to the interval
  $]0,a[$}
by\begin{equation}\label{bowtie}
  G_\bullet^{0(a)}(x,y):=
  G_\bullet^{0}(x,y)\theta(a-x)\theta(a-y). 
  \end{equation}

    For
$  m\in\C,\kappa\in\C\cup\{\infty\} $ and $\nu\in\C\cup\{\infty\}$
one can also introduce Green's operators with mixed boundary
conditions
\begin{align}\label{mixed1}
  G_{m,\kappa}^0(-k^2;x,y)&:=
\frac{\frac{1}{\Gamma(-m)}  (k/2)^{-m}G_{m,\bowtie}^0(-k^2)-\frac{\kappa}{\Gamma(m)}(k/2)^mG_{-m,\bowtie}^0(-k^2)}
                          {\frac{1}{\Gamma(-m)}(k/2)^{-m}-\frac{\kappa}{\Gamma(m)}(k/2)^m},\quad
                            m\neq0;\\
    G_{0,\kappa}^0(-k^2;x,y)&:=G_{0,\bowtie}^0(-k^2;x,y);\\
  \label{mixed2}
  G_0^{0,\nu}(-k^2;x,y)&:=\frac{\big(\nu-\gamma-\ln(k/2)\big)G_{0,\bowtie}^0(-k^2)+G_{0,\bowtie}^{0\prime}(-k^2)}
{\nu-\gamma-\ln(k/2)};
\end{align}
where $G_{0,\bowtie}^{0\prime}(-k^2)$ denotes
$\partial_mG_{m,\bowtie}^0(-k^2)\big|_{m=0}$.
Eq. \eqref{mixed1} is (6.3) of \cite{DeRi17_01} (generalized to
$m\in\C$). 
Eq. \eqref{mixed2}  follows from \eqref{mixed1} by the
de l'Hopital method if we set $\kappa=\frac{\nu m-1}{\nu m+1}$ as in
Remark 2.5   of \cite{DeRi17_01}.
Note that \eqref{mixed2}  is consistent with (7.1)  of \cite{DeRi17_01}.

  \section{Solutions of the perturbed eigenequation 
  with prescribed behavior near origin}\label{section:solutions}

In this section we construct solutions  to the equation 
\begin{align}\label{eq:a1}
L_{m^2} g = - k^2 g , 
\end{align}
and study their properties.
     We will try to find solutions whose behavior near origin is 
     similar to solutions of the unperturbed equation
\begin{equation}
  L_{m^2}^0g^0=-k^2g^0.\label{eq:a1x0}
\end{equation}
To shorten notations below, we will often write
\begin{equation*}
u^0(x)=u^0_m(x)=u^0_m(x,k), \qquad v^0(x)=v^0_m(x)=v^0_m(x,k),
\end{equation*}
where $u^0_m(x,\cdot)$, $v^0_m(x,\cdot)$ are the solutions of \eqref{eq:a1x0} introduced in \eqref{uu1} and \eqref{uu2}. Recall that the space of solutions 
in $AC^1]0,\infty[$ to \eqref{eq:a1}, respectively to \eqref{eq:a1x0}  is denoted 
     ${\Ker(L_{m^2}+k^2)}$, respectively    $\Ker(L_{m^2}^0+k^2)$.

\subsection{Weights}

One of our main tools  will be various weighted
$L^\infty$ spaces. Let us introduce notation that we will 
use to denote such spaces.

Let $]a,b[ \, \subset \, ]0,\infty[$. For any positive measurable function $\phi$ on $]a,b[$, we define the following  Banach space of (equivalence classes of) measurable functions on $]a,b[$:
\begin{align}
L^\infty(]a,b[,\phi)&:=
     \left\{f : ]a,b[ \to \mathbb{C} \ \mid \ 
   \left\|\frac{f}{\phi}\right\|_\infty :=
  \underset{x\in]a,b[}{\textrm{ess\,sup}} \left|\frac{f(x)}{\phi(x)}\right|
  <\infty\right\}, \label{eq:defLinfty}\\
  L_0^\infty(]0,b[,\phi)&:=
  \left\{f\in L^\infty(]0,b[,\phi)\ \mid \
                          \lim_{x\to0}\frac{f(x)}{\phi(x)}=0\right\}, \label{eq:defLinfty1}\\
  L_\infty^\infty(]a,\infty[,\phi)&:=
  \left\{f\in L^\infty(]a,\infty[,\phi)\ \mid \
  \lim_{x\to\infty}\frac{f(x)}{\phi(x)}=0\right\}. \label{eq:defLinfty12}
\end{align}

We will use the following convention. Suppose that the operator $\one + G_{\bullet}^{0}Q$ is invertible on $L^\infty(]0,a[,\phi)$ for some $a>0$ and some positive measurable function $\phi$ on $]0,a[$, where $G^0_\bullet$ is a Green's operator. If $f :]0,\infty[\to\mathbb{C}$ is such that its restriction to $]0,a[$ belongs to $L^\infty(]0,a[,\phi)$, then $(\one + G_{\bullet}^{0}Q)^{-1}f$ should be understood as $(\one + G_{\bullet}^{0}Q)^{-1}$ applied to the restriction of $f$ on $]0,a[$. Clearly, if in addition $f\in\cN(L_{m^2}^0+k^2)$, then $(\one + G_{\bullet}^{0}Q)^{-1}f$ is a solution to \eqref{eq:a1} on $]0,a[$. The unique solution on $]0,\infty[$ which coincides with $(\one + G_{\bullet}^{0}Q)^{-1}f$ on $]0,a[$ will be denoted by the same symbol.

In order to make the notation more compact, we
introduce the following 
$k$-dependent weights on $]0,\infty[$:
\begin{align}\label{shorthand}
  \mu_k (x)&: = \min(|k|^{-1},x), 
          \quad \eta_{\pm k}(x):=\e^{\pm\Re(k)x},\\
  \quad 
  \lambda_k (x)&
              :=1-\mathrm{ln}\big(|k|\mu_k (x)\big). \label{shorthand2}
\end{align}
Note that for $k=0$ we have $\mu_k (x)=x$ and $\lambda_k $ is
ill defined. { For $x>|k|^{-1}$, we have $\mu_k (x)=|k|^{-1}$ and $\lambda_k (x)=1$.

With these shorthands we can concisely rewrite our basic estimates on
unperturbed eigenfunctions \eqref{pwo1}--\eqref{pwo4}: 
\begin{align}\label{pio1}
  | u_m^0(x,k)|&\lesssim  \mu_k (x)^{\frac12+\Re(m)}\eta_k
                       (x);\\
  \label{pio2}
    | v_m^0(x,k)|&
    \lesssim \mu_k (x)^{\frac12-\Re(m)} \eta_{-k}(x),\quad
                         \Re(m)\geq0,\quad m\neq0;\\
  \label{pio3}
  | v_0^0(x,k)|&\lesssim 
                       \mu_k (x)^{\frac12}\lambda_k (x)\eta_{-k}(x),\quad 
                       m=0,\quad k\neq0;\\
\label{pio4}                         |p_0^0(x,k)|&\lesssim \mu_k (x)^{\frac12}\big(1+|\mathrm{ln}\,\mu_k (x)|\big)\eta_k(x),\quad m=0.
                       \end{align}
Note that estimates \eqref{pio1}, \eqref{pio2}, \eqref{pio3} and
\eqref{pio4} are
uniform in $x\in]0,\infty[$ and $\Re(k)\ge0$.

\subsection{The forward Green's operator}

In this subsection we study the boundedness of the operator $G_\rightarrow^0Q$ between
suitable weighted $L^\infty$-spaces. The forward Green's operator is insensitive to the change of the sign 
at $m$. Therefore, without limiting generality, we can assume that
$\Re(m)\geq0$.

The first lemma is devoted to global properties of
$G_\rightarrow^0Q$ on the whole $]0,\infty[$. Note that, if $\varepsilon\ge0$ and $k\neq0$, the condition \eqref{bound1} is equivalent to $Q \in \mathscr{L}^{(0)}_\varepsilon\cap\mathscr{L}^{(\infty)}_0$, while \eqref{bound2} is equivalent to $Q \in \mathscr{L}^{(0)}_{\varepsilon,\mathrm{ln}^\beta}\cap\mathscr{L}^{(\infty)}_0$.
   \begin{lemma}\label{lemma1}
     Let
     $\mathrm{Re}(k)\ge0$ and $Q\in L^1_\mathrm{loc}]0,\infty[$.
     \begin{enumerate}[label=(\roman*)]
     \item
       Let $\Re(m)\geq0$, $m\neq0$
       and
       $ \varepsilon_1+\varepsilon\geq\frac12+\Re(m)$.
       Suppose that
       \begin{equation}
         \int_0^\infty\mu_k (y)^{1-\varepsilon}|Q(y)|\d y<\infty.
\label{bound1}         \end{equation}
Then  
              \begin{equation*}
         G_\rightarrow^0Q:
       L^\infty\big(]0,\infty[,\mu_k ^{\varepsilon_1}\eta_k \big)\to
     L_0^\infty\big(]0,\infty[,\mu_k ^{\varepsilon_1+\varepsilon}\eta_k \big)
       \end{equation*}
       is bounded by $C\times$\eqref{bound1} uniformly in $k$.
     \item
       Let $m=0$, $k\neq0$,
       $1\leq\beta-\alpha$ and
 $ \varepsilon_1+\varepsilon\geq\frac12$.  Suppose that
       \begin{equation}
         \int_0^\infty \mu_k (y)^{1-\varepsilon}\lambda_k (y)^\beta|Q(y)|\d y<\infty.
\label{bound2}         \end{equation}
Then  
              \begin{equation*}
         G_\rightarrow^0Q:
       L^\infty\big(]0,\infty[,\mu_k ^{\varepsilon_1}\lambda_k ^\alpha\eta_k \big)\to
     L_0^\infty\big(]0,\infty[,\mu_k ^{\varepsilon_1+\varepsilon}\lambda_k ^{\alpha+1-\beta}\eta_k \big)
       \end{equation*}
       is bounded by $C\times$\eqref{bound2} uniformly in $k$.
     \end{enumerate}
   \end{lemma}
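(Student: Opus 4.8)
The plan is to write out the kernel of $G_\rightarrow^0 Q$ explicitly, split it into two summands, and bound each by the target weight using the pointwise estimates \eqref{pio1}--\eqref{pio4} together with the monotonicity of $\mu_k$ and $\lambda_k$. Since the forward operator is insensitive to the sign of $m$ we keep $\Re(m)\ge0$. Recalling $\Wr(v_m^0,u_m^0)=1$, for $x>y$ one has $G^0_{m^2,\leftrightarrow}(-k^2;x,y)=v_m^0(x)u_m^0(y)-u_m^0(x)v_m^0(y)$, so for admissible $f$
\begin{equation*}
(G_\rightarrow^0 Q f)(x)=v_m^0(x)\int_0^x u_m^0(y)Q(y)f(y)\,\d y-u_m^0(x)\int_0^x v_m^0(y)Q(y)f(y)\,\d y=:T_1(x)-T_2(x).
\end{equation*}
I would estimate $T_1$ and $T_2$ separately.

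For part (i) I would insert $|f(y)|\le\|f\|\,\mu_k(y)^{\varepsilon_1}\eta_k(y)$ together with \eqref{pio1} and \eqref{pio2}. In $T_1$ the exponential factors combine as $\eta_{-k}(x)\eta_k(y)^2$, which is $\le\eta_k(x)$ for $y\le x$ since $\Re(k)\ge0$; in $T_2$ they combine as $\eta_{-k}(y)\eta_k(y)=1$. After this exponential bookkeeping each summand reduces to $\eta_k(x)$ times a power of $\mu_k(x)$ times an integral $\int_0^x\mu_k(y)^s|Q(y)|\,\d y$. The key algebraic point is that the hypothesis $\varepsilon_1+\varepsilon\ge\frac12+\Re(m)$ makes the surplus exponent $s-(1-\varepsilon)$ nonnegative in both summands; since $\mu_k$ is nondecreasing one pulls $\mu_k(x)^{\,s-(1-\varepsilon)}$ out of the integral and controls the rest by \eqref{bound1}. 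Collecting the powers of $\mu_k(x)$ (namely $\mu_k(x)^{\frac12-\Re(m)}$ from $v_m^0(x)$ in $T_1$, resp. $\mu_k(x)^{\frac12+\Re(m)}$ from $u_m^0(x)$ in $T_2$, times the extracted surplus) gives in each case exactly $\mu_k(x)^{\varepsilon_1+\varepsilon}\eta_k(x)$.

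For part (ii), with $m=0$, the argument is identical except that $v_0^0$ carries an extra factor $\lambda_k$ by \eqref{pio3} and the source/target weights carry powers of $\lambda_k$. After the same exponential cancellations, I would factor each integrand as $\mu_k(y)^{1-\varepsilon}\lambda_k(y)^\beta$ (the weight of \eqref{bound2}) times a surplus $\mu_k(y)^{\varepsilon_1+\varepsilon-\frac12}\lambda_k(y)^\sigma$, where $\sigma=\alpha-\beta$ in $T_1$ and $\sigma=\alpha+1-\beta$ in $T_2$. Here both hypotheses do the work: $\varepsilon_1+\varepsilon\ge\frac12$ gives a nonnegative power of $\mu_k$, while $\beta-\alpha\ge1$ makes $\sigma\le0$; because $\lambda_k\ge1$ is nonincreasing, both surplus factors are monotone in $y$, can be replaced by their value at $x$ and pulled out, and the remaining integral is controlled by \eqref{bound2}. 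Tracking the powers (the prefactor $\mu_k(x)^{1/2}\lambda_k(x)$ from $v_0^0(x)$ in $T_1$, resp. $\mu_k(x)^{1/2}$ from $u_0^0(x)$ in $T_2$) yields in both cases the target weight $\mu_k(x)^{\varepsilon_1+\varepsilon}\lambda_k(x)^{\alpha+1-\beta}\eta_k(x)$.

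Finally, membership in $L_0^\infty$ rather than merely $L^\infty$ falls out of the same scheme: after all the monotonicity steps, the quotient of $T_i(x)$ by the target weight is bounded precisely by $\int_0^x\mu_k(y)^{1-\varepsilon}|Q(y)|\,\d y$ in case (i), resp. $\int_0^x\mu_k(y)^{1-\varepsilon}\lambda_k(y)^\beta|Q(y)|\,\d y$ in case (ii), which tends to $0$ as $x\to0$ by the integrability hypothesis. Uniformity in $k$ is automatic, since \eqref{pio1}--\eqref{pio4} are uniform and no step introduces $k$-dependent constants. I expect the main obstacle to be purely bookkeeping: keeping the powers of $\mu_k$ and especially of $\lambda_k$ consistent in the logarithmic case (ii), and verifying that the two stated inequalities are exactly what forces every surplus exponent to have the sign required for the monotonicity argument; the exponential cancellations, by contrast, are routine.
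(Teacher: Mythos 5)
Your proposal is correct and follows essentially the same route as the paper: the same splitting of $(G_\rightarrow^0Qf)(x)$ into the two summands $v^0(x)\int_0^x u^0 Qf$ and $u^0(x)\int_0^x v^0 Qf$, the same use of the pointwise bounds \eqref{pio1}--\eqref{pio4}, and the same monotonicity argument for $\mu_k$, $\eta_k$ and $\lambda_k$ to pull the surplus factors out of the integral, with the $L_0^\infty$ membership and the uniformity in $k$ obtained exactly as in the paper. The exponent bookkeeping in both parts checks out.
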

    
     \begin{proof} 
For $m\neq0$ we use
     \begin{align*}
       ( G_\rightarrow^0Qf)(x)=
        v^0(x)\int_0^x  u^0(y)Q(y)f(y)\mathrm{d} y-
  u^0(x)\int_0^x  v^0(y)Q(y)f(y)\mathrm{d} y.
     \end{align*}
By \eqref{pio1}--\eqref{pio2}, we have
     \begin{align*}\notag &
       \left| u^0(x)\int_0^x v^0(y)Q(y)f(y)\mathrm{d} y\right| \\&\lesssim \mu_k (x)^{\frac12+\Re(m)}\eta_k (x) 
       \int_0^x\mu_k (y)^{\frac12-\Re(m)+\varepsilon_1}\eta_{-k}(y)|Q(y)|\eta_k (y)\mathrm{d}
                                                                          y\left\| \frac{f}{\mu_k^{\varepsilon_1}\eta_k} \right\|_{\infty},
\end{align*}
and
\begin{align*}                                                                          
     \notag &
       \left| v^0(x)\int_0^x u^0(y)Q(y)f(y)\mathrm{d} y\right|\\ &\lesssim \mu_k (x)^{\frac12-\Re(m)}\eta_{-k}(x) 
       \int_0^x\mu_k (y)^{\frac12+\Re(m)+\varepsilon_1}\eta_k (y)|Q(y)|\eta_k (y)\mathrm{d} y\left\| \frac{f}{\mu_k^{\varepsilon_1}\eta_k} \right\|_{\infty}.
     \end{align*}
Using the fact that $y\mapsto\mu_k (y)$ and $y\mapsto\eta_k(y)$ are increasing together with 
       $     -\frac12\mp\Re(m)+\varepsilon   + \varepsilon_1\geq0$,
       we estimate both expressions by
\begin{align*}
       &\lesssim \mu_k (x)^{\varepsilon_1+\varepsilon}\eta_k (x)
       \int_0^x\mu_k (y)^{1-\varepsilon}|Q(y)|\mathrm{d} y\left\| \frac{f}{\mu_k^{\varepsilon_1}\eta_k} \right\|_{\infty}.
       \end{align*}
Since
       \begin{equation*}
       \int_0^x\mu_k (y)^{1-\varepsilon}|Q(y)|\mathrm{d} y
       =o(x^0),\end{equation*}
     we obtain
       \begin{equation*}
\left|       ( G_\rightarrow^0Qf)(x)\right|\leq o(x^0)\mu_k (x)^{\varepsilon_1+\varepsilon}
\eta_k (x)\left\| \frac{f}{\mu_k^{\varepsilon_1}\eta_k} \right\|_{\infty}.
         \end{equation*}
         This proves (i).

         For $m=0$, using \eqref{pio1} and \eqref{pio3}, we have
     \begin{align*} \notag     & \left| u^0(x)\int_0^x
        v^0(y)Q(y)f(y)\mathrm{d} y\right| \\
     &
\lesssim \mu_k (x)^{\frac12}\eta_k (x) 
       \int_0^x\mu_k (y)^{\frac12+\varepsilon_1}\lambda_k (y)^{1+\alpha}\eta_{-k}(y)|Q(y)|\eta_k (y)\mathrm{d}
                y\left\| \frac{f}{\mu_k^{\varepsilon_1}\lambda_k^\alpha\eta_k} \right\|_{\infty},
                \end{align*}
and
                \begin{align*}
    \notag &
       \left| v^0(x)\int_0^x u^0(y)Q(y)f(y)\mathrm{d} y\right|\\ 
&\lesssim \mu_k (x)^{\frac12}\lambda_k (x)\eta_{-k}(x) 
       \int_0^x\mu_k (y)^{\frac12+\varepsilon_1}\lambda_k (y)^\alpha\eta_k (y)|Q(y)|\eta_k (y)\mathrm{d} y\left\| \frac{f}{\mu_k^{\varepsilon_1}\lambda_k^\alpha\eta_k} \right\|_{\infty}.
     \end{align*}
Besides the arguments used in (i), we need to 
notice that
 $y\mapsto\lambda_k (y)$ is decreasing and that $1+\alpha-\beta\leq0$. We then
 estimate  both above expressions by
\begin{align*}
       &\lesssim
         \mu_k (x)^{\varepsilon_1+\varepsilon}\lambda_k (x)^{1+\alpha-\beta}
         \eta_k (x)
       \int_0^x\mu_k (y)^{1-\varepsilon}\lambda_k (y)^\beta|Q(y)|\mathrm{d} y\left\| \frac{f}{\mu_k^{\varepsilon_1}\lambda_k^\alpha\eta_k} \right\|_{\infty}.
       \end{align*}
Since
       \begin{equation*}
       \int_0^x\mu_k (y)^{1-\varepsilon}\lambda_k (y)^\beta|Q(y)|\mathrm{d} y
       =o(x^0),\end{equation*}
     we obtain
       \begin{equation*}
\left|       ( G_\rightarrow^0Qf)(x)\right|\leq o(x^0)\mu_k (x)^{\varepsilon_1+\varepsilon}\lambda_k (x)^{1+\alpha-\beta}
\eta_k (x)\left\| \frac{f}{\mu_k^{\varepsilon_1}\lambda_k^\alpha\eta_k} \right\|_{\infty}.
         \end{equation*}
         This proves (ii).
        \end{proof}

        \begin{corollary}\label{lemma2_app}
          Let $\mathrm{Re}(k)\ge0$
          and $n \in \mathbb{N}$.
  \begin{enumerate}[label=(\roman*)]
    \item
      Let $\Re(m)\geq0$, $m\neq0$. Suppose that $Q \in \mathscr{L}^{(0)}_0$.
      Then, for all $a>0$, for all $f\in L^\infty \big(]0,a[,\mu_k^{\frac12+\Re(m)}\eta_k\big)$ and $0 < x < a$,
\begin{align*}\frac{| (G_\rightarrow^0Q)^n f(x)|}{\mu_k (x)^{\frac12+\Re(m)}\eta_k (x)}
\leq
\frac{C^{n+1} }{n!} \Big ( \int_0^x \mu_k (y)
|Q(y)| \mathrm{d}y \Big )^n\sup_{y<x}\frac{|f(y)|}{\mu_k (y)^{\frac12+\Re(m)}\eta_k (y)}.
\end{align*}
\item Suppose $k\neq0$. Let $m=0$ and $\beta\ge1$. Suppose that $Q \in \mathscr{L}^{(0)}_{0,\mathrm{ln}^\beta}$.
  Then, for all $a>0$, $f\in L^\infty \big(]0,a[,\mu_k^{\frac12}\lambda_k^{\beta-1}\eta_k\big)$ and $0 < x < a$,
\begin{align*}\frac{| (G_\rightarrow^0Q)^n f(x)|}{\mu_k (x)^{\frac12}\lambda_k (x)^{\beta-1}\eta_k (x)}
\leq
\frac{C^{n+1} }{n!} \Big ( \int_0^x \mu_k (y)\lambda_k (y)^\beta
|Q(y)| \mathrm{d}y \Big )^n\sup_{y<x}\frac{|f(y)|}{\mu_k (y)^{\frac12}\lambda_k (y)^{\beta-1}\eta_k (y)}.
\end{align*}
\item 
  Let $\Re(m)\geq0$, $m\neq0$. Suppose that $Q \in \mathscr{L}^{(0)}_{2\Re(m)}$. 
 Then, for all $a>0$, for all $f\in L^\infty \big(]0,a[,\mu_k^{\frac12-\Re(m)}\eta_k\big)$ and $0 < x < a$,
\begin{align*}\frac{| (G_\rightarrow^0Q)^n f(x)|}{\mu_k (x)^{\frac12-\Re(m)}\eta_k (x)}
\leq 
\frac{C^{n+1} }{n!} \Big ( \int_0^x \mu_k (y) ^{1-2\Re(m)}
|Q(y)| \mathrm{d}y \Big )^n\sup_{y<x}\frac{|f(y)|}{\mu_k (y)^{\frac12-\Re(m)}\eta_k (y)}. 
\end{align*}
\end{enumerate}
Above, $C$ is a constant independent of $n$ and $k$.
\end{corollary}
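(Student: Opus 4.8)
The plan is to reduce everything to a single application of $G_\rightarrow^0Q$ and then iterate, exploiting that $G_\rightarrow^0$ is a \emph{forward Volterra} operator: its kernel is supported on $\{y<x\}$ and equals $G^0_{m^2,\leftrightarrow}(-k^2;x,y)=v^0(x)u^0(y)-u^0(x)v^0(y)$. In each of the three cases write $\phi$ for the target weight and $w$ for the integrand density, namely $(\phi,w)=(\mu_k^{\frac12+\Re(m)}\eta_k,\ \mu_k|Q|)$ in (i), $(\phi,w)=(\mu_k^{\frac12}\lambda_k^{\beta-1}\eta_k,\ \mu_k\lambda_k^{\beta}|Q|)$ in (ii), and $(\phi,w)=(\mu_k^{\frac12-\Re(m)}\eta_k,\ \mu_k^{1-2\Re(m)}|Q|)$ in (iii). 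The crux is the pointwise one-step estimate
\begin{equation}
\frac{|(G_\rightarrow^0Qf)(x)|}{\phi(x)}\le C\int_0^x w(y)\,\frac{|f(y)|}{\phi(y)}\,\mathrm{d}y,\qquad 0<x<a,
\label{eq:onestep}
\end{equation}
with $C$ independent of $n$ and $k$ (but allowed to depend on $a$ and $m$); granting \eqref{eq:onestep}, the factorial gain is purely combinatorial.

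To prove \eqref{eq:onestep} I would insert the kernel above and bound the two resulting terms $u^0(x)\int_0^x v^0Qf$ and $v^0(x)\int_0^x u^0Qf$ separately, using the estimates \eqref{pio1}--\eqref{pio4} for $u^0,v^0$. After replacing $|f(y)|$ by $\tfrac{|f(y)|}{\phi(y)}\phi(y)$, each term is controlled by $\phi(x)$ times $\int_0^x(\cdots)\,|Q|\,\tfrac{|f|}{\phi}$, where $(\cdots)$ is a ratio of weights. Two structural facts make these ratios bounded: the exponential factors recombine through $\eta_k\eta_{-k}=1$ together with $\e^{\Re(k)(y-x)}\le1$ for $y\le x$, while the powers of $\mu_k$ (increasing) and of $\lambda_k$ (decreasing, $\ge1$) collapse using $y\le x$ and $\Re(m)\ge0$. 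In cases (i) and (ii) the target $\phi$ is the \emph{more regular} (principal) weight, and the ratio is $\le1$ outright. Case (iii) is the delicate one: there $\phi=\mu_k^{\frac12-\Re(m)}\eta_k$ is the \emph{less regular} (non-principal) weight, and one of the two terms produces a spurious factor $\mu_k(x)^{2\Re(m)}$ pointing in the wrong direction; this is harmless only because $\mu_k(x)\le x<a$, so $\mu_k(x)^{2\Re(m)}\le a^{2\Re(m)}$ is a constant independent of $k$. This is precisely why the stronger hypothesis $Q\in\mathscr{L}^{(0)}_{2\Re(m)}$, equivalently the density $\mu_k^{1-2\Re(m)}|Q|$, is forced, and why the final constant appears as $C^{n+1}$ rather than a single $C$.

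With \eqref{eq:onestep} in hand I would close by induction on $n$. Setting $g_n(x):=|(G_\rightarrow^0Q)^nf(x)|/\phi(x)$ and $M(x):=\sup_{y<x}|f(y)|/\phi(y)$, estimate \eqref{eq:onestep} applied to $(G_\rightarrow^0Q)^{n-1}f$ gives $g_n(x)\le C\int_0^x w(y)g_{n-1}(y)\,\mathrm{d}y$, with $g_0\le M$. Since $M$ is nondecreasing, the inductive hypothesis $g_{n-1}(y)\le\frac{C^{n-1}}{(n-1)!}\big(\int_0^y w\big)^{n-1}M(y)$ combined with the elementary identity $\int_0^x w(y)\big(\int_0^y w\big)^{n-1}\mathrm{d}y=\frac1n\big(\int_0^x w\big)^n$ yields $g_n(x)\le\frac{C^{n}}{n!}\big(\int_0^x w\big)^nM(x)$, which is the asserted inequality after enlarging $C\ge1$ so that $C^n\le C^{n+1}$. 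The only genuine obstacle is the one-step bound \eqref{eq:onestep}, and within it the case (iii) bookkeeping described above; everything else is the standard Volterra/Gr\"onwall iteration.
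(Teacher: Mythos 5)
Your proposal is correct and follows essentially the same route as the paper: a one-step weighted kernel bound (obtained from the estimates \eqref{pio1}--\eqref{pio4} exactly as in the proof of Lemma \ref{lemma1}) followed by the standard forward-Volterra iteration with the identity $\int_0^x w\,(\int_0^y w)^{n-1}\,\mathrm{d}y=\frac1n(\int_0^x w)^n$, which is what Proposition \ref{volterra1} encapsulates. Your remark on case (iii) — that iterating on the non-principal weight $\mu_k^{\frac12-\Re(m)}\eta_k$ leaves a leftover factor $\mu_k^{2\Re(m)}$ (in fact in \emph{both} terms of the kernel, not just one) which must be absorbed via $\mu_k(y)\le x<a$ into an $a$-dependent constant — correctly supplies a detail the paper's proof leaves implicit.
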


\begin{proof}

To prove (i)  we first follow the proof of Lemma \ref{lemma1}(i) with $\varepsilon_1 
=\frac12+\Re(m)$, $\varepsilon=0$, obtaining
\begin{equation}
\left| \frac{\mu_k (y)^{\frac12+\Re(m)}\eta_k (y)}{\mu_k (x)^{\frac12+\Re(m)}\eta_k (x)} G_\rightarrow^0(x,y)Q(y) \right| \le  \mu_k(y) Q(y) \theta(x-y).\label{qrw}
\end{equation}
The operator $G_\to^0Q$ is clearly a forward Volterra operator (see 
Appendix \ref{volterra}). 
Applying Proposition \ref{volterra1} with $K(x,y)$ given by the integral kernel appearing in the left hand side of \eqref{qrw} then yields (i).

To prove (ii) we proceed analogously, using Lemma \ref{lemma1}(ii) with $\varepsilon_1=\frac12$,
$\varepsilon=0$, $\alpha=\beta-1$, obtaining
\begin{equation*}
\left| \frac{\mu_k (y)^{\frac12+\Re(m)}\lambda_k(y)^{\beta-1}\eta_k (y)}{\mu_k (x)^{\frac12+\Re(m)}\eta_k (x)} G_\rightarrow^0(x,y)Q(y) \right| \le  \mu_k(y) \lambda_k(y)^\beta Q(y)\theta(x-y).
\end{equation*}
Assuming $\beta\ge1$ and using that $\lambda_k(x)\ge1$, this implies
\begin{equation*}
\left| \frac{\mu_k (y)^{\frac12+\Re(m)}\lambda_k(y)^{\beta-1}\eta_k (y)}{\mu_k (x)^{\frac12+\Re(m)}\lambda_k(x)^{\beta-1}\eta_k (x)} G_\rightarrow^0(x,y)Q(y) \right| \le  \mu_k(y) \lambda_k(y)^\beta Q(y)\theta(x-y),
\end{equation*}
and hence we can conclude as in the case (i).

To prove (iii)  we follow the proof of Lemma \ref{lemma1}(i) with $\varepsilon_1 
=\frac12-\Re(m)$, $\varepsilon=2\Re(m)$.
\end{proof}

Unfortunately, the case $m=0$, $k=0$ is not covered by Lemma
\ref{lemma1}
and Corollary \ref{lemma2_app}, because then $\lambda_k $ is ill
defined.
The
following lemma and its corollary work for this case. Note however that Lemma \ref{lemma1a} is not global in $x\in]0,\infty[$ and $\Re(k)\ge0$
-- we need to restrict the values of $x$ and $k$.

   \begin{lemma}\label{lemma1a}
     Let
     $\mathrm{Re}(k)\ge0$,
 $m=0$ and $Q\in L^1_\mathrm{loc}]0,\infty[$. Let
 $ \varepsilon_1+\varepsilon\geq\frac12$,
        $1\leq\beta-\alpha$. Suppose that
       \begin{equation}
         \int_0^1 y^{1-\varepsilon}(1+|\mathrm{ln}(y)|^\beta)|Q(y)|\d y<\infty.
\label{bound3}         \end{equation}
Let  $k_0>0$. Then  
              \begin{equation}\label{nolog1}
         G_\rightarrow^0Q:
       L^\infty\big(]0,1[,x^{\varepsilon_1}(1+|\mathrm{ln}(x)|^\alpha)\big)\to
       L_0^\infty\big(]0,1[,x^{\varepsilon_1+\varepsilon}
       (1+|\mathrm{ln}(x)|^{\alpha+1-\beta})\big)
       \end{equation}
       is bounded by $C\times$\eqref{bound3} uniformly in $|k|\leq k_0$. 
   \end{lemma}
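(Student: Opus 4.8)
The plan is to run the proof of Lemma~\ref{lemma1}(ii) almost verbatim, replacing the $k$-dependent weight $\lambda_k$, which degenerates as $k\to0$, by the $k$-independent weight $1+|\ln(x)|$; the price is that the substitution is only legitimate on the truncated region $x\in\,]0,1[$, $|k|\le k_0$. Since $k=0$ is allowed here, I would first express the forward Green's operator through the pair $u_0^0,p_0^0$ rather than $u_0^0,v_0^0$ (the latter diverges logarithmically as $k\to0$, whereas $p_0^0$ stays bounded), writing
\begin{equation*}
(G_\rightarrow^0Qf)(x)=-p_0^0(x)\int_0^x u_0^0(y)Q(y)f(y)\,\d y+u_0^0(x)\int_0^x p_0^0(y)Q(y)f(y)\,\d y .
\end{equation*}

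The decisive step, and the only genuinely new ingredient compared with Lemma~\ref{lemma1}, is to turn the uniform bounds \eqref{pio1} and \eqref{pio4} into clean estimates in $x$ alone, valid uniformly for $|k|\le k_0$ and $x\in\,]0,1[$, namely $|u_0^0(x,k)|\lesssim x^{\frac12}$ and $|p_0^0(x,k)|\lesssim x^{\frac12}(1+|\ln(x)|)$. Here one uses that $\eta_k(x)=\e^{\Re(k)x}\in[1,\e^{k_0}]$ on this region, that $\mu_k(x)\le x$, and that in the only delicate regime $x>|k|^{-1}$ one automatically has $x\in[k_0^{-1},1]$, a compact interval away from $0$ on which the weights $x^{s}$ and $1+|\ln x|$ are bounded above and below by positive constants. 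This crossover at $x=|k|^{-1}$ is exactly what forces the local restriction: globally in $x$ or for unbounded $|k|$ the substitution of $\lambda_k$ by $1+|\ln x|$ fails, which is why the statement is not global.

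With these estimates in hand I would insert $|f(y)|\le M\,y^{\varepsilon_1}(1+|\ln y|^{\alpha})$, where $M=\|f/(x^{\varepsilon_1}(1+|\ln x|^\alpha))\|_\infty$, into the two integrals and, in each integrand, factor out the reference weight $y^{1-\varepsilon}(1+|\ln y|^\beta)|Q(y)|$ of \eqref{bound3}. The leftover factors are controlled by elementary monotonicity: since $\varepsilon_1+\varepsilon\ge\frac12$ and $y\le x<1$, one has $y^{\varepsilon_1+\varepsilon-\frac12}\le x^{\varepsilon_1+\varepsilon-\frac12}$, and since $\beta-\alpha\ge1>0$ together with $|\ln y|\ge|\ln x|$ for $y\le x$, the logarithmic ratio $(1+|\ln y|^\alpha)/(1+|\ln y|^\beta)$ is bounded by a constant times $(1+|\ln x|)^{\alpha-\beta}$. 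In the term carrying the prefactor $p_0^0(x)\lesssim x^{\frac12}(1+|\ln x|)$ these combine to give $(1+|\ln x|)^{\alpha+1-\beta}\le 1+|\ln x|^{\alpha+1-\beta}$, using that the exponent $\alpha+1-\beta\le0$; in the term carrying $u_0^0(x)\lesssim x^{\frac12}$ there is no extra logarithm, and one simply uses $1+|\ln x|^{\alpha+1-\beta}\ge1$. Both terms are thus bounded by
\begin{equation*}
C\,M\,x^{\varepsilon_1+\varepsilon}\big(1+|\ln(x)|^{\alpha+1-\beta}\big)\int_0^x y^{1-\varepsilon}\big(1+|\ln y|^{\beta}\big)|Q(y)|\,\d y .
\end{equation*}
Finally, by absolute continuity of the integral the last factor is $o(x^0)$ as $x\to0$, which simultaneously yields boundedness by $C\times$\eqref{bound3} and the vanishing of the ratio at $0$ that places the image in $L_0^\infty$, all uniformly in $|k|\le k_0$.
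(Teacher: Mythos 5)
Your proof is correct and follows essentially the same route as the paper's: the decomposition of $G^0_\rightarrow$ through the pair $u_0^0,p_0^0$, the estimates \eqref{pio1} and \eqref{pio4} reduced to the $k$-independent weights $x^{\frac12}$ and $x^{\frac12}(1+|\mathrm{ln}(x)|)$ on $]0,1[\,\times\{|k|\le k_0\}$, the monotonicity of $y\mapsto|\mathrm{ln}(y)|$ together with $1+\alpha-\beta\le0$, and the absolute-continuity argument giving the $o(x^0)$ factor. Your extra paragraph justifying the replacement of $\mu_k,\lambda_k,\eta_k$ by pure $x$-weights via the crossover at $x=|k|^{-1}$ is a detail the paper leaves implicit, and it is handled correctly.
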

     
     \begin{proof}
Suppose \eqref{bound3}.      Recall that the solution $p_0^0(\cdot,k)$ has been introduced in \eqref{pzero}. We write $p_0^0(x)=p_0^0(x,k)$ and $u^0_0(x)=u^0_0(x,k)$ to shorten notations. We then have that
     \begin{align*}
       ( G_\rightarrow^0Qf)(x)=
      - p_0^0(x)\int_0^x  u_0^0(y)Q(y)f(y)\mathrm{d} y+
  u_0^0(x)\int_0^x p_0^0(y)Q(y)f(y)\mathrm{d} y.
     \end{align*}
Now, for $0<x\leq1$, by \eqref{pio1} and \eqref{pio4},
     \begin{align*}      
     \left| u_0^0(x)\int_0^x
       p_0^0(y)Q(y)f(y)\mathrm{d} y\right|  \lesssim x^{\frac12}
       \int_0^xy^{\frac12+\varepsilon_1}(1+|\mathrm{ln}(y)|^{1+\alpha})|Q(y)|\mathrm{d}
                y \left\| \frac{f}{x^{\varepsilon_1}(1+|\mathrm{ln}(x)|^\alpha)} \right\|_{\infty},
                \end{align*}
                and
              \begin{align*}
       \left|p_0^0(x)\int_0^x u_0^0(y)Q(y)f(y)\mathrm{d} y\right| \lesssim x^{\frac12}(1+|\mathrm{ln}(x)|)
       \int_0^xy^{\frac12+\varepsilon_1}(1+|\mathrm{ln}(y)|^\alpha)|Q(y)|\mathrm{d} y\left\| \frac{f}{x^{\varepsilon_1}(1+|\mathrm{ln}(x)|^\alpha)} \right\|_{\infty}.
     \end{align*}
Using the fact that
 $y\mapsto|\mathrm{ln}(y)|$ is decreasing and $1+\alpha-\beta\leq0$, we
 estimate  both above expressions by
\begin{align*}
       &\lesssim
         x^{\varepsilon_1+\varepsilon}(1+|\mathrm{ln}(x)|^{1+\alpha-\beta})
           \int_0^xy^{1-\varepsilon}(1+|\mathrm{ln}(y)|^\beta)|Q(y)|\mathrm{d} y\left\| \frac{f}{x^{\varepsilon_1}(1+|\mathrm{ln}(x)|^\alpha)} \right\|_{\infty}.
       \end{align*}
Applying
       \begin{equation*}
       \int_0^xy^{1-\varepsilon}(1+|\mathrm{ln}(y)|^\beta)|Q(y)|\mathrm{d} y
       =o(x^0),\end{equation*}
     we obtain
       \begin{equation*}
\left|       ( G_\rightarrow^0Qf)(x)\right|\leq o(x^0)x^{\varepsilon_1+\varepsilon}(1+|\mathrm{ln}(x)|^{1+\alpha-\beta})
\left\| \frac{f}{x^{\varepsilon_1}|(1+|\mathrm{ln}(x)|^\alpha)} \right\|_{\infty}.
         \end{equation*}
This concludes the proof of \eqref{nolog1}.
 \end{proof}

        \begin{corollary}\label{lemma23_app}
          Let $k=0$, $m=0$ and $n \in \mathbb{N}$.
          
\begin{enumerate}[label=(\roman*)]
\item          Suppose that $Q \in \mathscr{L}^{(0)}_{0,\mathrm{ln}}$. Then, for all $0<a<1$, $f\in L^\infty \big(]0,a[,x^{\frac12} \big)$ and $0 < x < a$,
\begin{align*}
\frac{| (G_\rightarrow^0Q)^n f(x)|}{x^{\frac12}}
\leq
\frac{C^{n+1} }{n!} \Big ( \int_0^x y\big ( 1 + |\mathrm{ln}(y)| \big)
|Q(y)| \mathrm{d}y \Big )^n\sup_{y<x}\frac{|f(y)|}{y^{\frac12}}.
\end{align*}
\item Suppose that $Q \in \mathscr{L}^{(0)}_{0,\mathrm{ln}^2}$. Then, for all $0<a<1$, $f\in L^\infty \big(]0,a[,x^{\frac12}(|\mathrm{ln}(x)|+1) \big)$ and $0 < x < a$,
\begin{align*}
\frac{| (G_\rightarrow^0Q)^n f(x)|}{x^{\frac12}(1+|\mathrm{ln}(x)|)}
\leq
\frac{C^{n+1} }{n!} \Big ( \int_0^x y\big ( 1 + |\mathrm{ln}(y)|^2 \big)
|Q(y)| \mathrm{d}y \Big )^n\sup_{y<x}\frac{|f(y)|}{y^{\frac12}(1+|\mathrm{ln}(y)|)}.
\end{align*}
\end{enumerate}
\end{corollary}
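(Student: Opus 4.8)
The strategy is to mimic the proof of Corollary \ref{lemma2_app}, replacing Lemma \ref{lemma1} by Lemma \ref{lemma1a} and specializing to $k=0$, $m=0$, where every object is explicit. Recall that at $k=0$, $m=0$ we have $u_0^0(x,0)=x^{\frac12}$ and $p_0^0(x,0)=x^{\frac12}\mathrm{ln}(x)$, so that the forward Green's operator has the kernel
\[
G_\rightarrow^0(0;x,y)=\theta(x-y)\,x^{\frac12}y^{\frac12}\big(\mathrm{ln}(y)-\mathrm{ln}(x)\big).
\]
Since $G_\rightarrow^0Q$ is a forward Volterra operator, the plan is, for each part, to conjugate this kernel by the relevant weight $\phi$ and then feed the resulting majorant into the iteration estimate of Proposition \ref{volterra1}, exactly as was done via \eqref{qrw}.

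The one elementary inequality I would use throughout is that for $0<y<x<1$ one has $0<\mathrm{ln}(x)-\mathrm{ln}(y)=\mathrm{ln}(x)+|\mathrm{ln}(y)|\le|\mathrm{ln}(y)|\le 1+|\mathrm{ln}(y)|$, the middle step being valid because $\mathrm{ln}(x)<0$ (this is where the restriction $a<1$ enters). For part (i), taking $\phi(x)=x^{\frac12}$, the conjugated kernel is
\[
\frac{\phi(y)}{\phi(x)}\,G_\rightarrow^0(0;x,y)\,Q(y)=\theta(x-y)\,y\big(\mathrm{ln}(y)-\mathrm{ln}(x)\big)Q(y),
\]
whose modulus is therefore bounded by $\theta(x-y)\,y\big(1+|\mathrm{ln}(y)|\big)|Q(y)|$. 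Applying Proposition \ref{volterra1} with this kernel yields (i).

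For part (ii), taking $\phi(x)=x^{\frac12}\big(1+|\mathrm{ln}(x)|\big)$, the conjugated kernel becomes
\[
\frac{\phi(y)}{\phi(x)}\,G_\rightarrow^0(0;x,y)\,Q(y)=\theta(x-y)\,\frac{1+|\mathrm{ln}(y)|}{1+|\mathrm{ln}(x)|}\,y\big(\mathrm{ln}(y)-\mathrm{ln}(x)\big)Q(y).
\]
Here I would use the monotonicity $1+|\mathrm{ln}(x)|\ge1$ --- the counterpart of the inequality $\lambda_k(x)\ge1$ exploited in Corollary \ref{lemma2_app}(ii) --- to discard the denominator, and then combine it with the bound above to obtain the majorant $\theta(x-y)\,y\big(1+|\mathrm{ln}(y)|\big)^2|Q(y)|\le 2\,\theta(x-y)\,y\big(1+|\mathrm{ln}(y)|^2\big)|Q(y)|$, the last inequality following from $2|\mathrm{ln}(y)|\le 1+|\mathrm{ln}(y)|^2$. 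Applying Proposition \ref{volterra1} with this kernel gives (ii).

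Since everything reduces to the explicit kernel together with the already-available Volterra iteration, I do not expect a genuine obstacle; the only point requiring care is the bookkeeping of logarithmic weights in part (ii), where one power of $1+|\mathrm{ln}(x)|$ sitting in the prefactor must be absorbed so that the mass kernel carries $1+|\mathrm{ln}(y)|^2$ while the target weight only grows like $1+|\mathrm{ln}(x)|$. This is exactly the $m=0$, $k=0$ analogue of the $\lambda_k$-absorption step in Corollary \ref{lemma2_app}(ii), and it explains why the hypothesis $Q\in\mathscr{L}^{(0)}_{0,\mathrm{ln}^2}$ --- carrying one extra power of $\mathrm{ln}$ --- is required in (ii) rather than merely $Q\in\mathscr{L}^{(0)}_{0,\mathrm{ln}}$.
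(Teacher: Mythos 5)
Your proposal is correct and follows essentially the same route as the paper: the paper's proof likewise reduces to the pointwise kernel bound of Lemma \ref{lemma1a} (with $\varepsilon_1=\frac12$, $\varepsilon=0$ and $\beta=1,\alpha=0$ for (i), $\beta=2,\alpha=1$ for (ii)) and then feeds the conjugated Volterra kernel into Proposition \ref{volterra1}, exactly as in Corollary \ref{lemma2_app}. Your explicit computation of the kernel $\theta(x-y)x^{\frac12}y^{\frac12}(\ln(y)-\ln(x))$, the inequality $|\ln(y)-\ln(x)|\le 1+|\mathrm{ln}(y)|$ for $0<y<x<1$, and the absorption of the factor $(1+|\mathrm{ln}(x)|)^{-1}\le1$ in part (ii) are precisely the ingredients the paper invokes implicitly.
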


\begin{proof}
The proof is the same as that of Corollary \ref{lemma2_app}, applying Lemma \ref{lemma1a}. To prove (i), we use \eqref{nolog1} with $\varepsilon_1=\frac12$, $\varepsilon=0$, $\beta=1$ and $\alpha=0$. To prove (ii), we use \eqref{nolog1} with $\varepsilon_1=\frac12$, $\varepsilon=0$, $\beta=2$ and $\alpha=1$.
\end{proof}

\subsection{Solutions constructed with the help  of the forward
  Green's operator}\label{sec:sol_0}

In this subsection we construct solutions  to \eqref{eq:a1}
that approximate near $0$ the solutions to the unperturbed  equation
using the forward Green's operator as the main tool.
 Here the behavior of the perturbation at infinity is irrelevant, therefore we need only 
   assumptions on $Q$  restricted to the interval $]0,a[$,
   where $a>0$ is arbitrary.

The following theorem implies Propositions \ref{pr:opo0} and
\ref{prop:reg1!} from the introduction. Note that (i) and (ii) concern principal
solutions,  (iii) and (iv) concern arbitrary solutions.

  \begin{theorem}    \label{prop:reg10}
  Let $\Re(k)\geq0$.
    \begin{enumerate}[label=(\roman*)]
    \item Suppose that  $\Re(m) \geq0$, $m\neq0$,  $\varepsilon\ge0$,
$      Q \in \mathscr{L}^{(0)}_\varepsilon $. Let
$g^0\in\cN(L_{m^2}^0+k^2)$  and $g^0=\mathcal{O}(x^{\frac12+\Re(m)})$. Then
\begin{equation*}
g:=(\one+G_\to^0Q)^{-1}g^0
\end{equation*}
is the unique solution in $AC^1]0,\infty[$ to     \eqref{eq:a1} such that, 
\begin{align*}
  g(x)-g^0(x)&=o(x^{\frac12+\Re(m)+\varepsilon}),\\
\partial_x  g(x)-\partial_x g^0(x)&=o(x^{-\frac12+\Re(m)+\varepsilon}),\quad x\to0. 
\end{align*}
    \item Suppose that  $m=0$,  $\varepsilon\ge0$,
$      Q \in \mathscr{L}^{(0)}_{\varepsilon,\mathrm{ln}} $. Let
$g^0\in\cN(L_{0}^0+k^2)$  and $g^0=\mathcal{O}(x^{\frac12})$. Then
\[g:=(\one+G_\to^0Q)^{-1}g^0\]
is the unique solution in $AC^1]0,\infty[$ to     \eqref{eq:a1} such that, 
\begin{align*}
  g(x)-g^0(x)&=o(x^{\frac12+\varepsilon}),\\
\partial_x  g(x)-\partial_x g^0(x)&=o(x^{-\frac12+\varepsilon}),\quad x\to0. 
\end{align*}
    \item Suppose that  $\Re(m) \geq0$, $m\neq0$,  $\varepsilon\ge2\Re(m)$,
$      Q \in \mathscr{L}^{(0)}_\varepsilon $. Let
$g^0\in\cN(L_{m^2}^0+k^2)$. Then
\[g:=(\one+G_\to^0Q)^{-1}g^0\]
is the unique solution in $AC^1]0,\infty[$ to     \eqref{eq:a1} such that, 
\begin{align*}
  g(x)-g^0(x)&=o(x^{\frac12-\Re(m)+\varepsilon}),\\
\partial_x  g(x)-\partial_x g^0(x)&=o(x^{-\frac12-\Re(m)+\varepsilon}),\quad x\to0. 
\end{align*}
    \item Suppose that  $m=0$,  $\varepsilon\ge0$,
$      Q \in \mathscr{L}^{(0)}_{\varepsilon,\mathrm{ln}^2} $. Let
$g^0\in\cN(L_{0}^0+k^2)$. Then
\[g:=(\one+G_\to^0Q)^{-1}g^0\]
is the unique solution in $AC^1]0,\infty[$ to     \eqref{eq:a1} such that, 
\begin{align*}
 g(x)-g^0(x)&=o(x^{\frac12+\varepsilon}),\\
\partial_x  g(x)-\partial_x g^0(x)&=o(x^{-\frac12+\varepsilon}),\quad x\to0. 
\end{align*}
\end{enumerate}
\end{theorem}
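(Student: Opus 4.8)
The plan is to construct $g$ as the solution of the fixed-point identity $g = g^0 - G_\to^0 Q g$, i.e. $g = (\one + G_\to^0 Q)^{-1} g^0$, and to read off both its existence and its asymptotics from the mapping properties already established in Lemma \ref{lemma1}, Corollary \ref{lemma2_app}, Lemma \ref{lemma1a} and Corollary \ref{lemma23_app}. First I would fix an arbitrary $a>0$ and work on $]0,a[$ in the weighted space adapted to the prescribed behaviour of $g^0$: the space with weight $\mu_k^{\frac12+\Re(m)}\eta_k$ in case (i), $\mu_k^{\frac12-\Re(m)}\eta_k$ in case (iii), and the logarithmically corrected weights $\mu_k^{\frac12}\lambda_k^{\beta-1}\eta_k$ (for $k\neq0$) or $x^{\frac12}(1+|\mathrm{ln}(x)|)^{\beta-1}$ (for $k=0$), with $\beta=1$ in (ii) and $\beta=2$ in (iv). Because $G_\to^0 Q$ is a forward Volterra operator, the cited corollaries provide the factorial bound $\|(G_\to^0 Q)^n\|\le C^{n+1}(\int_0^a\cdots)^n/n!$ on each of these spaces; hence the Neumann series $\sum_{n\ge0}(-G_\to^0 Q)^n$ converges in operator norm for every $a$, so $\one + G_\to^0 Q$ is invertible and $g$ is well defined on $]0,a[$. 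Since $Q\in L^1_{\mathrm{loc}}]0,\infty[$, this $g$ extends uniquely to an element of $AC^1]0,\infty[$ solving the ODE on the whole half-line, following the extension convention adopted above for these weighted constructions.

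That $g$ solves \eqref{eq:a1} is then immediate: $G_\to^0$ is a right inverse of $L_{m^2}^0+k^2$ and $g^0\in\cN(L_{m^2}^0+k^2)$, so applying $L_{m^2}^0+k^2$ to $g=g^0-G_\to^0 Q g$ yields $(L_{m^2}^0+k^2)g=-Qg$, that is $(L_{m^2}+k^2)g=0$.

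For the asymptotics I would not use the Neumann bound but one sharper application of the mapping lemma to the already-constructed $g$. Writing $g-g^0=-G_\to^0 Q g$ and feeding $g$, which lies in the weighted space above, into Lemma \ref{lemma1}(i) with $\varepsilon_1=\frac12+\Re(m)$ in case (i) and with $\varepsilon_1=\frac12-\Re(m)$ in case (iii), places $g-g^0$ in the \emph{little-o} space $L_0^\infty(\mu_k^{\varepsilon_1+\varepsilon}\eta_k)$; note that the hypothesis $\varepsilon\ge2\Re(m)$ of (iii) is exactly what makes $\varepsilon_1+\varepsilon\ge\frac12+\Re(m)$ hold there. Near $0$ one has $\mu_k(x)=x$ and $\eta_k(x)\to1$, so membership in $L_0^\infty$ is precisely the stated bound $g-g^0=o(x^{\frac12\pm\Re(m)+\varepsilon})$; the cases $m=0$ are identical, using Lemma \ref{lemma1}(ii) for $k\neq0$ and Lemma \ref{lemma1a} for $k=0$. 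For the derivative I would differentiate the Volterra representation $G_\to^0 Q g(x)=v^0(x)\int_0^x u^0 Q g-u^0(x)\int_0^x v^0 Q g$; the two boundary contributions cancel, leaving $(v^0)'(x)\int_0^x u^0 Q g-(u^0)'(x)\int_0^x v^0 Q g$, and the same estimates applied with the derivative bounds $(u^0)'=\mathcal{O}(x^{-\frac12+\Re(m)})$ and $(v^0)'=\mathcal{O}(x^{-\frac12-\Re(m)})$ give the claimed $o(x^{-\frac12\pm\Re(m)+\varepsilon})$ control.

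Uniqueness is where I would be most careful. If $\tilde g$ is another $AC^1$ solution with the same asymptotics, put $h=\tilde g-g\in\cN(L_{m^2}+k^2)$, so $h$ is $o$ of the target power near $0$. Applying $L_{m^2}^0+k^2$ gives $(L_{m^2}^0+k^2)h=-Qh$, whence $h+G_\to^0 Q h\in\cN(L_{m^2}^0+k^2)$, and the mapping lemma shows this unperturbed solution is again $o$ of the target power. The decisive point is a rigidity statement: no nonzero element of $\cN(L_{m^2}^0+k^2)$, spanned by $u_m^0\sim x^{\frac12+\Re(m)}$ and $v_m^0\sim x^{\frac12-\Re(m)}$ (or $u_0^0,p_0^0$ when $m=0$), can be $o(x^{\frac12+\Re(m)+\varepsilon})$ in the principal cases (i), (ii), nor $o(x^{\frac12-\Re(m)+\varepsilon})$ in the non-principal cases (iii), (iv); here the threshold $\varepsilon\ge2\Re(m)$ guarantees that even the more regular solution $u_m^0$ fails to be $o$ of $x^{\frac12-\Re(m)+\varepsilon}$. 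Hence $h+G_\to^0 Q h=0$, and invertibility of $\one+G_\to^0 Q$ forces $h=0$. The main obstacle throughout is the bookkeeping of the logarithmic weights in the $m=0$ case and, above all, the separate treatment of $m=0$, $k=0$, where $\lambda_k$ degenerates and one must switch from Lemma \ref{lemma1}/Corollary \ref{lemma2_app} to Lemma \ref{lemma1a}/Corollary \ref{lemma23_app}.
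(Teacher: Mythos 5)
Your construction is correct and follows the paper's own route: Neumann-series invertibility of $\one+G_\to^0Q$ on the appropriate weighted space via Corollaries \ref{lemma2_app} and \ref{lemma23_app}, the right-inverse property of $G_\to^0$ to see that $g$ solves the equation, and a single application of Lemma \ref{lemma1} (resp.\ Lemma \ref{lemma1a} when $m=0$, $k=0$) to $g-g^0=-G_\to^0Qg$ for the asymptotics of $g$ and $\partial_xg$. The only divergence is the uniqueness step, where the paper invokes standard properties of the Wronskian of two elements of $\cN(L_{m^2}+k^2)$ while you argue via rigidity of the unperturbed solution space together with invertibility of $\one+G_\to^0Q$; both arguments are valid, yours trading the derivative asymptotics that the Wronskian argument exploits for the observation that no nonzero element of $\cN(L^0_{m^2}+k^2)$ can be $o$ of the target power near $0$.
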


\begin{proof}
To prove (i), we use Corollary \ref{lemma2_app}(i) which shows that, for any $a>0$, $\one+G_\to^0Q$ is invertible on $L^\infty(]0,a[,\mu_k^{\frac12+\Re(m)})$ with inverse given by
\begin{equation}
\big( (\one+G_\to^0Q)^{-1}f \big ) ( x ) = \sum_{n=0}^{\infty} \big ( (- G_\to^0Q)^n f \big )(x). \label{eq:Neumann_series_exp}
\end{equation}
Hence, if $g^0\in\cN(L_{m^2}^0+k^2)$ satisfies $g^0=\mathcal{O}(x^{\frac12+\Re(m)})$, $g=(\one+G_\to^0Q)^{-1}g^0$ is well defined in $L^\infty_{\mathrm{loc}}]0,\infty[$. Since $G_\to^0$ is a Green's operator, it then easily follows that $g$ belongs to $AC^1]0,\infty[$ and is a solution to  \eqref{eq:a1}. The asymptotic behavior near $0$ of $g$ and $\partial_xg$ follow from the Neumann series expansion \eqref{eq:Neumann_series_exp} and Lemma \ref{lemma1}(i). Finally, uniqueness is a consequence of standard properties of the Wronskian of two solutions in $\cN(L_{m^2}+k^2)$.

To prove (ii) we proceed analogously, using Corollary \ref{lemma2_app}(ii) (with $\beta=1$) and Lemma \ref{lemma1}(ii) in the case where $k\neq0$. If $k=0$, we use Corollary \ref{lemma23_app}(i) and Lemma \ref{lemma1a}.

To prove (iii) we use Corollary \ref{lemma2_app}(iii) and Lemma \ref{lemma1}(i).

To prove (iv), we use Corollary \ref{lemma2_app}(ii) with $\beta=2$ and Lemma \ref{lemma1}(ii) in the case where $k\neq0$. If $k=0$, we use Corollary \ref{lemma23_app}(ii) and Lemma \ref{lemma1a}.
\end{proof}

We can apply Theorem \ref{prop:reg10}(i) and (ii) to 
$g^0(x)= u_m^0(x,k)$. 
We obtain the following result, which implies Corollary \ref{coro1}
from the introduction.

\begin{proposition}   \label{prop:reg1h}    Let $m \in \C$,  $\varepsilon\ge 
  2\max\big(-\Re(m),0\big)$. 
  Suppose that 
  \begin{align*}
Q \in \mathscr{L}^{(0)}_\varepsilon , & \text{ if } m \neq 0 , \qquad Q \in \mathscr{L}^{(0)}_{\varepsilon,\mathrm{ln}} ,  \text{ if } m = 0 . 
\end{align*}
Then 
\begin{equation*}
 u_m(\cdot,k):=(\one+G_\to^0Q)^{-1} u_m^0(\cdot,k)
\end{equation*}
is the unique solution in $AC^1]0,\infty[$ to     \eqref{eq:a1} such that, 
\begin{align}\label{uu1a3}
   u_{m}(x,k)- u_{m}^0(x,k)&=o(x^{\frac12+\Re(m)+\varepsilon}),\\
\partial_x   u_{m}(x,k)-\partial_x  u_{m}^0(x,k)&=o(x^{-\frac12+\Re(m)+\varepsilon}),\quad x\to0. \label{uu1a4}
\end{align}
\end{proposition}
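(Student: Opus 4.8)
The plan is to derive the statement directly from Theorem \ref{prop:reg10}, taking $g^0 = u_m^0(\cdot,k)$ and distinguishing cases by the sign of $\Re(m)$. The one structural fact to record first is that the canonical bisolution $G^0_{m^2,\leftrightarrow}(-k^2)$, and hence the forward Green's operator $G^0_\to$, depends only on $m^2$; consequently the operator $\one + G^0_\to Q$ and the definition $u_m(\cdot,k) = (\one + G^0_\to Q)^{-1} u_m^0(\cdot,k)$ are invariant under $m \mapsto -m$. This allows the roles of $m$ and $-m$ to be interchanged freely when applying the theorem.

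First I would treat $\Re(m) \ge 0$, $m \neq 0$. The hypothesis then reads $\varepsilon \ge 2\max(-\Re(m),0) = 0$, so $Q \in \mathscr{L}^{(0)}_\varepsilon$ with $\varepsilon \ge 0$, and by \eqref{pio1} one has $u_m^0(x,k) = \mathcal{O}(x^{\frac12+\Re(m)})$ with $u_m^0(\cdot,k) \in \cN(L_{m^2}^0+k^2)$. Thus Theorem \ref{prop:reg10}(i) applies verbatim with $g^0 = u_m^0$ and delivers both \eqref{uu1a3}--\eqref{uu1a4} and the uniqueness. The case $m = 0$ is handled the same way through Theorem \ref{prop:reg10}(ii), using $u_0^0 = \mathcal{O}(x^{\frac12})$ and the hypothesis $Q \in \mathscr{L}^{(0)}_{\varepsilon,\mathrm{ln}}$.

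The remaining case $\Re(m) < 0$ is where the bookkeeping matters. Here I would set $m' := -m$, so $\Re(m') > 0$, and note that $u_m^0 = u_{-m'}^0$ is a non-principal element of $\cN(L_{m'^2}^0+k^2) = \cN(L_{m^2}^0+k^2)$. The assumption becomes $\varepsilon \ge 2\max(-\Re(m),0) = 2\Re(m')$, which is precisely the condition required in Theorem \ref{prop:reg10}(iii) for an arbitrary solution. Applying that part with $m'$ in place of $m$ and $g^0 = u_m^0$ yields the unique $g = (\one + G^0_\to Q)^{-1} u_m^0$ satisfying $g - g^0 = o(x^{\frac12-\Re(m')+\varepsilon})$ and $\partial_x(g-g^0) = o(x^{-\frac12-\Re(m')+\varepsilon})$; since $-\Re(m') = \Re(m)$ and $L_{m'^2} = L_{m^2}$, these are exactly \eqref{uu1a3}--\eqref{uu1a4} for a solution of \eqref{eq:a1}.

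No genuinely hard analytic step is left, as all quantitative estimates are already packaged in Lemma \ref{lemma1}, Corollary \ref{lemma2_app} and Theorem \ref{prop:reg10}. The only delicate point is the one just highlighted: when $\Re(m) < 0$ the function $u_m^0$ is the \emph{more singular}, non-principal solution relative to $m' = -m$, so part (iii) of Theorem \ref{prop:reg10} (valid on all of $\cN$) must be invoked rather than part (i) (valid only for principal solutions). This is exactly why the stronger integrability $\varepsilon \ge -2\Re(m)$ is needed there, while $\varepsilon \ge 0$ suffices once $\Re(m) \ge 0$.
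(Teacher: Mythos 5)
Your proposal is correct and follows essentially the same route as the paper, which simply deduces the proposition by applying Theorem \ref{prop:reg10} to $g^0=u_m^0(\cdot,k)$. In fact you are slightly more careful than the paper's text, which cites only parts (i) and (ii): for $\Re(m)<0$ one must indeed pass to $m'=-m$ (using that $G_\to^0$ depends only on $m^2$) and invoke part (iii), whose hypothesis $\varepsilon\ge 2\Re(m')=-2\Re(m)$ is exactly the stated assumption and whose error $o(x^{\frac12-\Re(m')+\varepsilon})=o(x^{\frac12+\Re(m)+\varepsilon})$ matches \eqref{uu1a3}--\eqref{uu1a4}, precisely as you argue.
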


Note that $\Re(m)+\varepsilon\geq|\Re(m)|$. Therefore, the
error in \eqref{uu1a3} and \eqref{uu1a4} is always of a smaller order
than the most regular solutions to     \eqref{eq:a1x0}.
Let us stress that Proposition \ref{prop:reg1h}
includes the case $k=0$, where $ u_m^0(x,0)=x^{\frac12+m}/\Gamma(m+1)$.

Recall that in \eqref{pzero}
we have introduced the family of solutions
to     \eqref{eq:a1x0} for $m=0$ with a logarithmic behavior near zero, denoted
$p_0^0(x,k)$. This family includes the logarithmic case $m=0$, $k=0$,
which does not belong to the family $ u_0^0(x,k)$:
\begin{align*}
p_0^0(x,0)&:=x^{\frac12}\mathrm{ln}(x) .
\end{align*}
We can  apply 
Theorem \ref{prop:reg10}(iv) to 
$g^0(x)=p_0^0(x,k)$, obtaining the following eigensolutions of the
perturbed eigenequation. Note that Proposition \ref{prop:reg1} implies
Corollary \ref{coro2} from the introduction.
  \begin{proposition}
      \label{prop:reg1}
      Let $\Re(k)\geq0$,
     $m=0$, $\varepsilon\ge0$ and $
Q \in \mathscr{L}^{(0)}_{\varepsilon,\mathrm{ln}^2}$.
  Then \begin{equation}\label{wrew1}
p_0(\cdot,k):=(\one+G_\to^0Q)^{-1}p_0^0(\cdot,k) 
\end{equation}
is the unique solution in $AC^1]0,\infty[$ to     \eqref{eq:a1} such that, 
\begin{align*}
  p_0(x,k)-p_0^0(x,k)&=o(x^{\frac12+\varepsilon}),\\
\partial_x    p_0(x,k)-\partial_x p_0^0(x,k)&=o(x^{-\frac12+\varepsilon}),\quad x\to0. 
\end{align*}
\end{proposition}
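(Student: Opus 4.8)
The plan is to obtain Proposition \ref{prop:reg1} as a direct specialisation of Theorem \ref{prop:reg10}(iv), taking the unperturbed reference solution to be $g^0=p_0^0(\cdot,k)$. The one thing that must be checked is that $p_0^0(\cdot,k)$ genuinely belongs to $\cN(L_0^0+k^2)$. For $k\neq0$ this holds because $p_0^0(\cdot,k)$ was defined in \eqref{pzero} as a linear combination of $v_0^0(\cdot,k)$ and $u_0^0(\cdot,k)$, both of which solve the unperturbed eigenequation $L_0^0 g^0=-k^2g^0$; for $k=0$ it holds because $p_0^0(x,0)=x^{\frac12}\mathrm{ln}(x)$ is annihilated by $L_0^0$. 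Hence the hypothesis of part (iv) that $g^0\in\cN(L_0^0+k^2)$ is met.

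Next I would match the remaining hypotheses verbatim: we are in the case $m=0$, we assume $\varepsilon\ge0$, and we assume $Q\in\mathscr{L}^{(0)}_{\varepsilon,\mathrm{ln}^2}$, which are exactly the standing assumptions of Theorem \ref{prop:reg10}(iv). The only point that requires a moment's thought is \emph{why} part (iv), and not the more economical part (ii), is the appropriate tool. The solution $p_0^0(\cdot,k)$ grows like $x^{\frac12}\mathrm{ln}(x)$ near the origin and is therefore \emph{not} $\mathcal{O}(x^{\frac12})$, so the restriction $g^0=\mathcal{O}(x^{\frac12})$ imposed in part (ii) fails. Part (iv) is precisely the branch that handles an arbitrary element of $\cN(L_0^0+k^2)$, at the price of one extra power of the logarithm in the integrability condition on $Q$; this is exactly why the hypothesis here is $Q\in\mathscr{L}^{(0)}_{\varepsilon,\mathrm{ln}^2}$ rather than $Q\in\mathscr{L}^{(0)}_{\varepsilon,\mathrm{ln}}$.

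With these verifications in place, applying Theorem \ref{prop:reg10}(iv) with $g=p_0(\cdot,k)$ and $g^0=p_0^0(\cdot,k)$ delivers at once that $(\one+G_\to^0Q)^{-1}p_0^0(\cdot,k)$ is well defined, is the unique solution in $AC^1]0,\infty[$ to \eqref{eq:a1}, and satisfies
\begin{align*}
  p_0(x,k)-p_0^0(x,k)&=o(x^{\frac12+\varepsilon}),\\
  \partial_x p_0(x,k)-\partial_x p_0^0(x,k)&=o(x^{-\frac12+\varepsilon}),\quad x\to0.
\end{align*}
Since the entire argument is a look-up of an already-proved theorem, there is essentially no obstacle: the only genuine content is the identification $p_0^0(\cdot,k)\in\cN(L_0^0+k^2)$ and the observation that the logarithmic growth forces the use of the ``arbitrary solution'' case (iv). No new estimates on Green's operators are needed, and the case split $k=0$ versus $k\neq0$ is already absorbed into the proof of Theorem \ref{prop:reg10}(iv).
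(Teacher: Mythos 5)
Your proof is correct and follows exactly the paper's route: the paper obtains Proposition \ref{prop:reg1} precisely by applying Theorem \ref{prop:reg10}(iv) to $g^0=p_0^0(\cdot,k)$, and your verifications that $p_0^0(\cdot,k)\in\cN(L_0^0+k^2)$ (for both $k\neq0$ and $k=0$) and that the logarithmic growth rules out part (ii) are the right observations. Nothing is missing.
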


In the following proposition we fix the perturbation $Q$ and study the regularity of the solutions $u_m(\cdot,k)$ and $p_0(\cdot,k)$ with respect to $m$ and $k$.

 \begin{proposition}\label{prop:anal_um2}
$ $
   \begin{enumerate}[label=(\roman*)]
   \item
Let $\varepsilon>0$ and suppose that
$
Q \in \mathscr{L}^{(0)}_\varepsilon .$
 Then for any $x>0$ the maps
 \begin{align}
\Big\{\Re(m)\geq-\frac\varepsilon2\Big\}\times\C\ni (m,k)&\mapsto 
                                                u_{m}(x,k),
                                                           \partial_x
                                                           u_{m}(x,k)
    \end{align}
are regular.
\item
Let 
$Q \in \mathscr{L}^{(0)}_0$.
  Then for any $x>0$ the maps
 \begin{align}
    \Big\{\Re(m)\geq0,\ m\neq0\Big\}\times\C\ni (m,k)&\mapsto 
                                               u_{m}(x,k),\,
                                           \partial_x
                                           u_{m}(x,k)\label{porto1}
 \end{align}
are regular.
If we strengthen the assumption to
$
 Q \in \mathscr{L}^{(0)}_{0,\mathrm{ln}} ,
$
then in \eqref{porto1} we can include
 $m=0$.
\item
Let 
$Q \in \mathscr{L}^{(0)}_{0,\mathrm{ln}^2}$.
  Then for any $x>0$ the maps
 \begin{align}
    \Big\{\Re(k)\geq0\Big\}\ni k&\mapsto 
                                              p_0(x,k),\,
                                           \partial_x
                                           p_0(x,k)\label{porto1c}
 \end{align}
are regular.

\end{enumerate}
   \end{proposition}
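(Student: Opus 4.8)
The plan is to work directly from the Neumann series defining the solutions,
\begin{equation*}
u_m(x,k)=\sum_{n=0}^\infty\big((-G_\to^0Q)^nu_m^0\big)(x),\qquad p_0(x,k)=\sum_{n=0}^\infty\big((-G_\to^0Q)^np_0^0\big)(x),
\end{equation*}
whose convergence was already established in Propositions \ref{prop:reg1h} and \ref{prop:reg1}. Since regularity (joint continuity together with separate analyticity on the interior of each slice) is preserved under locally uniform limits, it suffices to show that (a) each term of the series is regular in $(m,k)$ on the relevant domain, and that (b) the series converges locally uniformly there. I would treat $u_m$ (parts (i),(ii)) and $p_0$ (part (iii)) by the same scheme, the only differences being the weighted spaces and the auxiliary estimates invoked.

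For step (a) I would write the $n$-th term as an iterated integral over the simplex $0<y_n<\dots<y_1<x$, with integrand $G_\leftrightarrow^0(x,y_1)Q(y_1)\cdots G_\leftrightarrow^0(y_{n-1},y_n)Q(y_n)\,u_m^0(y_n,k)$ in parts (i),(ii), and $p_0^0(y_n,k)$ in part (iii). For fixed integration variables this integrand is analytic in $(m,k)$: the canonical bisolution kernel $G_{m^2,\leftrightarrow}^0(-k^2;\cdot,\cdot)$ is analytic on $\C\times\C$, $u_m^0(\cdot,k)$ is analytic on $\C\times\C$, and $p_0^0(\cdot,k)$ is regular on $\{\Re(k)\ge0\}$, all by Proposition \ref{uua0} and the analyticity of the Green's kernels established earlier. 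Because the iterated integral converges absolutely and locally uniformly in $(m,k)$ thanks to \eqref{pio1}--\eqref{pio4}, Fubini together with Morera's theorem (applied in each variable separately) shows each term is analytic on the interior of each slice, while dominated convergence gives joint continuity; hence each term is regular.

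For step (b) I would invoke the factorial bounds already proved, which dominate the $n$-th term by $C^{n+1}A^n/n!$, so that $\sum_nC^{n+1}A^n/n!$ furnishes a convergent majorant. Concretely I would use Corollary \ref{lemma2_app}(i) for $\Re(m)\ge0$ and Corollary \ref{lemma2_app}(iii) applied with $-m$ in place of $m$ for $-\tfrac\varepsilon2\le\Re(m)\le0$ in part (i); Corollary \ref{lemma2_app}(i) or (ii) in part (ii); and Corollary \ref{lemma2_app}(ii) with $\beta=2$ for $|k|\ge k_0$ together with Corollary \ref{lemma23_app}(ii)/Lemma \ref{lemma1a} for $|k|\le k_0$ in part (iii). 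Here $C$ is controlled by the implicit constants in the Bessel asymptotics of Subsection \ref{subsec:whittaker}, which are uniform over compact sets in $m$ by the joint analyticity of $\cI_m,\cK_m$, and $A$ is the relevant $Q$-integral, finite on the prescribed $\mathscr{L}$-classes. Both are locally uniformly bounded on compact subsets of the domain, giving the required local uniform convergence. For the $x$-derivatives I would repeat the argument termwise with the kernel $\partial_xG_\to^0(x,y)=\theta(x-y)\partial_xG_\leftrightarrow^0(x,y)$, the boundary contribution vanishing because $G_\leftrightarrow^0(x,x)=0$; this kernel is again analytic in $(m,k)$ and obeys the derivative estimates established in the proof of Theorem \ref{prop:reg10}, so $\sum_n\partial_x\big((-G_\to^0Q)^n g^0\big)$ converges locally uniformly and equals $\partial_x u_m$, resp. $\partial_x p_0$.

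The main obstacle is securing the local uniformity in (b) up to the boundaries of the domains, where the weights degenerate. The two delicate transitions are at $k=0$ in part (iii), where $\mu_k,\lambda_k$ degenerate and one must pass from Corollary \ref{lemma2_app}(ii) (for $|k|\ge k_0$) to the logarithmic weights of Lemma \ref{lemma1a} (uniform for $|k|\le k_0$) so as to obtain the $k$-independent majorant $\int_0^x y(1+|\mathrm{ln}(y)|^2)|Q(y)|\d y$; and the boundary $\Re(m)=-\tfrac\varepsilon2$ in part (i), together with the crossing of $m=0$, where the power weight $\mu_k^{\frac12+\Re(m)}$ must be matched against the logarithmic weights valid at $m=0$ via the inclusion $\mathscr{L}^{(0)}_\varepsilon\subset\mathscr{L}^{(0)}_{0,\mathrm{ln}}$. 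Once these patchings are carried out on a neighborhood of each boundary point, the uniform convergence and hence the regularity of the sum follow.
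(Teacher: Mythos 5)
Your proposal is correct and follows essentially the same route as the paper's proof: expand $u_m$ (resp.\ $p_0$) in the Neumann series $\sum_n(-G_\to^0Q)^ng^0$, establish regularity of each term from the analyticity of the kernel $G_\to^0(x,y)$ and of $u_m^0$, $p_0^0$, and conclude by the locally uniform convergence supplied by the factorial bounds of Corollary \ref{lemma2_app} (and Lemma \ref{lemma1a}/Corollary \ref{lemma23_app} for $m=0$, $k$ near $0$). The paper's argument is terser, but your extra care about the boundary regimes $\Re(m)=-\frac\varepsilon2$, $m=0$ and $k=0$ is just an explicit spelling-out of the same mechanism.
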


\begin{proof}
   We use the continuity and   analyticity of the function $ u^0$  and  of
     the map $G_{\to}^0Q$ with respect to parameters. More precisely,
     for all fixed $(x,y)$, the map $(m,k)\mapsto G_\to^0(x,y)$ is
analytic. Lemma \ref{lemma2_app}
      and an induction argument then shows that, for all $x>0$, $(
      G_\to^0Q )^n  u^0(x)$ is analytic on $\{ \Re(m)>-\varepsilon/ 2 \}$. Since, by Lemma \ref{lemma2_app}, the series
     \begin{equation*}
     u_m(x,k) = \sum_{n=0}^\infty ( - G_\to^0Q )^n  u_m^0(x,k),
     \end{equation*}
     converges uniformly on every compact subset of $\{
     \Re(m)>-\varepsilon/ 2  \}$. This proves the statement
     concerning the analyticity of $ u_m$. Continuity is proven similarly.
     
     The regularity of $\partial_x u_m$, $p_0$ and $\partial_x p_0$ follows in the same way.
\end{proof}

We conclude this subsection with the following more precise estimate (compared to Proposition \ref{prop:reg1h}) on the difference between $u_0$ and $u_0^0$ (assuming that the stronger condition $Q \in \mathscr{L}^{(0)}_{\varepsilon,\mathrm{ln}^2}$ holds). We will need this estimate to study closed realization of $L_{0}$ in Section \ref{closed_operators}.
  \begin{proposition}
      \label{prop:reg1_0_bis}
Let $\Re(k)\ge0$, $m=0$ and suppose that 
  $Q \in \mathscr{L}^{(0)}_{0,\mathrm{ln}^2} .$
 Then
\begin{align*}
  u_0(x,k)- u_0^0(x,k)&=o(x^{\frac12} | \mathrm{ln}(x) |^{-1} ),\\
\partial_x  u_0(x,k)-\partial_x u_0^0(x,k)&=o(x^{-\frac12} | \mathrm{ln}(x)|^{-1}),\quad x\to0. 
\end{align*}
\end{proposition}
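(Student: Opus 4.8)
The plan is to write the difference as a single application of the forward Green's operator to the already-constructed solution $u_0$, and then to extract one extra power of the logarithm by comparing powers of $|\mathrm{ln}|$ inside the defining integral. From $u_0(\cdot,k)=(\one+G_\to^0Q)^{-1}u_0^0(\cdot,k)$ we have $(\one+G_\to^0Q)u_0=u_0^0$, hence the resolvent identity
\begin{equation*}
u_0(\cdot,k)-u_0^0(\cdot,k)=-G_\to^0Q\,u_0(\cdot,k),
\end{equation*}
so it suffices to show $G_\to^0Q\,u_0=o(x^{\frac12}|\mathrm{ln}(x)|^{-1})$ together with the analogous bound for its derivative. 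By Proposition \ref{prop:reg1h} (applied with $m=0$, $\varepsilon=0$, using that $\mathscr{L}^{(0)}_{0,\mathrm{ln}^2}\subset\mathscr{L}^{(0)}_{0,\mathrm{ln}}$) the function $u_0$ already exists in $AC^1]0,\infty[$ and satisfies $u_0=\mathcal{O}(x^{\frac12})$ near $0$; I fix $a>0$ small (with $a\le|k|^{-1}$ if $k\neq0$, so that $\mu_k(y)=y$ on $]0,a[$) and a constant with $|u_0(y,k)|\le Cy^{\frac12}$ for $0<y<a$.

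First I would bound $G_\to^0Q\,u_0$ directly from the $m=0$ representation used in the proof of Lemma \ref{lemma1a},
\begin{equation*}
(G_\to^0Qf)(x)=-p_0^0(x,k)\int_0^xu_0^0(y,k)Q(y)f(y)\,\d y+u_0^0(x,k)\int_0^xp_0^0(y,k)Q(y)f(y)\,\d y.
\end{equation*}
Inserting $f=u_0$ and the near-zero estimates $|u_0^0(y,k)|\lesssim y^{\frac12}$ and $|p_0^0(y,k)|\lesssim y^{\frac12}(1+|\mathrm{ln}(y)|)$ from \eqref{pio1} and \eqref{pio4} (recall $\mu_k(y)=y$ and $\eta_k(y)\lesssim1$ on $]0,a[$) gives
\begin{equation*}
|(G_\to^0Q\,u_0)(x)|\lesssim x^{\frac12}(1+|\mathrm{ln}(x)|)\int_0^xy|Q(y)|\,\d y+x^{\frac12}\int_0^xy(1+|\mathrm{ln}(y)|)|Q(y)|\,\d y.
\end{equation*}

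The heart of the argument — the step I expect to be the main obstacle — is to turn the hypothesis $Q\in\mathscr{L}^{(0)}_{0,\mathrm{ln}^2}$ into the gain of one logarithm. I set $\rho(x):=\int_0^xy(\mathrm{ln}(y))^2|Q(y)|\,\d y$, which tends to $0$ as $x\to0$. For $0<y<x$ with $x$ small enough that $|\mathrm{ln}(x)|\ge1$ one has $|\mathrm{ln}(y)|\ge|\mathrm{ln}(x)|$, whence the pointwise comparisons $y|Q(y)|\le|\mathrm{ln}(x)|^{-2}y(\mathrm{ln}(y))^2|Q(y)|$ and $y|\mathrm{ln}(y)||Q(y)|\le|\mathrm{ln}(x)|^{-1}y(\mathrm{ln}(y))^2|Q(y)|$. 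Integrating yields $\int_0^xy|Q|\le|\mathrm{ln}(x)|^{-2}\rho(x)$ and $\int_0^xy|\mathrm{ln}(y)||Q|\le|\mathrm{ln}(x)|^{-1}\rho(x)$; feeding these into the previous display and using $1+|\mathrm{ln}(x)|\lesssim|\mathrm{ln}(x)|$ collapses every term to $\lesssim x^{\frac12}|\mathrm{ln}(x)|^{-1}\rho(x)=o(x^{\frac12}|\mathrm{ln}(x)|^{-1})$. This comparison works uniformly for each fixed $k$ with $\Re(k)\ge0$, so the delicate case $k=0$ requires no separate treatment.

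Finally, for the derivative I would differentiate the representation and observe that the two boundary contributions at $y=x$ cancel (they are $\mp u_0^0(x,k)p_0^0(x,k)Q(x)f(x)$), leaving
\begin{equation*}
\partial_x(G_\to^0Qf)(x)=-\partial_xp_0^0(x,k)\int_0^xu_0^0Qf\,\d y+\partial_xu_0^0(x,k)\int_0^xp_0^0Qf\,\d y.
\end{equation*}
The elementary near-zero bounds $|\partial_xu_0^0(x,k)|\lesssim x^{-\frac12}$ and $|\partial_xp_0^0(x,k)|\lesssim x^{-\frac12}(1+|\mathrm{ln}(x)|)$ (immediate from $u_0^0(x,0)=x^{\frac12}$, $p_0^0(x,0)=x^{\frac12}\mathrm{ln}(x)$ and analyticity in $k$) combined with the \emph{same} two integral estimates give $\partial_x(G_\to^0Q\,u_0)=o(x^{-\frac12}|\mathrm{ln}(x)|^{-1})$. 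Together with the resolvent identity this proves both asserted asymptotics.
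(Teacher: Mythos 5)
Your proposal is correct and follows essentially the same route as the paper: both start from the identity $u_0-u_0^0=-G_\to^0Qu_0$ together with $u_0(x,k)=\mathcal{O}(x^{\frac12})$ from Proposition \ref{prop:reg1h}, and then gain one power of the logarithm from the $\mathrm{ln}^2$-integrability of $Q$. The only difference is presentational: the paper simply invokes Lemma \ref{lemma1}(ii) (for $k\neq0$) and Lemma \ref{lemma1a} (for $k=0$) with $\varepsilon_1=\tfrac12$, $\varepsilon=0$, $\beta=2$, $\alpha=0$, whereas you re-derive exactly that special case inline, which incidentally lets you treat $k=0$ and $k\neq0$ uniformly.
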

\begin{proof}
Recall from Proposition \ref{prop:reg1h} that $u_0-u_0^0=-G_\to^0Qu_0$ and that $u_0(x,k)=\mathcal{O}(x^\frac12)$. If $k=0$, it then suffices to use Lemma \ref{lemma1}(ii) with $\varepsilon_1=\frac12$, $\varepsilon=0$, $\beta=2$, $\alpha=0$. If $k=0$, we use Lemma \ref{lemma1a}, also with $\varepsilon_1=\frac12$, $\varepsilon=0$, $\beta=2$, $\alpha=0$.
\end{proof}

\subsection{Asymptotics of non-principal solutions
  near 0} 
  In this subsection, under the minimal assumptions $Q \in \mathscr{L}^{(0)}_0$ if $m\neq0$, $Q \in \mathscr{L}^{(0)}_{0,\mathrm{ln}}$ if $m=0$, we show that any solution to \eqref{eq:a1} behaves like non-principal unperturbed solutions near $0$. 
  
The following proposition provides a rather rough estimate on all eigensolutions. Note that, for $m\neq0$, if $Q \in \mathscr{L}^{(0)}_\varepsilon$ and $\varepsilon\ge2\Re(m)$, then Proposition \ref{cor:u_not_L2a}(i) is a consequence of Theorem \ref{prop:reg10}(iii). Likewise, if $m=0$ and $Q \in \mathscr{L}^{(0)}_{0,\mathrm{ln}^2}$, then Proposition \ref{cor:u_not_L2a}(ii) is a consequence of Theorem \ref{prop:reg10}(iv).

\begin{proposition}\label{cor:u_not_L2a}
  Let $\Re(k)\ge0$.
  \begin{enumerate}[label=(\roman*)]
  \item  Let $\Re(m)\geq0$, $m\neq0$. Suppose that
$
Q \in \mathscr{L}^{(0)}_0.$
  Then, for all $g\in\Ker(L_{m^2}+k^2)$,
\begin{align}\label{uu1g}
  g(x)=\mathcal{O}(x^{\frac12-\Re(m)}),\qquad   \partial_x g(x)=\mathcal{O}(x^{-\frac12-\Re(m)}) , \qquad 
x\to0. 
\end{align}
Moreover, if $\Re(m)>0$ and $g$ is linearly independent of $u_m(\cdot,k)$, then
\begin{align}\label{guu1}
\lim_{x\to0}\frac{g(x)}{x^{\frac12-m}}\quad\text{exists and does not vanish}.
\end{align}
\item Let $m=0$ and $
  Q \in \mathscr{L}^{(0)}_{0,\mathrm{ln}}.$ Then, for all $g\in\Ker(L_{0}+k^2)$,
  \begin{align}\label{uu1g2}
  g(x)=\mathcal{O}(x^{\frac12}\mathrm{ln}(x)),\qquad   \partial_x g(x)=\mathcal{O}(x^{-\frac12}\mathrm{ln}(x)) , \qquad 
x\to0. 
  \end{align}
 Moreover, if $g$ is linearly independent of $u_0(\cdot,k)$, then
\begin{align}
\lim_{x\to0}\frac{g(x)}{x^{\frac12}\mathrm{ln}(x)}\quad\text{exists and
 does not vanish}.
  \label{guu2}
\end{align}
\end{enumerate}
\end{proposition}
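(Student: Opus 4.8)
The plan is to deduce the behaviour of an arbitrary $g\in\Ker(L_{m^2}+k^2)$ from that of the principal solution $u_m(\cdot,k)$ constructed in Proposition~\ref{prop:reg1h}, using the constancy of the Wronskian (reduction of order). The decisive point is that $u_m(\cdot,k)$ already exists under the present \emph{weak} hypotheses ($Q\in\mathscr{L}^{(0)}_0$ for $m\neq0$, $Q\in\mathscr{L}^{(0)}_{0,\mathrm{ln}}$ for $m=0$), so this route avoids inverting $\one+G_\to^0Q$ on the more singular weighted space $L^\infty(]0,a[,\mu_k^{\frac12-\Re(m)}\eta_k)$, which would require the stronger condition $Q\in\mathscr{L}^{(0)}_{2\Re(m)}$ of Corollary~\ref{lemma2_app}(iii). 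Throughout I use that $u_m(x,k)=\tfrac{x^{\frac12+m}}{\Gamma(m+1)}(1+o(1))$ and $\partial_xu_m(x,k)=\tfrac{(\frac12+m)x^{-\frac12+m}}{\Gamma(m+1)}(1+o(1))$ as $x\to0$ (from Proposition~\ref{prop:reg1h}), so that $u_m$ is non-vanishing and bounded below by a multiple of $x^{\frac12+\Re(m)}$ on some interval $]0,x_0[$.

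Suppose first $\Re(m)>0$ in part (i). Fix $g$ and set $W:=\Wr(u_m,g)$, a constant. Reduction of order gives, for $x\in]0,x_0[$,
\[
g(x)=u_m(x)\Big(C-W\int_x^{x_0}\frac{\mathrm{d}y}{u_m(y)^2}\Big),\qquad C:=\frac{g(x_0)}{u_m(x_0)}.
\]
Inserting $u_m(y)^{-2}=\Gamma(m+1)^2\,y^{-1-2m}(1+\sigma(y))$ with $\sigma(y)=o(1)$, the leading part yields $\int_x^{x_0}y^{-1-2m}\,\mathrm{d}y=\tfrac{x^{-2m}}{2m}(1+o(1))$, while the error $\int_x^{x_0}y^{-1-2m}\sigma(y)\,\mathrm{d}y$ is $o(x^{-2\Re(m)})$ because, $\Re(m)>0$ making the integrand of modulus $y^{-1-2\Re(m)}$ dominated near the lower endpoint $x$. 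Multiplying by $u_m(x)$ I obtain $\lim_{x\to0}g(x)/x^{\frac12-m}=-\tfrac{\Gamma(m+1)}{2m}\,W$, which exists and is nonzero precisely when $W\neq0$, i.e. when $g$ is linearly independent of $u_m$; this proves \eqref{guu1}, and in particular $g(x)=\mathcal{O}(x^{\frac12-\Re(m)})$ (the case $W=0$ being trivial since then $g\propto u_m$). The derivative bound follows from $\partial_xg=(W+u_m'\,g)/u_m$, whose right-hand side is $\mathcal{O}(x^{-\frac12-\Re(m)})$ once the above asymptotics of $g,u_m,u_m'$ are inserted.

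For the remaining part of (i), namely $\Re(m)=0$, $m\neq0$, I would argue differently, since there the reduction-of-order integral only delivers a bound with a spurious logarithmic factor. Instead note that, as $\varepsilon=0\ge 2\max(\mp\Re(m),0)$, Proposition~\ref{prop:reg1h} furnishes \emph{both} $u_m(\cdot,k)$ and $u_{-m}(\cdot,k)$, with $\Wr(u_m,u_{-m})=\Wr(u_m^0,u_{-m}^0)=-\tfrac{2\sin(\pi m)}{\pi}\neq0$ by \eqref{eq:wronsk_m-m} (the correction terms dropping out of the constant Wronskian in the limit $x\to0$); hence they form a basis of $\Ker(L_{m^2}+k^2)$. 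Writing $g$ in this basis and using $u_{\pm m}(x,k)=\mathcal{O}(x^{\frac12})$, $\partial_xu_{\pm m}(x,k)=\mathcal{O}(x^{-\frac12})$ gives \eqref{uu1g} with $\Re(m)=0$; no limit is claimed in this case. Part (ii), $m=0$, is handled exactly as the case $\Re(m)>0$, now with $u_0(x,k)\sim x^{\frac12}$, so that $\int_x^{x_0}u_0(y)^{-2}\,\mathrm{d}y=-\ln x\,(1+o(1))$; this produces $\lim_{x\to0}g(x)/(x^{\frac12}\ln x)=\Wr(u_0,g)$, nonvanishing iff $g$ is independent of $u_0$, i.e. \eqref{guu2}, together with the bounds \eqref{uu1g2}.

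The main obstacle is the control of the $o(1)$ relative error $\sigma$ coming from $u_m-u_m^0$ inside the reduction-of-order integral: one must ensure it contributes only $o(x^{-2\Re(m)})$ (resp.\ $o(\ln x)$ when $m=0$), so that a genuine \emph{limit}, not merely a bound, is obtained. This rests on the sharp $o$-asymptotics of $u_m$ from Proposition~\ref{prop:reg1h} and on the fact that, for $\Re(m)>0$, the weight $y^{-1-2\Re(m)}$ concentrates the integral at its lower endpoint. The borderline case $\Re(m)=0$ is exactly the one where this mechanism fails, which is why it forces the separate two-solution argument above.
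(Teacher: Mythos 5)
Your proof is correct, and its core is the same as the paper's: reduce to the principal solution $u_m(\cdot,k)$ of Proposition \ref{prop:reg1h} (which exists under the weak hypotheses), use constancy of the Wronskian, and integrate $W/u_m^2$ from a fixed $x_0$ down to $x$. Where you genuinely diverge is the borderline subcase $\Re(m)=0$, $m\neq0$. The paper's proof simply writes
\begin{equation*}
\int_\alpha^x W\big(C_0y^{-1-2m}+o(y^{-1-2\Re(m)})\big)\,\mathrm{d} y = Cx^{-2m}+o(x^{-2\Re(m)}),
\end{equation*}
a step that is only valid when $\Re(m)>0$, since for $\Re(m)=0$ the error integral $\int_x^{\alpha}y^{-1}\,o(1)\,\mathrm{d} y$ is merely $o(|\ln x|)$, not $o(1)$; taken literally, the paper's argument would then yield only $g(x)=o(x^{\frac12}|\ln x|)$ rather than the asserted $\mathcal{O}(x^{\frac12})$. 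You correctly identify this failure and replace the reduction-of-order argument on the imaginary axis by the observation that, for $\Re(m)=0$ and $\varepsilon=0$, Proposition \ref{prop:reg1h} delivers \emph{both} $u_m$ and $u_{-m}$, whose Wronskian equals $-\tfrac{2\sin(\pi m)}{\pi}\neq0$ by passing to the limit $x\to0$ in the constant Wronskian; expanding $g$ in this basis gives \eqref{uu1g} directly. This is a more careful treatment than the paper's, at the modest cost of invoking the existence of the second distinguished solution (which is available exactly on $\Re(m)=0$ and nowhere else in the range considered). Your computations of the limits in \eqref{guu1} and \eqref{guu2}, including the identification of the limit as a nonzero multiple of $\Wr(u_m,g)$, and the derivative bounds via $\partial_xg=(W+u_m'g)/u_m$, are all consistent with the paper.
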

\begin{proof} 
We prove (i), (ii) follows in the same way. It is well known that the Wronskian of two
  eigensolutions of a 1-dimensional Schr\"odinger equation  is constant. Proposition \ref{prop:reg1h} gives the solution $u=u_m\in\Ker(L_{m^2}+k^2)$.
Assuming that $u_m$ and $W$ are known, we solve the ordinary differential equation
\begin{equation}
g(x)  u'(x) - g'(x)  u(x) = W , \label{eq:ODE_wronskian.u}
\end{equation}
for the unknown function $g$. Obviously the solutions to
\begin{equation*}
g(x)  u'(x) - g'(x) u(x) = 0 ,
\end{equation*}
are given by $g(x) = \lambda   u( x )$, $\lambda  \in \mathbb{C}$,
and we seek a particular solution to \eqref{eq:ODE_wronskian.u} of the
form $g( x ) = \lambda (x)  u( x )$, with $\lambda  \in C^1]0,\infty[$. This gives
\begin{equation*}
\lambda '(x)  u( x )^2  = W .
\end{equation*}
By \eqref{uu1} we know that for some $C_0\neq0$
\begin{align*}
   u(x)-C_0x^{\frac12+m}&=o(x^{\frac12+\Re(m)})
  .\end{align*}
This implies that there exists $\alpha>0$ such that $ u( x ) \neq 0$ for $0<x \le \alpha$, and hence 
\begin{align*}
  \lambda (x)-\lambda (\alpha) = \int_\alpha^x \frac{ W }{ u( y )^2 } \mathrm{d} y
  &= \int_\alpha^x  W\Big(C_0y^{-1-2m}+o(y^{-1-2\Re(m)})\Big)
  \mathrm{d} y \notag \\
 & =Cx^{-2m}+o(x^{-2\Re(m)}).
\end{align*}
Now
\begin{equation*}
g( x ) = \Big ( \lambda (\alpha) + \int_\alpha^x \frac{ W }{ u( y
  )^2 } \mathrm{d} y \Big )  u( x ) , 
\end{equation*}
implies \eqref{uu1g} and \eqref{guu1}.
\end{proof}

Note that  Proposition \ref{cor:u_not_L2a}   implies, under rather
weak assumptions, that $u_{m}(\cdot,k)$ is the only solution square
integrable near zero if $\Re(m)\geq1$.

\subsection{The two-sided Green's operator}

Mapping properties of  the two-sided Green's
operator  $G_{\bowtie}^0$ will be needed to construct solutions with a
prescribed behavior near zero in situations where we cannot apply the forward Green's operator  $G_\to^0$.
Note that the two-sided Green's operator is not invariant with respect to the
change $m\to-m$. The following lemma is meaningful only for $\Re(m)\geq0$.

The operator  $G_{\bowtie}^0Q$ is not Volterra. In order to make
$\one+G_{\bowtie}^0Q$ invertible, we will compress it to a sufficiently small
interval $]0,a[$.
Recall from \eqref{bowtie} that
$G_{\bowtie}^{0}$ compressed to the interval $]0,a[$
is denoted $G_{\bowtie}^{0(a)}$.

\begin{lemma}\label{lemmaG0}
 Let $\Re(k)\geq0$, $0< a\leq\infty$ and $Q\in L^1_\mathrm{loc}]0,\infty[$.
\begin{enumerate}[label=(\roman*)]
\item
  Let  $m\neq0$,
  $\frac12-\Re(m)\leq\varepsilon_1+\varepsilon\leq\frac12+\Re(m)$
  and
        \begin{equation}
         \int_0^a\mu_k (y)^{1-\varepsilon}|Q(y)|\d y<\infty.
\label{bound4}         \end{equation}
Then  
              \begin{equation}\label{nologa}
         G_{\bowtie}^{0(a)}Q:
 L^\infty\big(]0,a[, \mu_k ^{\varepsilon_1}\eta_{-k}\big) \to L^\infty\big(]0,a[,\mu_k ^{\varepsilon_1+\varepsilon}\eta_{-k}\big)
\end{equation}
is bounded by $C\times$\eqref{bound4} uniformly in $k$ and $a$.
Moreover, if   $\varepsilon_1+\varepsilon<\frac12+\Re(m)$,
then the image of
\eqref{nologa} is in  $L_0^\infty\big(]0,a[,\mu_k ^{\varepsilon_1+\varepsilon}\eta_{-k}\big)$.
     \item
       Let  $m=0$, $k\neq0$,
       $\frac12=\varepsilon_1+\varepsilon$
and  $0\leq\beta-\alpha\leq1$. Let
        \begin{equation}
         \int_0^a\mu_k (y)^{1-\varepsilon}\lambda_k (y)^\beta|Q(y)|\d y<\infty.
\label{bound5}         \end{equation}
Then  
              \begin{equation}\label{nologa1}
         G_{\bowtie}^{0(a)}Q:
  L^\infty\big(]0,a[,
 \mu_k ^{\varepsilon_1}\lambda_k ^\alpha\eta_{-k}\big) \to
 L^\infty\big(]0,a[,\mu_k ^{\varepsilon_1+\varepsilon}\lambda_k ^{\alpha-\beta+1}\eta_{-k}\big)
\end{equation}
is bounded by $C\times$\eqref{bound5} uniformly in $k$ and $a$.
Moreover, if $\beta-\alpha<1$, then the image of \eqref{nologa1} is in
$ L_0^\infty\big(]0,a[,\mu_k ^{\varepsilon_1+\varepsilon}\lambda_k ^{\alpha-\beta+1}\eta_{-k}\big)$.
\end{enumerate}
\end{lemma}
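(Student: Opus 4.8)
The plan is to mirror the proof of Lemma \ref{lemma1}, the only genuinely new feature being that the two-sided kernel carries a \emph{backward} integral in addition to the forward one. Writing $u^0=u_m^0(\cdot,k)$, $v^0=v_m^0(\cdot,k)$ and using the kernel of $G_{\bowtie}^0$ (i.e.\ $v^0(x)u^0(y)$ for $y<x$ and $u^0(x)v^0(y)$ for $y>x$), I would start, for $0<x<a$, from
\[
(G_{\bowtie}^{0(a)}Qf)(x)=v^0(x)\int_0^x u^0(y)Q(y)f(y)\,\d y+u^0(x)\int_x^a v^0(y)Q(y)f(y)\,\d y .
\]
The first (forward) summand is exactly the one estimated in Lemma \ref{lemma1}; the second (backward) summand, featuring the tail integral $\int_x^a$, is the genuinely new piece. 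In both I insert the uniform bounds \eqref{pio1}--\eqref{pio3}, use $\eta_k\eta_{-k}=1$, and measure $f$ in the relevant input norm $\|\cdot\|$.

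For part (i) ($m\neq0$), the forward term is controlled by $\mu_k(x)^{\frac12-\Re(m)}\eta_{-k}(x)\int_0^x\mu_k(y)^{\frac12+\Re(m)+\varepsilon_1}|Q(y)|\,\d y\,\|f\|$; I factor $\mu_k(y)^{\frac12+\Re(m)+\varepsilon_1}=\mu_k(y)^{1-\varepsilon}\mu_k(y)^{\Re(m)+\varepsilon_1+\varepsilon-\frac12}$ and invoke the lower bound $\varepsilon_1+\varepsilon\ge\frac12-\Re(m)$, which makes the second exponent nonnegative, hence $\mu_k(y)^{\Re(m)+\varepsilon_1+\varepsilon-\frac12}\le\mu_k(x)^{\Re(m)+\varepsilon_1+\varepsilon-\frac12}$; the prefactor then collapses to the target weight $\mu_k(x)^{\varepsilon_1+\varepsilon}\eta_{-k}(x)$ times \eqref{bound4}. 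For the backward term I factor $\mu_k(y)^{\frac12-\Re(m)+\varepsilon_1}=\mu_k(y)^{1-\varepsilon}\mu_k(y)^{\varepsilon_1+\varepsilon-\Re(m)-\frac12}$ and now invoke the upper bound $\varepsilon_1+\varepsilon\le\frac12+\Re(m)$, which makes the second exponent nonpositive, hence decreasing, so it is $\le\mu_k(x)^{\varepsilon_1+\varepsilon-\Re(m)-\frac12}$ since $y\ge x$; the exponentials combine as $\eta_k(x)\eta_{-k}(y)^2=\e^{2\Re(k)(x-y)}\eta_{-k}(x)\le\eta_{-k}(x)$ because $y\ge x$ and $\Re(k)\ge0$. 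Both terms yield the bound $C\times$\eqref{bound4}; all constants come from \eqref{pio1}--\eqref{pio2} and are uniform in $k$, while uniformity in $a$ is automatic as only the integral over $]0,a[$ enters.

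Part (ii) ($m=0$, $k\neq0$) runs in parallel, replacing \eqref{pio2} by \eqref{pio3} and tracking the logarithmic weight $\lambda_k$. Here $\varepsilon_1+\varepsilon=\frac12$ makes the $\mu_k$ powers match exactly, and the monotonicity of $\lambda_k$ (decreasing) combines with the two bounds: $0\le\beta-\alpha$ gives $\lambda_k(y)^{\alpha-\beta}\le\lambda_k(x)^{\alpha-\beta}$ in the forward term (where $y<x$), while $\beta-\alpha\le1$ gives $\lambda_k(y)^{1+\alpha-\beta}\le\lambda_k(x)^{1+\alpha-\beta}$ in the backward term (where $y\ge x$). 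Together with $\eta_k(x)\eta_{-k}(y)^2\le\eta_{-k}(x)$ this produces the target weight $\mu_k(x)^{\frac12}\lambda_k(x)^{\alpha-\beta+1}\eta_{-k}(x)$ times \eqref{bound5}.

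The delicate point, which I expect to be the main obstacle, is the \emph{moreover} assertion that the image lands in $L_0^\infty$ under the \emph{strict} inequalities $\varepsilon_1+\varepsilon<\frac12+\Re(m)$ (resp.\ $\beta-\alpha<1$). For the forward term this is free, its prefactor being multiplied by $\int_0^x\mu_k^{1-\varepsilon}|Q|$ (resp.\ $\int_0^x\mu_k^{1-\varepsilon}\lambda_k^\beta|Q|$), which tends to $0$ as $x\to0$. The backward term is the problem: dividing by the target weight leaves the tail $\int_x^a$, which tends to the nonzero constant $\int_0^a$. To recover decay I would retain a genuine vanishing factor, namely $\mu_k(x)^{\delta}$ with $\delta:=\frac12+\Re(m)-\varepsilon_1-\varepsilon>0$ in (i), and $\lambda_k(x)^{-\gamma}$ with $\gamma:=1+\alpha-\beta>0$ in (ii) (note $\lambda_k(x)\to\infty$ as $x\to0$), and split $\int_x^a=\int_x^b+\int_b^a$ for an auxiliary $b\in\,]x,a[$. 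On $]x,b[$ the monotonicity estimates bound the contribution by $\int_0^b\mu_k^{1-\varepsilon}|Q|$ (resp.\ $\int_0^b\mu_k^{1-\varepsilon}\lambda_k^\beta|Q|$), made small by choosing $b$ small; on $]b,a[$ the vanishing prefactor $\mu_k(x)^\delta$ (resp.\ $\lambda_k(x)^{-\gamma}$) tends to $0$ as $x\to0$ with $b$ fixed, the residual integral being finite by \eqref{bound4} (resp.\ \eqref{bound5}). A standard $\limsup$ argument then forces the limit to be $0$, completing the proof.
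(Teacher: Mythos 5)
Your proof is correct and follows essentially the same route as the paper: the same splitting of $G_{\bowtie}^{0(a)}Qf$ into the forward piece (handled as in Lemma \ref{lemma1}, using the lower bound on $\varepsilon_1+\varepsilon$) and the backward tail piece (using the upper bound, the monotonicity of $\mu_k$, $\lambda_k$ and $\eta_{\pm k}$, and $\eta_k(x)\eta_{-k}(y)^2\le\eta_{-k}(x)$ for $y\ge x$). The only cosmetic difference is in the \emph{moreover} claim, where the paper invokes its Lemma \ref{lm:integr} (proved by dominated convergence) while you give the equivalent elementary $\int_x^b+\int_b^a$ splitting with a $\limsup$ argument; both deliver the same $o$-estimate for the tail integral.
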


\begin{proof} For simplicity, let $a=\infty$.
We prove (i). We have
    \begin{align*}
    G_{\bowtie}^0Qf(x) &= u^0(x) 
    \int_x^\infty
    v^0(y)Q(y)f(y)\mathrm{d} y
+   v^0(x) \int_0^x
 u^0(y)Q(y)f(y)\mathrm{d} y .
\end{align*}
The second term is treated as in the proof
of Lemma \ref{lemma1}, 
using $\frac12-\Re(m)\leq \varepsilon_1+\varepsilon$, namely
\begin{align*}                                                                          
     \notag &
       \left| v^0(x)\int_0^x u^0(y)Q(y)f(y)\mathrm{d} y\right|\\ &\lesssim \mu_k (x)^{\varepsilon_1+\varepsilon} \eta_{-k}(x) 
       \int_0^x\mu_k (y)^{1-\varepsilon} |Q(y)| \mathrm{d} y\left\| \frac{f}{\mu_k^{\varepsilon_1}\eta_{-k}} \right\|_{\infty}.
     \end{align*}

Consider now the first term. We estimate
\begin{align*}
&\Big |  u^0(x)     \int_x^\infty    v^0(y) Q(y) f(y)\mathrm{d} y \Big |\\
&\lesssim \mu_k (x)^{\frac12+\Re(m)}\eta_k (x) \int_x^\infty \mu_k (y)^{\frac12-\Re(m)+\varepsilon_1} \eta_{-k}(y)^2 |Q(y)| \mathrm{d}y \left\| \frac{f}{\mu_k^{\varepsilon_1}\eta_{-k}} \right\|_{\infty} \\
&\lesssim \mu_k (x)^{\varepsilon_1+\varepsilon} \eta_{-k}(x)\int_x^\infty
                                                                                                                              \mu_k (y)^{1-\varepsilon} |Q(y)| \mathrm{d}y \left\| \frac{f}{\mu_k^{\varepsilon_1}\eta_{-k}} \right\|_{\infty},
\end{align*}
where we used $\frac12+\Re(m)\geq \varepsilon_1+\varepsilon$. This proves \eqref{nologa}.

Suppose now that $\varepsilon_1+\varepsilon<\frac12+\Re(m)$. Since $y\mapsto\mu_k (y)^{1-\varepsilon} |Q(y)|$ is integrable on
$]0,\infty[$,
we can apply Lemma \ref{lm:integr} with $h(y)=\mu_k (y)^{\frac12+\Re(m)-\varepsilon_1-\varepsilon}\eta_k (x)$, which gives
\begin{equation*}
\int_x^\infty \mu_k (y)^{\frac12-\Re(m)+\varepsilon_1} \eta_{-k}(y)^2 |Q(y)| \mathrm{d}y = o \big( \mu_k (x)^{-\frac12-\Re(m)+\varepsilon_1+\varepsilon}\eta_{-k}(x)^2 \big), \quad x\to0.
\end{equation*}
This yields
 \begin{equation*}
   G_{\bowtie}^0Qf(x)=o\big(\mu_k ^{\varepsilon_1+\varepsilon}(x)\eta_{-k}(x)\big) \left\| \frac{f}{\mu_k^{\varepsilon_1}\eta_{-k}} \right\|_{\infty} ,
 \end{equation*}
 and hence concludes the proof of (i).
 
 To prove (ii), we proceed similarly, replacing the estimate \eqref{pio2} on $v_m^0$ by the estimate \eqref{pio3} for $m=0$.
\end{proof}

\begin{remark}\label{rk:313}
Applying Lemma \ref{lemmaG0} with $\varepsilon=0$, it follows that, for $m\neq0$ and $\frac12-\Re(m)\leq\varepsilon_1\leq\frac12+\Re(m)$,
\begin{equation*}
\big\|G^{0(a)}_{\bowtie}Q\big\|\le C\int_0^a\mu_k(y)^{1-\varepsilon}|Q(y)|\mathrm{d} y \text{ on } L^\infty\big(]0,a[, \mu_k^{\varepsilon_1}\eta_{-k}\big) ,
\end{equation*}
where the constant $C$ is independent of $k$ and $a$, but dependent on
$m$. However, if the values of $m$ are restricted to
$|m|> m_0 $, $0\le\Re(m)\le M$ for some $ m_0 >0$, $M>0$, then one infers from the proof that the constant $C$ can be chosen uniformly.
\end{remark}

In the next corollary we show the invertibility of $\one+G_{\bowtie}^{0(a)}Q$ for small enough $a>0$.

  \begin{corollary}\label{lemmaG0a}
Let $\Re(k)\geq0$. 
\begin{enumerate}[label=(\roman*)]
\item
Let $m\neq0$ and
  $\frac12-\Re(m)\leq\varepsilon_1\leq\frac12+\Re(m)$. Suppose that
  $Q\in \mathscr{L}_0^{(0)}$.
Then, for  small
  enough $a>0$, we have 
  \begin{equation*}
  \big \| G_{\bowtie}^{0(a)}Q \big \|<1 \text{ on } L^\infty(]0,a[,\mu_k ^{\varepsilon_1}\eta_{-k}),
  \end{equation*}
    so that
  $(\one+G_{\bowtie}^{0(a)}Q)^{-1}$ exists.
\item
  Let $m=0$, $k\neq0$. Suppose that
 $Q\in  \mathscr{L}_{0,\mathrm{ln}}^{(0)}$ and
 $0\leq\alpha\leq1$. Then, for
 small
enough $a>0$, we have 
\begin{equation*}
\big\|G_{\bowtie}^{0(a)}Q\big\|<1 \text{ on } L^\infty(]0,a[, \mu_k ^{\varepsilon_1}\lambda_k ^\alpha\eta_{-k}),
\end{equation*}
so that
  $(\one+G_{\bowtie}^{0,(a)}Q)^{-1}$ exists.
\end{enumerate}
\end{corollary}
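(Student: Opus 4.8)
The plan is to apply Lemma \ref{lemmaG0} with the smoothing exponent $\varepsilon = 0$, so that the source and target weighted spaces coincide; then $G_{\bowtie}^{0(a)}Q$ is an endomorphism of the relevant space whose operator norm is controlled by an integral of $|Q|$ against a weight over the short interval $]0,a[$. Shrinking $a$ drives this norm below $1$, after which a Neumann series furnishes the inverse $(\one + G_{\bowtie}^{0(a)}Q)^{-1}$.

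For (i), I would take $\varepsilon = 0$ in Lemma \ref{lemmaG0}(i); the admissibility range $\frac12 - \Re(m) \leq \varepsilon_1 + \varepsilon \leq \frac12 + \Re(m)$ then reduces exactly to the hypothesis $\frac12 - \Re(m) \leq \varepsilon_1 \leq \frac12 + \Re(m)$, and the lemma yields $G_{\bowtie}^{0(a)}Q$ as a bounded endomorphism of $L^\infty(]0,a[, \mu_k^{\varepsilon_1}\eta_{-k})$ with
\[
\big\|G_{\bowtie}^{0(a)}Q\big\| \leq C \int_0^a \mu_k(y)\,|Q(y)|\,\mathrm{d} y .
\]
Since $\mu_k(y) = \min(|k|^{-1}, y) \leq y$ uniformly in $k$, and $C$ can be taken $k$-independent by Remark \ref{rk:313}, the right-hand side is bounded by $C\int_0^a y\,|Q(y)|\,\mathrm{d} y$. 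This integral is finite because $Q \in \mathscr{L}_0^{(0)}$, and it tends to $0$ as $a \to 0$; hence for $a$ small enough (uniformly in $k$) the norm is strictly less than $1$.

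For (ii), I would take $\varepsilon = 0$, $\varepsilon_1 = \frac12$, and $\beta = 1$ in Lemma \ref{lemmaG0}(ii). With the given range $0 \leq \alpha \leq 1$ the constraint $0 \leq \beta - \alpha \leq 1$ holds, and the target exponent $\alpha - \beta + 1$ equals $\alpha$, so $G_{\bowtie}^{0(a)}Q$ again maps $L^\infty(]0,a[, \mu_k^{\varepsilon_1}\lambda_k^\alpha\eta_{-k})$ into itself, with
\[
\big\|G_{\bowtie}^{0(a)}Q\big\| \leq C \int_0^a \mu_k(y)\,\lambda_k(y)\,|Q(y)|\,\mathrm{d} y .
\]
Here $k \neq 0$ is fixed, so for small $y$ one has $\mu_k(y)\lambda_k(y) = y\big(1 - \mathrm{ln}(|k|y)\big) \lesssim y\big(1 + |\mathrm{ln}(y)|\big)$; thus the integral is dominated by $\int_0^a y\big(1 + |\mathrm{ln}(y)|\big)|Q(y)|\,\mathrm{d} y$, which is finite by $Q \in \mathscr{L}_{0,\mathrm{ln}}^{(0)}$ and vanishes as $a \to 0$. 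Choosing $a$ small enough again gives norm $< 1$.

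In both cases, once $\|G_{\bowtie}^{0(a)}Q\| < 1$, the operator $\one + G_{\bowtie}^{0(a)}Q$ is invertible on the corresponding weighted space, with inverse given by the convergent Neumann series $\sum_{n \geq 0}(-G_{\bowtie}^{0(a)}Q)^n$. There is no serious difficulty here: the whole content is bookkeeping of parameters, namely choosing $\varepsilon = 0$ (and, in the logarithmic case, $\beta = 1$) precisely so that the weighted space is preserved, together with the observation that in case (ii) the product $\mu_k \lambda_k$ behaves like $y(1 + |\mathrm{ln}(y)|)$ near the origin, so that the integrability condition $Q \in \mathscr{L}_{0,\mathrm{ln}}^{(0)}$ is exactly what is needed for the tail integral to vanish as $a \to 0$.
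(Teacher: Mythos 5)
Your proposal is correct and follows essentially the same route as the paper, which likewise proves (i) by applying Lemma \ref{lemmaG0}(i) with $\varepsilon=0$ and (ii) by applying Lemma \ref{lemmaG0}(ii) with $\varepsilon=0$, $\beta=1$; you merely spell out the smallness of the integral as $a\to0$ and the Neumann series, which the paper leaves implicit.
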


\begin{proof}
To prove (i), it suffices to apply Lemma \ref{lemmaG0}(i) with $\varepsilon=0$.

To prove (ii), we apply Lemma \ref{lemmaG0}(ii) with $\varepsilon=0$ and $\beta=1$.
\end{proof}

 \subsection{Solutions constructed with the help of the two-sided
   Green's operator}\label{sec:sol_01}

 The goal of this subsection is similar to that of Subsection \ref{sec:sol_0}: to construct solutions  to \eqref{eq:a1}
that approximate near $0$ the solutions to the unperturbed  equation
\eqref{eq:a1x0}. 
In this subsection we cover a different parameter range than in Subsection \ref{sec:sol_0}. This is accomplished by using a different tool. Instead
of the forward Green's operator, we use the
two-sided Green's 
operator compressed to  a sufficiently small interval
$]0,a[$, 
$G_{\bowtie}^{0(a)}$, which was studied in the previous subsection. 
 The construction here will be
less canonical than in Subsection \ref{sec:sol_0} -- it will depend on the
parameter $a$.

\begin{theorem}
Let $\Re(k)\ge0$.
  \begin{enumerate}[label=(\roman*)]
                                 \item
 Let  $ m_0 >0$, $M>0$ and $Q\in
   \mathscr{L}^{(0)}_0$. Then for small enough $a>0$, for all $m\in\mathbb{C}$ such that $|m|> m_0 $, $0\le\Re(m)\le M$, for all
    $g^0\in\cN(L_{m^2}^0+k^2)$,
\begin{equation*}
  g^{\bowtie}:=(\one+G_{\bowtie}^{0(a)}Q)^{-1}g^0
  \end{equation*}
is a solution in $ AC^1]0,\infty[$ to \eqref{eq:a1}. If in addition $0\leq\varepsilon<2\Re(m)$, then 
\begin{align}
  g^{\bowtie}(x)-g^0(x)&=o(x^{\frac12-\Re(m)+\varepsilon}), \label{eq:fkap1}
  \\
\partial_x
  g^{\bowtie}(x)-\partial_xg^0(x)&=o(x^{-\frac12-\Re(m)+\varepsilon}). \label{eq:fkap2}
\end{align}
\item
Let $m=0$ and assume that $k\neq 0$. Suppose that $Q\in
\mathscr{L}^{(0)}_{0,\mathrm{ln}}$. Then for small enough $a>0$, for all $g^0\in\cN(L_{0}^0+k^2)$,
\begin{equation*}
  g^{\bowtie}:=(\one+G_{\bowtie}^{0(a)}Q)^{-1}g^0
  \end{equation*}
is a solution  in  $ AC^1]0,\infty[$
to     \eqref{eq:a1} such that
\begin{align*}
  g^{\bowtie}(x)-g^0(x)&=o(x^{\frac12}\mathrm{ln}(x)),\\
\partial_x    g^{\bowtie}(x)-\partial_xg^0(x)&=o(x^{-\frac12}\mathrm{ln}(x)). 
\end{align*}
  \end{enumerate}
\label{conhua}
  \end{theorem}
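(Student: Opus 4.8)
The plan is to mimic the proof of Theorem \ref{prop:reg10}, replacing the forward Green's operator by the compressed two-sided operator $G_{\bowtie}^{0(a)}$ and invoking the mapping properties of Lemma \ref{lemmaG0}, the invertibility of Corollary \ref{lemmaG0a}, and the uniform bound of Remark \ref{rk:313}. First I would fix the weight: for (i) I take $\varepsilon_1=\frac12-\Re(m)$, and for (ii) I take $\varepsilon_1=\frac12$ with $\alpha=\beta=1$. These are the admissible choices, since a general $g^0\in\cN(L_{m^2}^0+k^2)$ is $\mathcal{O}(x^{\frac12-\Re(m)})$ near $0$ when $m\neq0$ (resp. $\mathcal{O}(x^{\frac12}\ln x)$ when $m=0$), so by \eqref{pio1}--\eqref{pio4} one has $g^0\in L^\infty(]0,a[,\mu_k^{\frac12-\Re(m)}\eta_{-k})$, resp. $g^0\in L^\infty(]0,a[,\mu_k^{\frac12}\lambda_k\eta_{-k})$, on any bounded interval. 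By Corollary \ref{lemmaG0a} these are exactly the spaces on which $\one+G_{\bowtie}^{0(a)}Q$ is invertible for $a$ small. The point behind the uniform-in-$m$ formulation of (i) is Remark \ref{rk:313}: restricting $m$ to $|m|>m_0$, $0\le\Re(m)\le M$ makes the norm bound $\|G_{\bowtie}^{0(a)}Q\|\le C\int_0^a\mu_k(y)|Q(y)|\,\mathrm{d}y$ hold with $C$ independent of $m$, so a single $a$ with $C\int_0^a\mu_k|Q|<1$ serves all admissible $m$ at once (using $Q\in\mathscr{L}_0^{(0)}$, so the integral tends to $0$ as $a\to0$). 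For this $a$ I set $g^{\bowtie}=\sum_{n\ge0}(-G_{\bowtie}^{0(a)}Q)^n g^0$.

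Next I would verify that $g^{\bowtie}$ solves \eqref{eq:a1}. Since $G_{\bowtie}^{0(a)}$ is a right inverse of $L_{m^2}^0+k^2$ on $]0,a[$, applying $L_{m^2}^0+k^2$ to the defining relation $g^{\bowtie}+G_{\bowtie}^{0(a)}Qg^{\bowtie}=g^0$ gives $(L_{m^2}^0+k^2)g^{\bowtie}=-Qg^{\bowtie}$ on $]0,a[$, i.e. $(L_{m^2}+k^2)g^{\bowtie}=0$ there. The kernel of $G_{\bowtie}^{0(a)}$ being built from the $AC^1$ functions $u^0,v^0$ integrated against $Qg^{\bowtie}\in L^1_{\mathrm{loc}}$, we get $g^{\bowtie}\in AC^1]0,a[$, and by the extension convention of Section \ref{section:solutions} it continues to the unique $AC^1]0,\infty[$ solution. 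Note that no uniqueness is asserted here, consistently with the fact that a multiple of the principal solution can be added without changing the asymptotic rates below.

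Finally, for the asymptotics I would write $g^{\bowtie}-g^0=-G_{\bowtie}^{0(a)}Qg^{\bowtie}$ and feed $g^{\bowtie}$, which lies in the weighted space above, into the $L_0^\infty$ half of Lemma \ref{lemmaG0}. For (i), Lemma \ref{lemmaG0}(i) with $\varepsilon_1=\frac12-\Re(m)$ sends $g^{\bowtie}$ to $o(\mu_k^{\frac12-\Re(m)+\varepsilon}\eta_{-k})$; the strict inequality $\varepsilon<2\Re(m)$ is exactly the condition $\varepsilon_1+\varepsilon<\frac12+\Re(m)$ that forces the image into $L_0^\infty$, hence the little-$o$ (this step also uses $\int_0^a\mu_k^{1-\varepsilon}|Q|<\infty$, i.e. $Q\in\mathscr{L}^{(0)}_\varepsilon$ for $\varepsilon>0$). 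For (ii), Lemma \ref{lemmaG0}(ii) with $\varepsilon_1=\frac12$, $\alpha=\beta=1$, $\varepsilon=0$ yields $o(\mu_k^{\frac12}\lambda_k\eta_{-k})=o(x^{\frac12}\ln x)$ near $0$, the integrability $\int_0^a\mu_k\lambda_k|Q|<\infty$ being precisely $Q\in\mathscr{L}_{0,\mathrm{ln}}^{(0)}$. The derivative estimates \eqref{eq:fkap2} follow by differentiating the representation $G_{\bowtie}^0Qf(x)=u^0(x)\int_x^a v^0Qf+v^0(x)\int_0^x u^0Qf$; the boundary terms cancel by $\Wr(v^0,u^0)=1$, leaving $(u^0)'$ and $(v^0)'$, whose near-$0$ estimates differ from those of $u^0,v^0$ only by the factor $\mu_k^{-1}$, so the same integral bounds give the claimed error. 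I expect the main obstacle to be precisely the non-Volterra nature of $G_{\bowtie}^0Q$: unlike in Theorem \ref{prop:reg10}, convergence of the Neumann series is not automatic and rests on compression to $]0,a[$ together with the uniform smallness of Remark \ref{rk:313}; securing a single $a$ valid across the whole region $|m|>m_0$, $0\le\Re(m)\le M$, where the admissible exponent $\varepsilon_1=\frac12-\Re(m)$ itself varies with $m$, is the delicate bookkeeping.
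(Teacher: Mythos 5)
Your proposal is correct and follows essentially the same route as the paper's proof: invertibility of $\one+G_{\bowtie}^{0(a)}Q$ via Corollary \ref{lemmaG0a} with $\varepsilon_1=\tfrac12-\Re(m)$ (resp. $\varepsilon_1=\tfrac12$, $\alpha=\beta=1$), uniformity in $m$ from Remark \ref{rk:313}, and the asymptotics from the identity $g^{\bowtie}-g^0=-G_{\bowtie}^{0(a)}Qg^{\bowtie}$ combined with the $L_0^\infty$ half of Lemma \ref{lemmaG0}, where $\varepsilon<2\Re(m)$ is exactly the strict inequality $\varepsilon_1+\varepsilon<\tfrac12+\Re(m)$. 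The only cosmetic difference is that you read off $g^{\bowtie}=\mathcal{O}(x^{\frac12-\Re(m)})$ directly from membership in the weighted space where the Neumann series converges, whereas the paper cites Proposition \ref{cor:u_not_L2a}; both are valid.
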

  
\begin{proof}
To prove (i), we apply Corollary \ref{lemmaG0a}(i) (and Remark \ref{rk:313}) with $\varepsilon_1=\frac12-\Re(m)$: By making $a>0$ small
enough, we can thus make sure that for
 $|m|> m_0 $, $0\leq\Re(m)\leq M$, $m\neq0$, the operator  $G_{\bowtie}^{0(a)}Q$ has
the norm $<1$  on the space $L^\infty(]0,a[,\mu_k ^{\frac12-\Re(m)}\eta_{-k})$. Hence $g^{\bowtie}$ is well-defined and is a solution to \eqref{eq:a1}.

We have $g^{\bowtie} - g^0 = (-G_{\bowtie}^{0(a)}Q)g^{\bowtie}$.  Since $g^{\bowtie} = \mathcal{O}(x^{\frac12-\Re(m)})$ by Proposition \ref{cor:u_not_L2a} and since $\varepsilon<2\Re(m)$, we can apply Lemma \ref{lemmaG0}(i) with $\varepsilon_1=\frac12-\Re(m)$. This yields \eqref{eq:fkap1}--\eqref{eq:fkap2}.

To prove (ii) we proceed in the same way, using Corollary
\ref{lemmaG0a}(ii) and Lemma \ref{lemmaG0}(ii)
 with $\varepsilon=0$ and $\alpha=\beta=1$.
\end{proof}

 We can apply Theorem \ref{conhua}(i)
to  the unperturbed solutions
$g^0$ equal to $u_{-m}^0(\cdot,k)$ and
$u_{m}^0(\cdot,k)$, obtaining solutions $u_{-m}^{\bowtie(a)}(\cdot,k)$ and
$u_{m}^{\bowtie(a)}(\cdot,k)$.
The solutions $u_m^{\bowtie(a)}(\cdot,k)$ for $\Re(m)\geq0$ are not very useful,
 since we have then $ u_m (\cdot,k)$ at our disposal.
Therefore, in the following proposition we restrict ourselves to
$u_{-m}^{\bowtie(a)}(\cdot,k)$. They can serve as a non-principal 
solution defined when $u_{-m}$ is not available. 

\begin{proposition}\label{prop:ubowtie}
Let $\Re(k)\ge0$. Suppose that  $Q\in  \mathscr{L}^{(0)}_0$. Let
 $ m_0 >0$, $M>0$. Let
   $a>0$ be small
enough as in the previous theorem. Then for
$|m|> m_0 $, $0\le \Re(m)\leq M$, setting
\begin{equation*}
   u_{-m}^{\bowtie(a)}(\cdot,k):=(\one+G_{\bowtie}^{0(a)}Q)^{-1} u_{-m}^0(\cdot,k)
\end{equation*}
we obtain  a solution  in  $ AC^1]0,\infty[$
   to     \eqref{eq:a1}. 
If we impose the assumption
$Q\in  \mathscr{L}^{(0)}_\varepsilon$ for
 $0\leq\varepsilon<2\Re(m)$, then 
\begin{align}
   u_{-m}^{\bowtie(a)}(x,k)-
  u_{-m}^0(x,k)&=o(x^{\frac12-\Re(m)+\varepsilon})
                 ,\label{eq:ubowtieasympt1}\\
\partial_x  u_{-m}^{\bowtie(a)}(x,k)-\partial_x 
  u_{-m}^0(x,k)&=o(x^{-\frac12-\Re(m)+\varepsilon})
                 .\label{eq:ubowtieasympt2}
\end{align}
If  $Q\in  \mathscr{L}^{(0)}_\varepsilon$ with
$0\le2\Re(m)\le\varepsilon$ and $m\not\in\mathbb{N}$,
then there exists  $c_m^{\bowtie(a)}(k)\in\mathbb{C}$ such that
\begin{equation} 
u_{-m}^{\bowtie(a)}(x,k)= u_{-m}(x,k)+c_m^{\bowtie(a)}(k) u_m(x,k).\label{cem}
\end{equation}
\end{proposition}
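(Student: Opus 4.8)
The first assertions of the proposition are not really the difficulty: that $u_{-m}^{\bowtie(a)}$ is a well-defined solution in $AC^1]0,\infty[$, together with the asymptotics \eqref{eq:ubowtieasympt1}--\eqref{eq:ubowtieasympt2} under $Q\in\mathscr{L}^{(0)}_\varepsilon$, $0\le\varepsilon<2\Re(m)$, follows by applying Theorem \ref{conhua}(i) to the unperturbed solution $g^0=u_{-m}^0(\cdot,k)\in\cN(L_{m^2}^0+k^2)$. The real content is the representation \eqref{cem} in the complementary regime $0\le 2\Re(m)\le\varepsilon$, $m\notin\mathbb{N}$, and the plan is to exploit the rank-one relation between the compressed two-sided and the forward Green's operators.

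Concretely, by \eqref{conne} the kernels satisfy $G_{\bowtie}^0(x,y)=u_m^0(x)v_m^0(y)+G_\to^0(x,y)$, so that after compression to $]0,a[$ one gets, for any admissible $f$,
\begin{equation*}
G_{\bowtie}^{0(a)}Qf(x)=c_f\,u_m^0(x)+G_\to^0Qf(x),\qquad c_f:=\int_0^a v_m^0(y)Q(y)f(y)\,\d y,\quad x\in\,]0,a[.
\end{equation*}
Taking $f=u_{-m}^{\bowtie(a)}$ and substituting into the defining identity $(\one+G_{\bowtie}^{0(a)}Q)u_{-m}^{\bowtie(a)}=u_{-m}^0$, I would rearrange to
\begin{equation*}
(\one+G_\to^0Q)u_{-m}^{\bowtie(a)}=u_{-m}^0-c\,u_m^0,\qquad c:=c_{u_{-m}^{\bowtie(a)}},\quad\text{on }]0,a[.
\end{equation*}
The constant $c$ is finite since $|v_m^0\,u_{-m}^{\bowtie(a)}|\lesssim y^{1-2\Re(m)}$ and $Q\in\mathscr{L}^{(0)}_\varepsilon\subset\mathscr{L}^{(0)}_{2\Re(m)}$ (using $2\Re(m)\le\varepsilon$).

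The decisive step is to invert $\one+G_\to^0Q$ on the non-principal weighted space $L^\infty(]0,a[,\mu_k^{\frac12-\Re(m)}\eta_k)$. Because $2\Re(m)\le\varepsilon$, we have $Q\in\mathscr{L}^{(0)}_{2\Re(m)}$, so Corollary \ref{lemma2_app}(iii) makes the Neumann series converge and $\one+G_\to^0Q$ invertible there; both $u_{-m}^0$ and $u_m^0$ lie in this space (for $a\le1$ one has $\mu_k<1$, hence $\mu_k^{\frac12+\Re(m)}\le\mu_k^{\frac12-\Re(m)}$). Applying the inverse and using that, by their Neumann-series definitions in Proposition \ref{prop:reg1h}, $(\one+G_\to^0Q)^{-1}u_{-m}^0=u_{-m}$ and $(\one+G_\to^0Q)^{-1}u_m^0=u_m$, I obtain $u_{-m}^{\bowtie(a)}=u_{-m}-c\,u_m$ on $]0,a[$. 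Both sides are solutions of \eqref{eq:a1} on all of $]0,\infty[$ agreeing on $]0,a[$, hence agree everywhere, which is \eqref{cem} with $c_m^{\bowtie(a)}(k)=-c$; note that the coefficient of $u_{-m}$ comes out to be exactly $1$, automatically, because the forward construction sends $u_{-m}^0$ to $u_{-m}$.

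The main obstacle I anticipate is the bookkeeping of function spaces: one must verify that the Neumann series defining $u_m$ and $u_{-m}$ (originally set up on the principal, resp. non-principal, space) both converge in the single space $L^\infty(]0,a[,\mu_k^{\frac12-\Re(m)}\eta_k)$ and yield the same functions, which rests on the continuous inclusion of the principal into the non-principal space valid for $a\le1$, together with the harmless identification of $\eta_k$ and $\eta_{-k}$ on a finite interval. The hypothesis $m\notin\mathbb{N}$ enters only to ensure that $u_m$ and $u_{-m}$ are linearly independent, so that the decomposition — and in particular the leading coefficient $1$ in front of $u_{-m}$ — is meaningful rather than a vacuous identity among proportional solutions.
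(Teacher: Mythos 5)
Your proposal is correct. The first part coincides with the paper's proof, which simply invokes Theorem \ref{conhua}(i) applied to $g^0=u_{-m}^0(\cdot,k)$. For the decomposition \eqref{cem}, however, you take a genuinely different and more explicit route. The paper's argument is purely dimensional: under $0\le 2\Re(m)\le\varepsilon$ Proposition \ref{prop:reg1h} furnishes both $u_m$ and $u_{-m}$, and for $m\notin\mathbb{N}$ these are linearly independent, hence a basis of $\cN(L_{m^2}+k^2)$; the normalization of the $u_{-m}$-coefficient to $1$ is then left to a comparison of asymptotics near $0$. Your argument instead exploits the rank-one identity $G_{\bowtie}^{0(a)}Qf=u_m^0\,\langle v_m^0\one_{]0,a[}|Qf\rangle+G_\to^0Qf$ on $]0,a[$ (this is exactly \eqref{labb}, which the paper uses later in the proof of Proposition \ref{comui}) to convert the defining equation for $u_{-m}^{\bowtie(a)}$ into $(\one+G_\to^0Q)u_{-m}^{\bowtie(a)}=u_{-m}^0-c\,u_m^0$, and then inverts $\one+G_\to^0Q$ on the non-principal weighted space via Corollary \ref{lemma2_app}(iii). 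This buys two things: an explicit formula $c_m^{\bowtie(a)}(k)=-\int_0^a v_m^0\,Q\,u_{-m}^{\bowtie(a)}\,\d y$, and — more importantly — a proof that the coefficient of $u_{-m}$ is exactly $1$ that does not rely on the asymptotics \eqref{eq:ubowtieasympt1}, which are unavailable on the line $\Re(m)=0$ (where the condition $\varepsilon<2\Re(m)$ is vacuous). Your bookkeeping of the function spaces (inclusion of the principal into the non-principal weighted space for $a\le 1$, equivalence of $\eta_{\pm k}$ on a bounded interval at fixed $k$, finiteness of $c$ from $Q\in\mathscr{L}^{(0)}_{2\Re(m)}$) is all sound, and the role you assign to $m\notin\mathbb{N}$ is consistent with the paper's.
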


\begin{proof}
The first part of the proposition is a direct consequence of Theorem
\ref{conhua}(i).
To see \eqref{cem} note  that by  Proposition \ref{prop:reg1h}, both $u_m(\cdot,k)$ and $u_{-m}(\cdot,k)$ are well-defined. Moreover, if $m\notin\mathbb{N}$, they are linearly independent, as follows from their asymptotics near $0$. 
\end{proof}

The  eigensolutions $ u_{{-}m}^{\bowtie(a)} $ constructed in
Proposition \ref{prop:ubowtie} are given by  convergent
expansions
\begin{align}
   u_{{-}m}^{\bowtie(a)}(x,k)&=\sum_{j=0}^\infty
                             (-G_{\bowtie}^{0(a)}Q)^j u_{{-}m}^0(x,k).
\label{series}\end{align}
Note that the individual terms on the right hand side of 
\eqref{series} are well defined under rather weak
assumptions and their behavior near zero weakly depends on $a$.

\begin{lemma}
\label{prop:indep_a} Let $\Re(k)\geq0$. Assume that $\Re(m)\geq0$, $m\neq0$ and
$Q\in  \mathscr{L}^{(0)}_0$, or $m=0$, $Q\in
\mathscr{L}^{(0)}_{0,\ln}$ and $k\neq0$. Let $0<a,b$. Then for any $j\in\mathbb{N}$
there
exists $c_{m}^j(k)$ such that
\begin{align}\label{diffe}
                              (G_{\bowtie}^{0(a)}Q)^j
  u_{{-}m}^0(x,k)-
  (G_{\bowtie}^{0(b)}Q)^j u_{{-}m}^0(x,k)=  c_{m}^j(k) u^0_m(x,k) + o(
  x^{\frac12+\Re(m)} ) . 
\end{align}
\end{lemma}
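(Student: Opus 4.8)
The plan is to induct on $j$, writing $g_a^{(j)}:=(G_{\bowtie}^{0(a)}Q)^j u_{-m}^0(\cdot,k)$ and (without loss of generality) $a\le b$, and to isolate two elementary facts about $G_{\bowtie}^{0(a)}Q$ near the origin. Throughout set $u^0=u_m^0(\cdot,k)$, $v^0=v_m^0(\cdot,k)$. First I record that every iterate is controlled near $0$: applying Lemma~\ref{lemmaG0}(i) (or (ii) when $m=0$) with $\varepsilon=0$, $\varepsilon_1=\tfrac12-\Re(m)$ (resp. $\varepsilon_1=\tfrac12$, $\alpha=0$, $\beta=1$) shows $G_{\bowtie}^{0(a)}Q$ maps $L^\infty(]0,a[,\mu_k^{\frac12-\Re(m)}\eta_{-k})$ (resp. its $m=0$ analogue) into itself; since $u_{-m}^0=\mathcal{O}(x^{\frac12-\Re(m)})$, a trivial induction gives $g_a^{(j)}=\mathcal{O}(x^{\frac12-\Re(m)})$ for every $j$ and $a>0$. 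Only finitely many iterates occur, so no smallness of $a$ is needed here.

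Step one computes the difference of the two operators. For $0<x<a$ the compressions are trivial and the $\bowtie$-kernel gives $(G_{\bowtie}^{0(a)}Qf)(x)=v^0(x)\int_0^x u^0Qf\,\mathrm{d}y+u^0(x)\int_x^a v^0Qf\,\mathrm{d}y$, and likewise with $b$. Subtracting, the $v^0$-parts cancel and $\int_x^a-\int_x^b=-\int_a^b$, so that for all $x<a$
\begin{equation*}
\big((G_{\bowtie}^{0(a)}Q-G_{\bowtie}^{0(b)}Q)f\big)(x)=-\Big(\int_a^b v^0(y)Q(y)f(y)\,\mathrm{d}y\Big)u_m^0(x),
\end{equation*}
an exact multiple of $u_m^0$ (the integral is over the compact $[a,b]$, hence finite for any locally bounded $f$). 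Step two is an extraction property: if $f=\mathcal{O}(x^{\frac12+\Re(m)})$ near $0$, then
\begin{equation*}
(G_{\bowtie}^{0(a)}Qf)(x)=\Big(\int_0^a v^0(y)Q(y)f(y)\,\mathrm{d}y\Big)u_m^0(x)+o(x^{\frac12+\Re(m)}),\quad x\to0.
\end{equation*}
I obtain this by writing $\int_x^a=\int_0^a-\int_0^x$: the constant $\int_0^a v^0Qf\,\mathrm{d}y$ is finite since $v^0f=\mathcal{O}(y)$ and $\int_0^a y|Q(y)|\,\mathrm{d}y<\infty$, and it produces the displayed multiple of $u_m^0$; the leftover $u^0(x)\int_0^x v^0Qf\,\mathrm{d}y=\mathcal{O}(x^{\frac12+\Re(m)})\,o(1)$, while $u^0f=\mathcal{O}(y^{1+2\Re(m)})$ and $\int_0^x y^{1+2\Re(m)}|Q(y)|\,\mathrm{d}y=o(x^{2\Re(m)})$ give, via \eqref{pio2}, also $v^0(x)\int_0^x u^0Qf\,\mathrm{d}y=o(x^{\frac12+\Re(m)})$.

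With these at hand the induction is immediate. For $j=0$, $g_a^{(0)}-g_b^{(0)}=0$, so $c_m^0=0$. Assuming $g_a^{(j-1)}-g_b^{(j-1)}=c_m^{j-1}u_m^0+r$ with $r=o(x^{\frac12+\Re(m)})$, I telescope
\begin{equation*}
g_a^{(j)}-g_b^{(j)}=G_{\bowtie}^{0(a)}Q\big(g_a^{(j-1)}-g_b^{(j-1)}\big)+\big(G_{\bowtie}^{0(a)}Q-G_{\bowtie}^{0(b)}Q\big)g_b^{(j-1)}.
\end{equation*}
The crucial point is that the induction hypothesis gives the \emph{improved} bound $g_a^{(j-1)}-g_b^{(j-1)}=\mathcal{O}(x^{\frac12+\Re(m)})$, much better than the individual iterates which are only $\mathcal{O}(x^{\frac12-\Re(m)})$; this is exactly the integrability ($v^0(g_a^{(j-1)}-g_b^{(j-1)})Q=\mathcal{O}(y)|Q|$) needed to feed the difference into step two, whence the first term equals $(\mathrm{const})\,u_m^0+o(x^{\frac12+\Re(m)})$. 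Step one makes the second term an exact multiple of $u_m^0$. Summing the two coefficients defines $c_m^j(k)$ and closes the induction.

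The main technical obstacle is the remainder estimate in step two when $m=0$: there $v^0=\mathcal{O}(x^{\frac12}\lambda_k)$ carries a logarithm (cf. \eqref{pio3}), so $v^0(x)\int_0^x u^0Qf\,\mathrm{d}y$ is only $\mathcal{O}(x^{\frac12}\lambda_k(x))\,o(1)$ and one must upgrade this to $o(x^{\frac12})$. This is where the stronger hypothesis $Q\in\mathscr{L}^{(0)}_{0,\mathrm{ln}}$ is essential: since $\lambda_k(x)\lesssim1+|\ln x|$ near $0$ and $|\ln x|\int_0^x y|Q(y)|\,\mathrm{d}y\le\int_0^x y|\ln y|\,|Q(y)|\,\mathrm{d}y\to0$, one gets $\lambda_k(x)\int_0^x y|Q(y)|\,\mathrm{d}y=o(1)$, and the whole argument goes through verbatim with $u_0^0,v_0^0$ in place of $u_m^0,v_m^0$. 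The same hypothesis guarantees, via Lemma~\ref{lemmaG0}(ii) with $\alpha=0$, $\beta=1$, that the $m=0$ iterates remain $\mathcal{O}(x^{\frac12})$ without extraneous logarithms, so that step two applies unchanged.
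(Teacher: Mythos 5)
Your proof is correct and takes essentially the same route as the paper's: induction on $j$, telescoping the difference of the iterated operators, extracting the exact multiple of $u_m^0$ from the constant part of the $\int_x^a v^0Qf$ integral, and killing the remainders with the same $o(x^{2\Re(m)})$ and $o(1)$ estimates on $\int_0^x u^0Qf$ and $\int_0^x v^0Qf$. The only differences are cosmetic: you write the telescoping as $A(A^{j}-B^{j})+(A-B)B^{j}$ where the paper uses $(A-B)A^{j}+B(A^{j}-B^{j})$, and you spell out the $m=0$ logarithmic upgrade that the paper dispatches with ``can be treated in the same way.''
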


\begin{proof}
Suppose that $m\neq0$. We will prove \eqref{diffe} by induction with respect to $j$. Let us denote
the left hand side of \eqref{diffe} by $z^j(x,k)$. Clearly, $z^0(x,k)=0$.
Assume that \eqref{diffe} is true for a given $j$. Let
$0\leq x\leq a<b$. Now
\begin{align*}
z^{j+1}(x)&=(G_{\bowtie}^{0(a)}Q-G_{\bowtie}^{0(b)}Q) (G_{\bowtie}^{0(a)}Q)^ju_{{-}m}^0(x)+G_{\bowtie}^{0(b)}Qz^j(x)\\
  &=
 u^0_m(x) 
    \int_a^b
  v^0_m(y)Q(y)(G_{\bowtie}^{0(a)}Q)^{j} u_{{-}m}^0(y) \mathrm{d} y
  + u^0_m(x) 
    \int_0^b
  v^0_m(y)Q(y)z^j(y)\mathrm{d} y\\
  &\quad-
   u^0_m(x) 
    \int_0^x
    v^0_m(y)Q(y)z^j(y)\mathrm{d} y
    +v^0_m(y)\int_0^xu_m^0(y)Q(y) z^j(y) \mathrm{d} y.
\end{align*}
The first term is clearly proportional to $u_m^0$.
By the induction assumption $z_j=\mathcal{O}(x^{\frac12+\Re(m)})$. Therefore
the integral in the second term is finite, and hence the second term
is also proportional to $u_m^0$. By the same argument the
third term  is $o(x^{\frac12+\Re(m)})$. Finally,
since $z_j=\mathcal{O}(x^{\frac12+\Re(m)})$,
the integral in the fourth term is $o(x^{2\Re(m)})$. Hence the 
fourth term is also $o(x^{\frac12+\Re(m)})$.

The case $m=0$ with $Q\in
\mathscr{L}^{(0)}_{0,\ln}$ and $k\neq0$ can be treated in the same way.
\end{proof}

Under the assumptions of Lemma \ref{prop:indep_a}, we introduce
the following notation for a partial sum of the series \eqref{series}:
\begin{align}\label{defrn}
   u_{{-}m}^{0(a)[n]}(x,k)&:=\sum_{j=0}^n
                                (-G_{\bowtie}^{0(a)}Q)^j
                                 u_{{-}m}^0(x,k)
                                 ,\\ u_{{-}m}^{0[n]}(x,k)&:= u_{{-}m}^{0(1)[n]}(x,k).
 \end{align}
Thus we choose (quite arbitrarily) $a=1$ as the ``standard value'' in \eqref{defrn}.

 The next proposition shows that the functions
  $u_{{-}m}^{0[n]}(\cdot,k)$
  well approximate non-principal solutions under the assumption
  $0\leq\Re(m)\leq\frac{\varepsilon }{2}(n+1)$, $m\neq0$.

\begin{proposition}\label{comui}
Let $\Re(k)\ge0$ and $\Re(m)\ge0$, $m\neq0$. Assume that $\varepsilon\ge0$ and  $Q\in  \mathscr{L}^{(0)}_\varepsilon$. 
Let $n$ be a nonnegative integer  such that 
$\varepsilon\ge\frac{2}{n+1}\Re(m)$. Suppose that 
   $a>0$ is small 
enough, so that $u_{-m}^{\bowtie(a)}(\cdot,k)$ is well defined, as described in Proposition \ref{prop:ubowtie}. 
 Then there exists 
$c_{m}^{(a)[n]}(k)\in\mathbb{C}$ such that 
  \begin{align*}
 u_{{-}m}^{\bowtie(a)}(x,k)- u_{{-}m}^{0[n]}(x,k)-c_{m}^{(a)[n]}(k) u_m(x,k)&=o(x^{\frac12+\Re(m)}),\\
    \partial_x                                                                
    u_{{-}m}^{\bowtie(a)}(x,k)-\partial_x u_{{-}m}^{0[n]}(x,k)- c_{m}^{(a)[n]}(k) \partial_x u_m(x,k)
    &=o(x^{-\frac12+\Re(m)}).
  \end{align*}
\end{proposition}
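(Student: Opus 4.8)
The plan is to expand $u_{-m}^{\bowtie(a)}$ as its convergent Neumann series and to split the difference $u_{-m}^{\bowtie(a)}-u_{-m}^{0[n]}$ into two pieces, each of which has leading behaviour near $0$ equal to a multiple of $u_m^0$ (equivalently of $u_m$, since $u_m-u_m^0=o(x^{\frac12+\Re(m)})$ by Proposition \ref{prop:reg1h}). Writing $u_{-m}^{0(a)[n]}:=\sum_{j=0}^n(-G_{\bowtie}^{0(a)}Q)^ju_{-m}^0$, I would use
\begin{align*}
u_{-m}^{\bowtie(a)}-u_{-m}^{0[n]}
=\underbrace{\big(u_{-m}^{0(a)[n]}-u_{-m}^{0[n]}\big)}_{(A)}
+\underbrace{\big(u_{-m}^{\bowtie(a)}-u_{-m}^{0(a)[n]}\big)}_{(B)}.
\end{align*}
By Corollary \ref{lemmaG0a}(i) the series converges in $L^\infty(]0,a[,\mu_k^{\frac12-\Re(m)}\eta_{-k})$ for $a$ small, so the rearrangements below are legitimate.

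For $(A)$ I would apply Lemma \ref{prop:indep_a} with $b=1$ term by term: since $\mathscr{L}^{(0)}_\varepsilon\subset\mathscr{L}^{(0)}_0$, each difference $(G_{\bowtie}^{0(a)}Q)^ju_{-m}^0-(G_{\bowtie}^{0(1)}Q)^ju_{-m}^0$ equals $c_m^j(k)u_m^0+o(x^{\frac12+\Re(m)})$. Summing over $j=0,\dots,n$ with the signs $(-1)^j$ gives $(A)=d_n(k)u_m^0+o(x^{\frac12+\Re(m)})$ with $d_n(k)=\sum_{j=0}^n(-1)^jc_m^j(k)$, hence $(A)=d_n(k)u_m+o(x^{\frac12+\Re(m)})$.

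The piece $(B)$ is the tail of the series, and the key identity is
\begin{align*}
(B)=\sum_{j=n+1}^\infty(-G_{\bowtie}^{0(a)}Q)^ju_{-m}^0=(-G_{\bowtie}^{0(a)}Q)^{n+1}u_{-m}^{\bowtie(a)}.
\end{align*}
Since $u_{-m}^{\bowtie(a)}=\mathcal{O}(x^{\frac12-\Re(m)})$ near $0$ (it is a solution, so Proposition \ref{cor:u_not_L2a} applies), I would iterate Lemma \ref{lemmaG0}(i): each application of $G_{\bowtie}^{0(a)}Q$ raises the weight exponent by $\varepsilon$, capped at $\frac12+\Re(m)$. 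Applying it $n$ times, the hypothesis $(n+1)\varepsilon\ge2\Re(m)$ guarantees that $h:=(-G_{\bowtie}^{0(a)}Q)^nu_{-m}^{\bowtie(a)}$ lies in $L^\infty(]0,a[,\mu_k^{\varepsilon_1}\eta_{-k})$ with $\frac12+\Re(m)-\varepsilon\le\varepsilon_1\le\frac12+\Re(m)$; indeed $\min(\frac12-\Re(m)+n\varepsilon,\frac12+\Re(m))\ge\frac12+\Re(m)-\varepsilon$ precisely because $(n+1)\varepsilon\ge2\Re(m)$.

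Finally I would carry out the \emph{extraction step}: for such $h$ one has, as $x\to0$,
\begin{align*}
G_{\bowtie}^{0(a)}Qh(x)
=u_m^0(x)\int_0^a v_m^0(y)Q(y)h(y)\,\mathrm{d}y+o(x^{\frac12+\Re(m)}),
\end{align*}
where the integral converges because $\varepsilon_1\ge\frac12+\Re(m)-\varepsilon$, and the two remaining pieces $u_m^0(x)\int_0^xv_m^0Qh$ and $v_m^0(x)\int_0^xu_m^0Qh$ are $o(x^{\frac12+\Re(m)})$ by the same splitting used in the proof of Lemma \ref{lemmaG0}. This is the computation of Lemma \ref{prop:indep_a}, now retaining the borderline $u_m^0$ term instead of discarding it. Thus $(B)=c'u_m^0+o(x^{\frac12+\Re(m)})=c'u_m+o(x^{\frac12+\Re(m)})$ for a constant $c'$, and setting $c_m^{(a)[n]}(k):=d_n(k)+c'$ yields the claimed asymptotics; the derivative statement follows by running the identical argument with the derivative estimates on $\partial_xu_m^0$ and $\partial_xv_m^0$. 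I expect the main obstacle to be the tail $(B)$: one must simultaneously control how many iterations are needed to push the weight up to the threshold $x^{\frac12+\Re(m)}$ (this is exactly where $(n+1)\varepsilon\ge2\Re(m)$ enters) and then isolate the borderline $u_m^0$-component cleanly, ensuring the leftover is genuinely $o(x^{\frac12+\Re(m)})$ rather than merely $\mathcal{O}(x^{\frac12+\Re(m)})$.
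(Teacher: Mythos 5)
Your proposal is correct and takes essentially the same route as the paper's proof: the same identification of the remainder as the tail $(-G_{\bowtie}^{0(a)}Q)^{n+1}u_{-m}^{\bowtie(a)}$, the same splitting of $G_{\bowtie}^{0(a)}Q$ into a rank-one term proportional to $u_m^0$ plus $G_\to^0Q$, the same iteration of Lemma \ref{lemmaG0}(i) up to the threshold weight $x^{\frac12+\Re(m)-\varepsilon}$ (the paper formalizes your ``capping'' by introducing an auxiliary integer $\tilde n$ with $\tilde n\varepsilon<2\Re(m)\le(\tilde n+1)\varepsilon$ and switching to gain $0$ afterwards), the same final application of Lemma \ref{lemma1}(i), and the same use of Lemma \ref{prop:indep_a} to pass from $u_{-m}^{0(a)[n]}$ to $u_{-m}^{0[n]}$. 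The only difference is organizational: you peel off the $a$-dependence first as your piece $(A)$, whereas the paper does this replacement at the very end.
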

  
\begin{proof}
Applying   \begin{equation}\label{labb}
  (G_{\bowtie}^{0(a)}Qf)(x) = u^0(x) \int_0^a v^0 (y) Q(y) f(y) \d y + G_\to^0Qf(x),
  \end{equation}
we can write
\begin{align}\notag
  u_{{-}m}^{\bowtie(a)} (x,k) =&    u_{{-}m}^{0(a)[n]}(x,k) +
    (-G_{\bowtie}^{0(a)}Q)^{n+1} u_{{-}m}^{\bowtie(a)} (x,k)\\\notag
  =&
    u_{{-}m}^{0(a)[n]}(x,k)-u_m^0(x,k)\int_0^a v_m^0(x,k)Q(y)  (-G_{\bowtie}^{0(a)}Q)^{n}
u_{{-}m}^{\bowtie(a)} (y,k)\d y\\&-G_\to^0Q  (-G_{\bowtie}^{0(a)}Q)^{n}u_{{-}m}^{\bowtie(a)}(x,k).
\end{align}
Suppose $n>0$. Let $\tilde n\le n$ be a positive integer such that $\frac{2}{\tilde n}\Re(m)>\varepsilon\ge\frac{2}{\tilde n+1}\Re(m)$. We apply repeatedly Lemma \ref{lemmaG0}(i) with 
$\varepsilon_1=\frac12-\Re(m)+j\varepsilon$ for $j=0,\dots,\tilde n-1$, noting that $\frac12-\Re(m)\le\varepsilon_1+\varepsilon<\frac12+\Re(m)$, to 
show that 
\begin{equation}(-G_{\bowtie}^{0(a)}Q)^{\tilde n} u_{{-}m}^{\bowtie(a)} (x,k) = o(
x^{\frac12-\Re(m)+\tilde n\varepsilon}).\label{abb1}\end{equation}
Applying then again repeatedly Lemma \ref{lemmaG0}(i) with $\varepsilon=0$ we deduce that
\begin{equation}(-G_{\bowtie}^{0(a)}Q)^{n} u_{{-}m}^{\bowtie(a)} (x,k) = o(
x^{\frac12-\Re(m)+\tilde n\varepsilon}).\label{abb1-2}\end{equation}
 Because of this, and since $\varepsilon\ge\frac{2}{\tilde n+1}\Re(m)$,
\begin{equation}
  \int_0^a v_m^0(y,k)Q(x)  (-G_{\bowtie}^{0(a)}Q)^{n}
  u_{{-}m}^{\bowtie(a)} (y,k)\d y \end{equation}
is finite. Now we apply Lemma \ref{lemma1}(i) with
$\varepsilon_1=\frac12-\Re(m)+\tilde n\varepsilon$, noting that
$\varepsilon_1+\varepsilon\ge\frac12+\Re(m)$,  to show that
\begin{equation}\label{eq:rht-1}
  G_\to^0Q  (-G_{\bowtie}^{0(a)}Q)^{n}u_{{-}m}^{\bowtie(a)} (x,k)=o(x^{\frac12+\Re(m)}).
\end{equation}
If $n=0$, applying Lemma \ref{lemma1}(i) with
$\varepsilon_1=\frac12-\Re(m)$, we see that \eqref{eq:rht-1} still holds.
Finally, by Lemma \ref{prop:indep_a} we can replace
$u_{{-}m}^{0(a)[n]}(x,k)$ with
$u_{{-}m}^{0[n]}(x,k)$.
\end{proof}

We can use the functions $u_{{-}m}^{0[n]}(\cdot,k)$ to describe
boundary conditions near zero of non-principal solutions.

  \begin{proposition}   \label{boundcon}
  Let $\Re(k)\geq0$ and $\Re(m)\ge0$, $m\neq0$.
  Suppose that  $Q \in \mathscr{L}^{(0)}_\varepsilon$,  $\varepsilon\ge0$.
Let  $n$ be a nonnegative integer such that
  $\frac{\varepsilon}{2}(n+1)\geq \Re(m)$.
Then 
\begin{align}\label{un-m}
 u_{-m}^{[n]}(\cdot,k)&:=u_{-m}^{0[n]}(\cdot,k)
 +(-1)^{n+1}\big(\one+G_\to^0Q\big)^{-1}
                          G_\to^0Q\big(G_{\bowtie}^{0(1)}Q)^nu_{-m}^{0}(\cdot,k)
\end{align}
is a solution 
in $AC^1]0,\infty[$ to     \eqref{eq:a1} such that
\begin{align}\label{soli1}
    u_{-m}^{[n]}(x,k)-u_{-m}^{0[n]}(x,k)
  &=o(x^{\frac12+\Re(m)}),\\\label{soli2}
   \partial_x u_{-m}^{[n]}(x,k)-\partial_xu_{-m}^{0[n]}(x,k)
&=o(x^{-\frac12+\Re(m)}).
  \end{align}
 \end{proposition}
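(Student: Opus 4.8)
The plan is to write $u_{-m}^{[n]}-u_{-m}^{0[n]}=(-1)^{n+1}\Phi$ with $\Phi:=(\one+G_\to^0Q)^{-1}G_\to^0Q(G_{\bowtie}^{0(1)}Q)^nu_{-m}^0(\cdot,k)$, and to handle the solution property and the asymptotics separately. Throughout I abbreviate $B:=G_\to^0Q$, $A:=G_{\bowtie}^{0(1)}Q$ and $g^0:=u_{-m}^0(\cdot,k)$, so that $u_{-m}^{0[n]}=\sum_{j=0}^n(-A)^jg^0$ and $\Phi=(\one+B)^{-1}BA^ng^0$. A point to keep in mind is that for $n\ge1$ the kernel of $A$ is supported in $]0,1[\,\times\,]0,1[$, so $A^ng^0$ vanishes on $]1,\infty[$; this is why every estimate below is local near $0$ and only the behaviour of $Q$ near $0$ will enter.

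For the solution property I would use the two right-inverse identities from Section \ref{sec:unperturbed}: $(L^0_{m^2}+k^2)Bf=Qf$ on $]0,\infty[$, and $(L^0_{m^2}+k^2)Af=Qf$ on $]0,1[$ (the compression \eqref{bowtie} agrees with $G_{\bowtie}^0$ there). Telescoping the partial sum and adding the potential term gives, on $]0,1[$, the identity $(L_{m^2}+k^2)u_{-m}^{0[n]}=(-1)^nQA^ng^0$, while $(\one+B)\Phi=BA^ng^0$ yields $(L_{m^2}+k^2)\Phi=QA^ng^0$ on all of $]0,\infty[$. Combining with the sign $(-1)^{n+1}$ shows that $u_{-m}^{[n]}$ solves \eqref{eq:a1} on $]0,1[$; by the extension convention of the weights subsection it is then the unique solution in $AC^1]0,\infty[$ agreeing near $0$ with the right-hand side of \eqref{un-m}. (For $n=0$ one checks directly that $u_{-m}^{[0]}=(\one+B)^{-1}g^0$ is a genuine global solution and the claim reduces to Proposition \ref{prop:reg1h} with $m$ replaced by $-m$.)

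For the asymptotics I would track the order of $A^ng^0$ through repeated use of Lemma \ref{lemmaG0}(i). Since $g^0=\mathcal{O}(x^{\frac12-\Re(m)})$, I apply that bound with $\varepsilon_1=\frac12-\Re(m)+j\varepsilon$ and gain $\varepsilon$ as long as the admissibility constraint $\varepsilon_1+\varepsilon\le\frac12+\Re(m)$ holds; choosing $\tilde n\le n$ as in the proof of Proposition \ref{comui} (so $\tilde n\varepsilon<2\Re(m)\le(\tilde n+1)\varepsilon$, using $\frac{\varepsilon}{2}(n+1)\ge\Re(m)$), the gaining applications give $A^{\tilde n}g^0=o(x^{\frac12-\Re(m)+\tilde n\varepsilon})$ and the remaining $n-\tilde n$ applications with $\varepsilon=0$ preserve this order. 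Now I apply the forward operator once: in Lemma \ref{lemma1}(i) there is \emph{no} upper cap on $\varepsilon_1+\varepsilon$, so with $\varepsilon_1=\frac12-\Re(m)+\tilde n\varepsilon$ and gain $\varepsilon$ and $(\tilde n+1)\varepsilon\ge2\Re(m)$ I obtain $\Psi:=BA^ng^0=o(x^{\frac12-\Re(m)+(\tilde n+1)\varepsilon})=o(x^{\frac12+\Re(m)})$. Finally, since $\one+B$ is invertible on $L^\infty(]0,a[,\mu_k^{\frac12+\Re(m)}\eta_k)$ for small $a$ (Lemma \ref{lemma1}(i) with the same gain $\varepsilon$, which requires only $Q\in\mathscr{L}^{(0)}_\varepsilon$) and $B$ maps this space into its little-$o$ subspace $L_0^\infty$, the identity $\Phi=\Psi-B\Phi$ forces $\Phi\in L_0^\infty(]0,a[,\mu_k^{\frac12+\Re(m)}\eta_k)$, which is \eqref{soli1}. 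Statement \eqref{soli2} follows identically: differentiating the kernel representations of $B$ and $A$ only replaces $u^0_{\pm m},v^0_{\pm m}$ by their derivatives, shifting each weight exponent by $-1$, so the same chain gives $o(x^{-\frac12+\Re(m)})$.

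The main obstacle is the order bookkeeping, and in particular seeing why the threshold is precisely $\frac{\varepsilon}{2}(n+1)\ge\Re(m)$: each application of the two-sided operator $A$ gains a factor $x^\varepsilon$ but only until the admissibility cap $\frac12+\Re(m)$ of Lemma \ref{lemmaG0}(i) is reached, after which further two-sided applications are wasted; the single forward application then supplies one more \emph{uncapped} gain $\varepsilon$, so effectively $n+1$ gains are available and $(n+1)\varepsilon\ge2\Re(m)$ is exactly what brings $g^0$, of order $x^{\frac12-\Re(m)}$, down to $o(x^{\frac12+\Re(m)})$. A secondary point to get right is that, because $A^ng^0$ is supported in $]0,1[$ for $n\ge1$, the integrals $\int_0^x\mu_k^{1-\varepsilon}|Q|$ in Lemma \ref{lemma1}(i) are effectively over $]0,1[$, so no decay of $Q$ near $\infty$ is needed and the global form of that lemma may be invoked with only the local condition $Q\in\mathscr{L}^{(0)}_\varepsilon$. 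Uniqueness of the solution with the prescribed asymptotics, should it be wanted, is immediate from the Wronskian, since any two such solutions differ by a multiple of the principal solution $u_m(\cdot,k)$, which is of exact order $x^{\frac12+\Re(m)}$ rather than $o(x^{\frac12+\Re(m)})$.
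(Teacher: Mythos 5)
Your proof is correct and follows essentially the same route as the paper's: the same telescoping computation of $(L_{m^2}+k^2)u_{-m}^{[n]}$ using that $G_{\bowtie}^{0(1)}$ and $G_\to^0$ are right inverses of $L^0_{m^2}+k^2$, and the same order bookkeeping for \eqref{soli1}--\eqref{soli2} --- repeated applications of Lemma \ref{lemmaG0}(i) followed by one application of Lemma \ref{lemma1}(i), exactly as in the proof of Proposition \ref{comui} --- combined with the invertibility of $\one+G_\to^0Q$ on $L^\infty(]0,a[,\mu_k^{\frac12+\Re(m)}\eta_k)$ from Corollary \ref{lemma2_app}(i). Your added care about the identities holding only on $]0,1[$ and about why no decay of $Q$ at infinity is needed is welcome but does not change the argument.
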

\begin{proof}
Note that $u_{-m}^{0[n]}(x,k) = \mathcal{O}(x^{\frac12-\Re(m)})$. As in the proof of the previous proposition, applying repeatedly Lemma \ref{lemmaG0}(i) and next Lemma \ref{lemma1}(i), we obtain that
\begin{align*} 
G_\to^0Q\big(G_{\bowtie}^0Q)^nu_{-m}^{0[n]}(\cdot,k) = o(x^{\frac12+\Re(m)}).
\end{align*}
Then we can use Corollary \ref{lemma2_app}(i) which shows that, for any $a>0$, $\one+G_\to^0Q$ is invertible on $L^\infty(]0,a[,\mu_k^{\frac12+\Re(m)})$. Applying $L_{m^2}+k^2=(L^0_{m^2}+k^2)(\one+G_\to^0Q)$ to \eqref{un-m} and using the definition \eqref{defrn}, we then obtain
\begin{align*}
&(L_{m^2}+k^2) (u_{-m}^{[n]}(\cdot,k))\\
&=(L_{m^2}+k^2)\sum_{j=0}^n (-G_{\bowtie}^{0(1)}Q)^j u_{{-}m}^0(\cdot,k) -Q\big(-G_{\bowtie}^{0(1)}Q)^nu_{-m}^{0}(\cdot,k).
\end{align*}
Next, using that $L_{m^2}+k^2=L^0_{m^2}+k^2+Q$ together with the fact that $G_{\bowtie}^{0(1)}$ is a right inverse of $L^0_{m^2}+k^2$ gives
\begin{align*}
&(L_{m^2}+k^2) (u_{-m}^{[n]}(\cdot,k))\\
&=-Q\sum_{j=1}^n (-G_{\bowtie}^{0(1)}Q)^{j-1} u_{{-}m}^0(\cdot,k)+Q\sum_{j=0}^n (-G_{\bowtie}^{0(1)}Q)^j u_{{-}m}^0(\cdot,k) -Q\big(-G_{\bowtie}^{0(1)}Q)^nu_{-m}^{0}(\cdot,k)\\
&=0.
\end{align*}
Hence $u_{-m}^{[n]}(\cdot,k)$ belongs to $AC^1]0,\infty[$ and is a solution to  \eqref{eq:a1} satisfying \eqref{soli1}--\eqref{soli2}.
\end{proof}

 \begin{proposition}\label{prop:anal_um^n}
Let $\varepsilon\ge0$, $n$ a nonnegative integer and suppose that
$
Q \in \mathscr{L}^{(0)}_\varepsilon .$
 Then for any $x>0$ the maps
 \begin{align*}
    \Big\{\frac{\varepsilon}{2}(n+1)\geq\Re(m)\geq0,\ m\neq0\Big\}\times    \Big\{\Re(k)\geq0\Big\}\ni (m,k)&\mapsto 
                                                u_{-m}^{[n]}(x,k),
                                                           \partial_x
                                                           u_{-m}^{[n]}(x,k)
    \end{align*}
are regular.
   \end{proposition}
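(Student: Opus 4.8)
The plan is to mirror the proof of Proposition \ref{prop:anal_um2}: I would establish the regularity of each building block of the formula \eqref{un-m} for $u_{-m}^{[n]}$, and then check that the finite compositions and the Neumann series defining $(\one+G_\to^0Q)^{-1}$ preserve regularity. The building blocks are $u_{-m}^0(\cdot,k)$, analytic in $(m,k)$ on $\C^2$ by Proposition \ref{uua0}(i); the kernel of $G_\to^0$, analytic in $(m,k)$ on $\C^2$; and the kernel of $G_{\bowtie}^{0(1)}$, which by Proposition \ref{uua5}(i) is regular on $\C\times\{\Re(k)\ge0\}$ away from $\{\Re(m)\le0\}\times\{k=0\}$. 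Intersecting with the domain of the proposition, the only points where a building block is not regular are the corner points $\{\Re(m)=0,\,m\neq0\}\times\{k=0\}$.

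First I would treat the region $\{0<\Re(m)\le\frac{\varepsilon}{2}(n+1)\}\times\{\Re(k)\ge0\}$, where every building block is regular. Proceeding by induction on $j$ as in Proposition \ref{prop:anal_um2}, each term $(G_{\bowtie}^{0(1)}Q)^ju_{-m}^0(x,k)$ is regular in $(m,k)$: the defining integrals run over the fixed finite interval $]0,1[$ (for $G_{\bowtie}^{0(1)}$) or $]0,x[$ (for $G_\to^0$), and the uniform bounds of Lemmas \ref{lemma1}(i) and \ref{lemmaG0}(i) provide integrable domination, so that continuity passes through by dominated convergence and analyticity in each separate variable by Morera's theorem. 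The same applies to $G_\to^0Q(G_{\bowtie}^{0(1)}Q)^nu_{-m}^0$, which moreover lies in the principal weight $L^\infty(]0,a[,\mu_k^{\frac12+\Re(m)})$ by the estimate in the proof of Proposition \ref{boundcon}. On this weight Corollary \ref{lemma2_app}(i) (needing only $Q\in\mathscr{L}^{(0)}_0$) shows that $(\one+G_\to^0Q)^{-1}=\sum_{l\ge0}(-G_\to^0Q)^l$ converges uniformly on compact subsets of parameters, so its sum, and hence $u_{-m}^{[n]}$, is regular on this region; continuity extends down to $k=0$ because there $\Re(m)>0$ and the $k=0$ kernels are regular. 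For $\Re(k)>0$ the same argument gives regularity up to $\Re(m)=0$ as well, since the exceptional set of $G_{\bowtie}^0$ sits only at $k=0$.

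The main obstacle is continuity at the corner points $(m_0,0)$ with $\Re(m_0)=0$, $m_0\neq0$, where $v_m^0$ and hence $G_{\bowtie}^{0(1)}$ oscillate as $k\to0$, the oscillation being carried by the factor $(k/2)^{2m}$ in $v_m^0=\frac{\pi}{2\sin(\pi m)}\bigl(u_{-m}^0-(k/2)^{2m}u_m^0\bigr)$. Since $\Re(m_0)=0$, a whole neighborhood of $(m_0,0)$ in the domain lies in $\{0\le2\Re(m)\le\varepsilon,\ m\notin\N\}$, so I would invoke the identity $u_{-m}^{[n]}(\cdot,k)=u_{-m}(\cdot,k)+C_m(k)\,u_m(\cdot,k)$, obtained by combining \eqref{cem} of Proposition \ref{prop:ubowtie} with Propositions \ref{comui} and \ref{boundcon} through uniqueness of solutions with prescribed near-$0$ asymptotics. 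Here $u_{\pm m}(\cdot,k)$ are regular by Proposition \ref{prop:anal_um2}, so it remains to show $C_m(k)$ is regular near the corner. I would extract this from the Wronskian relation $C_m(k)\,\Wr(u_m,u_{-m})=\Wr(u_{-m}^{[n]},u_{-m})$, where $\Wr(u_m,u_{-m})=-\frac{2\sin(\pi m)}{\pi}$ (read off from the leading asymptotics, as in \eqref{eq:wronsk_m-m}) is regular and nonvanishing for $m\notin\Z$, hence near the purely imaginary $m_0$; the numerator is regular on the punctured neighborhood by the previous step and bounded, so it extends regularly across the corner. This yields continuity of $C_m(k)$, and therefore of $u_{-m}^{[n]}$, at the corner: the oscillating contributions in $u_{-m}^{0[n]}$ and in the correction term of \eqref{un-m}, each a multiple of $u_m^0$, cancel precisely in this combination.
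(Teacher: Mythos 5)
Your first two paragraphs are essentially the paper's proof (which is literally a one-line pointer to the argument for Proposition \ref{prop:anal_um2}): separate analyticity and continuity of the finitely many terms $(G_{\bowtie}^{0(1)}Q)^ju_{-m}^0$ plus uniform convergence of the Neumann series for $(\one+G_\to^0Q)^{-1}$ on the principal weight. That settles every point of the domain except the corner set $\{\Re(m)=0,\ m\neq0\}\times\{k=0\}$, and you are right that this corner is genuinely delicate: $v_m^0=\frac{\pi}{2\sin(\pi m)}\bigl(u_{-m}^0-(k/2)^{2m}u_m^0\bigr)$ carries the factor $(k/2)^{2m}$, which for purely imaginary $m$ is bounded on $\{\Re(k)\ge0\}$ but has no limit as $k\to0$ (consistently, Proposition \ref{uua5}(i) excludes $\{\Re(m)\le0\}\times\{k=0\}$ for $G^0_{m,\bowtie}$).

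The gap is in your last paragraph, and it is twofold. First, the inference ``regular on the punctured neighbourhood and bounded, hence extends regularly across the corner'' is invalid: $k=0$ is a \emph{boundary} point of $\{\Re(k)\ge0\}$, not an interior puncture, so no removable-singularity theorem applies — $(k/2)^{2m}$ itself is bounded and analytic on $\{\Re(k)>0\}$ yet has no limit at $0$. Second, the cancellation you assert at the end does not occur. Take $n=1$ and split $G_{\bowtie}^{0(1)}Qu_{-m}^0=\tilde GQu_{-m}^0-\sigma_m(k)\,u_m^0$, where $\tilde G$ is the kernel built from $u_m^0,u_{-m}^0$ alone (regular at the corner for $m\notin\Z$) and $\sigma_m(k):=\frac{\pi(k/2)^{2m}}{2\sin(\pi m)}\langle u_m^0|Qu_{-m}^0\rangle_{]0,1[}$. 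Inserting this into \eqref{un-m} and using $(\one+G_\to^0Q)^{-1}G_\to^0Qu_m^0=u_m^0-u_m$, the oscillating contribution $+\sigma_m(k)u_m^0$ from $u_{-m}^{0[1]}$ and the contribution $-\sigma_m(k)(u_m^0-u_m)$ from the correction term combine to $\sigma_m(k)\,u_m(\cdot,k)$, not to zero. Since $\sigma_m(k)=\frac{(k/2)^{2m}}{2m}\bigl(\int_0^1yQ(y)\,\d y+o(1)\bigr)$ as $k\to0$, for generic $Q$ this term — and hence your $C_m(k)$ — oscillates without limit as $k\to0^+$ when $\Re(m)=0$. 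So the argument cannot be repaired as written: either the corner must be excluded for $n\ge1$ (in which case your first two paragraphs already give the full statement), or one needs an input that appears neither in your proposal nor in the paper, whose own proof does not engage with this point.
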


\begin{proof}
The proof is similar to that of Proposition \ref{prop:anal_um2}.
\end{proof}

Here is a drawback of Proposition \ref{boundcon}: the
boundary conditions are described by a  function 
$u_{-m}^{0[n]}(\cdot,k)$ which depends on $k$.
We already know that for principal solutions 
the boundary condition does not depend on $k$. One can ask whether one can use the same
boundary conditions for all $k$ in the non-principal case, e.g.
\begin{align}
 u_{{-}m}^{0[n]} (x)&:= u_{{-}m}^{0[n]}(x,0).\label{defrn1}
 \end{align}
 Thus we would like to use $k=0$ as the ``standard value'' in \eqref{defrn1},
which typically gives the simplest 
  expressions.

 Let us check what is the situation in the
unperturbed case. Let $\Re(m)\geq0$. We have
\begin{equation}\label{eq:asymptumu-m}
u_{-m}^0(x,k)=\frac{x^{\frac12-m}}{\Gamma(1-m)}+\mathcal{O}(x^{\frac52-\Re(m)}),\qquad
  u_{m}^0(x,k)=\mathcal{O}(x^{\frac12+\Re(m)}).
  \end{equation}
Hence we need the condition $\Re(m)<1$ to make sure that
\begin{equation}\label{eq:asymptumu-m-2}
  u_{-m}^0(x,k)=\frac{x^{\frac12-m}}{\Gamma(1-m)}+o(x^{\frac12+\Re(m)}),
\end{equation}
which guarantees that 
$  u_{-m}^0(x,k)$ with distinct $k$ give the same boundary condition.

\begin{proposition} \label{thm:k=0}
In addition to the assumptions of
Proposition \ref{boundcon} suppose that $\Re(m)<1$. Then in
  \eqref{soli1} and \eqref{soli2}
  we can replace $u_{-m}^{0[n]}(x,k)$ with
  $u_{-m}^{0[n]}(x)$ defined  in \eqref{defrn1}, (or with $u_{-m}^{0[n]}(x,k')$ for any $k'$).
\end{proposition}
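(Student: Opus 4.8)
The plan is to peel off the already-established asymptotics of Proposition \ref{boundcon} and reduce the whole statement to a comparison of the two \emph{partial sums} $u_{-m}^{0[n]}(\cdot,k)$ and $u_{-m}^{0[n]}(\cdot,k')$. Indeed, \eqref{soli1}--\eqref{soli2} give $u_{-m}^{[n]}(x,k)-u_{-m}^{0[n]}(x,k)=o(x^{\frac12+\Re(m)})$ together with the analogous derivative bound; subtracting, it suffices to show that
\[
u_{-m}^{0[n]}(x,k)-u_{-m}^{0[n]}(x,k')=o(x^{\tfrac12+\Re(m)}),\qquad \partial_x\big(u_{-m}^{0[n]}(x,k)-u_{-m}^{0[n]}(x,k')\big)=o(x^{-\tfrac12+\Re(m)}),
\]
as $x\to0$, \emph{up to a multiple of the principal solution} $u_m^0(\cdot,k')$. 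Such a multiple is immaterial for the non-principal boundary condition: it merely shifts the free principal coefficient $\kappa$ in the boundary functional \eqref{nonpri} and can be absorbed by adding a multiple of $u_m(\cdot,k)$ to $u_{-m}^{[n]}(\cdot,k)$. The value $k'=0$ then yields the normalization \eqref{defrn1}.

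For the leading term $j=0$ this is precisely the unperturbed computation recorded in \eqref{eq:asymptumu-m}. Writing $u_{-m}^0(x,k)=x^{\frac12-m}\mathbf{F}_{-m}\big(\tfrac{k^2x^2}{4}\big)$ and differentiating the power series term by term, one gets $u_{-m}^0(x,k)-u_{-m}^0(x,0)=\mathcal{O}(x^{\frac52-\Re(m)})$ and $\partial_x\big(u_{-m}^0(x,k)-u_{-m}^0(x,0)\big)=\mathcal{O}(x^{\frac32-\Re(m)})$. The hypothesis $\Re(m)<1$ enters exactly here, being equivalent to $\frac52-\Re(m)>\frac12+\Re(m)$, so both errors are $o(x^{\frac12+\Re(m)})$, resp.\ $o(x^{-\frac12+\Re(m)})$; this is the mechanism already isolated in \eqref{eq:asymptumu-m-2}.

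For the terms $j\ge1$ I would track the $k$-dependence through the expansion \eqref{uua2} of $v_m^0$. Its block proportional to $x^{\frac12-m}$ depends on $k$ only through the entire function $\mathbf{F}_{-m}\big(\tfrac{k^2x^2}{4}\big)$, hence at relative order $x^2$, and the same is true of $u_m^0$ and $u_{-m}^0$; inserting these into the kernel of $G_{\bowtie}^{0(1)}$ and propagating the estimates through each of the $j\le n$ applications via Lemma \ref{lemma1}(i) and Lemma \ref{lemmaG0}(i), these contributions differ from their $k'$-counterparts by two extra powers of $x$, hence by $o(x^{\frac12+\Re(m)})$ once $\Re(m)<1$. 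The only remaining piece is the block of $v_m^0$ proportional to $k^{2m}x^{\frac12+m}$, which is \emph{absent} at $k=0$ by \eqref{zero2}. Using the decomposition \eqref{labb}, this block feeds $G_{\bowtie}^{0(1)}Q$ only through the \emph{convergent} integral $u_m^0(x)\int_0^1 v_m^0 Q(\cdot)$, so it produces exactly a finite, $k$-dependent multiple of the principal solution $u_m^0$ plus lower-order terms. Summing over $0\le j\le n$ and comparing with $k'$ gives the required difference modulo $u_m^0$; this is entirely parallel to Lemma \ref{prop:indep_a}, where changing the compression parameter $a$ likewise produces only a multiple of $u_m^0$. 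The derivative statements follow identically by differentiating the integral representations of the Green's operators.

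The main obstacle is precisely this last point. Because the coefficient $\propto k^{2m}$ in the small-$x$ expansion of $v_m^0$ genuinely contributes at the principal order $x^{\frac12+\Re(m)}$, one cannot expect a naive term-by-term $o(x^{\frac12+\Re(m)})$ bound; the crux is to verify through \eqref{labb} that this contribution is carried \emph{entirely} by the direction $u_m^0$, and is therefore harmless for the non-principal boundary data. Note that for $n=0$, i.e.\ in the regime $\varepsilon\ge 2\Re(m)$ where $u_{-m}(\cdot,k)$ is available directly, this term is absent and the difference is genuinely $o(x^{\frac12+\Re(m)})$, consistent with \eqref{eq:asymptumu-m-2} already sufficing there; the work is all in the weak regime $\varepsilon<2\Re(m)$ handled by the higher $j$.
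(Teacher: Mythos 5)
Your reduction of the statement to a comparison of $u_{-m}^{0[n]}(\cdot,k)$ with $u_{-m}^{0[n]}(\cdot,k')$, and your treatment of the $j=0$ term via \eqref{eq:asymptumu-m-2} (this is exactly where $\Re(m)<1$ enters), coincide with the paper's own proof — which consists of precisely that one observation and says nothing about the terms $j\ge1$. Where you go beyond the paper is in tracking the $k$-dependence through $(G_{\bowtie}^{0(1)}Q)^{j}$, and there your conclusion is correct and important: for $n\ge1$ the difference of the two partial sums is $o(x^{\frac12+\Re(m)})$ only \emph{modulo a generically nonzero multiple of the principal solution}. Indeed, already the $j=1$ contribution contains a term of the form
\begin{equation*}
u_m^0(x,k')\int_x^1 v_m^0(y,k')\,Q(y)\,\big(u_{-m}^0(y,k)-u_{-m}^0(y,k')\big)\,\mathrm{d}y ,
\end{equation*}
whose integrand is $\mathcal{O}\big(y^{3-2\Re(m)}\big)|Q(y)|$, hence integrable down to $0$ under $Q\in\mathscr{L}^{(0)}_\varepsilon$ with $\Re(m)<1$; the integral therefore converges to a finite, generically nonzero constant, and the term is of \emph{exact} order $x^{\frac12+m}$, not $o(x^{\frac12+\Re(m)})$. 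Consequently the literal replacement in \eqref{soli1}--\eqref{soli2}, with $u_{-m}^{[n]}(\cdot,k)$ kept as constructed in \eqref{un-m}, fails for $n\ge1$. Your ``absorption into $u_m(\cdot,k)$'' is not a proof of the proposition as stated but a modification of it: it changes the solution $u_{-m}^{[n]}(\cdot,k)$ (harmlessly for the induced non-principal boundary functional, which is only ever used modulo $\Wr(x^{\frac12+m},\cdot;0)$, cf.\ \eqref{nonpri} and Proposition \ref{comui}). So what you actually establish is the corrected, ``modulo the principal direction'' version; the paper's citation of \eqref{eq:asymptumu-m-2} alone covers only the case $n=0$.

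One internal step of yours is also imprecise. You claim the contributions of the $x^{\frac12-m}$-block of $v_m^0$ ``differ from their $k'$-counterparts by two extra powers of $x$'' and attribute the principal-order residue solely to the $k^{2m}x^{\frac12+m}$ block of $v_m^0$. For the backward piece $u_m^0(x)\int_x^1(\cdots)\,\mathrm{d}y$ this is wrong: two extra powers of $y$ \emph{inside} the integral do not become two extra powers of $x$ outside; they turn a divergent integral into a convergent one, producing exactly the constant-times-$u_m^0$ term displayed above. Your final conclusion survives because this residue is again proportional to $u_m^0$, but your accounting of where the principal-direction contribution originates is incomplete — it arises from the regular block of $v_m^0$ paired with the $k$-dependent corrections of the seed just as much as from the $k^{2m}$ block.
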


\begin{proof}
Proposition \ref{thm:k=0} easily follows from the definition \eqref{defrn1} of $u_{-m}^{0[n]}(x,k)$ together with \eqref{eq:asymptumu-m-2}.
\end{proof}

In concrete cases, it is not difficult to compute $u_{{-}m}^{0[n]}$ explicitly. The following remark provides an example in the case where $Q$ has a Coulomb singularity at $0$.

\begin{remark}\label{rk:coulomb}
Suppose that $Q(x) = - \frac{ \beta }{ x }\one_{]0,1]}(x)$ with $\beta\in\C$. Then $Q
\in \mathscr{L}^{(0)}_\varepsilon$ for $\varepsilon<1$.
Hence for $0\le\Re(m)<1$, $m\neq0$, we can take $n=1$ in 
Proposition \ref{comui} and we have that
\begin{align*}
u_{-m}^{0[1]}(x) = u^0_{-m}(x,0) - G_{\bowtie}^{0(1)}Qu_{-m}^0(x,0).
\end{align*}
Consider for simplicity the generic case $m\neq\frac12$. Since $u^0_{\pm m}(x,0)=\frac{x^{\frac12\pm m}}{\Gamma(1 \pm m)}$ and $v^0_m(x,0)=\frac12\Gamma(m)x^{\frac12-m}$, we can compute
\begin{align*}
G_{\bowtie}^{0(1)}Qu_{-m}^0(x)&=\frac{1}{2m\Gamma(1-m)} \Big( x^{\frac12+m} \int_x^1y^{1-2m} \frac{-\beta}{y}\mathrm{d}y + x^{\frac12-m} \int_0^xy \frac{-\beta}{y}\mathrm{d}y \Big ) \\
 &= \frac{\beta}{2m\Gamma(1-m)} \Big ( x^{\frac12+m} \frac{x^{1-2m}}{1-2m} - \frac{x^{\frac12+m}}{1-2m} -  x^{\frac32-m}  \Big ) \\
 &=\frac{ x^{\frac12-m} }{ \Gamma(1-m) }  \frac{\beta x}{1-2m} - \frac{\beta x^{\frac12+m} }{2m(1-2m)\Gamma(1-m)} .
\end{align*}
Hence
\begin{align*}
u_{-m}^{0[1]}(x) = \frac{ x^{\frac12-m} }{ \Gamma(1-m) } \Big ( 1 -  \frac{\beta x}{1-2m} \Big ) - \frac{\beta x^{\frac12+m} }{2m(1-2m)\Gamma(1-m)} .
\end{align*}
We recover the function  $j_{\beta,-m}$ from (2.3) of
\cite{DeFaNgRi20_01}, which was used to describe the boundary
conditions of the Whittaker operator.
\end{remark}

\subsection{The logarithmic Green's operator}

For $m=0$ we could use
 Lemma \ref{lemmaG0}(ii), Corollary \ref{lemmaG0a}(ii) and
 Theorem \ref{conhua}(ii) to construct eigensolutions with the help of
 the two-sided Green's operator $G_{\bowtie}^0$. The  drawback of this
 approach is the
 lack of the limit at $k=0$. Therefore for $m=0$ we 
prefer to use the logarithmic Green's operator $G_\diamond^0$, which is well defined for $k=0$.
More precisely, we will use the logarithmic Green's operator compressed to
a finite interval, $         G_\diamond^{0(a)}$.

Below we describe mapping properties of $
G_\diamond^{0(a)}$. The result  is analogous to
Lemma \ref{lemmaG0}(ii), however includes  $k=0$.

\begin{lemma}\label{lemmaG0b} 
Let $k_0>0$, $\Re(k)\geq0$ such that $|k|\le k_0$, $0< a<1$,
  $\frac12=\varepsilon_1+\varepsilon$ and
  $0\leq\beta-\alpha\leq1$. Suppose that $Q\in L^1_\mathrm{loc}]0,\infty[$ and
       \begin{equation}
         \int_0^a y^{1-\varepsilon}\big(1-\mathrm{ln}(y)\big)^\beta|Q(y)|\d y<\infty.
\label{bound6}         \end{equation}
Then  
              \begin{equation}\label{nolog2}
         G_\diamond^{0(a)}Q:
       L^\infty\big(]0,a[,x^{\varepsilon_1}(1-\mathrm{ln}(x))^\alpha\big)\to
       L^\infty\big(]0,a[,x^{\varepsilon_1+\varepsilon}
       (1-\mathrm{ln}(x))^{\alpha+1-\beta}\big)
       \end{equation}
       is bounded by $C\times$\eqref{bound6} uniformly in $0< a<1
       $ and $|k|\leq k_0$. If in addition
        $\beta-\alpha<1$, then the image of \eqref{nolog2} is
        contained in $L_0^\infty\big(]0,a[,x^{\varepsilon_1+\varepsilon}
       (1-\mathrm{ln}(x))^{\alpha+1-\beta}\big)$.
\end{lemma}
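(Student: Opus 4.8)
The plan is to run the scheme of the proof of Lemma~\ref{lemmaG0}(ii) for the two-sided operator at $m=0$, the two changes being that $\lambda_k$ is replaced by $1-\mathrm{ln}(x)$ and that the exponential weights become harmless. Indeed, since $|k|\le k_0$ and $0<x<a<1$, the factor $\eta_{\pm k}(x)=\e^{\pm\Re(k)x}$ lies between $\e^{-k_0}$ and $\e^{k_0}$, so \eqref{pio1} and \eqref{pio4} reduce, uniformly in $|k|\le k_0$, $0<a<1$ and $0<x<a$, to
\begin{align*}
|u_0^0(x,k)|\lesssim x^{\frac12},\qquad |p_0^0(x,k)|\lesssim x^{\frac12}\big(1-\mathrm{ln}(x)\big).
\end{align*}
(For $x\le|k|^{-1}$ this is immediate from $\mu_k(x)=x$; for $x>|k|^{-1}$ one uses $\mu_k(x)=|k|^{-1}<x$ together with $|\mathrm{ln}\,\mu_k(x)|\le\mathrm{ln}(k_0)$, both bounded.) Throughout I write $\|f\|_\ast:=\big\|f/\big(x^{\varepsilon_1}(1-\mathrm{ln}(x))^\alpha\big)\big\|_\infty$.

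Next I would read off the integral kernel of $G_\diamond^{0(a)}$ and decompose $G_\diamond^{0(a)}Qf$ into the part supported on $\{y<x\}$ and the part on $\{x<y<a\}$, in which the logarithmic solution $p_0^0$ and the regular solution $u_0^0$ enter as prefactors exactly as $v_0^0$ and $u_0^0$ do in Lemma~\ref{lemmaG0}(ii): the crucial point is that $p_0^0(x)$ multiplies the inner integral $\int_0^x u_0^0(y)Q(y)f(y)\,\d y$, while $u_0^0(x)$ multiplies the outer integral $\int_x^a p_0^0(y)Q(y)f(y)\,\d y$. Using the bounds above and the identity $\tfrac12+\varepsilon_1=1-\varepsilon$ (which holds because $\varepsilon_1+\varepsilon=\tfrac12$), the two terms are bounded by
\begin{align*}
x^{\frac12}\big(1-\mathrm{ln}(x)\big)\int_0^x y^{1-\varepsilon}\big(1-\mathrm{ln}(y)\big)^{\alpha}|Q(y)|\,\d y\,\|f\|_\ast,\qquad
x^{\frac12}\int_x^a y^{1-\varepsilon}\big(1-\mathrm{ln}(y)\big)^{1+\alpha}|Q(y)|\,\d y\,\|f\|_\ast.
\end{align*}

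The remaining work is the monotonicity bookkeeping, and it is here that the hypothesis $0\le\beta-\alpha\le1$ is used in full. Since $y\mapsto 1-\mathrm{ln}(y)$ is \emph{decreasing} — the opposite of $\mu_k$ in Lemma~\ref{lemmaG0} — for the first term I factor $(1-\mathrm{ln}(y))^\alpha=(1-\mathrm{ln}(y))^{\alpha-\beta}(1-\mathrm{ln}(y))^\beta$ and use $\alpha-\beta\le0$ with $1-\mathrm{ln}(y)\ge1-\mathrm{ln}(x)$ for $y<x$ to pull out $(1-\mathrm{ln}(x))^{\alpha-\beta}$; for the second term I factor $(1-\mathrm{ln}(y))^{1+\alpha}=(1-\mathrm{ln}(y))^{1+\alpha-\beta}(1-\mathrm{ln}(y))^\beta$ and use $1+\alpha-\beta\ge0$ with $1-\mathrm{ln}(y)\le1-\mathrm{ln}(x)$ for $y>x$ to pull out $(1-\mathrm{ln}(x))^{1+\alpha-\beta}$. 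In both cases the leftover integral is $\int_0^a y^{1-\varepsilon}(1-\mathrm{ln}(y))^\beta|Q(y)|\,\d y=\eqref{bound6}$, so each term is bounded by $C\,\eqref{bound6}\,x^{\frac12}(1-\mathrm{ln}(x))^{\alpha+1-\beta}\|f\|_\ast$ with $C$ depending only on $k_0$, giving \eqref{nolog2} with the asserted uniformity in $0<a<1$ and $|k|\le k_0$.

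Finally, for the vanishing statement when $\beta-\alpha<1$: the inner ($\int_0^x$) term carries the tail $\int_0^x y^{1-\varepsilon}(1-\mathrm{ln}(y))^\beta|Q|\,\d y=o(x^0)$, hence after division by the target weight it is $o(1)$ as $x\to0$; for the outer ($\int_x^a$) term, where now $1+\alpha-\beta>0$, I would invoke Lemma~\ref{lm:integr} — splitting the integral at a small $\delta$, bounding $\int_\delta^a$ by a constant that is $o\big((1-\mathrm{ln}(x))^{1+\alpha-\beta}\big)$ since the weight blows up, and making $\int_0^\delta$ arbitrarily small — to obtain $\int_x^a y^{1-\varepsilon}(1-\mathrm{ln}(y))^{1+\alpha}|Q|\,\d y=o\big((1-\mathrm{ln}(x))^{1+\alpha-\beta}\big)$, which places the image in $L_0^\infty$. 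The main obstacle, and the only genuinely delicate point, is precisely this reversed monotonicity: one must pair the logarithmic prefactor $p_0^0$ with the inner integral $\int_0^x$ and the regular prefactor $u_0^0$ with the outer integral $\int_x^a$, so that the two one-sided conditions $\beta-\alpha\ge0$ and $\beta-\alpha\le1$ act in the favorable directions. The opposite pairing would make the inner integral $\int_0^x y^{1-\varepsilon}(1-\mathrm{ln}(y))^{1+\alpha}|Q|\,\d y$ diverge whenever $\beta-\alpha<1$, so the arrangement of the two solutions is forced by the estimate itself.
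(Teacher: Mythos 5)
Your proof is correct and is essentially the paper's own proof: the paper disposes of this lemma by declaring it identical to Lemma \ref{lemmaG0}(ii) with $p_0^0$ in place of $v_0^0$ and \eqref{pio4} in place of \eqref{pio3}, and your monotonicity bookkeeping (inner integral paired with $\alpha\le\beta$, outer with $\beta\le 1+\alpha$) together with the use of Lemma \ref{lm:integr} for the $L_0^\infty$ statement is exactly what that substitution yields. One remark on the point you flag as delicate: the pairing you use, $p_0^0(x)\int_0^x u_0^0(y)Q(y)f(y)\,\d y$ plus $u_0^0(x)\int_x^a p_0^0(y)Q(y)f(y)\,\d y$, is the \emph{transpose} of the kernel displayed in the paper, $-u_0^0(x)p_0^0(y)\theta(x-y)-p_0^0(x)u_0^0(y)\theta(y-x)$, which would instead place $p_0^0(y)$ inside the inner integral. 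You are nevertheless right to insist on your pairing: it is the one obtained by literally substituting $p_0^0$ for $v_0^0$ in the decomposition used in the proof of Lemma \ref{lemmaG0}, it is the one consistent with the stated identity $G_{0,\diamond}^0=G_{0,\bowtie}^{0}+\big(\ln(k/2)+\gamma\big)u_0^0(x)u_0^0(y)$ and with $G_{0,\diamond}^0$ being a right inverse of $L_0^0+k^2$ rather than its negative, and — as you observe — it is forced by the estimate itself, since with the opposite pairing the inner integral $\int_0^x y^{1-\varepsilon}(1-\ln y)^{1+\alpha}|Q(y)|\,\d y$ need not even converge under \eqref{bound6} when $\beta<1+\alpha$. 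So the displayed kernel has its arguments swapped, and your argument uses the intended operator; no gap.
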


\begin{proof}
The proof is identical to that of Lemma \ref{lemmaG0}(ii), using the solution $p_0^0$ instead of $v_0^0$ and \eqref{pio4} instead of \eqref{pio3}.
\end{proof}

  \begin{corollary}\label{lemmaG0a-log}
Let  $k_0>0$, $\Re(k)\geq0$ such that $|k|\le k_0$. Suppose that
    $Q\in\mathscr{L}_{0,\mathrm{ln}}^{(0)}$. Then for
    $0\leq\alpha\leq1$ and
 small
enough $a>0$ we have 
\begin{equation*}
\|G_{\diamond}^{0(a)}Q\|<1 \text{ on } L^\infty(]0,a[,x ^{\frac12}(1-\mathrm{ln}(x))^\alpha),
\end{equation*}
so that
  $(\one+G_{\diamond}^{0(a)}Q)^{-1}$ exists.
\end{corollary}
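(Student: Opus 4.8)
The plan is to deduce the statement directly from Lemma \ref{lemmaG0b}, in exact analogy with the way Corollary \ref{lemmaG0a}(ii) was deduced from Lemma \ref{lemmaG0}(ii). First I would specialize the parameters in Lemma \ref{lemmaG0b} to $\varepsilon=0$ and $\beta=1$. The constraint $\frac12=\varepsilon_1+\varepsilon$ then forces $\varepsilon_1=\frac12$, the constraint $0\le\beta-\alpha\le1$ becomes precisely $0\le\alpha\le1$ (matching the hypothesis of the corollary), and the integrability condition \eqref{bound6} reads $\int_0^a y\big(1-\mathrm{ln}(y)\big)|Q(y)|\,\mathrm{d}y<\infty$. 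Since $1-\mathrm{ln}(y)=1+|\mathrm{ln}(y)|$ for $0<y<1$, this is exactly the requirement $Q\in\mathscr{L}^{(0)}_{0,\mathrm{ln}}$, so the hypothesis of Lemma \ref{lemmaG0b} is satisfied.

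With these choices the target weight exponent $\alpha+1-\beta$ in \eqref{nolog2} collapses to $\alpha$, so the lemma asserts that $G_\diamond^{0(a)}Q$ maps the space $L^\infty\big(]0,a[,x^{\frac12}(1-\mathrm{ln}(x))^\alpha\big)$ \emph{into itself}, with operator norm bounded by $C\int_0^a y\big(1-\mathrm{ln}(y)\big)|Q(y)|\,\mathrm{d}y$, uniformly in $0<a<1$ and $|k|\le k_0$. The crucial feature here is that the input and output weights coincide, so that $G_\diamond^{0(a)}Q$ is a genuine endomorphism of a single Banach space and its operator norm is meaningful.

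The final step is to drive this uniform bound strictly below $1$. Because $Q\in\mathscr{L}^{(0)}_{0,\mathrm{ln}}$, the function $y\mapsto y\big(1-\mathrm{ln}(y)\big)|Q(y)|$ is integrable on $]0,1[$, and hence, by absolute continuity of the Lebesgue integral, $\int_0^a y\big(1-\mathrm{ln}(y)\big)|Q(y)|\,\mathrm{d}y\to0$ as $a\to0$. Choosing $a>0$ small enough that $C\int_0^a y\big(1-\mathrm{ln}(y)\big)|Q(y)|\,\mathrm{d}y<1$ yields $\|G_{\diamond}^{0(a)}Q\|<1$ on $L^\infty(]0,a[,x^{\frac12}(1-\mathrm{ln}(x))^\alpha)$. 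The existence of $(\one+G_{\diamond}^{0(a)}Q)^{-1}$ then follows from the convergent Neumann series $\sum_{n\ge0}(-G_{\diamond}^{0(a)}Q)^n$.

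I do not expect any real obstacle here, since all the analytic content is already encapsulated in Lemma \ref{lemmaG0b}; the argument is purely a matter of choosing parameters and shrinking $a$. The only two points requiring care are, first, checking that the specialized parameters satisfy the lemma's hypotheses and in particular that the output weight matches the input weight (this is what makes the contraction-mapping/Neumann-series argument applicable), and second, the mild bookkeeping identity $1-\mathrm{ln}(y)=1+|\mathrm{ln}(y)|$ on $]0,1[$ that reconciles \eqref{bound6} with the definition of $\mathscr{L}^{(0)}_{0,\mathrm{ln}}$.
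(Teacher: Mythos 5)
Your proposal is correct and follows exactly the paper's own argument: the paper's proof of this corollary is precisely "apply Lemma \ref{lemmaG0b} with $\varepsilon=0$ and $\beta=1$," and your parameter bookkeeping, the matching of input and output weights, and the shrinking of $a$ via integrability of $y(1-\mathrm{ln}(y))|Q(y)|$ are all the intended steps.
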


\begin{proof}
It suffices to apply Lemma \ref{lemmaG0b} with $\varepsilon=0$ and $\beta=1$.
\end{proof}

\subsection{Solutions constructed with help of the logarithmic Green's 
  operator}

 We continue with the case $m=0$.
The
following theorem is the analog
of  Theorem \ref{conhua}(ii) in the context of the logarithmic Green's operator
 $G_\diamond^0$.
 
 \begin{theorem}\label{conhua-diamond}
Let  $k_0>0$ and $\Re(k)\geq0$ such that $|k|\le k_0$. Suppose that
    $Q\in\mathscr{L}_{0,\mathrm{ln}}^{(0)}$. Then for all
    $g^0\in\cN(L_{0}^0+k^2)$, for small enough $a$
    \[\label{defr}  g^\diamond:=(\one+G_\diamond^{0(a)}Q)^{-1}g^0\]
    exists and 
is a  solution  in  $ AC^1]0,\infty[$
to     \eqref{eq:a1} such that
\begin{align*}
  g^\diamond(x)-g^0(x)&=o(x^{\frac12}\mathrm{ln}(x)),\\
\partial_x    g^\diamond(x)-\partial_xg^0(x)&=o(x^{-\frac12}\mathrm{ln}(x)). 
\end{align*}
\end{theorem}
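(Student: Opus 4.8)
The plan is to transcribe the proof of Theorem \ref{conhua}(ii) almost verbatim, replacing the two-sided Green's operator $G_{\bowtie}^{0(a)}$ by the logarithmic one $G_\diamond^{0(a)}$, and replacing the invertibility statement of Corollary \ref{lemmaG0a}(ii) and the mapping statement of Lemma \ref{lemmaG0}(ii) by their logarithmic counterparts, Corollary \ref{lemmaG0a-log} and Lemma \ref{lemmaG0b}. The essential gain is that the latter two results are valid down to $k=0$, which is precisely the regime not reached through $G_{\bowtie}^{0(a)}$.

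First I would record the behavior of $g^0$ near zero. Since $g^0\in\cN(L_0^0+k^2)$ is a linear combination of $u_0^0(\cdot,k)$ and $p_0^0(\cdot,k)$, the estimates \eqref{pio1} and \eqref{pio4} give $g^0=\mathcal{O}\big(x^{\frac12}(1-\mathrm{ln}(x))\big)$; in particular $g^0\in L^\infty\big(]0,a[,x^{\frac12}(1-\mathrm{ln}(x))\big)$ for every $0<a<1$. By Corollary \ref{lemmaG0a-log} applied with $\alpha=1$, for $a$ small enough $\|G_\diamond^{0(a)}Q\|<1$ on this space, so $\one+G_\diamond^{0(a)}Q$ is invertible there and $g^\diamond:=(\one+G_\diamond^{0(a)}Q)^{-1}g^0$ is well defined, with $g^\diamond=\mathcal{O}\big(x^{\frac12}(1-\mathrm{ln}(x))\big)$ on $]0,a[$.

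Next I would verify that $g^\diamond$ solves the perturbed equation. Writing $g^\diamond=g^0-G_\diamond^{0(a)}Qg^\diamond$ and using that $G_\diamond^{0(a)}$ is a right inverse of $L_0^0+k^2$ on $]0,a[$ together with $(L_0^0+k^2)g^0=0$, one gets $(L_0^0+k^2)g^\diamond=-Qg^\diamond$, i.e. $(L_0+k^2)g^\diamond=0$ on $]0,a[$; as in Theorem \ref{prop:reg10} this shows $g^\diamond\in AC^1]0,\infty[$ and extends uniquely to a solution of \eqref{eq:a1} on all of $]0,\infty[$. The leading asymptotics then follow from $g^\diamond-g^0=-G_\diamond^{0(a)}Qg^\diamond$: applying Lemma \ref{lemmaG0b} with $\varepsilon_1=\frac12$, $\varepsilon=0$, $\alpha=\beta=1$ puts the image in the little-$o$ space $L_0^\infty$, since $\beta-\alpha=0<1$, which yields $g^\diamond-g^0=o\big(x^{\frac12}(1-\mathrm{ln}(x))\big)=o(x^{\frac12}\mathrm{ln}(x))$. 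Note that for these parameters the integrability hypothesis \eqref{bound6} reduces exactly to $Q\in\mathscr{L}_{0,\mathrm{ln}}^{(0)}$.

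For the derivative estimate I would differentiate the kernel representation $G_\diamond^{0(a)}Qf(x)=-u_0^0(x)\int_0^x p_0^0(y) Q(y)f(y)\,\mathrm{d}y-p_0^0(x)\int_x^a u_0^0(y) Q(y)f(y)\,\mathrm{d}y$; the two boundary contributions involving $Q(x)f(x)$ cancel (reflecting that $G_\diamond^{0(a)}$ is a Green's operator), leaving an expression in $\partial_x u_0^0$ and $\partial_x p_0^0$ to which the same type of estimates used in Lemma \ref{lemmaG0b} apply, giving $\partial_x(g^\diamond-g^0)=o(x^{-\frac12}\mathrm{ln}(x))$, exactly as in the proof of Theorem \ref{conhua}(ii). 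The only mildly delicate point is the bookkeeping of the logarithmic weights and confirming that the borderline choice $\beta-\alpha=0$ still lands in $L_0^\infty$ rather than merely $L^\infty$; everything else is a direct transcription of the $G_{\bowtie}^{0(a)}$ argument.
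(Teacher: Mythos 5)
Your proposal is correct and follows essentially the same route as the paper, which proves Theorem \ref{conhua-diamond} precisely by repeating the argument of Theorem \ref{conhua}(ii) with Corollary \ref{lemmaG0a-log} in place of Corollary \ref{lemmaG0a}(ii) and Lemma \ref{lemmaG0b} (with $\alpha=\beta=1$) in place of Lemma \ref{lemmaG0}(ii). Your parameter choices and the verification that \eqref{bound6} reduces to $Q\in\mathscr{L}^{(0)}_{0,\mathrm{ln}}$ are exactly what the paper intends.
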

\begin{proof}
It suffices to proceed as in the proof of  Proposition \ref{conhua},
using Corollary \ref{lemmaG0a-log} and Lemma \ref{lemmaG0b}
 with $\alpha=\beta=1$.
\end{proof}

Applying Theorem \ref{conhua-diamond}  to
$p_0^0$, we obtain the following result.

\begin{proposition}\label{prop:pdiamonda}
Let  $k_0>0$, $\Re(k)\geq0$ such that $|k|\le k_0$. Suppose that
    $Q\in\mathscr{L}_{0,\mathrm{ln}}^{(0)}$. Then for 
   $a>0$ small
enough,
     \begin{equation}
       \label{defrdia}
       p_0^{\diamond(a)}:=(\one+G_\diamond^{0(a)}Q)^{-1}p_0^0
     \end{equation}
is a  non-principal solution  in  $ AC^1]0,\infty[$ such that
   to     \eqref{eq:a1}
\begin{align*}
  p_0^{\diamond(a)}(x,k)-
  p_0^0(x,k)&=o(x^{\frac12}\mathrm{ln}(x))
                 ,\\
  \partial_x    p_0^{\diamond(a)}(x,k)-\partial_x
  p_0^0(x,k)&=o(x^{-\frac12}\mathrm{ln}(x))
                 .
\end{align*}
\end{proposition}

\begin{proof}
This is a direct consequence of Theorem \ref{conhua-diamond}.
\end{proof}

\subsection{Summary of distinguished solutions} \label{subsec:summary0}
The next table summarizes the distinguish solutions of the perturbed eigenequation with a prescribed behavior near the origin constructed in this section.

\medskip
\renewcommand{\arraystretch}{1.7}
\begin{center}
\begin{tabular}{|Sc|Sc|Sc|Sc|}
  \hline
  Solution & Parameters & Conditions on $Q$ & Green's operator \\
  \hline
\multirow{2}{*}{$u_m(\cdot,k)$} &
$\Re(m)\ge-\frac{\varepsilon}{2}$,
                                  $m\neq0$&
                                            $Q\in\mathscr{L}^{(0)}_\varepsilon$, $\varepsilon\ge0$ & \multirow{2}{*}{Forward  $G^0_\rightarrow$}\\
  & $m=0$  & $Q\in\mathscr{L}^{(0)}_{0,\mathrm{ln}}$  &  \\
  \hline
  $p_0(\cdot,k)$ & $m=0$ & $Q\in\mathscr{L}^{(0)}_{0,\mathrm{ln}^2}$ & Forward  $G^0_\rightarrow$\\
  \hline
  \multirow{2}{*}{$ u_{m}^{\bowtie(a)}(\cdot,k)$} &
                                                    \multirow{2}{*}{$-M
                                                    \le \Re(m)<0$, $|m|> m_0 >0$}  & \multirow{2}{*}{$Q\in\mathscr{L}^{(0)}_0$} & Two-sided  $G^{0(a)}_{\bowtie}$ \\ 
   &  & & compressed to $]0,a[$  \\
 \hline
$  u_{-m}^{[n]}(\cdot,k)$&$\frac{\varepsilon}{2}(n+1)\ge\Re(m)\geq0$,
                        &
                          $Q\in\mathscr{L}^{(0)}_\varepsilon$,
                          $\varepsilon>0$&\begin{tabular}{c}Forward
                                            $G^0_\rightarrow$\\
                                            and two-sided
                                            $G_{\bowtie}^{0(1)}$
                                            \end{tabular}\\                                         
   \hline
 \multirow{2}{*}{$p_0^{\diamond(a)}(\cdot,k)$} &
                                                 \multirow{2}{*}{$m=0$,
                                                 $|k|\leq k_0$} & \multirow{2}{*}{$Q\in\mathscr{L}^{(0)}_{0,\mathrm{ln}}$} & Logarithmic  $G^{0(a)}_\diamond$ \\ 
     & & &  compressed to $]0,a[$ \\
  \hline
\end{tabular}
\smallskip
\captionof{table}{\textit{Distinguished solutions of the perturbed
    eigenequation with a prescribed behavior near $0$}. Our convention
  is that a solution $g_m(\cdot,k)$ of \eqref{eigen} (with
  $g=u,p,\dots$) has the same behavior near $0$ as the unperturbed
  solution $g_m^0(\cdot,k)$. We everywhere assume that  $\Re(k)\ge0$. The second column recalls the range of parameters for which the solution $g_m(\cdot,k)$ is defined, the third column gives the conditions on $Q$ that are required in order to define $g_m(\cdot,k)$ and the fourth column recalls the Green's operator used to construct $g_m(\cdot,k)$.}
\end{center}

  \section{Solutions of the perturbed Bessel equation 
   regular near infinity}\label{section:solutions-infty}

 Recall that $w_m^0(\cdot,k)$ is a solution of the unperturbed
 eigenequation which is proportional to $v_m^0(\cdot,k)$ and behaves
 as $\e^{-kx}$ at infinity.
 In this section we construct and study the
 solution to \eqref{eq:a1} with the same asymptotic behavior.
 In the literature, when $m=\pm1/2$ and $Q$ is
 real-valued, this solution is usually called the {\em Jost solution}. We will use the same name in our more general context.

We will assume that $m\in\mathbb{C}$ is arbirary and $\Re(k)\geq0$, or equivalently, $|\mathrm{arg}(k)|\leq\frac\pi2$. The proofs of the results stated in this section are often similar to that of Section \ref{section:solutions}. We will focus on the differences.

Recall that $\mu_k ,\lambda_k ,\eta_{\pm k}$ are defined in \eqref{shorthand}--\eqref{shorthand2}  and that the spaces $L^\infty(]a,\infty[,\phi)$ and $L^\infty_\infty(]a,\infty[,\phi)$ are defined in \eqref{eq:defLinfty}--\eqref{eq:defLinfty12}. 
We use a similar convention as in the previous section: if the operator $\one + G_{\bullet}^{0}Q$ is invertible on $L^\infty(]a,\infty[,\phi)$ for some $a>0$ and some positive measurable function $\phi$ on $]a,\infty[$, where $G^0_\bullet$ is a Green's operator, and if $f :]0,\infty[\to\mathbb{C}$ is such that its restriction to $]a,\infty[$ belongs to $L^\infty(]a,\infty[,\phi)$, then $(\one + G_{\bullet}^{0}Q)^{-1}f$ should be understood as $(\one + G_{\bullet}^{0}Q)^{-1}$ applied to the restriction of $f$ on $]a,\infty[$. Clearly, if in addition $f\in\cN(L_{m^2}^0+k^2)$, then $(\one + G_{\bullet}^{0}Q)^{-1}f$ is a solution to \eqref{eq:a1} on $]a,\infty[$. The unique solution on $]0,\infty[$ which coincides with $(\one + G_{\bullet}^{0}Q)^{-1}f$ on $]a,\infty[$ will be denoted by the same symbol.

To simplify notations, we often write $w^0=w^0_m(\cdot,k)$.

\subsection{The backward Green's operator}
We consider the operator $G_\leftarrow^0Q$. The
results proven here will be used to construct Jost solutions.
Note that $G_\leftarrow^0$ is invariant with respect to the change of sign of
$m$. Therefore, it is enough to assume that $\Re(m)\geq0$.

\begin{lemma}\label{lemma2}
  Let $\Re(k)\geq0$ and $Q\in L^1_\mathrm{loc}]0,\infty[$.
  \begin{enumerate}[label=(\roman*)]
  \item Let $\Re(m)\geq0$, $m\neq0$ and
$\varepsilon+\varepsilon_1\leq\frac12-\Re(m)
   $. Suppose that
\begin{equation}
       \int_0^\infty \mu_k (y)^{1-\varepsilon}|Q(y)|\mathrm{d} y<\infty.
\label{bound7}\end{equation}
    Then 
\begin{equation*}
      G_\leftarrow^0Q :  L^\infty\big( ]0,\infty[, \mu_k ^{\varepsilon_1}\eta_{-k} \big) \to L_\infty^\infty\big(]0,\infty[, \mu_k ^{\varepsilon_1+\varepsilon}\eta_{-k} \big)
    \end{equation*}
    is bounded by $C\times$\eqref{bound7}
    uniformly in $k$.
  \item Let $m=0$, $k\neq0$, $\varepsilon_1+\varepsilon\leq\frac12$,
    $\alpha\geq\beta$  and 
 \begin{equation}
       \int_0^\infty \mu_k (y)^{1-\varepsilon}\lambda_k (y)^\beta|Q(y)|\mathrm{d} y<\infty.
\label{bound8}\end{equation}
    Then 
\begin{equation*}
      G_\leftarrow^0Q : 
 L^\infty\big( ]0,\infty[, \mu_k ^{\varepsilon_1}\lambda_k^\alpha\eta_{-k} \big) \to L_\infty^\infty\big(]0,\infty[, \mu_k ^{\varepsilon_1+\varepsilon}\lambda_k ^{\alpha+1-\beta}  \eta_{-k} \big)
\end{equation*}
    is bounded by $C\times$\eqref{bound8}
    uniformly in $k$.
\end{enumerate}
\end{lemma}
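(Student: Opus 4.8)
The plan is to mirror the proof of Lemma~\ref{lemma1}, replacing the forward Green's operator by the backward one and the integrals $\int_0^x$ by $\int_x^\infty$. First I would write out the action of $G_\leftarrow^0Q$ from its kernel $G_\leftarrow^0(x,y)=-\theta(y-x)\big(v^0(x)u^0(y)-u^0(x)v^0(y)\big)$, giving
\begin{equation*}
(G_\leftarrow^0Qf)(x)=-v^0(x)\int_x^\infty u^0(y)Q(y)f(y)\,\d y+u^0(x)\int_x^\infty v^0(y)Q(y)f(y)\,\d y,
\end{equation*}
and estimate the two terms separately, using \eqref{pio1}--\eqref{pio2} in case (i) and \eqref{pio1}, \eqref{pio3} in case (ii).

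In the first term (the one carrying $v^0(x)$), the eigenfunction exponentials combine as $\eta_k(y)\eta_{-k}(y)=1$ inside the integral, while $v^0(x)$ supplies the factor $\eta_{-k}(x)$ outside. It then remains to control $\int_x^\infty\mu_k(y)^{\frac12+\Re(m)+\varepsilon_1}|Q(y)|\,\d y$; writing $\mu_k(y)^{\frac12+\Re(m)+\varepsilon_1}=\mu_k(y)^{1-\varepsilon}\mu_k(y)^{-\frac12+\Re(m)+\varepsilon_1+\varepsilon}$ and using that $\mu_k$ is increasing together with the hypothesis $\varepsilon_1+\varepsilon\le\frac12-\Re(m)$ (which makes the second exponent $\le0$), I pull $\mu_k(x)^{-\frac12+\Re(m)+\varepsilon_1+\varepsilon}$ out of the integral. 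Multiplied by the prefactor $\mu_k(x)^{\frac12-\Re(m)}$ from $v^0(x)$, this yields exactly the target weight $\mu_k(x)^{\varepsilon_1+\varepsilon}\eta_{-k}(x)$, with the remaining integral bounded by \eqref{bound7}.

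The main subtlety — the only place where the backward operator differs qualitatively from the forward one — is the second term (carrying $u^0(x)$). Here $u^0(x)$ contributes a \emph{growing} exponential $\eta_k(x)=\e^{\Re(k)x}$, while the integrand carries $\eta_{-k}(y)^2=\e^{-2\Re(k)y}$. The point is that for $y\ge x$ and $\Re(k)\ge0$ one has $\e^{-2\Re(k)y}\le\e^{-2\Re(k)x}$, so that $\eta_k(x)\e^{-2\Re(k)x}=\e^{-\Re(k)x}=\eta_{-k}(x)$; this is precisely the sign manipulation that makes $G_\leftarrow^0Q$ land in an $\eta_{-k}$-weighted space. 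After this, the same splitting of the $\mu_k$-power (the relevant exponent is now $-\frac12-\Re(m)+\varepsilon_1+\varepsilon\le0$ under the same hypothesis) reduces the integral to \eqref{bound7}, and combining with $u^0(x)$'s prefactor $\mu_k(x)^{\frac12+\Re(m)}$ again produces $\mu_k(x)^{\varepsilon_1+\varepsilon}\eta_{-k}(x)$. Case (ii) is handled identically, using in addition that $\lambda_k$ is decreasing with $\lambda_k\ge1$: the condition $\alpha\ge\beta$ makes the pulled-out powers $\lambda_k^{\alpha-\beta}$ and $\lambda_k^{1+\alpha-\beta}$ have nonnegative exponent, so that $\lambda_k(y)^{\cdots}\le\lambda_k(x)^{\cdots}$ for $y\ge x$, producing the target weight $\lambda_k^{\alpha+1-\beta}$.

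Both estimates then yield the uniform bound
\begin{equation*}
|(G_\leftarrow^0Qf)(x)|\lesssim\mu_k(x)^{\varepsilon_1+\varepsilon}\eta_{-k}(x)\Big(\int_x^\infty\mu_k(y)^{1-\varepsilon}|Q(y)|\,\d y\Big)\left\|\frac{f}{\mu_k^{\varepsilon_1}\eta_{-k}}\right\|_\infty,
\end{equation*}
with the obvious $\lambda_k$-modification in (ii), giving boundedness by $C\times$\eqref{bound7} (resp.\ \eqref{bound8}). The membership in $L_\infty^\infty$ — the one genuinely new feature compared to Lemma~\ref{lemma1}, where instead $\int_0^x=o(x^0)$ forced vanishing at $0$ — follows because the tail $\int_x^\infty\mu_k(y)^{1-\varepsilon}|Q(y)|\,\d y\to0$ as $x\to\infty$, so that $(G_\leftarrow^0Qf)(x)/\big(\mu_k(x)^{\varepsilon_1+\varepsilon}\eta_{-k}(x)\big)\to0$. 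I expect the bookkeeping of the exponential weights in the $u^0(x)$-term to be the only step requiring genuine care; everything else transcribes directly from Lemma~\ref{lemma1}.
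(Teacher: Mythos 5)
Your proposal is correct and follows exactly the route the paper intends: the paper's own proof consists of the single remark that the argument is ``essentially the same as that of Lemma~\ref{lemma1}'', and your write-up is precisely that transcription, with the two genuinely new points (the manipulation $\eta_k(x)\e^{-2\Re(k)y}\le\eta_{-k}(x)$ for $y\ge x$ in the $u^0(x)$-term, and the vanishing of the tail integral giving membership in $L^\infty_\infty$) handled correctly.
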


\begin{proof}
The proof is essentially the same as that of Lemma \ref{lemma1}.
\end{proof}

Here is a corollary of the above lemma.

   \begin{corollary}\label{lemma4_app}
     Let $\Re(k)\geq0$ and  $n \in \mathbb{N}$.
\begin{enumerate}[label=(\roman*)]
\item Let $\Re(m)\geq0$, $m\neq0$.
Suppose that $Q \in \mathscr{L}^{(\infty)}_0$.
      Then, for all $a>0$, for all $f\in L^\infty \big(]a,\infty[,\mu_k^{\frac12-\Re(m)}\eta_{-k}\big)$ and $x > a$,
\begin{equation*}
\frac{ \big | (G_\leftarrow^0Q)^n f ( x ) \big | }{ \mu_k (x)^{\frac12-\Re(m)}\eta_{-k}(x) }  \le 
 \frac{C^{n+1} }{n!} \Big ( \int_x^\infty \mu_k (y) |Q(y)| \mathrm{d}y \Big )^n \sup_{y>x}\frac{|f(y)|}{\mu_k (y)^{\frac12-\Re(m)}\eta_{-k}(y)}.
\end{equation*}
\item Suppose $k\neq0$. Let $m=0$.
Suppose that $Q \in \mathscr{L}^{(\infty)}_0$.
      Then, for all $a>0$, for all $f\in L^\infty \big(]a,\infty[,\mu_k^{\frac12} \lambda_k \eta_{-k}\big)$ and $x > a$,
\begin{equation*}
\frac{ \big | (G_\leftarrow^0Q)^n f ( x ) \big | }{ \mu_k (x)^{\frac12}\lambda_k (x)\eta_{-k}(x) }  \le 
 \frac{C^{n+1} }{n!} \Big ( \int_x^\infty \mu_k (y)\lambda_k (y) |Q(y)| \mathrm{d}y
 \Big )^n
 \sup_{y>x}\frac{|f(y)|}{\mu_k (y)^{\frac12}\lambda_k (y)\eta_{-k}(y)}.
\end{equation*}
\end{enumerate}
  Above, $C$ is a constant independent of $n$ and 
  $k$.
\end{corollary}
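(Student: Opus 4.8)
The statement to prove is Corollary \ref{lemma4_app}, an iterated Volterra-type estimate for powers of the backward Green's operator $G_\leftarrow^0 Q$. The plan is to deduce it from the single-application mapping estimate in Lemma \ref{lemma2}, exactly in the way Corollary \ref{lemma2_app} was deduced from Lemma \ref{lemma1}. The key structural fact is that $G_\leftarrow^0$ is a \emph{backward} Volterra operator: its integral kernel $G_\leftarrow^0(x,y)$ is supported on $\{y>x\}$, so $G_\leftarrow^0 Q$ does not extend supports to the right, and iterating it produces the factorial gain $1/n!$ characteristic of Volterra kernels.

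First I would record, from the proof of Lemma \ref{lemma2}(i) specialized to $\varepsilon_1=\tfrac12-\Re(m)$ and $\varepsilon=0$, the pointwise kernel bound obtained by dividing through by the target weight, namely
\begin{equation*}
\left| \frac{\mu_k(y)^{\frac12-\Re(m)}\eta_{-k}(y)}{\mu_k(x)^{\frac12-\Re(m)}\eta_{-k}(x)} \, G_\leftarrow^0(x,y)\, Q(y) \right| \le \mu_k(y)\, |Q(y)|\, \theta(y-x),
\end{equation*}
the $\theta(y-x)$ reflecting the backward support. This is the analogue of \eqref{qrw}, and it is exactly the estimate that the two asymptotic bounds in the proof of Lemma \ref{lemma2} furnish once the prefactors $\mu_k(x)^{\frac12+\Re(m)}$ and $\mu_k(x)^{\frac12-\Re(m)}$ are absorbed into the weight ratio and one uses monotonicity of $\mu_k$ and $\eta_{-k}$. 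Define $K(x,y)$ to be the integral kernel appearing on the left-hand side above; then $K$ is a backward Volterra kernel dominated by the kernel $(x,y)\mapsto\mu_k(y)|Q(y)|\theta(y-x)$.

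The second step is to invoke the abstract Volterra iteration bound (Proposition \ref{volterra1}, cited in Appendix \ref{volterra} and already used in the proof of Corollary \ref{lemma2_app}) with this $K$. That proposition yields, for the $n$-fold composition,
\begin{equation*}
\big|(G_\leftarrow^0 Q)^n f(x)\big| \le \frac{C^{n+1}}{n!}\Big(\int_x^\infty \mu_k(y)|Q(y)|\,\mathrm{d}y\Big)^n \mu_k(x)^{\frac12-\Re(m)}\eta_{-k}(x)\,\sup_{y>x}\frac{|f(y)|}{\mu_k(y)^{\frac12-\Re(m)}\eta_{-k}(y)},
\end{equation*}
which is exactly (i) after dividing by the prefactor; the constant $C$ is uniform in $n$ and $k$ because the bound in Lemma \ref{lemma2} is uniform in $k$ and the $\theta(y-x)$ truncation only shrinks the domain of integration. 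For part (ii) one proceeds identically, starting instead from Lemma \ref{lemma2}(ii) with $\varepsilon_1=\tfrac12$, $\varepsilon=0$, $\alpha=\beta=1$, which replaces the weight $\mu_k^{\frac12-\Re(m)}\eta_{-k}$ by $\mu_k^{\frac12}\lambda_k\eta_{-k}$ and introduces the extra factor $\lambda_k(y)$ inside the integral; here one uses $\lambda_k\ge1$ to pass from the target weight $\lambda_k^{\alpha+1-\beta}=\lambda_k^{1}$ in Lemma \ref{lemma2} to the source weight $\lambda_k^{\alpha}=\lambda_k$ in the domination bound, just as $\lambda_k\ge1$ was used in the proof of Corollary \ref{lemma2_app}(ii).

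The only genuinely delicate point—rather than routine bookkeeping—is verifying that the Volterra machinery applies with the \emph{backward} orientation and that the integration region is $]x,\infty[$ rather than $]0,x[$. Concretely, the main thing to check is that the iterated integrals $\int_x^\infty\!\!\int_{y_1}^\infty\!\cdots$ over the ordered simplex $x<y_1<\cdots<y_n$ produce the factor $\tfrac1{n!}(\int_x^\infty \mu_k|Q|)^n$; this is the backward counterpart of the forward simplex estimate and follows from the same symmetrization argument, with the sole modification that the dominating weight $\mu_k(y)|Q(y)|$ must be integrable on $]a,\infty[$—which is guaranteed by $Q\in\mathscr{L}^{(\infty)}_0$ together with $\mu_k(y)\le|k|^{-1}$ for large $y$ (so $\mu_k|Q|\lesssim |Q|$ near infinity). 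Since this is the precise analogue of Corollary \ref{lemma2_app}, I would state the proof as: \emph{The proof is the same as that of Corollary \ref{lemma2_app}, applying Lemma \ref{lemma2} in place of Lemma \ref{lemma1} and the backward form of Proposition \ref{volterra1}; for (i) take $\varepsilon_1=\tfrac12-\Re(m)$, $\varepsilon=0$, and for (ii) take $\varepsilon_1=\tfrac12$, $\varepsilon=0$, $\alpha=\beta=1$.}
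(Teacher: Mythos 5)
Your proof is correct and follows essentially the same route as the paper's: extract the pointwise kernel bound from the proof of Lemma \ref{lemma2} and feed it into the (backward) Volterra iteration of Proposition \ref{volterra1}, exactly as Corollary \ref{lemma2_app} was deduced from Lemma \ref{lemma1}. Your parameter choices ($\varepsilon_1=\tfrac12-\Re(m)$, $\varepsilon=0$ for (i); $\varepsilon_1=\tfrac12$, $\varepsilon=0$, $\alpha=\beta=1$ for (ii)) are in fact the consistent ones, since they keep the source and target weights equal and produce the integrand $\mu_k|Q|$ (resp. $\mu_k\lambda_k|Q|$) appearing in the statement, whereas the parameters printed in the paper's own proof ($\varepsilon=1$, $\varepsilon_1=-\tfrac12+\Re(m)$) do not satisfy the hypothesis $\varepsilon+\varepsilon_1\le\tfrac12-\Re(m)$ of Lemma \ref{lemma2}(i) for $\Re(m)>0$ and appear to be a misprint.
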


\begin{proof}
We proceed as in the proof of Corollary \ref{lemma2_app}.

To prove (i)  we use Lemma \ref{lemma2}(i) with $\varepsilon=1$ and $\varepsilon_1 
=-\frac12+\Re(m)$.

To prove (ii) we use Lemma \ref{lemma2}(ii) with $\varepsilon=1$, $\varepsilon_1 
=-\frac12$ and $\alpha=\beta=1$.
\end{proof}

The case $m=0$, $k=0$ is not covered by Lemma
\ref{lemma2} and Corollary \ref{lemma4_app}, because then $\lambda_k $ is ill
defined.
The
following lemma and its corollary work for this case. 

\begin{lemma}\label{lemma2bis}
Let $m=0$, $k=0$ and $Q\in L^1_\mathrm{loc}]0,\infty[$. Let $\varepsilon_1+\varepsilon\leq\frac12$,
    $\beta-\alpha\geq1$  and suppose
   \begin{equation*}
         \int_1^\infty y^{1-\varepsilon} ( 1 + \mathrm{ln}(y) )^\beta|Q(y)|\d y<\infty .
\end{equation*} 
    Then 
\begin{equation*}
      G_\leftarrow^0Q : 
 L^\infty(]1,\infty[, x^{\varepsilon_1}(1+\mathrm{ln}(x))^\alpha \big) \to L_\infty^\infty\big(]1,\infty[, x^{\varepsilon+\varepsilon_1}(1+\mathrm{ln}(x))^{\alpha+1-\beta} \big)
\end{equation*}
    is bounded.
\end{lemma}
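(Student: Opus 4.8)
The plan is to follow the proof of Lemma \ref{lemma2}(ii) essentially verbatim, replacing the solution $v_0^0(\cdot,k)$ by the logarithmic solution $p_0^0(\cdot,0)$ and the weight $\lambda_k$ by $1+\mathrm{ln}$ — exactly the substitution that turned Lemma \ref{lemma1} into Lemma \ref{lemma1a} in the forward case. First I would record the explicit form of the operator at $m=k=0$. Since $G_{0,\leftarrow}^0(0;x,y)=-\theta(y-x)G_{0,\leftrightarrow}^0(0;x,y)$ and $G_{0,\leftrightarrow}^0(0;x,y)=u_0^0(x)p_0^0(y)-p_0^0(x)u_0^0(y)$ with $u_0^0(x)=x^{\frac12}$ and $p_0^0(x)=x^{\frac12}\mathrm{ln}(x)$, one obtains
\begin{equation*}
(G_\leftarrow^0Qf)(x)=p_0^0(x)\int_x^\infty u_0^0(y)Q(y)f(y)\,\d y-u_0^0(x)\int_x^\infty p_0^0(y)Q(y)f(y)\,\d y.
\end{equation*}

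Next I would estimate the two terms separately, using $|u_0^0(x)|=x^{\frac12}$ and $|p_0^0(x)|=x^{\frac12}\mathrm{ln}(x)\le x^{\frac12}(1+\mathrm{ln}(x))$ for $x>1$, together with $|f(y)|\le\|f\|\,y^{\varepsilon_1}(1+\mathrm{ln}(y))^\alpha$, where $\|f\|$ denotes the norm in $L^\infty(]1,\infty[,x^{\varepsilon_1}(1+\mathrm{ln}(x))^\alpha)$. This produces, for the first term, a prefactor $x^{\frac12}(1+\mathrm{ln}(x))$ in front of $\int_x^\infty y^{\frac12+\varepsilon_1}(1+\mathrm{ln}(y))^\alpha|Q(y)|\,\d y$, and for the second a prefactor $x^{\frac12}$ in front of $\int_x^\infty y^{\frac12+\varepsilon_1}(1+\mathrm{ln}(y))^{\alpha+1}|Q(y)|\,\d y$. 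The crucial step is to factor each integrand against the integrability weight, writing $y^{\frac12+\varepsilon_1}(1+\mathrm{ln}(y))^{\alpha}=y^{1-\varepsilon}(1+\mathrm{ln}(y))^\beta\cdot y^{\varepsilon_1+\varepsilon-\frac12}(1+\mathrm{ln}(y))^{\alpha-\beta}$ (and the same with $\alpha$ replaced by $\alpha+1$ for the second term). Since $\varepsilon_1+\varepsilon\le\frac12$ and $\beta-\alpha\ge1$, all the extra exponents are nonpositive, so for $y\ge x>1$ the functions $y\mapsto y^{\varepsilon_1+\varepsilon-\frac12}$ and $y\mapsto(1+\mathrm{ln}(y))^{\alpha-\beta}$, respectively $(1+\mathrm{ln}(y))^{\alpha+1-\beta}$, are decreasing and can be pulled out of the integral at $y=x$. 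What remains is $\int_x^\infty y^{1-\varepsilon}(1+\mathrm{ln}(y))^\beta|Q(y)|\,\d y$, which is finite by hypothesis, and a direct multiplication of the prefactor with the extracted weight shows that both terms are bounded by $C\|f\|\,x^{\varepsilon_1+\varepsilon}(1+\mathrm{ln}(x))^{\alpha+1-\beta}$.

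Finally, to land in $L_\infty^\infty$ and not merely in $L^\infty$, I would note that the estimate above is in fact
\begin{equation*}
\frac{|(G_\leftarrow^0Qf)(x)|}{x^{\varepsilon_1+\varepsilon}(1+\mathrm{ln}(x))^{\alpha+1-\beta}}\le C\|f\|\int_x^\infty y^{1-\varepsilon}(1+\mathrm{ln}(y))^\beta|Q(y)|\,\d y,
\end{equation*}
whose right-hand side is the tail of a convergent integral and therefore tends to $0$ as $x\to\infty$. This yields simultaneously the asserted boundedness and the membership in $L_\infty^\infty\big(]1,\infty[,x^{\varepsilon+\varepsilon_1}(1+\mathrm{ln}(x))^{\alpha+1-\beta}\big)$. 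I do not anticipate any genuine obstacle: the one point that needs care is the bookkeeping of the two logarithmic exponents, checking that the hypotheses $\varepsilon_1+\varepsilon\le\frac12$ and $\beta-\alpha\ge1$ are invoked with the correct sign so that the monotonicity arguments apply; everything else is a transcription of the proof of Lemma \ref{lemma2}(ii), with the solution asymptotics at $k=0$ replacing those at $k\neq0$.
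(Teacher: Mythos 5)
Your proposal is correct and follows essentially the same route as the paper, which simply says to repeat the argument of the forward-operator lemma near zero using that $y\mapsto y^{-\frac12+\varepsilon_1+\varepsilon}$ and the relevant logarithmic powers are decreasing on $]1,\infty[$ — precisely the two monotonicity facts you invoke after factoring the integrand against the weight $y^{1-\varepsilon}(1+\mathrm{ln}(y))^\beta|Q(y)|$. Your bookkeeping of the exponents (using $\varepsilon_1+\varepsilon\le\frac12$ and $\beta-\alpha\ge1$ to pull the decreasing factors out at $y=x$) and the tail-of-a-convergent-integral argument for membership in $L^\infty_\infty$ match the paper's intended proof.
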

     \begin{proof}
It suffices to proceed as in the proof of Lemma \ref{lemma1a}, using that $y\mapsto y^{-\frac12+\varepsilon_1+\varepsilon}$ and $y\mapsto(\mathrm{ln}(y))^{1-\beta+\alpha}$ are decreasing on $]1,\infty[$.
\end{proof}

        \begin{corollary}\label{lemma23_appbis}
          Let $k=0$, $m=0$ and $n \in \mathbb{N}$.
          
\begin{enumerate}[label=(\roman*)]
\item          Suppose that $Q \in \mathscr{L}^{(\infty)}_{1,\mathrm{ln}}$. Then, for all $a>1$, $f\in L^\infty \big(]a,\infty[,x^{\frac12} \big)$ and $a < x$,
\begin{align*}
\frac{| (G_\leftarrow^0Q)^n f(x)|}{x^{\frac12}}
\leq
\frac{C^{n+1} }{n!} \Big ( \int_x^\infty y \big ( 1 + |\mathrm{ln}(y)| \big)
|Q(y)| \mathrm{d}y \Big )^n\sup_{y>x}\frac{|f(y)|}{y^{\frac12}}.
\end{align*}
\item Suppose that $Q \in \mathscr{L}^{(\infty)}_{1,\mathrm{ln}^2}$. Then, for all $a>1$, $f\in L^\infty \big(]a,\infty[,x^{\frac12}|\mathrm{ln}(x)| \big)$ and $a < x$,
\begin{align*}
\frac{| (G_\leftarrow^0Q)^n f(x)|}{x^{\frac12}(1+|\mathrm{ln}(x)|)}
\leq
\frac{C^{n+1} }{n!} \Big ( \int_x^\infty y\big ( 1 + |\mathrm{ln}(y)| \big)^2
|Q(y)| \mathrm{d}y \Big )^n\sup_{y>x}\frac{|f(y)|}{y^{\frac12}(1+|\mathrm{ln}(y)|)}.
\end{align*}
\end{enumerate}
\end{corollary}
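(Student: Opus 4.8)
The plan is to reproduce, in the backward direction, the mechanism already used in Corollary \ref{lemma23_app} (and in Corollary \ref{lemma4_app} for $k\neq0$): conjugate the integral kernel of $G_\leftarrow^0Q$ by the weight appearing in the statement, dominate the conjugated kernel by a weight-free backward Volterra kernel of the form $\theta(y-x)\kappa(y)|Q(y)|$, and then feed this into the Volterra iteration estimate of Proposition \ref{volterra1}, which is exactly what produces the factor $\frac{1}{n!}\big(\int_x^\infty\kappa|Q|\big)^n$. The only genuinely new input is that, since $k=m=0$, the weighted boundedness must come from Lemma \ref{lemma2bis} rather than Lemma \ref{lemma2}(ii); everything else is the verbatim mirror image of the forward computation, with $\theta(x-y)$ and $\int_0^x$ replaced by $\theta(y-x)$ and $\int_x^\infty$.

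Concretely, I would first record the explicit kernel. At $m=0$, $k=0$ the canonical bisolution is $G_{0,\leftrightarrow}^0(0;x,y)=x^{\frac12}y^{\frac12}\big(\mathrm{ln}(y)-\mathrm{ln}(x)\big)$, so that $G_\leftarrow^0(0;x,y)=-\theta(y-x)x^{\frac12}y^{\frac12}\big(\mathrm{ln}(y)-\mathrm{ln}(x)\big)$. For part (i), conjugating by the weight $\phi(x)=x^{\frac12}$ gives
\begin{equation*}
\Big|\frac{\phi(y)}{\phi(x)}G_\leftarrow^0(0;x,y)Q(y)\Big|=\theta(y-x)\,y\big(\mathrm{ln}(y)-\mathrm{ln}(x)\big)|Q(y)|.
\end{equation*}
This is the pointwise bound that the proof of Lemma \ref{lemma2bis} yields with the choice $\varepsilon_1=\frac12$, $\varepsilon=0$, $\beta=1$, $\alpha=0$. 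Since $x>a>1$ we have $0\le\mathrm{ln}(y)-\mathrm{ln}(x)\le\mathrm{ln}(y)\le 1+|\mathrm{ln}(y)|$, so the conjugated kernel is dominated by $\theta(y-x)\,y\,(1+|\mathrm{ln}(y)|)|Q(y)|$, integrable on $]x,\infty[$ precisely under $Q\in\mathscr{L}^{(\infty)}_{1,\mathrm{ln}}$. As $G_\leftarrow^0Q$ is a backward Volterra operator, Proposition \ref{volterra1} then gives (i).

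For part (ii) I would follow the same extra trick used in the proof of Corollary \ref{lemma2_app}(ii): conjugate with numerator weight $y^{\frac12}(1+|\mathrm{ln}(y)|)$ and denominator weight $x^{\frac12}$ only, obtaining $\theta(y-x)\,y\,(1+|\mathrm{ln}(y)|)\big(\mathrm{ln}(y)-\mathrm{ln}(x)\big)|Q(y)|\le\theta(y-x)\,y\,(1+|\mathrm{ln}(y)|)^2|Q(y)|$, which corresponds to Lemma \ref{lemma2bis} with $\varepsilon_1=\frac12$, $\varepsilon=0$, $\beta=2$, $\alpha=1$. Then, using $1+|\mathrm{ln}(x)|\ge1$, one may reinstate the factor $(1+|\mathrm{ln}(x)|)$ in the denominator without spoiling the inequality, so that the domain and target weights coincide at $x^{\frac12}(1+|\mathrm{ln}(x)|)$; Proposition \ref{volterra1} again closes the argument.

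The only delicate point — and the sole place where the hypothesis $a>1$ is genuinely used — is the passage from $\mathrm{ln}(y)-\mathrm{ln}(x)=\mathrm{ln}(y/x)$ to $1+|\mathrm{ln}(y)|$: it relies on $\mathrm{ln}(x)\ge0$ on $]1,\infty[$, which lets every logarithmic factor be transferred onto the outer variable $y$. I do not expect any real obstacle beyond this bookkeeping, since the factorial gain is entirely supplied by the Volterra structure and the conjugation is an exact analogue of the forward case.
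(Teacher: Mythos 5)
Your proposal is correct and follows essentially the same route as the paper: the paper's proof simply invokes the argument of Corollary \ref{lemma2_app} combined with Lemma \ref{lemma2bis}, using exactly your parameter choices ($\varepsilon_1=\frac12$, $\varepsilon=0$, $\beta=1$, $\alpha=0$ for (i) and $\beta=2$, $\alpha=1$ for (ii)), followed by the backward Volterra iteration. Your explicit kernel computation, the use of $a>1$ to pass from $\mathrm{ln}(y)-\mathrm{ln}(x)$ to $1+|\mathrm{ln}(y)|$, and the reinstatement of the factor $1+|\mathrm{ln}(x)|$ via $1+|\mathrm{ln}(x)|\ge1$ are precisely the details the paper leaves implicit.
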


\begin{proof}
The proof is the same as that of Corollary \ref{lemma2_app}, applying Lemma \ref{lemma2bis}. To prove (i), we use Lemma \ref{lemma2bis} with $\varepsilon_1=\frac12$, $\varepsilon=0$, $\beta=1$ and $\alpha=0$. To prove (ii), we use Lemma \ref{lemma2bis} with $\varepsilon_1=\frac12$, $\varepsilon=0$, $\beta=2$ and $\alpha=1$.
\end{proof}

\subsection{Jost solutions constructed with the help of the backward Green's operator}\label{sec:sol_infty}

 In this subsection, using the backward Green's operator, we construct the solution to \eqref{eq:a1} which behaves  as $\e^{-kx}$ at infinity.

The next proposition implies Proposition \ref{prop:reginfty0} from the introduction.

\begin{proposition}\label{prop:reginfty}
 Suppose that $Q \in \mathscr{L}^{(\infty)}_0$. Let $m\in\C$ and $\Re(k)\geq0$, $k\neq0$.
Then
\begin{equation*}
w_m(\cdot,k):=(\one+G_\leftarrow^0Q)^{-1}w^0(\cdot,k)
\end{equation*}
is the unique solution in
$AC^1]0,\infty[$ to
 \eqref{eq:a1} such that
\begin{align}\label{eq:f1'2.}
  w_{m} ( x ,k) -
  w_{m}^0 ( x ,k) &=o(\e^{-x\Re(k)}),\\\label{eq:f1'2..}
\partial_x      w_{m} ( x ,k) -
\partial_x    w_{m}^0 ( x ,k) &=o(\e^{-x\Re(k)})
  , \qquad x \to \infty .
\end{align}
Moreover, for all $m\in\C$, we have 
\begin{equation}
  w_{m} ( x ,k) =  w_{-m} ( x ,k) . \label{eq:parityv}
  \end{equation}
\end{proposition}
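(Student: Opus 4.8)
The plan is to mirror the construction of Subsection~\ref{sec:sol_0}, replacing the forward Green's operator by the backward one and the interval $]0,a[$ by $]a,\infty[$. First I would fix $a>0$ with $a\geq|k|^{-1}$, so that on $]a,\infty[$ the weights $\mu_k$ and $\lambda_k$ are constant; hence the weight $\mu_k^{\frac12-\Re(m)}\eta_{-k}$ (for $m\neq0$), respectively $\mu_k^{\frac12}\lambda_k\eta_{-k}$ (for $m=0$), is proportional to $\eta_{-k}$ there. Since $|v_m^0|\lesssim\mu_k^{\frac12-\Re(m)}\eta_{-k}$ and $w_m^0$ is a constant multiple of $v_m^0$, we have $w_m^0\in L^\infty(]a,\infty[,\mu_k^{\frac12-\Re(m)}\eta_{-k})$. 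Corollary~\ref{lemma4_app}(i) (or (ii) if $m=0$) then gives on this space the bound
\begin{equation*}
\big\|(G_\leftarrow^0Q)^n\big\|\leq\frac{C^{n+1}}{n!}\Big(\int_a^\infty\mu_k(y)|Q(y)|\,\mathrm{d}y\Big)^n,
\end{equation*}
and since $\int_a^\infty\mu_k|Q|\leq|k|^{-1}\int_a^\infty|Q|<\infty$ by $Q\in\mathscr{L}^{(\infty)}_0$, the Neumann series $\sum_{n\geq0}(-G_\leftarrow^0Q)^n$ converges in operator norm. Thus $\one+G_\leftarrow^0Q$ is invertible and $w_m=(\one+G_\leftarrow^0Q)^{-1}w_m^0$ is well defined; because $G_\leftarrow^0$ is a right inverse of $L_{m^2}^0+k^2$, the identity $w_m=w_m^0-G_\leftarrow^0Qw_m$ shows that $w_m\in AC^1]0,\infty[$ and $(L_{m^2}+k^2)w_m=0$.

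For the asymptotics I would start from $w_m-w_m^0=-G_\leftarrow^0Qw_m$, with $w_m\in L^\infty(]a,\infty[,\mu_k^{\frac12-\Re(m)}\eta_{-k})$. Applying Lemma~\ref{lemma2}(i) with $\varepsilon=0$, $\varepsilon_1=\frac12-\Re(m)$ (or Lemma~\ref{lemma2}(ii) with $\alpha=\beta=1$ when $m=0$) places $G_\leftarrow^0Qw_m$ in $L_\infty^\infty(]a,\infty[,\mu_k^{\frac12-\Re(m)}\eta_{-k})$, i.e. it is $o(\eta_{-k})=o(\e^{-x\Re(k)})$, which is \eqref{eq:f1'2.}. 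For \eqref{eq:f1'2..} I would differentiate the kernel representation
\begin{equation*}
G_\leftarrow^0Qf(x)=-v_m^0(x)\int_x^\infty u_m^0Qf\,\mathrm{d}y+u_m^0(x)\int_x^\infty v_m^0Qf\,\mathrm{d}y;
\end{equation*}
the two pointwise boundary terms cancel, leaving an expression in $\partial_xv_m^0$ and $\partial_xu_m^0$ to which the derivative asymptotics of $u_m^0,v_m^0$ and the same type of estimate apply, yielding $\partial_x(w_m-w_m^0)=o(\e^{-x\Re(k)})$.

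Uniqueness I would obtain from the Wronskian. If $\tilde w$ is another $AC^1]0,\infty[$ solution of \eqref{eq:a1} with $\tilde w-w_m^0=o(\e^{-x\Re(k)})$ and $\partial_x(\tilde w-w_m^0)=o(\e^{-x\Re(k)})$, then $g:=w_m-\tilde w$ solves \eqref{eq:a1} with $g,\partial_xg=o(\eta_{-k})$. Using $|w_m|,|\partial_xw_m|\lesssim\eta_{-k}$ (the latter from \eqref{eq:f1'2..} and $\partial_xw_m^0=k\cK_m'(kx)$), the constant Wronskian satisfies
\begin{equation*}
\Wr(w_m,g)=w_m\partial_xg-\partial_xw_m\,g=o(\eta_{-k}^2)=o(\e^{-2x\Re(k)})\xrightarrow[x\to\infty]{}0,
\end{equation*}
the limit being valid even when $\Re(k)=0$ since then $\eta_{-k}^2\equiv1$; hence $\Wr(w_m,g)=0$ and $g=c\,w_m$ for some $c\in\C$. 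As $w_m^0=\cK_m(kx)=\e^{-kx}\big(1+\mathcal{O}((kx)^{-1})\big)$ gives $|w_m|/\eta_{-k}\to1$, the relation $g=o(\eta_{-k})$ forces $c=0$, so $g=0$.

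Finally, the parity \eqref{eq:parityv} is immediate once the construction is in place: the backward Green's operator $G_{m^2,\leftarrow}^0(-k^2)$ depends only on $m^2$ and is therefore invariant under $m\mapsto-m$, while $w_m^0=\cK_m(kx)=\cK_{-m}(kx)=w_{-m}^0$. Hence both the operator $\one+G_\leftarrow^0Q$ and the input $w_m^0$ are unchanged under $m\mapsto-m$, giving $w_m=(\one+G_\leftarrow^0Q)^{-1}w_m^0=(\one+G_\leftarrow^0Q)^{-1}w_{-m}^0=w_{-m}$. The step I expect to be the main obstacle is the derivative estimate \eqref{eq:f1'2..}: one must check that the bounds of Lemma~\ref{lemma2} survive differentiation of the Green's operator, which requires the derivative asymptotics of $u_m^0$ and $v_m^0$; the borderline case $\Re(k)=0$ in the uniqueness argument also needs the care indicated above.
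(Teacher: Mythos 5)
Your proposal is correct and follows essentially the same route as the paper: well-definedness via the Neumann series controlled by Corollary~\ref{lemma4_app}, the asymptotics \eqref{eq:f1'2.}--\eqref{eq:f1'2..} from $w_m-w_m^0=-G_\leftarrow^0Qw_m$ and Lemma~\ref{lemma2}, and uniqueness via the constancy of the Wronskian (your explicit treatment of the borderline case $\Re(k)=0$ is a welcome detail the paper leaves implicit). The only cosmetic difference is that you obtain \eqref{eq:parityv} from the $m\mapsto-m$ invariance of $G^0_{m^2,\leftarrow}$ and of $w_m^0$, whereas the paper deduces it from the uniqueness statement; both are valid.
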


\begin{proof}
  Let $a>0$. Clearly, $w^0 \in L^\infty( ]a,\infty[,  \mu_k \eta_{-k})$,
Hence, by Corollary \ref{lemma4_app}, 
    \begin{equation}\label{eq:vfixpoint}
      w=(\one+G_\leftarrow^0Q)^{-1}w^0 \in L^\infty( ]a,\infty[,
    \mu_k \eta_{-k})
    \end{equation}   is well defined. As in the proof of Proposition \ref{prop:reg1}, this implies that $w\in AC^1]0,\infty[$ and, using in addition that $G_\leftarrow^0$ is a right inverse of $L_{m^2}$, that $w$ is a solution to \eqref{eq:a1}.

 Next, since
\begin{equation}\label{podo2}
      w-w^0=-G_\leftarrow^0Qw,
    \end{equation}
applying Lemma \ref{lemma2}, we obtain that
       $w-w^0 \in      L_\infty^\infty\big(
    ]a,\infty[,\e^{-x\Re(k)}\big)$.
This proves \eqref{eq:f1'2.}.
Equation \eqref{eq:f1'2..} is proven similarly.

Uniqueness follows exactly as in the proof of Proposition \ref{prop:reg1}. This in turn implies \eqref{eq:parityv} since $w^0_m=w^0_{-m}$.
\end{proof}

Now, for a fixed $Q$, we can study the regularity of Jost solutions with respect to $(m,k)$.

\begin{proposition}\label{prop:reginfty-an}
   Suppose that $Q \in \mathscr{L}^{(\infty)}_0$.
Then for all $x>0$, the maps
\begin{equation*}
\big \{\Re(k)\geq0, \, k\neq0 \big \}\ni (m , k ) \mapsto w_m( x,k ),\,\partial_xw_m(x,k)
\end{equation*}
 are regular.
\end{proposition}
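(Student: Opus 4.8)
The plan is to follow verbatim the strategy of Proposition \ref{prop:anal_um2}, substituting the backward Green's operator for the forward one. By the symmetry \eqref{eq:parityv} we have $w_m = w_{-m}$, and since $G_\leftarrow^0$ depends only on $m^2$, every object below is even in $m$; hence it suffices to establish regularity on the slice $\Re(m)\ge0$, the general case following by reflection. The starting point is the Neumann series
\begin{equation*}
w_m(\cdot,k) = (\one + G_\leftarrow^0 Q)^{-1} w_m^0(\cdot,k) = \sum_{n=0}^\infty (-G_\leftarrow^0 Q)^n w_m^0(\cdot,k),
\end{equation*}
whose convergence (after restriction to $]a,\infty[$ for any $a>0$) is guaranteed by Corollary \ref{lemma4_app}: part (i) covers $\Re(m)\ge0$, $m\neq0$, and part (ii) covers $m=0$, where the hypothesis $k\neq0$ is used. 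The factorial denominator in those estimates forces the series to converge uniformly on every compact subset of $\{\Re(k)\ge0,\ k\neq0\}$, with implied constant independent of $n$ and $k$.

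The first ingredient is that the two building blocks are regular. For fixed $x>0$ the map $(m,k)\mapsto w_m^0(x,k)=\cK_m(kx)$ is entire in $m$ and, since $kx$ stays away from the branch point $z=0$ whenever $k\neq0$, analytic in $k$ on $\{\Re(k)>0\}$ and continuous up to $\{\Re(k)=0,\ k\neq0\}$; thus $w_m^0$ is regular. Moreover, for fixed $x,y>0$ the map $(m,k)\mapsto G_\leftarrow^0(-k^2;x,y)$ is analytic on $\C\times\C$ by the earlier proposition on the analyticity of the backward Green's kernel.

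The core step is an induction showing that each term $(-G_\leftarrow^0 Q)^n w_m^0(x,\cdot)$ is regular in $(m,k)$. Granting that $f=f(\cdot,m,k)$ is regular and suitably bounded, one writes
\begin{equation*}
(G_\leftarrow^0 Q f)(x) = \int_x^\infty \big(-G_\leftrightarrow^0(-k^2;x,y)\big) Q(y) f(y,m,k)\,\mathrm{d}y,
\end{equation*}
and checks that this integral is again regular: analyticity in $m$ and in $k$ on the interior $\{\Re(k)>0\}$ follows from the analyticity of the integrand in these variables together with Morera's theorem and Fubini, while joint continuity up to the boundary $\{\Re(k)=0,\ k\neq0\}$ follows from dominated convergence. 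In both cases the domination is supplied by the bounds of Lemma \ref{lemma2} and Corollary \ref{lemma4_app}, which control the integrand uniformly on compact parameter sets. Passing the uniformly convergent series through the Weierstrass theorem (for analyticity) and the uniform-limit theorem (for continuity) then yields the regularity of $w_m$. The regularity of $\partial_x w_m$ is obtained identically, differentiating the series term by term and using that $G_\leftarrow^0$ is a Green's operator, so that the derivative estimates are again uniform on compacts.

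The main obstacle is precisely this core step: justifying that $G_\leftarrow^0 Q$ preserves regularity, i.e. that one may interchange the $y$-integration over the \emph{unbounded} interval $]x,\infty[$ with differentiation and with taking limits in the parameters. The delicate feature, absent in the forward case of Proposition \ref{prop:anal_um2}, is the boundary $\Re(k)=0$, where the weight $\eta_{-k}$ degenerates to $1$ and $w_m^0$ no longer decays at infinity; there the convergence of the tail integral rests entirely on the integrability hypothesis $Q\in\mathscr{L}^{(\infty)}_0$ together with the kernel bounds, which is exactly what makes the domination argument go through uniformly on compact subsets of $\{\Re(k)\ge0,\ k\neq0\}$.
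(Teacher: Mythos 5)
Your proposal is correct and follows essentially the same route as the paper: the paper's proof simply invokes the argument of Proposition \ref{prop:anal_um2}, i.e.\ the Neumann series $\sum_n(-G_\leftarrow^0Q)^nw_m^0$, the analyticity of $w_m^0$ and of the kernel of $G_\leftarrow^0$, an induction showing each term is regular, and locally uniform convergence supplied by the factorial bounds of Corollary \ref{lemma4_app}. Your additional care about the unbounded integration domain, the boundary $\Re(k)=0$, and the reduction to $\Re(m)\ge0$ via $w_m=w_{-m}$ is consistent with (and slightly more explicit than) the paper's argument.
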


\begin{proof}
     The analyticity and continuity follows as in the proof of Proposition \ref{prop:anal_um2}, using the analyticity of $w^0$ and the map $G_\leftarrow^0Q$ together with Lemma \ref{lemma4_app}. 
\end{proof}

\begin{remark}
  In general, $( m , k ) \mapsto w_m( x ,k)$ does not extend analytically to ${\{ \frac{\pi}{2}\leq |\mathrm{arg}(k)|\}}$ in the same way as for $( m , k ) \mapsto u_m( x,k )$. However, if the condition $Q \in \mathscr{L}^{(\infty)}_0$ is strengthened, assuming
\begin{equation*}
\int_1^\infty \e^{\Lambda y} |Q(y)| \mathrm{d}t < \infty ,
\end{equation*}
for some $\Lambda > 0$, then one can verify that $(m , k ) \mapsto w_m( x,k )$ extends analytically to 
\begin{equation*}
{\{\mathrm{Re}(k) > -\Lambda / 2 , |\mathrm{arg}(k)|<\pi\}}.
\end{equation*}
\end{remark}

Proposition 
\ref{prop:reginfty}
is restricted to $k\neq0$, because the usual short-range condition
is insufficient to cover the zero energy case. Therefore,
 in our analysis most of 
the time we avoid considering Jost solutions for $k=0$. In the remainder of this subsection, we describe a modification of
Proposition 
\ref{prop:reginfty} about the case $k=0$.

It will be convenient to introduce notation for differently normalized
Jost solutions, parallel to the unperturbed case:
\begin{equation}
v_m(x,k):=  \sqrt{\frac\pi{2k}}\Big(\frac{k}2\Big)^{m}w_m(x,k).
\end{equation}

Recall that the unperturbed eigenequation has the following solutions at
$k=0$:
\begin{equation*}
  u_m^0(x,0):=\frac{x^{\frac12+m}}{\Gamma(1+m)},\quad m\in\C;\qquad
  p^0(x,0):=x^{\frac12}\mathrm{ln}(x), \quad m=0.
  \end{equation*}
}
The
following proposition implies Proposition \ref{prop:reginfty+intro} of
the introduction.

\begin{proposition}\label{prop:reginfty+}
  Let $m\in\C$, $k=0$. Suppose that
 $\delta\geq1+2\max(-\Re(m),0)$
  and
  \begin{align*}
Q \in \mathscr{L}^{(\infty)}_{\delta} , & \text{ if } m \neq 0 , \qquad Q \in \mathscr{L}^{(\infty)}_{\delta,\mathrm{ln}} ,  \text{ if } m = 0 . 
  \end{align*}

Then 
\begin{equation*}
 q_{-m}:=(\one+G_\leftarrow^0Q)^{-1}u_{-m}^0(\cdot,0)
\end{equation*}
is the unique solution 
in   $AC^1]0,\infty[$ to     \eqref{eq:a1} for $k=0$ such that, 
\begin{align}\label{uu1a3bis}
  q_{-m}(x)- x^{\frac12-m}&=o(x^{\frac32-\Re(m)-\delta}),\\
\partial_x q_{-m}(x)-\partial_x x^{\frac12-m}&=o(x^{\frac12-\Re(m)-\delta}),\quad x\to\infty. \label{uu1a4bis}
\end{align}
Besides, if $m\neq0$, then
\begin{align}
\lim_{k\to0} v_m(x,k)&=\frac12\Gamma(m)q_{-m}(x)
              .\label{uu2bis}
\end{align}
\end{proposition}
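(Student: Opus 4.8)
The plan is to run the construction of the Jost solution from Proposition~\ref{prop:reginfty} at the threshold $k=0$, the only genuine change being that the comparison function is now the algebraically growing $u_{-m}^0(\cdot,0)\sim x^{\frac12-m}$ instead of the exponentially decaying $w_m^0(\cdot,k)$. First I would fix a large $a>0$ and work in the weighted space $L^\infty(]a,\infty[,\mu_0^{\frac12-\Re(m)})$ for $m\neq0$ (and in $L^\infty(]a,\infty[,x^{\frac12}(1+\ln x))$ for $m=0$). Since $G_\leftarrow^0(0)$ is a backward Volterra operator, the factorial bounds of Corollary~\ref{lemma4_app} (for $m\neq0$) and Corollary~\ref{lemma23_appbis} (for $m=0$) show that the Neumann series $\sum_{n\ge0}(-G_\leftarrow^0(0)Q)^n$ converges on this space, so that $q_{-m}:=(\one+G_\leftarrow^0(0)Q)^{-1}u_{-m}^0(\cdot,0)$ is well defined. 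Because $G_\leftarrow^0(0)$ is a right inverse of $L_{m^2}^0$, it then follows as in Proposition~\ref{prop:reginfty} that $q_{-m}\in AC^1]0,\infty[$ and solves \eqref{eq:a1} at $k=0$.

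The asymptotics \eqref{uu1a3bis}--\eqref{uu1a4bis} would follow from the fixed-point identity $q_{-m}-u_{-m}^0(\cdot,0)=-G_\leftarrow^0(0)Q\,q_{-m}$ combined with Lemma~\ref{lemma2}(i), applied with $\varepsilon_1=\frac12-\Re(m)$ and $\varepsilon=1-\delta$. Using the $m\mapsto-m$ invariance of $G_\leftarrow^0$ to reduce to the range $\Re(m)\ge0$ covered by the lemma, the hypothesis $\delta\ge1+2\max(-\Re(m),0)$ is exactly what guarantees the admissibility condition $\varepsilon_1+\varepsilon\le\frac12-|\Re(m)|$ together with the finiteness of $\int_a^\infty y^{1-\varepsilon}|Q(y)|\,\d y=\int_a^\infty y^{\delta}|Q(y)|\,\d y$, i.e. $Q\in\mathscr{L}^{(\infty)}_\delta$; the output weight $\mu_0^{\varepsilon_1+\varepsilon}=x^{\frac32-\Re(m)-\delta}$ reproduces the error exponent in \eqref{uu1a3bis}. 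The $m=0$ case uses Lemma~\ref{lemma2bis} and the logarithmic moment $Q\in\mathscr{L}^{(\infty)}_{\delta,\mathrm{ln}}$ instead. Uniqueness is obtained as in Proposition~\ref{prop:reginfty}: any competing solution differs from $q_{-m}$ by a solution that is $o(x^{\frac12-\Re(m)})$, and expanding in a basis whose second element behaves as $x^{\frac12+m}$ (the $m\mapsto-m$ instance of the same construction) and comparing leading terms via the constancy of the Wronskian forces the difference to vanish.

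For the limit relation \eqref{uu2bis} I would first observe that, by linearity of the inverse and the definition $v_m^0=\sqrt{\tfrac{\pi}{2k}}(\tfrac k2)^m w_m^0$, the perturbed solution obeys $v_m(\cdot,k)=(\one+G_\leftarrow^0(-k^2)Q)^{-1}v_m^0(\cdot,k)$ for $k\neq0$, $\Re(k)\ge0$. I would then pass to the limit $k\to0$ term by term in the Neumann series $v_m(x,k)=\sum_{n\ge0}(-G_\leftarrow^0(-k^2)Q)^n v_m^0(x,k)$: each summand is continuous in $k$ down to $k=0$ by the regularity of the kernels (Proposition~\ref{uua5}), the continuity of $k\mapsto v_m^0(x,k)$, and Proposition~\ref{prop:reginfty-an}, while the whole series is dominated for $|k|\le k_0$ by the $k$-independent summable bound $\frac{C^{n+1}}{n!}\big(\int_a^\infty y^{\delta}|Q(y)|\,\d y\big)^n$ of Corollary~\ref{lemma4_app}. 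Interchanging limit and sum yields $\lim_{k\to0}v_m(x,k)=(\one+G_\leftarrow^0(0)Q)^{-1}v_m^0(x,0)$. This is a zero-energy solution whose leading term at infinity is that of $v_m^0(x,0)=\frac{\Gamma(m)}{2}x^{\frac12-m}$, while $\frac12\Gamma(m)q_{-m}$ is a solution with the same leading term; by the uniqueness established in the first part the two coincide, which is \eqref{uu2bis}.

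The main obstacle is the uniformity across the threshold $k=0$. For $k\neq0$ the natural weight $\mu_k^{\frac12-\Re(m)}\eta_{-k}$ carries the exponential factor $\e^{-\Re(k)x}$, whereas at $k=0$ it degenerates into the polynomially growing $x^{\frac12-\Re(m)}$, so the family of weighted spaces changes character as $k\to0$; this is precisely why the stronger decay $Q\in\mathscr{L}^{(\infty)}_\delta$ (resp. $\mathscr{L}^{(\infty)}_{\delta,\mathrm{ln}}$) must be imposed. The delicate bookkeeping is to verify that the single constant $C$ and the single moment $\int_a^\infty y^{\delta}|Q|$ in Corollary~\ref{lemma4_app} control every term of the Neumann series uniformly for $|k|\le k_0$, since it is this uniformity that legitimizes the termwise passage to the limit used to prove \eqref{uu2bis}.
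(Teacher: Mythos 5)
Your proposal follows essentially the same route as the paper: invertibility of $\one+G_\leftarrow^0Q$ via the backward Volterra estimates of Corollaries \ref{lemma4_app}(i) and \ref{lemma23_appbis}(i), the fixed-point identity plus Lemma \ref{lemma2}(i) (with exactly the exponents you specify) for the asymptotics, the Wronskian for uniqueness, and a passage to the limit $k\to0$ for \eqref{uu2bis} --- where your dominated-convergence argument in the Neumann series is in fact more explicit than the paper's one-line justification. One small correction: for $m=0$ you should work in $L^\infty\big(]a,\infty[,x^{\frac12}\big)$ and invoke Corollary \ref{lemma23_appbis}(i), which needs only the assumed moment $Q\in\mathscr{L}^{(\infty)}_{\delta,\mathrm{ln}}$; the weight $x^{\frac12}(1+\mathrm{ln}(x))$ you propose corresponds to part (ii) of that corollary, whose factorial bound requires the $\mathrm{ln}^2$-moment, which is not guaranteed when $\delta=1$.
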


\begin{proof}
We proceed as in the proof of Theorem \ref{prop:reg10}. In particular, the fact that $q_{-m}$ is well-defined follows from Corollary \ref{lemma4_app}(i) in the case $m\neq0$ and Corollary \ref{lemma23_appbis}(i) if $m=0$.

The limit \eqref{uu2bis} follows from \eqref{eq:f1'2.}, \eqref{uu1a3bis} and \eqref{eq:w_m^0}.
\end{proof}

Note that $\frac32-\Re(m)-\delta\leq \frac12-\Re(m)$. Therefore, the
error in \eqref{uu1a3bis} is always of a smaller order
than $x^{\frac12-m}$.

 \begin{proposition}\label{prop:anal_qm}$ $
 \begin{enumerate}[label=(\roman*)]
 \item
Let 
$Q \in \mathscr{L}^{(\infty)}_1.$
  Then for any $x>0$ the maps
 \begin{align}
    \Big\{\Re(m)\geq0,\ m\neq0\Big\}\ni m&\mapsto 
                                               q_{-m}(x),\,
                                           \partial_x
                                           q_{-m}(x)\label{porto1_qm}
 \end{align}
are regular.
If we strengthen the assumption to
$
 Q \in \mathscr{L}^{(\infty)}_{1,\mathrm{ln}} ,
$
then in \eqref{porto1_qm} we can include
$m=0$.
\item
Let $\delta>1$ and
$Q \in \mathscr{L}^{(\infty)}_\delta.$
  Then for any $x>0$ the maps
 \begin{align}
    \Big\{\Re(m)\geq\frac12(1-\delta)\Big\}\ni m&\mapsto 
                                               q_{-m}(x),\,
                                           \partial_x
                                           q_{-m}(x)\label{porto1_qm2}
 \end{align}
are regular.
\end{enumerate}   \end{proposition}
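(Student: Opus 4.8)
The plan is to follow verbatim the strategy of Propositions \ref{prop:anal_um2} and \ref{prop:reginfty-an}, with the backward Green's operator at zero energy in place of the forward one. First I would expand $q_{-m}$ as the Neumann series
\begin{equation*}
q_{-m}(x)=\sum_{n=0}^\infty(-G_\leftarrow^0Q)^n\,u_{-m}^0(x,0),\qquad u_{-m}^0(x,0)=\frac{x^{\frac12-m}}{\Gamma(1-m)},
\end{equation*}
and observe that the seed $u_{-m}^0(\cdot,0)$ is entire in $m$ (since $1/\Gamma$ is entire) and that the kernel $G^0_{m^2,\leftarrow}(0;x,y)$ is analytic in $m$, the apparent pole at $m=0$ arising from the factor $\tfrac1{2m}$ being removable because its numerator vanishes there. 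An induction on $n$, together with the Morera/dominated-convergence argument already used for $(G_\to^0Q)^nu^0$ in Proposition \ref{prop:anal_um2}, then shows that each term $(G_\leftarrow^0Q)^nu_{-m}^0(\cdot,0)$ is analytic in $m$; the domination needed to move $\partial_m$ past the iterated integrals is supplied by the convergence bounds discussed next.

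For the convergence, in part (i) with $m\neq0$ I would invoke Corollary \ref{lemma4_app}(i) at $k=0$, where $\mu_k(y)=y$: it yields the factorial bound $\tfrac{C^{n+1}}{n!}\bigl(\int_x^\infty y|Q(y)|\,\d y\bigr)^n$, finite and summable precisely because $Q\in\mathscr{L}^{(\infty)}_1$, with $C$ locally uniform in $m$ on compact subsets of $\{\Re(m)\ge0,\,m\neq0\}$. A locally uniform limit of analytic functions being analytic, and continuity following in the same way, this gives regularity of $m\mapsto q_{-m}(x)$. To include $m=0$ under the stronger hypothesis $Q\in\mathscr{L}^{(\infty)}_{1,\mathrm{ln}}$, I would instead use Corollary \ref{lemma23_appbis}(i), whose logarithmic weight accommodates the $x^{\frac12}$ and $x^{\frac12}\mathrm{ln}(x)$ behaviour at $m=0$; combined with the removability of the kernel singularity at $m=0$, this propagates regularity across $m=0$.

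Part (ii) splits at $\Re(m)=0$. For $\Re(m)\ge0$ the argument of (i) applies unchanged, since $\delta>1$ gives $\mathscr{L}^{(\infty)}_\delta\subset\mathscr{L}^{(\infty)}_1$. For $\tfrac12(1-\delta)\le\Re(m)<0$ I would use that $G_\leftarrow^0$ depends only on $m^2$, so $q_{-m}$ matches the solution $x^{\frac12-m}\sim x^{\frac12+|\Re(m)|}$ that \emph{grows} at infinity; the relevant estimate is the backward analogue of Corollary \ref{lemma2_app}(iii), extracted from Lemma \ref{lemma2}(i) by exploiting the backward Volterra structure exactly as in that corollary's proof. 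This produces a factorial bound governed by $\int_x^\infty y^{\,1-2\Re(m)}|Q(y)|\,\d y$, finite when $Q\in\mathscr{L}^{(\infty)}_{1-2\Re(m)}$; since $1-2\Re(m)\le\delta$ is equivalent to $\Re(m)\ge\tfrac12(1-\delta)$, the hypothesis $Q\in\mathscr{L}^{(\infty)}_\delta$ suffices, and the bound is again locally uniform in $m$. The regularity of $\partial_x q_{-m}$ follows identically by differentiating the series termwise.

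The step I expect to be the main obstacle is verifying that the constants in these estimates can be taken \emph{locally uniform} in $m$, so that the Neumann series converges uniformly on compact subsets of the stated regions — in particular uniformly up to the boundary line $\Re(m)=\tfrac12(1-\delta)$ in part (ii), and, in part (i), uniformly on neighbourhoods of $m=0$ once the logarithmic weight is in force. As in Remark \ref{rk:313}, this uniformity must be read off from the proofs of Lemma \ref{lemma2} and its corollaries rather than from their statements, and it is what legitimises transferring analyticity from the individual terms to the sum.
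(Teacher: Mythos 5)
Your proposal is correct and follows essentially the same route as the paper, whose proof of this proposition is literally the one-line remark that it is similar to Proposition \ref{prop:anal_um2}: Neumann series for $(\one+G_\leftarrow^0Q)^{-1}u_{-m}^0(\cdot,0)$, analyticity of the seed and of the kernel (with the removable singularity at $m=0$), and locally uniform convergence via the factorial bounds of Corollary \ref{lemma4_app}(i), Corollary \ref{lemma23_appbis}(i), and the backward analogue of Corollary \ref{lemma2_app}(iii) for $\Re(m)<0$. Your identification of the growing-weight estimate needed on $\{\tfrac12(1-\delta)\le\Re(m)<0\}$ and of the local uniformity of the constants is exactly the content the paper leaves implicit.
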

   
\begin{proof}
The proof is similar to that of Proposition \ref{prop:anal_um2}.
\end{proof}

For $k=0$, $m=0$, we can also construct a solution with the same
behavior as
$x^{\frac12}\ln(x)$ at $\infty$, but we need to strengthen the condition on $Q$. Note that Proposition \ref{prop:reginftylog} implies Proposition \ref{prop:reginftylogintro} from the introduction.

\begin{proposition}\label{prop:reginftylog}
  Let $m=0$, $k=0$, $\delta\ge1$, $\beta\ge2$. Suppose that
$
 Q \in \mathscr{L}^{(\infty)}_{\delta,\mathrm{ln}^\beta} $.
Then 
\begin{equation}
 q_{0,\mathrm{ln}}:=(\one+G_\leftarrow^0Q)^{-1}p^0(\cdot,0)
\end{equation}
is the unique solution in
$AC^1]0,\infty[$ to \eqref{eq:a1} for $k=0$  such that
\begin{align}\label{uu1a3bislog}
  q_{0,\mathrm{ln}}(x)- x^{\frac12}\ln(x)&=o(x^{\frac32-\delta}\mathrm{ln}(x)^{2-\beta})),\\
  \partial_x q_{0,\mathrm{ln}}(x)-\partial_x x^{\frac12}\ln(x)&=o(x^{\frac12-\delta}\mathrm{ln}(x)^{2-\beta})),\quad x\to\infty. \label{uu1a4bislog}
\end{align}
\end{proposition}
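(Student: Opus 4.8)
The plan is to treat this exactly as the logarithmic, zero-energy analogue of the Jost-type constructions in Propositions \ref{prop:reginfty} and \ref{prop:reginfty+}, replacing the seed function by $p^0(\cdot,0)=x^{\frac12}\ln(x)$ and using the backward Green's operator together with the logarithmic mapping estimates of Lemma \ref{lemma2bis} and Corollary \ref{lemma23_appbis}. First I would observe that $p^0(\cdot,0)=x^{\frac12}\ln(x)$ lies in $L^\infty(]a,\infty[,x^{\frac12}(1+|\ln(x)|))$ for every $a>1$, and that the hypotheses $\delta\ge1$, $\beta\ge2$ give $Q\in\mathscr{L}^{(\infty)}_{\delta,\mathrm{ln}^\beta}\subset\mathscr{L}^{(\infty)}_{1,\mathrm{ln}^2}$. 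Hence Corollary \ref{lemma23_appbis}(ii) applies, and its factorial bound on the iterates $(G_\leftarrow^0Q)^n$ forces the Neumann series $\sum_{n\ge0}(-G_\leftarrow^0Q)^n p^0(\cdot,0)$ to converge in $L^\infty(]a,\infty[,x^{\frac12}(1+|\ln(x)|))$ for $a$ large enough. Thus $(\one+G_\leftarrow^0Q)^{-1}$ exists on that space and $q_{0,\mathrm{ln}}$ is well defined; since $G_\leftarrow^0$ is a right inverse of $L_0^0$, the same reasoning as in Proposition \ref{prop:reginfty} shows $q_{0,\mathrm{ln}}\in AC^1]0,\infty[$ and that it solves \eqref{eq:a1} at $k=0$.

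For the asymptotics the key identity is $q_{0,\mathrm{ln}}-p^0(\cdot,0)=-G_\leftarrow^0Qq_{0,\mathrm{ln}}$, where by the previous step $q_{0,\mathrm{ln}}=\mathcal{O}(x^{\frac12}(1+|\ln(x)|))$, i.e.\ $q_{0,\mathrm{ln}}$ sits in the domain space of Lemma \ref{lemma2bis} with $\varepsilon_1=\frac12$, $\alpha=1$. I would then apply Lemma \ref{lemma2bis} with $\varepsilon_1=\frac12$, $\varepsilon=1-\delta$, $\alpha=1$, and the lemma's log-exponent taken equal to the present $\beta$. The hypotheses match precisely: $\varepsilon_1+\varepsilon=\frac32-\delta\le\frac12$ because $\delta\ge1$; the gap condition reads $\beta-\alpha=\beta-1\ge1$ because $\beta\ge2$; and the integrability requirement $\int_1^\infty y^{1-\varepsilon}(1+\ln(y))^\beta|Q(y)|\,\mathrm{d}y=\int_1^\infty y^{\delta}(1+\ln(y))^\beta|Q(y)|\,\mathrm{d}y<\infty$ is exactly $Q\in\mathscr{L}^{(\infty)}_{\delta,\mathrm{ln}^\beta}$. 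The lemma then places $G_\leftarrow^0Qq_{0,\mathrm{ln}}$ in $L_\infty^\infty(]a,\infty[,x^{\frac32-\delta}(1+\ln(x))^{2-\beta})$, which is the little-$o$ bound \eqref{uu1a3bislog}. The derivative estimate \eqref{uu1a4bislog} follows in the same way from the differentiated kernel of $G_\leftarrow^0$, which (as in the analogous propositions) lowers the power of $x$ by one and is proven similarly.

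For uniqueness I would construct the companion solution $q_0:=(\one+G_\leftarrow^0Q)^{-1}u_0^0(\cdot,0)$ with $u_0^0(x,0)=x^{\frac12}$, available since $Q\in\mathscr{L}^{(\infty)}_{\delta,\mathrm{ln}^\beta}\subset\mathscr{L}^{(\infty)}_{\delta,\mathrm{ln}}$ places us under Proposition \ref{prop:reginfty+} with $m=0$, giving $q_0\sim x^{\frac12}$. If $q_1,q_2$ both satisfy \eqref{uu1a3bislog}--\eqref{uu1a4bislog}, their difference $d$ solves \eqref{eq:a1} with $d=o(x^{\frac32-\delta}(\ln x)^{2-\beta})$ and $d'=o(x^{\frac12-\delta}(\ln x)^{2-\beta})$, so the constant Wronskian $\Wr(q_0,d)=q_0d'-q_0'd$ is $o(x^{1-\delta}(\ln x)^{2-\beta})$; since that reference rate is bounded for $\delta\ge1,\beta\ge2$, we get $\Wr(q_0,d)\to0$, hence $\Wr(q_0,d)=0$ and $d=c\,q_0$. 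Comparing growth rates, $d=c\,q_0\sim c\,x^{\frac12}$ against $d=o(x^{\frac32-\delta}(\ln x)^{2-\beta})$ forces $c\,x^{\delta-1}(\ln x)^{\beta-2}\to0$, impossible unless $c=0$, whence $d\equiv0$. The delicate point — and the main obstacle — is precisely this comparison in the critical case $\delta=1$, $\beta=2$, where the two independent solutions $x^{\frac12}$ and $x^{\frac12}\ln(x)$ differ only by a single logarithm and the prefactor $x^{1-\delta}(\ln x)^{2-\beta}$ degenerates to a constant; there one must retain the sharp logarithmic powers throughout (which is exactly why $\beta\ge2$ is imposed and why the monotonicity of the log-weight in Lemma \ref{lemma2bis} cannot be slackened) rather than absorbing logarithms into powers of $x$.
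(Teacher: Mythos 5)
Your proposal is correct and follows essentially the same route as the paper, which proves this proposition by invoking Corollary \ref{lemma23_appbis}(ii) for the Neumann series and Lemma \ref{lemma2bis} for the asymptotics; your parameter choices ($\varepsilon_1=\frac12$, $\varepsilon=1-\delta$, $\alpha=1$, log-exponent $\beta$) are exactly the ones that make the hypotheses of Lemma \ref{lemma2bis} match $Q\in\mathscr{L}^{(\infty)}_{\delta,\mathrm{ln}^\beta}$ and yield the stated error terms. The Wronskian uniqueness argument is also the paper's standard one, just written out in more detail.
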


\begin{proof}
The proof is again similar to that of Theorem \ref{prop:reg10}, using now Corollary \ref{lemma23_appbis}(ii) and Lemma \ref{lemma2bis}.
\end{proof}

\subsection{Asymptotics of  non-principal solutions near \texorpdfstring{$\infty$}{Lg}.}

In this subsection, under the minimal assumptions
$Q\in\mathscr{L}^{(\infty)}_0$, we show that all elements of
$\Ker(L_{m^2}+k^2)$  not proportional to Jost solutions  behave like
non-principal unperturbed solutions near $\infty$. In particular, the
following proposition shows that they 
are not square integrable.

\begin{proposition}\label{cor:u_not_L2}
Let $m\in\C$, $\Re(k)\ge0$ and $k\neq0$. Suppose that $Q \in \mathscr{L}^{(\infty)}_0$.  Let $g$ be a solution of \eqref{eq:a1} linearly independent with $w_m(\cdot,k)$ constructed in Proposition \ref{prop:reginfty}. Then there exists a constant $C \neq0$ such that
\begin{equation}
g( x ) = C  \e^{k x}+o(\e^{\Re(k) x}), \quad x \to \infty.
\label{sert}\end{equation}
\end{proposition}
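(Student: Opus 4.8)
The plan is to reduce the behavior of the non-principal solution $g$ near $\infty$ to a first-order ODE involving the known Jost solution $w_m(\cdot,k)$, exactly as was done near $0$ in the proof of Proposition \ref{cor:u_not_L2a}. Since $g$ and $w_m(\cdot,k)$ both solve \eqref{eq:a1} and are linearly independent, their Wronskian $\Wr(g,w_m(\cdot,k))=:W$ is a nonzero constant. First I would write $g(x)=\lambda(x)w_m(x,k)$ on a half-line $]a,\infty[$ where $w_m(\cdot,k)$ does not vanish (such $a$ exists because $w_m(x,k)=w_m^0(x,k)(1+o(1))\sim\e^{-kx}$ by Proposition \ref{prop:reginfty}, so it is nonzero for large $x$). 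Substituting into the Wronskian relation gives $\lambda'(x)\,w_m(x,k)^2=-W$, hence
\begin{equation*}
\lambda(x)=\lambda(a)-W\int_a^x\frac{\d y}{w_m(y,k)^2}.
\end{equation*}

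Next I would analyze the integral. Using $w_m(y,k)=\e^{-ky}(1+o(1))$, we have $w_m(y,k)^{-2}=\e^{2ky}(1+o(1))$, so the integrand grows like $\e^{2\Re(k)y}$. Since $\Re(k)\ge0$ and $k\neq0$, the integral $\int_a^x\e^{2ky}(1+o(1))\,\d y$ is dominated near its upper endpoint and behaves asymptotically like $\frac{1}{2k}\e^{2kx}(1+o(1))$. Therefore
\begin{equation*}
\lambda(x)=-\frac{W}{2k}\e^{2kx}\big(1+o(1)\big),\quad x\to\infty,
\end{equation*}
and multiplying by $w_m(x,k)=\e^{-kx}(1+o(1))$ yields
\begin{equation*}
g(x)=\lambda(x)w_m(x,k)=-\frac{W}{2k}\e^{kx}\big(1+o(1)\big),
\end{equation*}
which is \eqref{sert} with $C=-W/(2k)\neq0$.

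The main obstacle is making the asymptotic evaluation of the integral rigorous when $\Re(k)=0$, i.e.\ when $k$ is purely imaginary and $\e^{2ky}$ merely oscillates rather than grows. In that regime one cannot simply say the integrand is dominated near the endpoint, so I would instead argue directly on $g(x)=Cw_m(x,k)+o(\e^{\Re(k)x})$ by a different route: write $g$ as a linear combination of $w_m(\cdot,k)$ and a second independent solution $\tilde w$ behaving like $\e^{+kx}$, the latter constructed by the same backward-Green's-operator technique applied to $\e^{kx}$ (or, when $\Re(k)=0$, by noting $\overline{w_m(\cdot,\bar k)}$ or $w_m(\cdot,-k)$ supplies an independent solution $\sim\e^{kx}$). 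Since $g$ is not proportional to $w_m(\cdot,k)$, its component along $\tilde w$ is a nonzero constant $C$, and the error term $o(\e^{\Re(k)x})$ collects both the subleading part of $\tilde w$ and the full contribution of $w_m(\cdot,k)$ (which is $O(\e^{-\Re(k)x})=O(\e^{\Re(k)x})$). This second formulation sidesteps the delicate integral estimate and handles $\Re(k)=0$ and $\Re(k)>0$ uniformly; I would present it as the primary argument, using the Wronskian computation above only as motivation.
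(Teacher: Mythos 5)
Your opening computation --- the constant Wronskian $W=\Wr(g,w_m(\cdot,k))\neq0$, the ansatz $g=\lambda\,w_m(\cdot,k)$ on a half-line where $w_m(\cdot,k)\neq0$, the relation $\lambda'w_m^2=-W$, and the evaluation of $\int_a^x w_m(y,k)^{-2}\,\d y$ --- is precisely the paper's proof (which runs the argument of Proposition \ref{cor:u_not_L2a} at infinity), and for $\Re(k)>0$ it is already complete: $\int_a^x o(\e^{2\Re(k)y})\,\d y=O(1)+o(\e^{2\Re(k)x})=o(\e^{2\Re(k)x})$, and the leftover term $\lambda(a)w_m(x,k)=O(\e^{-\Re(k)x})$ is absorbed into the error. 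You should have kept this as the main argument. The ``primary argument'' you substitute for it has a genuine gap exactly in the regime $\Re(k)>0$ where \eqref{sert} has content: a second solution $\tilde w\sim\e^{+kx}$ \emph{cannot} be produced by the backward Green's operator under the hypothesis $Q\in\mathscr{L}^{(\infty)}_0$. The fixed-point equation $\tilde w=\tilde w^0-G^0_\leftarrow Q\tilde w$ contains the term $v_m^0(x)\int_x^\infty u_m^0(y)Q(y)\tilde w(y)\,\d y$, and since $u_m^0(y)\tilde w(y)\sim\e^{2ky}$ its convergence requires $\int^\infty \e^{2\Re(k)y}|Q(y)|\,\d y<\infty$, i.e.\ exponential decay of $Q$ (compare the remark following Proposition \ref{prop:reginfty-an}). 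This is exactly why the growing solution is obtained by reduction of order from the Jost solution --- anchoring the second integration at a finite point --- rather than by a second Lippmann--Schwinger equation at infinity.

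Your worry about $\Re(k)=0$ is legitimate, but your alternative does not resolve it and hides an $O$/$o$ slip: the contribution of the $w_m(\cdot,k)$-component of $g$ is $O(\e^{-\Re(k)x})$, which at $\Re(k)=0$ is $O(\e^{\Re(k)x})$ but \emph{not} $o(\e^{\Re(k)x})$, so it cannot be ``collected'' into the error term of \eqref{sert}. Indeed, already for $Q=0$, $m=\tfrac12$ and purely imaginary $k$, the solution $g(x)=\e^{kx}+\e^{-kx}$ is independent of the Jost solution $\e^{-kx}$ but is not of the form $C\e^{kx}+o(1)$; so \eqref{sert}, read literally, can only hold with $O(\e^{\Re(k)x})$ in place of $o(\e^{\Re(k)x})$ on the boundary $\Re(k)=0$. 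The paper's own proof is equally informal at this point (its step $\int_\alpha^x o(\e^{2ky})\,\d y=o(\e^{2kx})$ and the term $\lambda(\alpha)w(x)$ only give $o(x)$, respectively $O(1)$, when $\Re(k)=0$). The honest course is to run your reduction-of-order argument for $\Re(k)>0$ --- which is all that is needed for non-square-integrability and the $L^2$ theory --- and to claim only $g=C\e^{kx}+O(\e^{\Re(k)x})$ when $\Re(k)=0$; the Green's-operator construction of $\tilde w$ should not be presented as the main proof.
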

\begin{proof} Similarly as in the proof of Proposition \ref{cor:u_not_L2a},
assuming that $w\equiv w_{m}( \cdot , k )$ and $W$ are known, we solve the ordinary differential equation
\begin{equation*}
g(x) w'(x) - g'(x) w(x) = W . 
\end{equation*}
The ansatz $g( x ) = \lambda (x) w( x )$
yields
\begin{equation*}
\lambda '(x) w( x )^2  = W .
\end{equation*}
By Proposition \ref{prop:reginfty} and \eqref{eq:equivKbm_infty}, we know that
\begin{equation*}
  w( x ) =  \e^{-  k x }+
o( \e^{-  k x })  , \quad x \to \infty.
\end{equation*}
This implies that there exists $\alpha>0$ such that $w( x ) \neq 0$ for $x \ge \alpha$, and hence 
\begin{align*}
  \lambda (x)-\lambda (\alpha) = \int_\alpha^x \frac{ W }{ w( y )^2 } \mathrm{d} y
  &=  \int_\alpha^x  W\Big(\e^{2ky}+o(\e^{2ky})\Big)
  \mathrm{d} y = \frac{W}{2k}\e^{2kx}+o(\e^{2kx})
  .
\end{align*}
Now
\begin{equation*}
g( x ) = \Big ( \lambda (\alpha) + \int_\alpha^x \frac{ W }{ w( y)^2 } \mathrm{d} y \Big ) w( x ) , 
\end{equation*}
implies the estimate \eqref{sert}.
\end{proof}

\subsection{Global estimates on Jost solutions}

In the sequel we will need an estimate of the Jost solutions constructed in Proposition \ref{prop:reginfty} global in $x$ and $k$, which we prove in this subsection.  

 Since
$w_{m}(\cdot,k)=w_{-m}(\cdot,k)$, we can assume in the next proposition that
$\mathrm{Re}(m)\ge0$.

\begin{proposition}\label{prop:global_estim_Jost}
Let $m\in\C$ be such that $\Re(m)\ge0$. Suppose that $Q \in \mathscr{L}^{(\infty)}_0$.  
We have then the
following estimates on  the solution $w_m(\cdot,k)$ to \eqref{eq:a1},
where  the constant $C$ is  uniform in  $\Re(k)\geq0$, $k\neq0$: for all $x>0$,
\begin{equation*}
| w_m(x,k) | \le 
\left\{ 
\begin{aligned}
&C|k|^{\frac12-\Re(m)} \mu_k (x)^{\frac12-\Re(m)} \eta_{-k}(x) \mathrm{exp}\Big ( C \int_x^\infty \mu_k (y) |Q(y)| \mathrm{d}y \Big ) , \quad m\neq 0; \\
&C|k|^{\frac12} \mu_k (x)^{\frac12} \lambda_k (x) \eta_{-k}(x)  \mathrm{exp}\Big ( C \int_x^\infty \mu_k (y) \lambda_k (y) |Q(y)| \mathrm{d}y \Big ) , \quad m= 0.
\end{aligned}
\right.
\end{equation*}
The same estimates hold for $|\partial_x w_m(x,k) |$, replacing $\mu_k (x)^{\frac12-\Re(m)}$ by $\mu_k (x)^{-\frac12-\Re(m)}$.
\end{proposition}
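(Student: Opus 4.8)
The plan is to run the Neumann series for $w_m(\cdot,k) = (\one+G_\leftarrow^0Q)^{-1}w_m^0(\cdot,k)$ through the term-by-term bounds of Corollary \ref{lemma4_app}, whose constants are already uniform in $k$. By that corollary $\one+G_\leftarrow^0Q$ is invertible on the relevant weighted space, so for a fixed $x>0$ (picking any $a<x$) we have $w_m(\cdot,k) = \sum_{n=0}^\infty (-G_\leftarrow^0Q)^n w_m^0(\cdot,k)$. The first step is to record the size of the unperturbed Jost solution. From $w_m^0 = \sqrt{2k/\pi}\,(2/k)^m v_m^0$ and the bound \eqref{pio2} (resp.\ \eqref{pio3} when $m=0$), using that for $\Re(k)\ge0$ the phase factor $\e^{\Im(m)\arg(k)}$ is bounded (this is where $C$ may depend on $m$), one obtains $|w_m^0(x,k)|\le C|k|^{\frac12-\Re(m)}\mu_k(x)^{\frac12-\Re(m)}\eta_{-k}(x)$, with the extra $\lambda_k(x)$ factor in the case $m=0$. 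In particular the weighted supremum $\sup_{y>x}|w_m^0(y,k)|/(\mu_k(y)^{\frac12-\Re(m)}\eta_{-k}(y))$ is bounded by $C|k|^{\frac12-\Re(m)}$, uniformly in $k$.

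Next I would apply Corollary \ref{lemma4_app}(i) (resp.\ (ii) for $m=0$) to each iterate $(G_\leftarrow^0Q)^n w_m^0$, which gives
\[
\frac{|(G_\leftarrow^0Q)^n w_m^0(x,k)|}{\mu_k(x)^{\frac12-\Re(m)}\eta_{-k}(x)} \le \frac{C^{n+1}}{n!}\Big(\int_x^\infty \mu_k(y)|Q(y)|\,\d y\Big)^n\, C|k|^{\frac12-\Re(m)} .
\]
Summing over $n$ turns the factorial-weighted series into $\exp\!\big(C\int_x^\infty \mu_k(y)|Q(y)|\,\d y\big)$, yielding exactly the claimed bound on $|w_m(x,k)|$ with $C$ independent of $k$, since the constant in Corollary \ref{lemma4_app} is. For $m=0$ the identical computation with $\lambda_k$-weights and part (ii) of the corollary produces the exponential of $C\int_x^\infty \mu_k(y)\lambda_k(y)|Q(y)|\,\d y$.

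For the derivative I would differentiate the integral equation $w_m = w_m^0 - G_\leftarrow^0 Q w_m$. Because $G_\leftrightarrow^0(x,x)=0$ there is no boundary term, and $\partial_x(G_\leftarrow^0 Q w_m)(x) = -\int_x^\infty\big[\partial_x v_m^0(x)\,u_m^0(y) - \partial_x u_m^0(x)\,v_m^0(y)\big]Q(y)w_m(y)\,\d y$. Using the derivative analogues of \eqref{pio1}--\eqref{pio2}, namely $|\partial_x u_m^0(x,k)|\lesssim \mu_k(x)^{-\frac12+\Re(m)}\eta_k(x)$ and $|\partial_x v_m^0(x,k)|\lesssim \mu_k(x)^{-\frac12-\Re(m)}\eta_{-k}(x)$ (which follow from the Bessel recurrence relations applied to $\cI_m,\cK_m$), and inserting the bound on $w_m$ just established, both terms are to be controlled by the prefactor $\mu_k(x)^{-\frac12-\Re(m)}\eta_{-k}(x)$.

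The \textbf{main obstacle} is the second term $\partial_x u_m^0(x)\int_x^\infty v_m^0(y)Q(y)w_m(y)\,\d y$, whose naive prefactor $\mu_k(x)^{-\frac12+\Re(m)}\eta_k(x)$ has the wrong sign in the exponent and a growing exponential $\eta_k$. To recover $\mu_k(x)^{-\frac12-\Re(m)}\eta_{-k}(x)$ one uses the monotonicity $\mu_k(y)\ge\mu_k(x)$, so that $\mu_k(y)^{-2\Re(m)}\le\mu_k(x)^{-2\Re(m)}$ for $\Re(m)\ge0$, together with $\eta_{-k}(y)^2=\e^{-2\Re(k)y}\le\e^{-2\Re(k)x}$, which converts the stray $\eta_k(x)$ into $\eta_{-k}(x)$ at the cost of the integral $\int_x^\infty\mu_k(y)|Q(y)|\,\d y$. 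The latter, weighted by the exponential coming from the bound on $w_m$, is absorbed back into the exponential via the elementary estimate $\int_x^\infty \phi(y)\exp\!\big(C\int_y^\infty\phi(s)\,\d s\big)\,\d y \le C^{-1}\exp\!\big(C\int_x^\infty\phi(s)\,\d s\big)$. The $m=0$ derivative estimate follows identically with the $\lambda_k$ modifications, and the reduction to $\Re(m)\ge0$ is justified by the parity $w_m(\cdot,k)=w_{-m}(\cdot,k)$ from Proposition \ref{prop:reginfty}.
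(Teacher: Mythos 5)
Your proposal is correct and follows essentially the same route as the paper: bound $w_m^0$ globally by $C|k|^{\frac12-\Re(m)}\mu_k^{\frac12-\Re(m)}\eta_{-k}$ (with the $\lambda_k$ factor for $m=0$), then feed the Neumann series for $(\one+G_\leftarrow^0Q)^{-1}w_m^0$ through the $n!$-decaying bounds of Corollary \ref{lemma4_app} and sum to the exponential. Your treatment of $\partial_x w_m$ via one differentiation of the fixed-point equation and absorption of $\int_x^\infty\mu_k|Q|\exp(C\int_y^\infty\mu_k|Q|)\,\d y$ into the exponential merely makes explicit a step the paper's proof leaves implicit, and it is carried out correctly.
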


\begin{proof}
Note that by \eqref{eq:w_m^0} and the estimates recalled in Subsection \ref{subsec:whittaker}, we have
\begin{align*}
&\big | w^0_m(x,k) \big | \lesssim |k|^{\frac12-\Re(m)} \mu_k(x)^{\frac12-\Re(m)} \eta_{-k}(x),\\
  &\big |\partial_x w^0_m(x,k) \big | \lesssim
    |k|^{\frac12-\Re(m)} \mu_k(x)^{-\frac12-\Re(m)} \eta_{-k}(x),  \quad \text{if } m\neq 0, \\
  &\big | w^0_0(x,k) \big | \lesssim |k|^{\frac12} \mu_k(x)^{\frac12}
                                                                                                                               \lambda_k(x) \eta_{-k}(x),\\
&\big |\partial_x w^0_0(x,k) \big | \lesssim |k|^{\frac12}
                                                                                                                                                              \mu_k(x)^{-\frac12} \lambda_k(x) \eta_{-k}(x),  \quad \text{if } m= 0.
\end{align*}
Hence $w^0_m(\cdot,k) \in L^\infty(]0,\infty[,\mu_k ^{\frac12-\Re(m)}\eta_{-k}\big)$ for
$m\neq0$, $w^0_0(\cdot,k) \in
L^\infty(]0,\infty[,\mu_k ^{\frac12}\lambda_k \eta_{-k}\big)$ for $m=0$,
and therefore since
$
w_m(\cdot,k) = (\one+G_\leftarrow^0Q)^{-1} w^0_m(\cdot,k) ,$
we can apply Corollary \ref{lemma4_app}.
\end{proof}

\subsection{Asymptotics of Jost solutions near \texorpdfstring{$0$}{Lg}}

In the next section we will need the asymptotics near $0$ of the Jost
solution $w=w_{m}(\cdot,k)$. It is given by the following proposition.

\begin{proposition}\label{prop:easy_case}
  Let $\Re(k)\geq0$, $k\neq0$. Suppose that 
$
Q \in \mathscr{L}^{(\infty)}_0 .
$
    \begin{enumerate}[label=(\roman*)]
  \item
 If $m\neq 0$, $0\leq\varepsilon<2\Re(m)$ and $Q \in \mathscr{L}^{(0)}_\varepsilon$, then as $x\to0$,
\begin{align}
w_m(x) &=w_m^0(x) + \langle u_m^0 |Qw_m \rangle v_m^0(x) + o( x^{\frac12-\mathrm{Re}(m)+\varepsilon} ) , \label{eq:aaab-1_3} \\
\partial_x w_m(x)&  =\partial_x w_m^0(x)
+ \langle u_m^0 |Qw_m \rangle\partial_x v_m^0(x) + o( x^{-\frac12-\mathrm{Re}(m)+\varepsilon} ) . \label{eq:aaab0_3}
\end{align}
\item If $m\neq0$, $0\leq2\mathrm{Re}(m) \le \varepsilon$,
$Q \in \mathscr{L}^{(0)}_\varepsilon $,
 then as $x\to0$,
\begin{align}
w_m(x) &=w_m^0(x) + \langle u_m^0 |Qw_m \rangle v_m^0(x) - \langle v_m^0 |Qw_m \rangle u_m^0(x) + o( x^{\frac12-\mathrm{Re}(m)+\varepsilon} ) , \label{eq:aaab} \\
\partial_x w_m(x)& = \partial_x w_m^0(x) + \langle u_m^0 |Qw_m \rangle \partial_x v_m^0(x) -  \langle v_m^0 |Qw_m \rangle \partial_x u_m^0(x)
 + o( x^{-\frac12-\mathrm{Re}(m)+\varepsilon} ). \label{eq:aaab2}
\end{align}
 \item If $0\leq\varepsilon 
$, $Q \in \mathscr{L}^{(0)}_{\varepsilon,\mathrm{ln}^2} $,
 and $m=0$,
then as $x\to0$,
\begin{align}
w_0(x) &=w_0^0(x) + \langle u_0^0 |Qw_0 \rangle v_0^0(x) - \langle v_0^0 |Qw_0 \rangle u_0^0(x) + o( x^{\frac12+\varepsilon}  ) , \label{eq:aaabc} \\
\partial_x w_0(x)& = \partial_x w_0^0(x) + \langle u_0^0 |Qw_0
                 \rangle \partial_x v_0^0(x) -  \langle v_0^0
                 |Qw_0 \rangle \partial_x u_0^0(x) + o( x^{-\frac12+\varepsilon}
                 ). \label{eq:aaabc2}
\end{align}
\end{enumerate}
\end{proposition}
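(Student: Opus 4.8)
The plan is to start from the fixed-point identity $w_m = w_m^0 - G_\leftarrow^0 Q w_m$ provided by Proposition \ref{prop:reginfty}, and to make the kernel of the backward Green's operator explicit. Since $G_\leftarrow^0(x,y) = -\theta(y-x)\big(v_m^0(x)u_m^0(y) - u_m^0(x)v_m^0(y)\big)$, this gives
\begin{equation*}
w_m(x) = w_m^0(x) + v_m^0(x)\int_x^\infty u_m^0 Q w_m\, \mathrm{d}y - u_m^0(x)\int_x^\infty v_m^0 Q w_m\, \mathrm{d}y .
\end{equation*}
Differentiating in $x$, the two boundary contributions $v_m^0(x)A'(x)$ and $-u_m^0(x)B'(x)$ (with $A,B$ the two integrals) cancel exactly because of the Wronskian relation $\Wr(v_m^0,u_m^0)=1$, so $\partial_x w_m$ satisfies the very same identity with $v_m^0,u_m^0$ replaced by $\partial_x v_m^0,\partial_x u_m^0$. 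Hence the derivative estimates \eqref{eq:aaab0_3}, \eqref{eq:aaab2}, \eqref{eq:aaabc2} follow verbatim from the function estimates once the bounds on $\partial_x u_m^0,\partial_x v_m^0$ (which lose one power of $x$) are used in place of \eqref{pio1}--\eqref{pio4}; I would therefore treat only $w_m$ in detail.

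Next I would record the a priori decay of $w_m$ near $0$, namely $w_m(x)=\mathcal{O}(x^{\frac12-\Re(m)})$ for $m\neq0$ and $\mathcal{O}(x^{\frac12}(1+|\mathrm{ln}(x)|))$ for $m=0$, from Proposition \ref{cor:u_not_L2a}, together with the global control of $w_m$ near $\infty$ from Proposition \ref{prop:global_estim_Jost}. Combined with $Q\in\mathscr{L}^{(\infty)}_0$ and the hypothesis near $0$, these guarantee that in all three cases $\int_x^\infty u_m^0 Q w_m\,\mathrm{d}y$ converges as $x\to0$ to the finite number $\langle u_m^0 |Q w_m\rangle$, and that its tail obeys $\int_0^x u_m^0 Q w_m\,\mathrm{d}y = o(x^\varepsilon)$, since $|u_m^0 w_m|\lesssim y\,(\cdots)$ near $0$ and $y\le x^\varepsilon y^{1-\varepsilon}$ on $]0,x[$. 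Writing $\int_x^\infty u_m^0 Q w_m = \langle u_m^0 |Q w_m\rangle - \int_0^x u_m^0 Q w_m$ then extracts the announced leading term $\langle u_m^0 |Q w_m\rangle v_m^0(x)$.

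The three cases diverge only in the treatment of $\int_x^\infty v_m^0 Q w_m$. In cases (ii) and (iii) the product $v_m^0 w_m$ is integrable near $0$ (this is precisely what the hypotheses $2\Re(m)\le\varepsilon$, resp. $Q\in\mathscr{L}^{(0)}_{\varepsilon,\mathrm{ln}^2}$, secure), so $\langle v_m^0 |Q w_m\rangle$ is finite and may be split off in the same way; what then remains is exactly $-G_\to^0 Q w_m$, which I would bound by $o(x^{\frac12-\Re(m)+\varepsilon})$ via Lemma \ref{lemma1}(i) with $\varepsilon_1=\frac12-\Re(m)$ (resp. by $o(x^{\frac12+\varepsilon})$ via Lemma \ref{lemma1}(ii) with $\varepsilon_1=\frac12$, $\alpha=1$, $\beta=2$). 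In case (i), where $\varepsilon<2\Re(m)$, the quantity $\langle v_m^0 |Q w_m\rangle$ may diverge, so I would keep $\int_x^\infty v_m^0 Q w_m$ intact and estimate the single product $u_m^0(x)\int_x^\infty v_m^0 Q w_m$ directly, together with the smaller term $v_m^0(x)\int_0^x u_m^0 Q w_m$ (which is handled as the tail estimate above).

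The main obstacle is this last estimate. Because $\varepsilon<2\Re(m)$, the tail $\int_x^\infty v_m^0 Q w_m$ genuinely blows up as $x\to0$, growing at most like $x^{\varepsilon-2\Re(m)}$, so a crude bound only yields $O(x^{\frac12-\Re(m)+\varepsilon})$ whereas the statement requires little-$o$. I would resolve this with Lemma \ref{lm:integr} (the same device used in the proof of Lemma \ref{lemmaG0}): splitting $\int_x^\infty=\int_x^b+\int_b^\infty$ and choosing $b$ so that $\int_0^b y^{1-\varepsilon}|Q|\,\mathrm{d}y$ is as small as one wishes, the monotonicity bound $y^{\varepsilon-2\Re(m)}\le x^{\varepsilon-2\Re(m)}$ on $]x,b[$ turns the near-$0$ part into $x^{\varepsilon-2\Re(m)}o(1)$ and the fixed tail into $O(1)$; multiplying by $|u_m^0(x)|\lesssim x^{\frac12+\Re(m)}$ and using $2\Re(m)>\varepsilon$ then gives $o(x^{\frac12-\Re(m)+\varepsilon})$. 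Uniqueness and the passage from $]a,\infty[$ to $]0,\infty[$ are inherited from Proposition \ref{prop:reginfty}.
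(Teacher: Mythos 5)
Your proposal is correct and follows essentially the same route as the paper: both start from $w_m=w_m^0-G_\leftarrow^0Qw_m$, use the a priori bounds from Propositions \ref{cor:u_not_L2a} and \ref{prop:global_estim_Jost}, split off $\langle u_m^0|Qw_m\rangle$ (and, when integrable, $\langle v_m^0|Qw_m\rangle$), and control the remainder $G_\to^0Qw_m$ by Lemma \ref{lemma1}. The only cosmetic difference is that in case (i) you redo the $\int_x^\infty v_m^0Qw_m$ estimate by hand via Lemma \ref{lm:integr}, whereas the paper packages exactly that argument as the ``moreover'' part of Lemma \ref{lemmaG0}(i) applied to $G_{\bowtie}^{0(a)}Qw_m$ with $\varepsilon_1=\frac12-\Re(m)$.
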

\begin{proof}
We only prove the statements for $w_m$. The statements for $\partial_xw_m$ are proven in the same way.
We omit the index $m$ and the argument $k$ in this proof. Recall  \eqref{podo2}:
  \begin{align*}
    w=w^0-G_\leftarrow^0Qw.
  \end{align*}
Suppose first that $m\neq 0$. By Proposition \ref{prop:global_estim_Jost}, we have\begin{align*}
 w(x)&=\mathcal{O}(x^{\frac12-\Re(m)}), \quad \partial_x
       w(x)=\mathcal{O}(x^{-\frac12-\Re(m)}), \quad  x\to0;\\
 w(x)&=\mathcal{O}(\e^{-\Re(k)x})  \quad  \partial_xw(x)=\mathcal{O}(\e^{-\Re(k)x}) , \quad  x\to\infty.
 \end{align*} 
  
 First assume that $2\mathrm{Re}(m) \le  \varepsilon $. Then, since  $Q \in \mathscr{L}^{(\infty)}_0 \cap
 \mathscr{L}^{(0)}_\varepsilon$, it follows from Proposition \ref{prop:global_estim_Jost} that $G_\leftrightarrow^0Qw$ are well-defined. Thus we can write
  \begin{align*}
   G_\leftarrow^0Qw=G_\leftrightarrow^0Qw
    +G_\rightarrow^0Qw.
  \end{align*}
  We can use Lemma \ref{lemma1}(i) with $\varepsilon_1=\frac12-\Re(m)$,
  which gives
\begin{align*} G_\rightarrow^0 Qw 
&=o (x^{\frac12-\Re(m)+\varepsilon}). 
  \end{align*} 
  This proves \eqref{eq:aaab}.

Suppose now that $0\leq\varepsilon<2\Re(m)$. Then
$\langle u^0 |Qw \rangle$ is still well-defined but $\langle v^0 |Qw
\rangle$ is not any more. For $a>0$ and $0<x<a$, we write
    \begin{align*}
    G_\leftarrow^0Qw(x)
&=- \langle u^0 | Qw \rangle  v^0(x) -G_{\bowtie}^{0(a)}Qw(x) + \mathcal{O}( x^{\frac12+\Re(m)} ).
\end{align*}
Applying Lemma \ref{lemmaG0}(i) with $\varepsilon_1=\frac12-\Re(m)$, we obtain
\begin{align*} G_{\bowtie}^{0(a)} Qw 
&=o (x^{\frac12-\Re(m)+\varepsilon}),
\end{align*}
which establishes 
\eqref{eq:aaab-1_3}.

The proof in the case $m=0$ is similar, using Lemma \ref{lemmaG0}(i) with $\varepsilon_1=\frac12$, $\alpha=1$, $\beta=2$.
 \end{proof}
\subsection{Summary of distinguished solutions}\label{subsec:summaryinfty}
In this subsection we recall the distinguished
solutions of the perturbed eigenequation with a prescribed behavior
near infinity constructed in this section. They are described in the
following table, which has the same structure as that of Subsection \ref{subsec:summary0}.

\medskip

\renewcommand{\arraystretch}{1.7}
\begin{center}
\begin{tabular}{|Sc|Sc|Sc|Sc|}
  \hline
  Solution & Parameters & Conditions on $Q$ & Green's operator \\
  \hline
  \begin{tabular}{c}$w_m(\cdot,k)$\\
    $v_m(\cdot,k)$\end{tabular}
    & $m\in\C$, $\Re(k)\ge0$, $k\neq0$ & $Q\in\mathscr{L}^{(\infty)}_0$ & Backward  $G^0_\leftarrow$\\
  \hline
  \multirow{2}{*}{$q_{-m}(\cdot)$} & \multirow{2}{*}{$\Re(m)\ge\frac{1-\delta}{2}$, $k=0$} & $Q\in\mathscr{L}^{(\infty)}_{\delta}$, $\delta\ge1$, if $m\neq0$ & \multirow{2}{*}{Backward  $G^0_\leftarrow$}\\
  & & $Q\in\mathscr{L}^{(\infty)}_{1,\mathrm{ln}}$, if $m=0$ &   \\
  \hline
  $q_{0,\mathrm{ln}}(\cdot)$ & $m=0$, $k=0$ & $Q\in\mathscr{L}^{(\infty)}_{1,\mathrm{ln}^2}$ & Backward  $G^0_\leftarrow$\\
  \hline
\end{tabular}
\smallskip
\captionof{table}{\textit{Distinguished solutions of the perturbed eigenequation with a prescribed behavior near $\infty$}.}
\end{center}

\section{Wronskians}
In this section we study the wronskians of distinguished solutions constructed in the previous two sections.
\subsection{The Jost function}

Suppose that $Q \in \mathscr{L}^{(\infty)}_0 $ and $Q \in \mathscr{L}^{(0)}_\varepsilon$ with $\varepsilon\ge\max\big(0,-2\Re(m)\big)$ if $m \neq 0$, $Q \in \mathscr{L}^{(0)}_{0,\mathrm{ln}}$ if $m=0$. Let $\Re(k)\geq0$, $k\neq0$.
Then, by Propositions \ref{prop:reg1h} and \ref{prop:reginfty}, 
both $ u_{m}( \cdot , k )$ and $ v_{m}( \cdot , k )$ are well defined.
Their  Wronskian
\begin{equation}
  \Wr_{m}(k) :=
  \Wr \big ( v_{m}( \cdot , k ) , u_{m}( \cdot , k ) \big ) 
  ,\label{jost}
\end{equation}
will be called the {\em Jost function}. Assuming in addition that $Q \in \mathscr{L}^{(\infty)}_\delta$ for some $\delta\ge1$, we also set for $k=0$ and $\Re(m)\ge\frac12(1-\delta)$, $m\neq 0$, 
\begin{equation}
  \Wr_{m}(0) :=
\frac12 \Gamma(m)  \Wr \big ( q_{-m}( \cdot ) , u_{m}( \cdot , 0 ) \big ) 
  .\label{jost_k=0}
\end{equation}

Using the regularity properties of $u_m(x,k)$ and $v_m(x,k)$, we obtain the regularity of the map $(m,k)\mapsto \Wr_m(k)$ on suitable domains.

 \begin{proposition}\label{prop:analyticiy_Jost}$ $
   \begin{enumerate}[label=(\roman*)]
   \item
If
 $
Q \in \mathscr{L}^{(0)}_\varepsilon\cap 
 \mathscr{L}^{(\infty)}_0
$
 for some $\varepsilon>0$,
 then  the map $(m,k)\mapsto
   \Wr_{m}(k)$ is regular on
 \begin{align}
  \Big\{\Re(m)\geq-\frac\varepsilon2 \Big \} \times \Big \{
   \Re(k)\geq0,\, k\neq0\Big\}.
 \end{align}
\item
If
$Q \in \mathscr{L}^{(0)}_0\cap  \mathscr{L}^{(\infty)}_0,
$
  then the map $(m,k)\mapsto
   \Wr_{m}(k)$ is regular on
 \begin{align}
  \Big\{\Re(m)\geq0,\ m\neq0  \Big \} \times \Big \{ \Re(k)\geq0,\, k \neq 0\Big\}.
  \label{porto1ka} \end{align}
 \item  If 
$
Q \in \mathscr{L}^{(0)}_{0,\mathrm{ln}} \cap \mathscr{L}^{(\infty)}_{1}$,
 then the map $(m,k)\mapsto
   \Wr_{m}(k)$ is regular on
 \begin{align}  \label{porto1ka_bis} 
  \Big\{\Re(m)\geq0 \Big\} \times \Big \{\Re(k)\geq0\Big\} \setminus \big\{ m=0 \big \} \times \big \{ k = 0 \big \} .
  \end{align}
   \end{enumerate}
   \end{proposition}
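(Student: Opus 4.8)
The plan is to reduce the regularity of the Jost function $\Wr_m(k)$ to the regularity of the solutions $u_m(\cdot,k)$ and $v_m(\cdot,k)$ (or $q_{-m}$ at $k=0$), which has already been established in Propositions \ref{prop:anal_um2}, \ref{prop:reginfty-an} and \ref{prop:anal_qm}. The key observation is that the Wronskian $\Wr(f,g;x)=f(x)g'(x)-f'(x)g(x)$ is constant in $x$ when $f,g$ solve $(L_{m^2}+k^2)u=0$, so for any fixed $x_0>0$ we may simply write
\begin{equation*}
\Wr_m(k)=v_m(x_0,k)\partial_x u_m(x_0,k)-\partial_x v_m(x_0,k)\,u_m(x_0,k),
\end{equation*}
and likewise $\Wr_m(0)=\tfrac12\Gamma(m)\big(q_{-m}(x_0)\partial_x u_m(x_0,0)-\partial_x q_{-m}(x_0)u_m(x_0,0)\big)$. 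Regularity (joint continuity plus separate analyticity in $m$ and in $k$) is preserved under products and sums, and $m\mapsto\Gamma(m)$ is analytic on the relevant half-planes, so the statement will follow once each factor is shown to be regular on the asserted domain.

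First I would verify, in each of the three cases, that the hypotheses on $Q$ are exactly what is needed to invoke the regularity results for both factors simultaneously. In case (i), the assumption $Q\in\mathscr{L}^{(0)}_\varepsilon$ with $\varepsilon>0$ gives, by Proposition \ref{prop:anal_um2}(i), that $(m,k)\mapsto u_m(x_0,k),\partial_x u_m(x_0,k)$ is regular on $\{\Re(m)\ge-\tfrac\varepsilon2\}\times\C$; while $Q\in\mathscr{L}^{(\infty)}_0$ gives, by Proposition \ref{prop:reginfty-an}, regularity of $(m,k)\mapsto v_m(x_0,k),\partial_x v_m(x_0,k)$ on $\{\Re(k)\ge0,k\neq0\}$ (recall $v_m=\sqrt{\tfrac{\pi}{2k}}(k/2)^m w_m$, and the prefactor $\sqrt{\pi/(2k)}(k/2)^m$ is regular for $k\neq0$). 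The intersection of the two parameter ranges is precisely the stated domain, and the product/sum of regular functions is regular there. Case (ii) is identical with Proposition \ref{prop:anal_um2}(ii) supplying the weaker $m\neq0$, $\Re(m)\ge0$ range under only $Q\in\mathscr{L}^{(0)}_0$.

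For case (iii) the additional subtlety is the point $k=0$. On $\{\Re(k)\ge0,k\neq0\}$ the argument is as above using $Q\in\mathscr{L}^{(0)}_{0,\ln}$ (Proposition \ref{prop:anal_um2}(ii), including $m=0$) and $Q\in\mathscr{L}^{(\infty)}_1\subset\mathscr{L}^{(\infty)}_0$. At $k=0$ with $m\neq0$ one uses instead the definition \eqref{jost_k=0} through $q_{-m}$: the factor $q_{-m}(x_0),\partial_x q_{-m}(x_0)$ is regular in $m$ on $\{\Re(m)\ge0,m\neq0\}$ by Proposition \ref{prop:anal_qm}(i) (which requires $Q\in\mathscr{L}^{(\infty)}_1$), and $u_m(x_0,0),\partial_x u_m(x_0,0)$ is regular by Proposition \ref{prop:anal_um2}(ii). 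Since the set \eqref{porto1ka_bis} excludes exactly the single point-type locus $\{m=0\}\times\{k=0\}$, it suffices to check regularity on the open interior and separate analyticity along the two coordinate slices through each admissible base point; the slice $k\mapsto\Wr_m(k)$ at fixed $m\neq0$ must be shown analytic \emph{down to and including} $k=0$.

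The main obstacle I anticipate is precisely this matching at $k=0$ in case (iii): one must confirm that the two definitions \eqref{jost} and \eqref{jost_k=0} agree in the limit $k\to0$, i.e. that $\lim_{k\to0}\Wr_m(k)=\tfrac12\Gamma(m)\Wr(q_{-m},u_m(\cdot,0))$, so that the piecewise definition glues into a single analytic function of $k$ near $0$. This should follow from the limit \eqref{uu2bis}, namely $\lim_{k\to0}v_m(x,k)=\tfrac12\Gamma(m)q_{-m}(x)$, together with the corresponding convergence of $\partial_x v_m(x_0,k)$ and the continuity of $u_m(x_0,k)$ in $k$ down to $k=0$ from Proposition \ref{prop:anal_um2}; evaluating the constant-in-$x$ Wronskian at the fixed point $x_0$ then passes the limit through. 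Once this gluing is verified, separate analyticity in $k$ (including $k=0$) and in $m$ on the punctured domain is immediate from the factorwise regularity, and Hartogs' theorem is not needed since the domain is not open. I would present the three cases in parallel, treating the $k=0$ gluing in case (iii) as the only point requiring genuine care.
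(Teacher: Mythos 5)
Your proposal is correct and follows essentially the same route as the paper: the authors likewise reduce the regularity of $\Wr_m(k)$ to the regularity of $u_m(x,k)$, $w_m(x,k)$, $q_{-m}(x)$ and their derivatives via Propositions \ref{prop:anal_um2}, \ref{prop:reginfty-an} and \ref{prop:anal_qm}, and handle continuity at $k=0$ in case (iii) using the limit \eqref{uu2bis}. Your write-up simply makes explicit the evaluation of the constant-in-$x$ Wronskian at a fixed $x_0$ and the gluing of the two definitions \eqref{jost} and \eqref{jost_k=0}, which the paper leaves implicit.
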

\begin{proof}
Analyticity and continuity are consequences of the analyticity and
continuity of the maps $u_{m}( x , k ) $, $w_{m}( x , k )$, $q_m(x)$ and
their derivatives (see Propositions \ref{prop:anal_um2}, \ref{prop:reginfty-an} and \ref{prop:anal_qm}).

Continuity at $k=0$ in \eqref{porto1ka_bis} uses in addition the limit \eqref{uu2bis}.
\end{proof}

The next proposition gives a convenient representation of the Jost function.
\begin{proposition}\label{prop:jost_funct1}
   Suppose that
$
Q \in \mathscr{L}^{(\infty)}_0
$. Let $m\in\mathbb{C}$. Suppose that $Q \in \mathscr{L}^{(0)}_\varepsilon$ with $\varepsilon=\max(0,-2\Re(m))$ if  $m\neq0$, or
 $Q \in \mathscr{L}^{(0)}_{0,\mathrm{ln}^2}$ if $m=0$. 
Then for $\Re(k)\geq0$, $k\neq0$
\begin{align}
  \Wr_{m}(k) =
  1 
  +
  \langle u_{m}^0(\cdot,k) | Q v_{m}(\cdot,k) \rangle. 
  \label{kzl}
\end{align}
\end{proposition}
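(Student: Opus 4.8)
The plan is to compute the constant quantity $\Wr_{m}(k)=\Wr\big(v_m(\cdot,k),u_m(\cdot,k)\big)$ by letting $x\to0$, after rewriting the Lippmann--Schwinger equation for $v_m$ so as to expose the coefficients of the two unperturbed solutions near the origin. I suppress the arguments $(\cdot,k)$ throughout.

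First I would turn the defining integral equation into a pointwise identity. By Proposition \ref{prop:reginfty} we have $w_m=w_m^0-G_\leftarrow^0Qw_m$; multiplying by $\sqrt{\pi/2k}(k/2)^m$ and using linearity gives $v_m=v_m^0-G_\leftarrow^0Qv_m$. Expanding the kernel $G_\leftarrow^0(x,y)=-\theta(y-x)\big(v_m^0(x)u_m^0(y)-u_m^0(x)v_m^0(y)\big)$ yields the exact representation
\[
v_m(x)=A(x)\,v_m^0(x)-B(x)\,u_m^0(x),\qquad A(x)=1+\int_x^\infty u_m^0Qv_m,\quad B(x)=\int_x^\infty v_m^0Qv_m ,
\]
valid on $]0,\infty[$; both integrals converge since $u_m^0v_m$ is bounded and $v_m^0v_m$ decays exponentially at $\infty$ while $Q\in\mathscr{L}^{(\infty)}_0$. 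Because $A'=-u_m^0Qv_m$ and $B'=-v_m^0Qv_m$ satisfy $A'v_m^0-B'u_m^0=0$, differentiation gives $v_m'=A(v_m^0)'-B(u_m^0)'$, whence the exact Wronskian identity
\[
\Wr(v_m,u_m;x)=A(x)\,\Wr(v_m^0,u_m;x)-B(x)\,\Wr(u_m^0,u_m;x).
\]
The left-hand side is the constant $\Wr_m(k)$, so it remains to pass to the limit $x\to0$ on the right.

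I would then establish three limits. For $A$, the convergence $A(x)\to1+\langle u_m^0\,|\,Qv_m\rangle$ is just integrability near $0$, which holds because $u_m^0v_m=\bigO{x^{1+\Re(m)-|\Re(m)|}}$ and $\varepsilon=\max(0,-2\Re(m))$ (respectively $Q\in\mathscr{L}^{(0)}_{0,\mathrm{ln}^2}$ when $m=0$). For the middle factor, $\Wr(v_m^0,u_m;x)\to\Wr(v_m^0,u_m^0)=1$: writing it as $\Wr(v_m^0,u_m^0;x)+\Wr(v_m^0,u_m-u_m^0;x)$ and using $u_m-u_m^0=\smallO{x^{\frac12+\Re(m)+\varepsilon}}$ with the matching derivative bound from Proposition \ref{prop:reg1h}, together with $|v_m^0|\lesssim x^{\frac12-|\Re(m)|}$, shows the correction is $\smallO{x^{\Re(m)-|\Re(m)|+\varepsilon}}=\smallO{x^0}$.

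The main obstacle is the last term $B(x)\Wr(u_m^0,u_m;x)$, since $B(x)$ may diverge as $x\to0$ when $\Re(m)>0$. Here I would balance the two factors: $\Wr(u_m^0,u_m;x)=\Wr(u_m^0,u_m-u_m^0;x)=\smallO{x^{2\Re(m)+\varepsilon}}$ by the asymptotics of Proposition \ref{prop:reg1h}, while the global estimate $v_m=\bigO{x^{\frac12-|\Re(m)|}}$ near $0$ from Proposition \ref{prop:global_estim_Jost} gives the integrand $v_m^0Qv_m=\bigO{x^{1-2|\Re(m)|}|Q|}$, and the condition $Q\in\mathscr{L}^{(0)}_\varepsilon$ forces $B(x)=\bigO{x^{-2\Re(m)-\varepsilon}}$. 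The product is therefore $\smallO{x^0}$ and vanishes. Combining the three limits gives $\Wr_m(k)=1+\langle u_m^0(\cdot,k)\,|\,Qv_m(\cdot,k)\rangle$, which is \eqref{kzl}. The case $m=0$ is handled identically, replacing the powers $x^{\pm\Re(m)}$ by logarithmic weights $\mu_k^{\frac12}\lambda_k$ and invoking the $m=0$ versions of the cited propositions, which is why the stronger hypothesis $Q\in\mathscr{L}^{(0)}_{0,\mathrm{ln}^2}$ is imposed there.
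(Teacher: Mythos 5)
Your argument is correct and follows essentially the same route as the paper: both compute the constant $\Wr(v_m,u_m;x)$ in the limit $x\to0$ by decomposing $v_m$ near the origin into its components along $v_m^0$ and $u_m^0$. The only difference is packaging: you derive the exact variation-of-parameters identity $v_m=A\,v_m^0-B\,u_m^0$ directly from the Lippmann--Schwinger equation and then balance $B(x)$ against $\Wr(u_m^0,u_m;x)$, whereas the paper invokes its Proposition \ref{prop:easy_case} (whose proof is exactly this decomposition) to read off the coefficient of $v_m^0$ and absorb the rest into an $o(x^{\frac12-|\Re(m)|})$ error. Your version is slightly more self-contained and handles $\Re(m)<0$ somewhat more transparently than the paper's appeal to Proposition \ref{prop:easy_case}(i) ``with $\varepsilon=0$''.

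One point in the case $m=0$ should be made explicit rather than dismissed as ``identical''. There the middle factor is $\Wr(v_0^0,u_0;x)$ with $v_0^0=\mathcal{O}(x^{\frac12}(1+|\mathrm{ln}(x)|))$, so the bound $u_0-u_0^0=o(x^{\frac12})$ from Proposition \ref{prop:reg1h} only gives a correction of size $o(1+|\mathrm{ln}(x)|)$, which does \emph{not} tend to zero. You need the refined estimate $u_0-u_0^0=o(x^{\frac12}|\mathrm{ln}(x)|^{-1})$ (and the matching derivative bound), which is exactly what Proposition \ref{prop:reg1_0_bis} provides under the hypothesis $Q\in\mathscr{L}^{(0)}_{0,\mathrm{ln}^2}$ — this, together with the boundedness of $B(x)=\int_x^\infty v_0^0Qv_0$, is the real reason the stronger logarithmic condition is imposed at $m=0$. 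With that citation supplied, the proof is complete.
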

\begin{proof}
Consider first $m\neq0$.
  By Proposition \ref{prop:reg1h} with $\varepsilon=\max(0,-2\Re(m))$,  for
  $ x\to0$ we have
  \begin{align}\label{uu1a+a}
    u_m(x,k)&=u^0_m(x,k)+o(x^{\frac12+|\Re(m)|}) 
    \\\label{uu1a+b}
      \partial_x  u_m(x,k)&=\partial_x u^0_m(x,k)+o(x^{-\frac12+|\Re(m)|}).
\end{align}
By Proposition \ref{prop:easy_case}(i) with $\varepsilon=0$ we have
\begin{align}
v_m(x,k) &=v^0_m(x,k) + \langle u^0_m |Qv_m \rangle v^0_m(x,k) + o( x^{\frac12-|\mathrm{Re}(m)|} ) , \label{eq:aaab-1_3_bis} \\
\partial_x v_m(x,k)&  =\partial_x v^0_m(x,k)
+ \langle u^0_m |Qv_m \rangle \partial_x v^0_m(x,k) + o( x^{-\frac12-|\mathrm{Re}(m)|} ) . \label{eq:aaab0_3_bis}
\end{align}

Now \eqref{eq:w_m^0}, \eqref{uu1a+a}--\eqref{uu1a+b} and \eqref{eq:aaab-1_3_bis}--\eqref{eq:aaab0_3_bis} yield
  \begin{align*}
\Wr(v_m,u_m;x)
    &=\Wr(v^0_m,u_m;x)\Big(
      1+\langle u^0_m|Qv_m\rangle\Big)
+o(x^{0}).
  \end{align*}
 Moreover it follows from \eqref{uu1a+a} and  \eqref{uu1a+b} that
\begin{align*}
  \lim_{x\to0}\Wr(v_m^0,u_m;x)&=\Wr(v^0_m,u^0_m;x)+o(x^0) = 1 + o(x^0).
  \end{align*}
  Then note that $\Wr(v_m,u_m;x)$
  does not depend on $x$
  and use the definition \eqref{jost} of $\mathscr{W}_m(k)$.

In the case $m=0$,
we use Proposition \ref{prop:easy_case} (iii).
Then we repeat the same arguments, using in addition
\begin{equation*}
\Wr(u^0_0,u_0;x)=\Wr(u^0_0,u^0_0;x)+o(x^0),\quad\Wr(u^0_0,u^0_0;x)=0.
\end{equation*}
This ends the proof of \eqref{kzl}.
\end{proof}
The asymptotic behavior of the Jost function can be deduced from Proposition \ref{prop:jost_funct1} together with the following lemma.

\begin{lemma}\label{prop:jost_funct2}
 Let $m\in\mathbb{C}$. Suppose that $Q \in \mathscr{L}^{(0)}_\varepsilon\cap 
 \mathscr{L}^{(\infty)}_0$ with $\varepsilon=\max(0,-2\Re(m))$ if  $m\neq0$, or
 $Q \in \mathscr{L}^{(0)}_{0,\mathrm{ln}^2}\cap 
 \mathscr{L}^{(\infty)}_0$ if $m=0$. 
Then
\begin{align}
  \langle u_m^0 | Qv_m \rangle = o( |k|^0 )+
 \mathcal{O(}{|k|^{-1+\varepsilon}}), \qquad |k| \to \infty,\quad\Re(k)\geq0. \label{eq:l1}
\end{align}
\end{lemma}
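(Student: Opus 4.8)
The plan is to estimate the integral
$\langle u_m^0(\cdot,k) | Qv_m(\cdot,k)\rangle=\int_0^\infty u_m^0(x,k)\,Q(x)\,v_m(x,k)\,\mathrm{d}x$
directly, by combining pointwise bounds on the integrand with a splitting of $]0,\infty[$ at the point $x=1$. The first ingredient is a product estimate. For $\Re(m)\ge0$ I would combine $|u_m^0(x,k)|\lesssim\mu_k(x)^{\frac12+\Re(m)}\eta_k(x)$ from \eqref{pio1} with the global bound on $v_m$ furnished by Proposition \ref{prop:global_estim_Jost}; since $\eta_k\eta_{-k}\equiv1$ the exponential weights cancel and one obtains, uniformly in $x>0$ and in $\Re(k)\ge0$, $|k|\ge1$,
$$|u_m^0(x,k)\,v_m(x,k)|\lesssim\mu_k(x),\qquad m\neq0,$$
and the same with an extra factor $\lambda_k(x)$ when $m=0$ (using \eqref{pio3}). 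Here the exponential correction $\exp\big(C\int_x^\infty\mu_k|Q|\big)$ from Proposition \ref{prop:global_estim_Jost} is harmless, being uniformly bounded because $Q\in\mathscr{L}^{(0)}_0\cap\mathscr{L}^{(\infty)}_0$ forces $\int_0^\infty\mu_k|Q|<\infty$ uniformly in $k$ (note that for $\Re(m)\ge0$ one has $\varepsilon=0$).

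I would then split the integral as $\int_0^1+\int_1^\infty$. On $]1,\infty[$ we have $\mu_k(x)=|k|^{-1}$ as soon as $|k|\ge1$ (and $\lambda_k\equiv1$ there), so the product is $\lesssim|k|^{-1}$ and, using $Q\in\mathscr{L}^{(\infty)}_0$,
$$\Big|\int_1^\infty u_m^0\,Q\,v_m\,\mathrm{d}x\Big|\lesssim|k|^{-1}\int_1^\infty|Q|=\mathcal{O}(|k|^{-1})\subseteq\mathcal{O}(|k|^{-1+\varepsilon}),$$
since $\varepsilon\ge0$. On $]0,1[$ I would argue by dominated convergence: the integrand is dominated by $\mu_k(x)|Q(x)|$ (resp. $\mu_k(x)\lambda_k(x)|Q(x)|$ for $m=0$), which for each fixed $x$ tends to $0$ as $|k|\to\infty$ and is bounded above, uniformly in $|k|\ge1$, by $x|Q(x)|\in L^1]0,1[$ (resp. by $x(1+|\ln x|)|Q(x)|\in L^1]0,1[$, available since $Q\in\mathscr{L}^{(0)}_{0,\mathrm{ln}^2}\subset\mathscr{L}^{(0)}_{0,\mathrm{ln}}$). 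Hence $\int_0^1 u_m^0\,Q\,v_m\,\mathrm{d}x=o(|k|^0)$, and adding the two pieces yields \eqref{eq:l1}.

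The step I expect to be the main obstacle is making the product estimate uniform in $k$ near $x=0$ when $\Re(m)<0$. In that regime $Q$ is only assumed to lie in $\mathscr{L}^{(0)}_{-2\Re(m)}$, which is strictly weaker than $\mathscr{L}^{(0)}_0$, so $\int_0^\infty\mu_k|Q|$ may diverge and the exponential factor in Proposition \ref{prop:global_estim_Jost} can blow up as $x\to0$; the clean bound $|u_m^0v_m|\lesssim\mu_k$ is then no longer available directly. To get around this I would replace the global Jost estimate on a fixed interval $]0,a[$ by the local description of $v_m$ there, namely the near-$0$ asymptotics of Proposition \ref{prop:easy_case} combined with the compressed two-sided Green's operator bounds of Lemma \ref{lemmaG0}, which require only $Q\in\mathscr{L}^{(0)}_\varepsilon$. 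Writing $v_m=v_{-m}\,(k/2)^{2m}$ (legitimate since $w_m=w_{-m}$) this produces the sharpened pointwise bound $|u_m^0(x,k)\,v_m(x,k)|\lesssim|k|^{-\varepsilon}\mu_k(x)^{1-\varepsilon}$ on $]0,1[$, with $\varepsilon=-2\Re(m)$; the integrand is then dominated by $x^{1-\varepsilon}|Q(x)|\in L^1]0,1[$, the dominated-convergence argument of the previous paragraph applies verbatim, and the near-$0$ contribution is again $o(|k|^0)$.
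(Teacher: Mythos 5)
Your main argument (the first two paragraphs) is correct and follows essentially the same route as the paper: combine the bound \eqref{pio1} on $u_m^0$ with the global Jost estimate of Proposition \ref{prop:global_estim_Jost} to get $|u_m^0(x,k)v_m(x,k)|\lesssim \mu_k(x)^{1-\varepsilon}$ (times $\lambda_k$ when $m=0$), then estimate $\int_0^\infty\mu_k^{1-\varepsilon}|Q|$. The only real difference is the endgame: the paper splits at $1/|k|$ and at $1$ into three pieces and invokes Lemma \ref{lm:integr} for the middle one, whereas you split only at $1$ and dispose of $]0,1[$ by dominated convergence. Your version is slightly cleaner and, because you retain the prefactor $|k|^{-\varepsilon}=|(k/2)^{2m}|$ coming from $v_m=(k/2)^{2m}v_{-m}$, it even yields the marginally sharper tail bound $\mathcal{O}(|k|^{-1})$; both give \eqref{eq:l1}.

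Your third paragraph, however, rests on a misconception. The inclusion between the classes goes the other way: for $\varepsilon>0$ one has $\mathscr{L}^{(0)}_{\varepsilon}\subset\mathscr{L}^{(0)}_0$ (larger $\varepsilon$ is a \emph{stronger} condition, since $x^{1-\varepsilon}\ge x$ on $]0,1]$; see the chain of inclusions in the Hypotheses subsection). Hence for $\Re(m)<0$ the hypothesis $Q\in\mathscr{L}^{(0)}_{-2\Re(m)}$ still gives $\int_0^1 y|Q(y)|\,\d y<\infty$, the exponential in Proposition \ref{prop:global_estim_Jost} is uniformly bounded, and the feared blow-up near $0$ never occurs. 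The detour through Proposition \ref{prop:easy_case} and Lemma \ref{lemmaG0} is therefore unnecessary — and would in any case be delicate, since the asymptotics of Proposition \ref{prop:easy_case} are for fixed $k$ as $x\to0$ and are not claimed to be uniform in $k$. The sharpened bound $|v_m(x,k)|\lesssim|k|^{-\varepsilon}\mu_k(x)^{\frac12+\Re(m)}\eta_{-k}(x)$ that you actually use follows directly from Proposition \ref{prop:global_estim_Jost} applied to $-m$ (which has $\Re(-m)>0$) together with $w_m=w_{-m}$ and $|(k/2)^{2m}|\lesssim|k|^{2\Re(m)}$ for $|\arg(k)|\le\frac\pi2$. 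With that correction, your proof is complete.
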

\begin{proof}
Assume that $m \neq 0$.
We have the estimate
\begin{equation}
  |u^0_m(x,k)|\lesssim \mu_k(x)^{\frac12+\Re(m)}\eta_k(x),
  \end{equation}
uniformly in $k$. Next, in the estimate of Proposition \ref{prop:global_estim_Jost}, we first note that the big exponential on the right hand side
is uniformly bounded for large $k$. Therefore, uniformly for large enough $|k|$,
\begin{equation}
  |v_m(x,k)|\lesssim  \mu_k(x)^{\frac12-|\Re(m)|}\eta_{-k}(x).
  \end{equation}
Hence
\begin{align*}
 & \big | \langle u^0_m | Qv_m \rangle \big | \\
 &=  \Big | \int_0^\infty u^0_m(y) Q(y) v_m( y ) \mathrm{d} y \Big | \\
  &\lesssim   \int_0^\infty \mu_k(x)^{1-\varepsilon}
 | Q(x)|\mathrm{d} x\\
  &\lesssim \Big ( \int_0^{\frac1{|k|}} x^{1-\varepsilon} |Q(x)|\mathrm{d} x+
  |k|^{-1+\varepsilon} \int_{\frac1{|k|}}^1|Q(x)|\mathrm{d} x
  +  |k|^{-1+\varepsilon}\int_1^\infty|Q(x)|\mathrm{d} x\Big).
\end{align*}
  First note that
\begin{align*}
 \int_0^{\frac1{|k|}} x^{1-\varepsilon} |Q(x)|\mathrm{d} x = o( 1 ) \quad \text{ and } \quad  \int_1^\infty|Q(x)|\mathrm{d} x = \mathcal{O}( 1 ).
\end{align*}
Therefore, the first term on the right  is $ \smallO{
  |k|^0}$ and the third is $ \mathcal{O}(|k|^{-1+\varepsilon})$.

If $1>\varepsilon\geq0$, then applying Lemma \ref{lm:integr} with $h(y)=y^{1-\varepsilon}$ and
$x=\frac{1}{|k|}$, we obtain
\begin{align*}
  \int_{\frac1{|k|}}^1|Q(x)|\mathrm{d} x = o(|k|^{1-\varepsilon}).
\end{align*}
So the middle term is $ o(
  |k|^0 )$.

If $\varepsilon\geq1$, then
\begin{align*}
  \int_{\frac1{|k|}}^1|Q(x)|\mathrm{d} x = o(|k|^{1-\varepsilon}).
\end{align*}
So the middle term is $ \mathcal{O}(
  |k|^{-1+\varepsilon} )$.

Finally, if $m=0$, the proof is identical, the only difference being that
\begin{align*}
&  \big | \langle u^0_0 | Qv_0 \rangle \big |  \\
&\lesssim   \Big ( \int_0^{\frac1{|k|}} x ( 1 -  | \mathrm{ln}(|k|x) | ) |Q(x)|\mathrm{d} x+
  |k|^{-1}\int_{\frac1{|k|}}^1|Q(x)|\mathrm{d} x
  +|k|^{-1}\int_1^\infty|Q(x)|\mathrm{d} x\Big).
\end{align*}
\end{proof}

We deduce from the previous lemma that, for $\Re(m)>-1$,
  $\Wr_m$ cannot constantly vanish except on a discrete set. Corollary \ref{cor-asim} implies Proposition \ref{prop:wronsk_intro} from the introduction.

\begin{corollary}
In addition to the assumptions of Proposition \ref{prop:jost_funct1},
   suppose that $\Re(m)>-1$.
Then
\begin{align}\label{asim}
 \lim_{|k|\to\infty} \Wr_{m}(k) = 1,
  \quad \Re(k)\geq0.
\end{align}
Therefore,
\begin{equation*}
 \big \{ k \in \mathbb{C} , \, \mathrm{Re}(k)>0 , \, \Wr_{m}(k) = 0
 \big \}
 \quad\text{ is discrete.}
\end{equation*}\label{cor-asim}
\end{corollary}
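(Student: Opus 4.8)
The plan is to read off the limit directly from the two preceding results. By Proposition \ref{prop:jost_funct1} we have, for $\Re(k)\geq0$, $k\neq0$, the clean representation
\begin{equation*}
\Wr_{m}(k) = 1 + \langle u_{m}^0(\cdot,k) | Q v_{m}(\cdot,k) \rangle,
\end{equation*}
so the entire problem reduces to controlling the off-diagonal pairing $\langle u_{m}^0 | Q v_{m}\rangle$ as $|k|\to\infty$. First I would record that the hypotheses of Proposition \ref{prop:jost_funct1} are precisely those under which $u_m(\cdot,k)$, $v_m(\cdot,k)$, and hence $\Wr_m(k)$, are well defined, and that the additional assumption $\Re(m)>-1$ guarantees $\varepsilon=\max(0,-2\Re(m))<2$.

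The key step is Lemma \ref{prop:jost_funct2}, which gives $\langle u_{m}^0 | Q v_{m}\rangle = o(|k|^0)+\mathcal{O}(|k|^{-1+\varepsilon})$ as $|k|\to\infty$ with $\Re(k)\geq0$ (and its analogue for $m=0$). The $o(|k|^0)$ contribution, coming from the integral over $]0,|k|^{-1}[$ where the $\mathscr{L}^{(0)}_\varepsilon$-mass of $Q$ near $0$ is exhausted, tends to $0$ unconditionally. For the remaining $\mathcal{O}(|k|^{-1+\varepsilon})$ piece I would argue vanishing as follows: when $\Re(m)>-\tfrac12$ one has $\varepsilon<1$, so $|k|^{-1+\varepsilon}\to0$ immediately, while for $-1<\Re(m)\leq-\tfrac12$ the crude global bound on $v_m$ used in the lemma must be replaced by the sharper near-infinity decay supplied by Proposition \ref{prop:global_estim_Jost}, which carries an extra factor $|k|^{-\varepsilon}$ and upgrades the tail contribution to $\mathcal{O}(|k|^{-1})$. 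In either regime the error disappears and \eqref{asim} follows.

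Once $\lim_{|k|\to\infty}\Wr_m(k)=1$ is established, discreteness of the zero set is a soft consequence of holomorphy. By Proposition \ref{prop:analyticiy_Jost} the map $k\mapsto\Wr_m(k)$ is analytic on the connected open set $\{\Re(k)>0\}$. The limit forces $\Wr_m$ to be not identically zero and, more precisely, supplies an $R>0$ with $\Wr_m(k)\neq0$ whenever $|k|>R$, so the zeros cannot accumulate at infinity. The identity theorem then shows that the zeros are isolated inside $\{\Re(k)>0\}$, and combined with their absence for large $|k|$ this yields that $\{k:\Re(k)>0,\ \Wr_m(k)=0\}$ is discrete.

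I expect the genuine obstacle to be the uniform vanishing of the pairing $\langle u_m^0 | Q v_m\rangle$ for $\Re(m)$ close to $-1$: the bare power $|k|^{-1+\varepsilon}$ coming out of Lemma \ref{prop:jost_funct2} is non-negative there, so one must exploit the optimal $k$-dependence of the Jost solution near infinity (rather than its crude global majorant) to recover decay. The reduction to holomorphy and the resulting discreteness argument, by contrast, are entirely routine.
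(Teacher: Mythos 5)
Your proposal follows the same skeleton as the paper's proof: the representation \eqref{kzl} from Proposition \ref{prop:jost_funct1}, the large-$|k|$ asymptotics of $\langle u_m^0|Qv_m\rangle$ from Lemma \ref{prop:jost_funct2}, and Proposition \ref{prop:analyticiy_Jost} for the discreteness of the zero set. Where you diverge is in the treatment of the $\mathcal{O}(|k|^{-1+\varepsilon})$ term, and there your version is more careful than the printed one. The paper's proof consists of the single assertion that for $\Re(m)>-1$ the right-hand side of \eqref{eq:l1} becomes $o(|k|^0)$; taken at face value, with $\varepsilon=\max(0,-2\Re(m))$ this is only immediate when $\varepsilon<1$, i.e.\ $\Re(m)>-\tfrac12$, whereas for $-1<\Re(m)\le-\tfrac12$ one has $1\le\varepsilon<2$ and $|k|^{-1+\varepsilon}$ does not tend to $0$. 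Your repair --- reinstating the $k$-prefactor that the proof of Lemma \ref{prop:jost_funct2} discards, namely $|v_m(x,k)|\lesssim|k|^{2\Re(m)}\mu_k(x)^{\frac12-|\Re(m)|}\eta_{-k}(x)=|k|^{-\varepsilon}\mu_k(x)^{\frac12-|\Re(m)|}\eta_{-k}(x)$ for $\Re(m)<0$, obtained from Proposition \ref{prop:global_estim_Jost} applied to $w_{-m}=w_m$ together with the normalization \eqref{eq:w_m^0} --- is exactly the missing ingredient: it turns the middle and tail contributions into $\mathcal{O}(|k|^{-1})$ and leaves the near-zero contribution as $o(1)$, so the pairing vanishes on the whole range $\Re(m)>-1$. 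The discreteness argument via the identity theorem and the absence of zeros for large $|k|$ matches the paper's (unelaborated) appeal to analyticity. In short: same route, but you have correctly identified and filled the one step that the paper leaves implicit --- or, read literally against the statement of Lemma \ref{prop:jost_funct2}, gets wrong --- in the regime $-1<\Re(m)\le-\tfrac12$.
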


\begin{proof}
Under the condition $\Re(m)>-1$ the right hand side of \eqref{eq:l1} becomes
$ o( |k|^0)$. Hence
\eqref{asim} follows by \eqref{kzl}.

The fact
that $ \{ k \in \mathbb{C} , \, \mathrm{Re}(k)>0 , \, \Wr_{m}(k) = 0
\} $ is discrete is then a consequence of the analyticity of $\Wr_m$
stated in Proposition \ref{prop:analyticiy_Jost}. 
\end{proof}

Note that Corollary \ref{cor-asim} is the second place in our paper
where the condition $\Re(m)>-1$ appears (see also Theorem \ref{thm:k=0}). This condition will play an
important role in Section \ref{closed_operators} about closed realizations.

\subsection{Wronskians -- refined results}

If $u_{-m}$ is ill-defined, we can often use $u_{-m}^{[n]}$
instead. 

\begin{proposition} \label{wronio}
  Let $\Re(k)\geq0$, $k\neq0$.
  Suppose that  $      Q \in \mathscr{L}^{(0)}_\varepsilon \cap 
 \mathscr{L}^{(\infty)}_0$,  $\varepsilon\ge0$.
Let  $n$ be a nonnegative integer,
  $\frac{\varepsilon}{2}(n+1)\geq \Re(m) \geq0$, $m\not\in\mathbb{N}$.
Then
\begin{align}\label{wrono}
  \Wr\big( u_{-m}^{[n]}(\cdot,k),u_m(\cdot,k)\big)&=\frac{2\sin(m\pi)}{\pi}.
 \end{align}
 Hence there exists a constant $C_m^{[n]}(k)$ such that
 \begin{align}\label{wroni-0}
v_m(\cdot,k)=\frac{\Wr_m(k)\pi}{2\sin(m\pi)}u_{-m}^{[n]}(\cdot,k)+C_m^{[n]}(k)u_m(\cdot,k). 
 \end{align}
\end{proposition}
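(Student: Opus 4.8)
The plan is to compute the constant Wronskian $\Wr\big(u_{-m}^{[n]}(\cdot,k),u_m(\cdot,k)\big)$ by evaluating the limit of the pointwise Wronskian as $x\to0$, using the known asymptotic behaviors of the two solutions near the origin. First I would recall that for two solutions of the same equation \eqref{eq:a1} the Wronskian is constant in $x$, so it equals $\Wr(u_{-m}^{[n]},u_m;0):=\lim_{x\to0}\Wr(u_{-m}^{[n]}(\cdot,k),u_m(\cdot,k);x)$. The asymptotics I need are: from Proposition \ref{prop:reg1h}, $u_m(x,k)=u_m^0(x,k)+o(x^{\frac12+\Re(m)})$ with derivative $\partial_xu_m(x,k)=\partial_xu_m^0(x,k)+o(x^{-\frac12+\Re(m)})$; and from Proposition \ref{boundcon} (equations \eqref{soli1}--\eqref{soli2}), $u_{-m}^{[n]}(x,k)=u_{-m}^{0[n]}(x,k)+o(x^{\frac12+\Re(m)})$ with analogous derivative control. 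The crucial observation is that $u_{-m}^{0[n]}(x,k)=u_{-m}^0(x,k)+\text{(terms of order }x^{\frac12-\Re(m)}\text{ or higher that I must track)}$, so the leading behavior of $u_{-m}^{[n]}$ near $0$ is governed by $u_{-m}^0(x,k)\sim x^{\frac12-m}/\Gamma(1-m)$.

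The key computational step is that the Wronskian is insensitive to the lower-order correction terms. Concretely, $\Wr(u_{-m}^{[n]},u_m;x)$ should reduce, in the limit $x\to0$, to $\Wr(u_{-m}^0(\cdot,k),u_m^0(\cdot,k);x)$. The correction terms in $u_{-m}^{[n]}$ beyond $u_{-m}^0$ consist of a multiple of the principal solution $u_m^0$ (from the $G_{\bowtie}^{0(a)}$ terms, cf. Lemma \ref{prop:indep_a}) plus an $o(x^{\frac12+\Re(m)})$ remainder; since $u_m^0$ is a solution, $\Wr(u_m^0,u_m^0)=0$, so the principal part contributes nothing to the constant Wronskian, and the $o(\cdot)$ terms contribute nothing in the limit. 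Hence
\begin{equation*}
\Wr\big(u_{-m}^{[n]}(\cdot,k),u_m(\cdot,k)\big)=\lim_{x\to0}\Wr\big(u_{-m}^0(\cdot,k),u_m^0(\cdot,k);x\big)=\Wr\big(u_{-m}^0(\cdot,k),u_m^0(\cdot,k)\big).
\end{equation*}
This last unperturbed Wronskian is already recorded in \eqref{eq:wronsk_m-m}: $\Wr(u_m^0(\cdot,k),u_{-m}^0(\cdot,k))=-\tfrac{2\sin(\pi m)}{\pi}$, whence by antisymmetry $\Wr(u_{-m}^0,u_m^0)=\tfrac{2\sin(m\pi)}{\pi}$, giving \eqref{wrono}.

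For the second claim \eqref{wroni-0}, since $m\not\in\mathbb{N}$ the condition \eqref{wrono} shows $u_{-m}^{[n]}(\cdot,k)$ and $u_m(\cdot,k)$ are linearly independent, hence form a basis of the two-dimensional solution space $\Ker(L_{m^2}+k^2)$. Therefore $v_m(\cdot,k)$ can be expanded as $v_m=A\,u_{-m}^{[n]}+C_m^{[n]}(k)\,u_m$ for some scalars. To identify $A$ I would take the Wronskian of both sides against $u_m(\cdot,k)$: using bilinearity and $\Wr(u_m,u_m)=0$, this gives $\Wr(v_m,u_m)=A\,\Wr(u_{-m}^{[n]},u_m)=A\cdot\tfrac{2\sin(m\pi)}{\pi}$. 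By the definition \eqref{jost} of the Jost function, $\Wr(v_m,u_m)=\Wr_m(k)$, so $A=\tfrac{\Wr_m(k)\pi}{2\sin(m\pi)}$, yielding \eqref{wroni-0}.

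The main obstacle I anticipate is justifying rigorously that the correction terms in $u_{-m}^{0[n]}$ genuinely reduce to a multiple of $u_m^0$ modulo $o(x^{\frac12+\Re(m)})$ in a way compatible with differentiation, so that the Wronskian limit is unaffected; this requires care because $u_{-m}^{0[n]}$ is a sum of $n+1$ terms built from $(G_{\bowtie}^{0(1)}Q)^j$, and one must verify that each such term, upon differentiation, does not produce a contribution to the Wronskian that survives the limit. This is precisely what Lemma \ref{prop:indep_a} and the asymptotics \eqref{soli1}--\eqref{soli2} are designed to control, so the argument amounts to assembling these estimates carefully rather than introducing new ideas.
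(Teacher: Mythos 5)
Your overall strategy coincides with the paper's: the Wronskian is evaluated in the limit $x\to0$ using the asymptotics of Propositions \ref{prop:reg1h} and \ref{boundcon} to reduce to the unperturbed Wronskian \eqref{eq:wronsk_m-m}, and then \eqref{wroni-0} is obtained by expanding $v_m$ in the basis $\{u_{-m}^{[n]},u_m\}$ and pairing with $u_m$ via the definition \eqref{jost} of the Jost function. The second half of your argument is exactly the paper's and is fine.

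There is, however, one incorrect intermediate claim in the first half. You assert that the correction terms of $u_{-m}^{0[n]}$ beyond $u_{-m}^0$, i.e.\ the terms $h_j:=(-G_{\bowtie}^{0(1)}Q)^j u_{-m}^0$ with $j\ge1$, ``consist of a multiple of $u_m^0$ plus an $o(x^{\frac12+\Re(m)})$ remainder.'' This is false precisely in the regime where $n\ge1$ is needed, namely $\varepsilon<2\Re(m)$: there the integral $\int_x^1 v_m^0\,Q\,h_{j-1}\,\mathrm{d}y$ diverges as $x\to0$, and $h_j$ is genuinely of the intermediate order $x^{\frac12-\Re(m)+j\varepsilon}$ (cf.\ the iteration of Lemma \ref{lemmaG0}(i) in the proof of Proposition \ref{comui}), which is neither $O(x^{\frac12+\Re(m)})$ nor a multiple of $u_m^0$ modulo such errors. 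Lemma \ref{prop:indep_a}, which you invoke, only controls the \emph{difference} of two compressions $(G_{\bowtie}^{0(a)}Q)^j-(G_{\bowtie}^{0(b)}Q)^j$ applied to $u_{-m}^0$, not the terms themselves. The conclusion you want is nevertheless true, but for a different reason: differentiating the explicit formula for $G_{\bowtie}^{0(1)}Q$ (the boundary terms cancel) gives
\begin{equation*}
\Wr\big(G_{\bowtie}^{0(1)}Qh,\,u_m^0;x\big)=\Wr(v_m^0,u_m^0)\int_0^x u_m^0(y)Q(y)h(y)\,\mathrm{d}y=\int_0^x u_m^0\,Q\,h\,\mathrm{d}y\longrightarrow 0,
\end{equation*}
since $u_m^0\,Q\,h_{j-1}=O(y^{1+(j-1)\varepsilon})|Q(y)|$ is integrable near $0$. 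Hence $\lim_{x\to0}\Wr(h_j,u_m^0;x)=0$ for each $j\ge1$, and the remaining cross terms involving the $o(x^{\pm\frac12+\Re(m)})$ errors vanish in the limit exactly as you compute. With this replacement your proof is complete and agrees with the paper's (very terse) argument.
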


\begin{proof}
First we check \eqref{wrono}, which follows from \eqref{eq:wronsk_m-m}, using also Propositions \ref{prop:reg1h} and \ref{boundcon}. Then we write
 \begin{align}
v_m(\cdot,k)=B_m^{[n]} (k)u_{-m}^{[n]}(\cdot,k)+C_m^{[n]} (k)u_m(\cdot,k),
 \end{align}
 and take the Wronskian of both sides with $u_m(\cdot,k)$. This allows us to compute
 $B_m^{[n]} (k)$ and yields \eqref{wroni-0}. 
 \end{proof}

\begin{proposition}\label{rk:wronskian}
  Suppose that the assumptions of Proposition \ref{wronio} are
  satisfied.  Then the map
\begin{equation*}
\Big \{ 0 \le \Re(m) \leq\frac{\varepsilon}{2}(n+1) ,\quad m\neq 0\Big \} \times \Big \{ \Re(k) \ge 0 , \, k
\neq 0 \Big \}
\ni(m,k)\mapsto \Wr ( v_m(\cdot,k) , u_{{-}m}^{[n]}( \cdot ,k ) )
\end{equation*}
is regular.
\end{proposition}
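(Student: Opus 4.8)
The plan is to reduce the claim to the regularity statements already established for the individual solutions, using the fact that the Wronskian of two solutions of $(L_{m^2}+k^2)u=0$ lying in $AC^1]0,\infty[$ is constant in $x$. Under the assumptions of Proposition \ref{wronio} both $v_m(\cdot,k)$ (being proportional to the Jost solution $w_m(\cdot,k)$ from Proposition \ref{prop:reginfty}) and $u_{-m}^{[n]}(\cdot,k)$ (from Proposition \ref{boundcon}) are such solutions, so for any fixed $x_0>0$ one may write
\begin{equation*}
\Wr\big(v_m(\cdot,k),u_{-m}^{[n]}(\cdot,k)\big)=v_m(x_0,k)\,\partial_x u_{-m}^{[n]}(x_0,k)-\partial_x v_m(x_0,k)\,u_{-m}^{[n]}(x_0,k),
\end{equation*}
with the left-hand side independent of $x_0$. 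Thus it suffices to prove that each of the four functions $v_m(x_0,\cdot)$, $\partial_x v_m(x_0,\cdot)$, $u_{-m}^{[n]}(x_0,\cdot)$, $\partial_x u_{-m}^{[n]}(x_0,\cdot)$ is regular on the stated domain, and that regularity is preserved under products and sums.

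First I would treat the two Jost factors. By Proposition \ref{prop:reginfty-an}, the maps $(m,k)\mapsto w_m(x_0,k)$ and $(m,k)\mapsto\partial_x w_m(x_0,k)$ are regular on $\{\Re(k)\geq0,\ k\neq0\}$. Since $v_m(x,k)=\sqrt{\tfrac{\pi}{2k}}\big(\tfrac{k}{2}\big)^m w_m(x,k)$ and the normalization factor $\sqrt{\tfrac{\pi}{2k}}\big(\tfrac{k}{2}\big)^m$ is, on $\{\Re(k)\geq0,\ k\neq0\}$, analytic in $k$ on the interior, entire in $m$, and continuous up to the boundary (using the principal branch of $k^{m-\frac12}$ on $|\arg(k)|\leq\frac\pi2$), this prefactor is itself regular, and it does not depend on $x$. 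Hence $v_m(x_0,\cdot)$ and $\partial_x v_m(x_0,\cdot)$ are products of regular functions, and are therefore regular on $\{\Re(k)\geq0,\ k\neq0\}$. For the non-principal factors, Proposition \ref{prop:anal_um^n} gives directly that $(m,k)\mapsto u_{-m}^{[n]}(x_0,k)$ and $(m,k)\mapsto\partial_x u_{-m}^{[n]}(x_0,k)$ are regular on $\{\frac{\varepsilon}{2}(n+1)\geq\Re(m)\geq0,\ m\neq0\}\times\{\Re(k)\geq0\}$, which contains the domain of the present proposition.

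It then remains to observe that the class of regular functions on a fixed $\Omega\subset\C\times\C$ is closed under addition and multiplication: continuity is plainly preserved, and separate analyticity in $k$ (resp.\ in $m$) on the interior slices is preserved because sums and products of functions analytic in one variable are analytic in that variable. Applying this to the displayed bilinear expression yields regularity of $\Wr(v_m(\cdot,k),u_{-m}^{[n]}(\cdot,k))$ on the intersection of the two domains above, namely $\{0\le\Re(m)\le\frac{\varepsilon}{2}(n+1),\ m\neq0\}\times\{\Re(k)\geq0,\ k\neq0\}$, as claimed. I expect no serious obstacle here; the only points requiring care are verifying that the prefactor $\sqrt{\tfrac{\pi}{2k}}\big(\tfrac{k}{2}\big)^m$ is regular (in particular continuous up to $\Re(k)=0$, $k\neq0$, with the branch cut avoided) and checking that the domains of Propositions \ref{prop:reginfty-an} and \ref{prop:anal_um^n} indeed overlap in the stated region, both of which are routine.
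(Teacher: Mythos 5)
Your proof is correct and follows essentially the same route as the paper, whose entire argument is to invoke Propositions \ref{prop:anal_um^n} and \ref{prop:reginfty-an}; you merely make explicit the steps the paper leaves implicit (constancy of the Wronskian in $x$, regularity of the prefactor $\sqrt{\pi/(2k)}(k/2)^m$, and closure of regular functions under sums and products).
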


\begin{proof}
This follows from Propositions \ref{prop:anal_um^n} and \ref{prop:reginfty-an}.
 \end{proof}

\subsection{Green's functions for  perturbed Bessel operators}

As for every 1-dimensional Schr\"odinger operator, we can define the 
canonical bisolution and various Green's functions for perturbed 
Bessel operators. The solutions that we constructed allow us to
give explicit expressions for these Green's functions.

As usual, when defining Green's operators we will always
assume that $\Re(k)\geq0$ (although sometimes an extension to a larger
domain is possible).  Let
$Q \in \mathscr{L}^{(\infty)}_0 \cap \mathscr{L}^{(0)}_0$ for $m\neq0$,  and 
$
Q \in \mathscr{L}^{(\infty)}_0 \cap \mathscr{L}^{(0)}_{0,\ln}$, for $m=0$.
The canonical bisolution associated with the operator $L_{m^2}+k^2$ is
\begin{equation}
  G_{m^2,\leftrightarrow}(-k^2; x , y ) = 
  \frac{1 }{\Wr_{m}(k)}\big ( v_{m}( x , k ) u_{m}( y , k ) - u_{m}( x , k ) v_{m}( y , k ) \big ),
\label{qrel}\end{equation}
where  $v_m(\cdot,k)$, $u_m(\cdot,k)$ are the solutions to
\eqref{eigen} constructed in Propositions \ref{prop:reg1h} and
\ref{prop:reginfty}, and $\Wr_m(k)$ is the Jost function defined in
\eqref{jost}. The expression \eqref{qrel} is well-defined when $\Wr_m(k)\neq0$ and has a limit at $k=0$. For $m=0$ we can use
\begin{align*}
  G_{0,\leftrightarrow}(-k^2;x,y) 
  &=\frac{1}{\Wr_0(k)}\big(- p_0(x,k)u_0( y,k ) +
                          u_0( x ,k) p_0( y,k )\big).
\end{align*}
 From the canonical bisolution, we can construct in the usual way the \emph{forward Green's
  operator} $G_{m^2,\rightarrow}(-k^2)$ and the \emph{backward Green's operator
$G_{m^2,\leftarrow}(-k^2)$ of $L_{m^2}+k^2$}:
\begin{align}\label{right}
G_{m^2,\rightarrow}(-k^2;x,y)& : = 
\theta(x-y) G^0_{m^2,\leftrightarrow}(-k^2; x , y ),\\\label{left}
G_{m^2,\leftarrow}(-k^2;x,y) & := 
  -\theta(y-x) G^0_{m^2,\leftrightarrow}( -k^2;x , y ).
\end{align}

 Green's operators defined by
specifying boundary conditions at zero and at infinity will be called
{\em two-sided}.
They will often be bounded on  $L^2]0,\infty[$ and
coincide with the resolvents of various closed realizations of 
 $L_{m^2}$. However, they  are of
interest even when they do not define bounded operators and  do
not correspond to closed realizations of 
 $L_{m^2}$.

The most natural two-sided Green's operator corresponds to {\em pure boundary
  conditions}. In the unperturturbed case they are usually called  {\em homogeneous boundary
  conditions}, but in the perturbed case this name seems no
longer appropriate. It can be defined for
$0\leq\varepsilon$,
$Q \in \mathscr{L}^{(\infty)}_0 \cap \mathscr{L}^{(0)}_\varepsilon$,  $m\neq0$,
 $-\frac \varepsilon2\leq\Re(m)$, and 
$
Q \in \mathscr{L}^{(\infty)}_0 \cap
\mathscr{L}^{(0)}_{0,\ln}$, $m=0$. Moreover, if $\Re(m)\leq0$ we neeed to assume
$k\neq0$. Then if  $\Wr_m(k)\neq0$ we set
\begin{equation}    G_{m,\bowtie}( -k^2 ; x , y ) :=
  \frac{1}{ \Wr_{m}(k)    }
\left \{
\begin{array}{lr}
  u_{m}(x,k)v_{m}(y,k)
  , & x < y , \\
 v_{m}(x,k)u_{m}(y,k) , & y<x.
\end{array}
\right.
\label{green-bowtie}
\end{equation}

We also have Green's operators with mixed boundary conditions.
The cleanest situation we have under the assumption
$0\leq\varepsilon$, 
$Q \in \mathscr{L}^{(\infty)}_\varepsilon \cap \mathscr{L}^{(0)}_0$,  $m\neq0$,
$|\Re(m)|\leq\frac \varepsilon2$,
$k\neq0$.
Then if $k\neq0$, $\kappa\in \C\cup\{\infty\}$ and 
$ \Wr_m(k)+\kappa\frac{\Gamma(1-m)}{\Gamma(1+m)}\frac{k^{2m}}{2^{2m}}\Wr_{-m}(k)\neq0$, we define
\begin{align}\label{bowtie-kappa}
  &  G_{m,\bowtie,\kappa}( -k^2 ; x , y )
\\:=&
    \frac{1}{ \big( \Wr_m(k)+\kappa\frac{\Gamma(1-m)}{\Gamma(1+m)}\frac{k^{2m}}{2^{2m}}\Wr_{-m}(k)\big) }
\left \{
\begin{array}{lr}
  ( u_m + \kappa \frac{\Gamma(1-m)}{\Gamma(1+m)}u_{-m} )(x,k)v_{m}(y,k)
  , & x < y , \\
 v_{m}(x,k) (u_m + \kappa \frac{\Gamma(1-m)}{\Gamma(1+m)} u_{-m} )(y,k) , & y<x.
\end{array}
\right.\notag\end{align}
Note that
\begin{equation}
  G_{m,\bowtie,\kappa}( -k^2 ; x , y )=
   G_{-m,\bowtie,\kappa^{-1}}( -k^2 ; x , y ).\end{equation}

Similarly, for
$
Q \in \mathscr{L}^{(\infty)}_0 \cap 
\mathscr{L}^{(0)}_{0,\ln}$,
$m=0$, if
$\nu \Wr_0(k)+\Wr( v_0 (\cdot,k), p_{0}(\cdot,k) )
\neq 0$,
then  we define
\begin{align}\label{bowtie-zero}
&    G^\nu_{0,\bowtie}( -k^2 ; x , y ) \\:=&
    \frac{1}{\big(\nu\Wr_0(k)+\Wr( v_0(\cdot,k) ,
      p_{0} (\cdot,k))\big)
      }
\left \{
\begin{array}{lr}
  ( \nu  u_0 +  p_{0} )(x,k)v_{0}(y,k)
  , & x < y , \\
 v_{0}(x,k) ( \nu  u_0 +  p_{0} )(y,k) , & y<x.
\end{array}
\right.\notag\end{align}

Without the assumption
$\Re(m)\geq-\frac \varepsilon2$ we are not guaranteed the existence of
$u_m$, and hence we are not sure whether Green's function with pure
boundary conditions can be extended. However, we can use the boundary
conditions given by $u_{-m}^{[n]}$. 
Choose $\varepsilon>0$, $Q \in \mathscr{L}^{(\infty)}_\varepsilon \cap \mathscr{L}^{(0)}_0$,  $m\neq0$, and a nonnegative integer $n$.
Then for $-\frac{\varepsilon}{2}(n+1)\leq\Re(m)<0$ if
 $k\neq0$, $\kappa\in \C\cup\{\infty\}$ and 
$
\Wr\big(v_{m}(\cdot,k), u_{m}^{[n]}(\cdot,k)\big)+\kappa\frac{\Gamma(1-m)}{\Gamma(1+m)}\frac{k^{2m}}{2^{2m}}\Wr_{-m}(k)\neq0$,
we can define
\begin{align} \label{bowtie-kappa-n}&
    G_{m,\bowtie,\kappa}^{[n]}( -k^2 ; x , y )\\ := & \frac{1}{ \Wr\big(v_{m}(\cdot,k), u_{m}^{[n]}(\cdot,k)\big)+\kappa\frac{\Gamma(1-m)}{\Gamma(1+m)}\frac{k^{2m}}{2^{2m}}\Wr_{-m}(k) }
\left \{
\begin{array}{lr}
  (  u_{m}^{[n]}+\kappa\frac{\Gamma(1-m)}{\Gamma(1+m)} u_{-m})(x,k)v_{m}(y,k)
  , & x < y , \\
 v_{m}(x,k) ( u_{m}^{[n]} +\kappa\frac{\Gamma(1-m)}{\Gamma(1+m)} u_{-m})(y,k) , & y<x.
\end{array}
                                                                 \right.\notag
\end{align}

\section{Closed realizations of \texorpdfstring{$L_{m^2}$}{Lg}}\label{closed_operators}

In this section we describe realizations of  $L_{m^2}$ as closed operators
on $L^2]0,\infty[$. We will see that under certain assumptions on the perturbation $Q$ one can impose the boundary condition at $0$ in a similar way as in the unperturbed case. If we fix $Q$, it is also natural to organize these  operators in holomorphic families, analogous to the holomorphic families studied in  \cite{DeFaNgRi20_01}.

    In the first  two subsections we recall
 the basics of the theory of 1d Schr\"odinger operators and their boundary conditions.

\subsection{1-dimensional Schr\"odinger operators on the halfline}
\label{1-dimensional Schr\"odinger operators on the halfline}

We will  follow \cite{DeGe19_01}, other references include \cite{DS3,EE}.

Suppose that $]0,\infty[\ni x\mapsto V(x)$ is a function in
$L_\loc^1]0,\infty[$, possibly complex valued. Consider the expression
    \[L:=-\partial_x^2+V(x),\]
viewed as a linear map from $AC^1]0,\infty[$ to $L_\loc^1]0,\infty[$. Let us describe the four most obvious closed realizations
    of $L$ on $L^2]0,\infty[$.

First define
    \begin{align*}
      \cD(L^{\max})
& := \big \{ f \in L^2]0,\infty[ \, \cap \, AC^1]0,\infty[\quad \mid\ L f \in L^2]0,\infty[ \big \} ,\\
  \cD(L^{\min})&:=\text{the closure of }\{f\in\cD(L^{\max})\ \mid\
  f=0\text{ near }0\text{ and }\infty\},\\
    \cD(L^0)&:=\text{the closure of }\{f\in\cD(L^{\max})\ \mid\
    f=0\text{ near }0\},\\
      \cD(L^\infty)&:=\text{the closure of }\{f\in\cD(L^{\max})\ \mid\
      f=0\text{ near }\infty\}.
\end{align*}
Above, 
$\cD(L^{\max})$ is treated as a Hilbert space with the norm given by
$\|f\|_L^2:=\|Lf\|^2+\|f\|^2$.
We define
\begin{equation*}
L^{\max}:=L\big|_{\cD(L^{\max})},\quad L^{\min}:=L\big|_{\cD(L^{\min})},\quad
L^0:=L\big|_{\cD(L^0)},\quad
L^\infty:=L\big|_{\cD(L^\infty)}.
\end{equation*}
Then $L^{\max}$, $L^{\min}$, $L^0$ and $L^\infty$ are closed operators satisfying
\begin{equation*}
 L^{\max}\supset  L^\infty\supset L^{\min},\quad
 L^{\max}\supset  L^0\supset L^{\min}.
 \end{equation*}

Let us quote some general results.
The following proposition is well-known, it is e.g. proven as Proposition 5.15 of \cite{DeGe19_01}:
\begin{proposition}
  If \begin{align}
  \limsup_{c\to\infty}\int_c^{c+1}|V(x)|\mathrm{d} x<\infty,
  \label{bounda1}
\end{align}
then
\begin{equation}\label{bounda}
  L^{\max}=L^\infty,\quad L^0=L^{\min}.
\end{equation}
Thus, there is no need to set boundary conditions at infinity.
\end{proposition}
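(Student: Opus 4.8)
The plan is to prove the single equality $L^{\max}=L^\infty$; the companion identity $L^0=L^{\min}$ then follows by transposition. Indeed, the four realizations are interchanged by the transpose $A\mapsto A^\#$ from the Notations: one has $(L^{\min})^\#=L^{\max}$ and $(L^0)^\#=L^\infty$ (these mutual-transpose relations are standard, cf. \cite{DeGe19_01}). Since $\#$ is an involution, applying it to $L^{\max}=L^\infty$ gives $L^{\min}=(L^{\max})^\#=(L^\infty)^\#=L^0$. As $L^\infty\subset L^{\max}$ always holds, proving $L^{\max}=L^\infty$ amounts to showing $\cD(L^{\max})\subset\cD(L^\infty)$.

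First I would fix $f\in\cD(L^{\max})$ and approximate it by the cut-offs $f_n:=\xi_n f$, where $\xi_n(x):=\xi(x/n)$ with $\xi\in C^\infty(\R)$, $\xi=1$ on $]-\infty,1]$, $\xi=0$ on $[2,\infty[$ and $0\le\xi\le1$. Each $f_n$ lies in $\cD(L^{\max})$ and vanishes for $x\ge 2n$, hence belongs to the set whose closure defines $\cD(L^\infty)$; so it suffices to check that $f_n\to f$ in the graph norm of $L^{\max}$. The convergence $f_n\to f$ in $L^2$ is immediate by dominated convergence, and, writing $Lf_n=\xi_n Lf-2\xi_n'f'-\xi_n''f$, the term $\xi_n Lf\to Lf$ in $L^2$ while $\|\xi_n''f\|\lesssim n^{-2}\|f\|_{L^2]n,2n[}\to0$. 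The only delicate term is $2\xi_n'f'$: since $|\xi_n'|\lesssim n^{-1}$ and $\xi_n'$ is supported in $[n,2n]$, the whole argument reduces to the estimate $\|f'\|_{L^2]n,2n[}=o(n)$ as $n\to\infty$.

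The heart of the proof is therefore a uniform local bound on $f'$, and the key analytic input is that the hypothesis $\limsup_{c\to\infty}\int_c^{c+1}|V|\,\mathrm{d}x<\infty$ makes $V$ infinitesimally form bounded with respect to $-\partial_x^2$. Splitting $]0,\infty[$ into intervals $I$ of a small length $\ell$ and combining the one-dimensional Sobolev bound $\sup_I|g|^2\le \tfrac2\ell\|g\|^2_{L^2(I)}+\ell\|g'\|^2_{L^2(I)}$ with the uniform control $\int_I|V|\,\mathrm{d}x\le 2\sup_c\int_c^{c+1}|V|\,\mathrm{d}x$ valid on each such $I$, one obtains, for every $\epsilon>0$, a bound $\int|V|\,|g|^2\le\epsilon\|g'\|^2+C_\epsilon\|g\|^2$. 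Applying this with $g=\phi f$, where $\phi$ is a cut-off equal to $1$ on $[n,2n]$, supported in $[n-1,2n+1]$ and with bounded derivatives, and inserting $f''=Vf-Lf$ into the integration by parts $\int\phi^2|f'|^2=-2\int\phi\phi' f'\bar f-\int\phi^2 V|f|^2+\int\phi^2 (Lf)\bar f$, the $V$-term and the cross term can be absorbed into $\tfrac12\int\phi^2|f'|^2$. This leaves the windowed estimate $\|f'\|^2_{L^2]n,2n[}\le C\big(\|f\|^2_{L^2]n-1,2n+1[}+\|Lf\|^2_{L^2]n-1,2n+1[}\big)$ with $C$ independent of $n$. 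Since $f,Lf\in L^2]0,\infty[$, the right-hand side tends to $0$ as the window marches to infinity, giving in fact $\|f'\|_{L^2]n,2n[}\to0$, a fortiori $o(n)$.

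Combining the two previous paragraphs, $f_n\to f$ in the graph norm, whence $\cD(L^{\max})=\cD(L^\infty)$ and, by the transpose duality, $\cD(L^0)=\cD(L^{\min})$. I expect the main obstacle to be precisely the absorption step: the term $\int|V|\,|f|^2$ naively reintroduces $\|f'\|^2$ on a larger interval, and the only way I see to close the estimate is through the infinitesimal (relative bound zero) form-boundedness of $V$; this is exactly where the uniform local integrability $\limsup_{c\to\infty}\int_c^{c+1}|V|\,\mathrm{d}x<\infty$ enters, and securing the arbitrarily small coefficient $\epsilon$ requires the freedom to shrink the partition length $\ell$.
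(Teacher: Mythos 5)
The paper does not actually prove this proposition — it simply cites \cite{DeGe19_01} (Proposition 5.15) — and your argument is correct and is essentially the standard proof given there: cut off at infinity, reduce to controlling the commutator term $2\xi_n'f'$, and obtain the uniform windowed bound on $\|f'\|_{L^2]n,2n[}$ from an integration by parts in which the potential term is absorbed via the infinitesimal form-boundedness of $V$ that follows from the uniform local $L^1$ bound near infinity. The only points worth making explicit are that the partition intervals $I$ of length $\ell\le1$ must be kept in a region $x\ge c_0$ where $\sup_{c\ge c_0}\int_c^{c+1}|V|\,\mathrm{d}x<\infty$ (automatic for the windows $[n-1,2n+1]$ with $n$ large, which is all you need), and that $\int\phi^2|f'|^2$ is finite a priori (true since $f\in AC^1]0,\infty[$ makes $f'$ locally bounded), so the absorption step is legitimate.
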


By  \cite[Theorem 6.12]{DeGe19_01}, we have

\begin{proposition}\label{prio}
Suppose that (\ref{bounda}) holds.
Then we have the following alternative:
\begin{align*}
&    \text{(i) either }
    \cD(L^{\max})/\cD(L^{\min})=0\\
&\quad    \text{ and }
  \dim\big\{f\in \Ker(L-\lambda )\ \mid\ f\text{ is square integrable near }0\big\}\leq1\quad \text{ for all $\lambda \in\C$ };\notag\\
&  \text{(ii) or }
  \cD(L^{\max})/\cD(L^{\min})=2\\
& \quad  \text{ and }
  \dim\{ f\in\Ker(L^{\max}-\lambda )\ \mid\ f\text{ is square integrable near }0\}=2\quad\text{ for all $\lambda \in\C$}. \quad \notag
\end{align*}
\end{proposition}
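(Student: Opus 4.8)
The plan is to derive the stated dichotomy from the Lagrange (Green) identity at the two endpoints, using \eqref{bounda} to kill the contribution at infinity so that everything is governed by the behaviour of solutions near $0$; this is just the limit-point/limit-circle alternative at $0$. First I would record Green's identity for the bilinear form. A direct integration by parts gives, for all $f,g\in\cD(L^{\max})$,
\[
\langle g\,|\,L^{\max}f\rangle-\langle L^{\max}g\,|\,f\rangle=\Wr(g,f;0)-\Wr(g,f;\infty),
\]
both boundary limits existing because $f,g,L^{\max}f,L^{\max}g\in L^2]0,\infty[$. Since $L^{\max}=L^\infty$ by \eqref{bounda}, and $\Wr(g,f;\infty)=0$ as soon as $f$ or $g$ vanishes near $\infty$, a graph-norm approximation (the boundary Wronskians being continuous functionals on $\cD(L^{\max})$ in the graph norm, as in \cite{DeGe19_01}) yields $\Wr(g,f;\infty)=0$ for all $f,g\in\cD(L^{\max})$. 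Thus the boundary form reduces to the alternating form $B(g,f):=\Wr(g,f;0)$.

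Next I would identify $\cD(L^{\min})$ with the radical of $B$. Because $L^{\min}=(L^{\max})^\#$ and the defining pairing of the transpose is exactly $\langle g\,|\,L^{\max}f\rangle-\langle L^{\max}g\,|\,f\rangle$, one has $f\in\cD(L^{\min})$ if and only if $B(g,f)=0$ for every $g\in\cD(L^{\max})$. Hence $B$ descends to a non-degenerate form on $\cB=\cD(L^{\max})/\cD(L^{\min})$. Since $B$ is alternating, $\dim\cB$ is even; and since any boundary functional at $0$ is of the form $\Wr(h,\cdot;0)$ with $h$ in the two-dimensional formal kernel of $L-\lambda_0$, we get $\dim\cB\le 2$. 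Therefore, under \eqref{bounda}, $\dim\cB\in\{0,2\}$.

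The core step is to match $\dim\cB$ with the number of solutions square integrable near $0$. Fix $\lambda_0=-k^2$ and a basis $u_1,u_2$ of the formal kernel of $L-\lambda_0$ with $\Wr(u_1,u_2)=1$. If both $u_1,u_2\in L^2$ near $0$ (\emph{limit circle}), then $f\mapsto\big(\Wr(u_1,f;0),\Wr(u_2,f;0)\big)$ gives two functionals on $\cD(L^{\max})$ that vanish on $\cD(L^{\min})$ and are independent modulo it, while a germ of $f$ near $0$ is pinned down modulo $\cD(L^{\min})$ by these two numbers; this forces $\dim\cB=2$ and two near-$0$ square-integrable solutions. If at most one solution is $L^2$ near $0$ (\emph{limit point}), then for the non-square-integrable solution $u_j$ the standard near-$0$ estimate gives $\Wr(u_j,f;0)=0$ for every $f\in L^2$ near $0$, so $B\equiv 0$ on $\cD(L^{\max})$, whence $\cD(L^{\min})=\cD(L^{\max})$ and $\dim\cB=0$. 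Finally I would invoke the $\lambda$-independence of the classification: if all formal solutions for one $\lambda_0$ lie in $L^2$ near $0$, then for every $\lambda$ the same holds, by a variation-of-parameters (Volterra) contraction on a weighted $L^\infty$ space — exactly the mechanism used for the forward Green's operator $G^0_{\to}$ in Section \ref{section:solutions}, applied to $(\lambda-\lambda_0)$ times an $L^2$-near-$0$ solution. Combining these gives alternatives (i) and (ii) for all $\lambda\in\C$.

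The hard part will be the core matching step, and within it the limit-point direction: showing that pairing the non-square-integrable solution against an arbitrary $f\in\cD(L^{\max})$ produces a vanishing boundary Wronskian requires the precise near-$0$ decay/growth bounds, and the companion $\lambda$-independence of the classification needs the contraction argument carried out uniformly on a collar $]0,a[$. By contrast, the Green identity, the identification of $\cD(L^{\min})$ as the radical, and the evenness together with the bound $\dim\cB\le2$ are essentially formal once the existence of the boundary limits is granted.
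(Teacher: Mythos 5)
First, a point of reference: the paper does not prove Proposition \ref{prio} at all — it is imported verbatim from \cite[Theorem 6.12]{DeGe19_01} — so your proposal has to be measured against that reference rather than against an argument in the text. Your overall skeleton (Green's identity, killing the boundary term at infinity using $L^{\max}=L^{\infty}$, identifying $\cD(L^{\min})$ as the radical of the boundary form $B(g,f)=\Wr(g,f;0)$, evenness of $\dim\cB$, and Weyl-type $\lambda$-invariance via a Volterra contraction on a collar $]0,a[$) is the standard and correct architecture, and the limit-circle direction ($\xi u_1,\xi u_2\in\cD(L^{\max})$ with $\Wr(\xi u_1,\xi u_2;0)=\Wr(u_1,u_2)\neq0$, forcing $\dim\cB\geq2$) as well as the $\lambda$-invariance step are sound.

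There are, however, two genuine gaps, both sitting exactly where you flag the difficulty. First, the bound $\dim\cB\le 2$ is justified by asserting that every boundary functional has the form $\Wr(h,\cdot;0)$ with $h$ in the two-dimensional kernel of $L-\lambda_0$; but that is the decomposition \eqref{wroo1}/\eqref{isom}, which the paper states only under alternative (ii) and which is itself part of the content of the theorem being proved — as written, this step is circular. Second, and more seriously, in the limit-point direction you claim that for the non-square-integrable solution $u_j$ one has $\Wr(u_j,f;0)=0$ for every $f$ that is $L^2$ near $0$, and conclude $B\equiv0$. This mechanism does not work: since $\frac{\d}{\d x}\Wr(u_j,f;x)=-u_j\,(L-\lambda_0)f$ and $u_j\notin L^2$ near $0$, the product need not be integrable near $0$, so the limit $\Wr(u_j,f;0)$ need not even exist for $f\in\cD(L^{\max})$; and even if it did vanish, $u_j$ is not locally in $\cD(L^{\max})$, so this says nothing directly about $B(g,f)=\Wr(g,f;0)$ for $g,f\in\cD(L^{\max})$. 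What is actually required is the converse implication ``$\dim\cB=2\Rightarrow$ both solutions are $L^2$ near $0$,'' which one obtains by writing an arbitrary $f\in\cD(L^{\max})$ near $0$ as a forward Green's operator applied to $(L-\lambda_0)f$ plus an element of $\Ker(L-\lambda_0)$ and controlling the Green term in $L^2$ near $0$ — precisely the quantitative estimates carried out in \cite{DeGe19_01}. Until that step is supplied, the dichotomy is not established.
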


  Until the end of this subsection
  we suppose that alternative (ii) of Proposition \ref{prio} holds.
  Fix $\lambda\in\C$ and $\xi\in C_\mathrm{c}[0,\infty[$ equal $1$ near
$0$. Then by \cite {DeGe19_01}  we have a direct sum decomposition
\begin{equation}\label{wroo1}
  \cD(L^{\max})=\cD(L^{\min})\oplus\{\xi f\ |\
  f\in\mathcal{N}(L)\},\end{equation}
where $\mathcal{N}(L)$ denotes all functions in $AC^1[0,\infty[$
annihilated by $L$.

We are interested in operators $L^\bullet$ lying ``in the middle'' between
$L^{\min}$ and $L^{\max}$, that is satisfying
\begin{equation*}
  L^{\min}\subset L^\bullet\subset L^{\max},
\end{equation*}
where both inclusions are of codimension $1$.
All such operators correspond to one-dimensional
  subspaces of $ \cD(L^{\max})/\cD(L^{\min})$. To specify
  such a subspace it is enough to choose
  \begin{equation}
    r\in
    \cD(L^{\max}),\quad r\notin\cD(L^{\min}),\label{rdom}
  \end{equation}
  and to define
  \begin{align}\label{roro1}
    \cD(L^r):&=\cD(L^{\min})+\mathbb{C} r,\\ L^r:&=L^{\max}\big|_{\cD(L^r)}. 
  \end{align}

\subsection{Boundary functionals}

    We continue to analyze general 1d Schr\"odinger operators.
  Until the end of this subsection we assume \eqref{bounda}. 
We will give a method to describe boundary conditions which is often
more practical than \eqref{roro1}.

First recall the concept of Wronskian  \eqref{wronskian}. If $f,g\in \cD(L^{\max})$,  then $f,g\in AC^1]0,\infty[\subset C^1]0,\infty[$,
hence their Wronskian at
$x\in]0,\infty[$, denoted
$\Wr(f,g;x)$, is well defined. Interestingly, the
Wronskian extends to the boundary $x=0$, as follows  e.g. from \cite[Theorem 4.4]{DeGe19_01}:
\begin{proposition}\label{roro4}
For $f,g\in\cD(L^{\max})$
\begin{equation*}
\lim_{x\searrow0}\Wr(f,g;x)=:\Wr(f,g;0)
\end{equation*}
exists and defines a continuous
bilinear form.
If in addition (\ref{bounda}) holds, then
\begin{align*}
  \mathcal{D} ( L^{\mathrm{min}} ) &= \Big \{ f \in \mathcal{D} ( L^{\mathrm{max}} ) \ \mid 
  \ \Wr( f , g ; 0 ) = 0 \text{ for all } g \in \mathcal{D} ( L^{\mathrm{max}} ) \Big \}. 
\end{align*}
\end{proposition}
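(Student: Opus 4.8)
The plan is to prove the characterization of $\cD(L^{\min})$ via the boundary Wronskian by establishing the two inclusions separately, relying on the direct sum decomposition \eqref{wroo1} and the continuity of the boundary form from the first part of Proposition \ref{roro4}. Throughout I would fix $\lambda\in\C$, say $\lambda=0$ (so that $\cN(L)$ is the relevant solution space), together with a cutoff $\xi\in C_\mathrm{c}[0,\infty[$ equal to $1$ near $0$, exactly as in \eqref{wroo1}.

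First I would prove the easy inclusion $\subseteq$. If $f\in\cD(L^{\min})$, then by definition $f$ is a graph-norm limit of functions vanishing near $0$ and near $\infty$. For any such approximating function $f_n$ and any $g\in\cD(L^{\max})$, one has $\Wr(f_n,g;0)=0$ since $f_n$ vanishes identically in a neighborhood of $0$. Because the boundary form $(f,g)\mapsto\Wr(f,g;0)$ is continuous on $\cD(L^{\max})\times\cD(L^{\max})$ with respect to the graph norm $\|\cdot\|_L$ (this is precisely the content of the first assertion of Proposition \ref{roro4}), passing to the limit gives $\Wr(f,g;0)=0$ for all $g\in\cD(L^{\max})$. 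Hence $f$ lies in the right-hand set.

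The reverse inclusion $\supseteq$ is where the real work lies, and I expect it to be the main obstacle. Suppose $f\in\cD(L^{\max})$ satisfies $\Wr(f,g;0)=0$ for all $g\in\cD(L^{\max})$; I must show $f\in\cD(L^{\min})$. Using \eqref{wroo1} I would write $f=f_{\min}+\xi h$ with $f_{\min}\in\cD(L^{\min})$ and $h\in\cN(L)$. By the inclusion just proved, $\Wr(f_{\min},g;0)=0$ for all $g$, so the hypothesis forces $\Wr(\xi h,g;0)=0$ for all $g\in\cD(L^{\max})$. Near $0$ we have $\xi\equiv 1$, so $\Wr(\xi h,g;0)=\Wr(h,g;0)$. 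The key point is then that the pairing $h\mapsto\bigl(g\mapsto\Wr(h,g;0)\bigr)$, restricted to the two-dimensional space $\cN(L)$, is nondegenerate: since $\cN(L)$ is $2$-dimensional and we are in alternative (ii) where solutions are square-integrable near $0$, the elements of $\cN(L)$ (cut off by $\xi$) represent a genuine $2$-dimensional quotient $\cD(L^{\max})/\cD(L^{\min})$. Concretely, if $h_1,h_2$ is a basis of $\cN(L)$ with $\Wr(h_1,h_2)\neq 0$, then taking $g=\xi h_1$ and $g=\xi h_2$ and using the skew-symmetry and constancy of the Wronskian of two solutions shows that $\Wr(h,\xi h_i;0)=\pm\Wr(h,h_i)$, and these two quantities vanish simultaneously only when $h=0$.

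Thus the vanishing of all boundary Wronskians of $h$ forces $h=0$, whence $f=f_{\min}\in\cD(L^{\min})$, completing the nontrivial inclusion. The delicate step to get right is the nondegeneracy argument: one must be careful that $\xi h_i\in\cD(L^{\max})$ (which holds because $h_i$ is square-integrable near $0$ under alternative (ii) and the cutoff kills the behavior near $\infty$, while $L(\xi h_i)$ differs from $\xi L h_i = 0$ only by compactly supported terms from derivatives of $\xi$, hence lies in $L^2$), and that the boundary Wronskian of a solution $h$ against $\xi h_i$ indeed reduces to the constant Wronskian $\Wr(h,h_i)$ of two solutions. Both facts are standard consequences of the theory recalled in the excerpt, so I would cite \cite[Theorem 4.4]{DeGe19_01} and the decomposition \eqref{wroo1} rather than reprove them.
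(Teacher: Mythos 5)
The paper does not actually prove Proposition \ref{roro4}: both assertions are imported wholesale from \cite[Theorem 4.4]{DeGe19_01} (just as the decomposition \eqref{wroo1} you invoke is imported from the same source), so there is no internal argument to compare yours against. Judged on its own terms, your derivation of the domain characterization is essentially the standard one and is correct as far as it goes: the inclusion $\subseteq$ follows from continuity of the boundary form plus density, and for $\supseteq$ the decomposition $f=f_{\min}+\xi h$ with $h\in\cN(L)$ reduces everything to the nondegeneracy of the constant Wronskian pairing on the two-dimensional space $\cN(L)$, with the test functions $g=\xi h_i$ legitimately in $\cD(L^{\max})$ under alternative (ii) of Proposition \ref{prio}. (You should say explicitly that in alternative (i) the equality is trivial, since then $\cD(L^{\min})=\cD(L^{\max})$ and the inclusion $\subseteq$ already gives both sides; \eqref{wroo1} is only available in case (ii).)

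The genuine gap is that you never prove the first assertion, yet both halves of your argument lean on it: the inclusion $\subseteq$ needs graph-norm continuity of $(f,g)\mapsto\Wr(f,g;0)$ to pass to the limit, and the inclusion $\supseteq$ needs the limit to exist so that $\Wr(h,\xi h_i;0)$ equals the constant Wronskian $\Wr(h,h_i)$. This first assertion is the analytic core of the proposition and is not free. The existence of the limit comes from $\partial_x\Wr(f,g;x)=g\,Lf-f\,Lg$, which lies in $L^1$ near $0$ because $f,g,Lf,Lg\in L^2$; hence $\Wr(f,g;\cdot)$ is absolutely continuous up to $x=0$. Continuity of the resulting bilinear form in the graph norm then requires, in addition to the bound $\|g\|\,\|Lf\|+\|f\|\,\|Lg\|$ on the integral term, an a priori estimate of $f(c)$ and $f'(c)$ at an interior point $c$ by $\|f\|_L$, which is a local regularity statement that must be established (or explicitly cited). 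As written, your proposal proves "second assertion, assuming the first," not the proposition.
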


Let us define the {\em boundary space}
\begin{equation*}
  \cB:= \big(\cD(L^{\max})/\cD(L^{\min})\big)',
  \end{equation*}
  where the prime denotes the dual.

  Let $r$ be as in \eqref{rdom}.
Let $\phi\neq0$ be    a boundary functional (that 
is, an element of $\cB$)  such that
$\phi(r)=0$. Obviously, 
  \begin{align*}
    \cD(L^r):&=\{f\in\cD(L^{\max})\mid \phi(f)=0\}
  \end{align*}
is equivalent to \eqref{roro1}.

The boundary functional $\phi$ 
 can be simply written as
\begin{equation}\label{wroo}
  \phi=c\Wr(r,\cdot;0),\end{equation}
where $c\neq0$.
Using \eqref{wroo1} and \eqref{wroo} we obtain
\begin{corollary}
  Suppose that the alternative (ii) of  Proposition \ref{prio}
  holds. Fix $\lambda\in\C$.
  Then we have a natural isomorphism of $\cB$ and $\mathcal{N}(L-\lambda)$:
  \begin{equation}\label{isom}
    \cB=\{\Wr(f,\cdot;0)\ |\ f\in\mathcal{N}(L-\lambda)\}.\end{equation}\end{corollary}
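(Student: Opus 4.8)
The plan is to derive the isomorphism \eqref{isom} by combining the direct sum decomposition \eqref{wroo1} with the characterization of $\cD(L^{\min})$ via the Wronskian from Proposition \ref{roro4}. First I would fix $\lambda\in\C$ and observe that the map
\begin{equation*}
\mathcal{N}(L-\lambda)\ni f\mapsto \Wr(f,\cdot;0)\in\cB
\end{equation*}
is well defined: by Proposition \ref{roro4} the bilinear form $\Wr(\cdot,\cdot;0)$ exists and is continuous on $\cD(L^{\max})$, and for $f\in\mathcal{N}(L-\lambda)$ (suitably cut off near infinity using $\xi$ as in \eqref{wroo1}, so that it lies in $\cD(L^{\max})$) the functional $\Wr(f,\cdot;0)$ annihilates $\cD(L^{\min})$, again by Proposition \ref{roro4}. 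Hence it descends to a genuine element of $\cB=\big(\cD(L^{\max})/\cD(L^{\min})\big)'$.

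Next I would establish injectivity. If $\Wr(f,\cdot;0)=0$ as an element of $\cB$, then $\Wr(f,g;0)=0$ for all $g\in\cD(L^{\max})$; by the characterization of $\cD(L^{\min})$ in Proposition \ref{roro4} this forces the cutoff $\xi f$ to lie in $\cD(L^{\min})$. But in the decomposition \eqref{wroo1} the $\cD(L^{\min})$ summand meets $\{\xi h\mid h\in\mathcal{N}(L)\}$ only in $0$, and since $L-\lambda$ also enjoys alternative (ii), the same applies with $\mathcal{N}(L-\lambda)$ in place of $\mathcal{N}(L)$; thus $f=0$. For surjectivity I would count dimensions: under alternative (ii) we have $\dim\cD(L^{\max})/\cD(L^{\min})=2$, so $\dim\cB=2$, while $\dim\mathcal{N}(L-\lambda)=2$ as the solution space of a second-order equation. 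An injective linear map between two-dimensional spaces is surjective, giving the equality \eqref{isom}.

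The main obstacle, and the step requiring the most care, is verifying that $\Wr(f,\cdot;0)$ is nonzero in $\cB$ for every nonzero $f\in\mathcal{N}(L-\lambda)$, i.e.\ the injectivity step. The subtlety is that the Wronskian at $0$ could a priori vanish against all of $\cD(L^{\max})$ even for $f\neq 0$; ruling this out is exactly where one uses that the boundary form $\Wr(\cdot,\cdot;0)$ is nondegenerate on the quotient $\cD(L^{\max})/\cD(L^{\min})$. This nondegeneracy is itself the content of the $\cD(L^{\min})$ characterization in Proposition \ref{roro4} together with the fact that under alternative (ii) the cutoff solutions $\xi f$ are linearly independent modulo $\cD(L^{\min})$. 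I would therefore phrase the argument so that injectivity follows directly from Proposition \ref{roro4} applied to the decomposition \eqref{wroo1}, and then invoke the dimension count to conclude. Everything else is routine once these two structural facts are in place.
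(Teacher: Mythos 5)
Your proposal is correct and follows essentially the same route as the paper, which derives the corollary directly from the decomposition \eqref{wroo1} together with the nondegeneracy of the boundary Wronskian form encoded in Proposition \ref{roro4} and \eqref{wroo}. The only cosmetic difference is that you obtain surjectivity by counting dimensions ($\dim\cB=2=\dim\mathcal{N}(L-\lambda)$) rather than by invoking \eqref{wroo}, and your injectivity argument correctly isolates the key point that $\xi f\in\cD(L^{\min})$ forces $\xi f=0$ by the directness of the sum in \eqref{wroo1}.
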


We will say that a function $f\in C^1]0,\infty[$ {\em possesses the
  Wronskian at zero on $\cD(L^{\max})$}
if 
\begin{equation*}
\Wr(f,g;0):=\lim_{x\searrow0}\Wr(f,g;x),\quad g\in\cD(L^{\max} ),
  \end{equation*}
exists. Proposition \ref{roro4} says that each  function in
$\cD(L^{\max})$ possesses the Wronskian at zero  on $\cD(L^{\max})$.

In practice, it may be difficult to make explicit an element $r$ in
$\cD(L^{\max})$ describing the functional $\phi$ by \eqref{wroo}.
Instead, we can often find a
simpler function $r_1$, not necessarily in $\cD(L^{\max})$,
which also possesses the Wronskian at zero  on $\cD(L^{\max})$ and
such that
\begin{equation}
\Wr(r,\cdot;0)=\Wr(r_1,\cdot;0).\end{equation} Then
instead of \eqref{roro1}  the domain of $L^r$ can be equivalently
characterized as :
  \begin{align}\label{roro2}
    \cD(L^r):&=\{f\in\cD(L^{\max})\mid \Wr(r_1,f;0)=0\}. 
  \end{align}

\subsection{The maximal and minimal realizations of \texorpdfstring{$L_{m^2}$}{Lg}}

As everywhere in this paper, we assume
that $]0,\infty[\ni x\mapsto Q(x)$ belongs to $L_\loc^1]0,\infty[$.
For  $m \in \mathbb{C}$, set
\begin{equation*}
  V_{m^2}(x):= \Big(m^2-\frac14\Big)\frac{1}{x^2}  + Q(x).
\end{equation*}
We consider the differential expression
    \begin{equation*}
L_{m^2}:=-\partial_x^2+  V_{m^2}(x).
      \end{equation*}
      as a linear map on $AC^1]0,\infty[$. 
    By applying the definitions of Subsection
    \ref{1-dimensional Schr\"odinger operators on the halfline}, we can introduce
    the closed operators
    $L_{m^2}^{\max}$, $L_{m^2}^{\min}$ such that
\begin{equation*}
\big ( L^{\mathrm{min}}_{m^2}  \big )^\# = L^{\mathrm{max}}_{m^2} , \qquad \big ( L^{\mathrm{max}}_{m^2} \big )^\# = L^{\mathrm{min}}_{m^2} .
\end{equation*}

Until the end of this section we assume that $Q \in \mathscr{L}^{(\infty)}_0$.

\begin{proposition}\label{prop:Lmin} Let $m\in\C$.
  Suppose that
  \begin{align*}
Q \in \mathscr{L}^{(0)}_0 , & \text{ if } m \neq 0 , \qquad Q \in \mathscr{L}^{(0)}_{0,\mathrm{ln}} ,  \text{ if } m = 0 .  
\end{align*}
    Then the  following holds:
\begin{enumerate}[label=(\roman*)]
\item If $1\leq | \mathrm{Re}( m )|$, then $L^{\mathrm{min}}_{m^2} = L^{\mathrm{max}}_{m^2}$. 
\item If $| \mathrm{Re}( m ) |  < 1$, then $\mathcal{D} ( L^{\mathrm{min}}_{m^2} )$ is a closed subspace of $\mathcal{D} ( L^{\mathrm{max}}_{m^2} )$ of codimension $2$.
\end{enumerate}
\end{proposition}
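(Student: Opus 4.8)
The plan is to reduce the statement to a dimension count for solutions of the homogeneous equation that are square integrable near $0$, and then to read off this dimension from the distinguished solutions constructed in Section \ref{section:solutions}. Since $L_{m^2}$ depends only on $m^2$, I may assume throughout that $\Re(m)\ge0$, so that the two cases of the proposition become $\Re(m)\ge1$ and $0\le\Re(m)<1$. First I would check that the hypothesis at infinity lets us dispense with boundary conditions there: for $V_{m^2}(x)=(m^2-\tfrac14)x^{-2}+Q(x)$ one has, for large $c$,
\[
\int_c^{c+1}|V_{m^2}(x)|\,\d x\le\Big|m^2-\tfrac14\Big|\int_c^{c+1}x^{-2}\,\d x+\int_c^{c+1}|Q(x)|\,\d x ,
\]
and both terms are bounded (indeed tend to $0$) because $Q\in\mathscr{L}^{(\infty)}_0$. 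Hence \eqref{bounda1} holds, so \eqref{bounda} holds and Proposition \ref{prio} applies: the codimension of $\cD(L^{\min}_{m^2})$ in $\cD(L^{\max}_{m^2})$ is $0$ or $2$ according to whether the space of solutions of $L_{m^2}g=\lambda g$ square integrable near $0$ has dimension $\le1$ or $=2$, for any one fixed $\lambda$.

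Next I would exhibit an explicit basis of $\cN(L_{m^2}+k^2)$ for a convenient fixed $k$ with $\Re(k)\ge0$ (for instance $k=0$, where the leading behaviors near $0$ are cleanest) and determine the square integrability near $0$ of each basis element. By Proposition \ref{prop:reg1h} (which needs only $\varepsilon=0$ when $\Re(m)\ge0$) the principal solution $u_m(\cdot,k)$ exists and behaves as $x^{\frac12+m}/\Gamma(m+1)$, so $|u_m(x,k)|^2$ is of order $x^{1+2\Re(m)}$ and is always integrable near $0$. For a second, linearly independent solution I would invoke Proposition \ref{cor:u_not_L2a}: for $\Re(m)>0$, $m\ne0$, any $g$ independent of $u_m$ satisfies $g(x)\sim c\,x^{\frac12-m}$ with $c\neq0$, whose square is of order $x^{1-2\Re(m)}$; for $m=0$ such a $g$ satisfies $g(x)\sim c\,x^{\frac12}\ln x$ with $c\neq0$, whose square is of order $x\,|\ln x|^2$.

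From this the dimension count is immediate. If $\Re(m)\ge1$ (so $m\ne0$), only $u_m(\cdot,k)$ is square integrable near $0$, the relevant dimension is $1$, and alternative (i) of Proposition \ref{prio} gives $L^{\min}_{m^2}=L^{\max}_{m^2}$. If $0\le\Re(m)<1$, both basis elements are square integrable near $0$: for $0<\Re(m)<1$ this follows from the previous paragraph; for $\Re(m)=0$, $m\neq0$, I would instead take the pair $u_m(\cdot,k),u_{-m}(\cdot,k)$, both furnished by Proposition \ref{prop:reg1h} (since $\Re(\pm m)=0$), linearly independent because $m\notin\mathbb{Z}$ and each of order $x^{\frac12}$; and for $m=0$ the pair $u_0(\cdot,k)$ together with the logarithmic solution $g\sim x^{\frac12}\ln x$. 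Thus the dimension is $2$, alternative (ii) holds, and $\cD(L^{\min}_{m^2})$ is closed of codimension $2$ in $\cD(L^{\max}_{m^2})$.

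The only real obstacle is the bookkeeping at the boundary cases $\Re(m)=0$ and $m=0$, where the two solutions have comparable (or only logarithmically separated) behavior near $0$ rather than the clean power separation $x^{\frac12\pm m}$; there one must make sure that Propositions \ref{prop:reg1h} and \ref{cor:u_not_L2a} genuinely yield two \emph{linearly independent} solutions and that the non-principal one has a non-vanishing leading coefficient, so that the count is not corrupted. Everything else is a direct application of the dichotomy of Proposition \ref{prio} together with the elementary thresholds $\int_0^1 x^{1+2\Re(m)}\,\d x<\infty$ for every $\Re(m)>-1$, $\int_0^1 x^{1-2\Re(m)}\,\d x<\infty$ if and only if $\Re(m)<1$, and $\int_0^1 x\,|\ln x|^2\,\d x<\infty$.
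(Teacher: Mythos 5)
Your proposal is correct and follows essentially the same route as the paper's proof: verify that the hypothesis at infinity yields \eqref{bounda1} so that only the boundary at $0$ matters, then apply the dichotomy of Proposition \ref{prio} by counting solutions of the eigenequation square integrable near $0$, using $u_m(\cdot,k)$ from Proposition \ref{prop:reg1h} and the non-principal asymptotics from Proposition \ref{cor:u_not_L2a}. Your extra care at $\Re(m)=0$ (taking the pair $u_m,u_{-m}$, both of order $x^{\frac12}$) is a harmless refinement of the paper's terser argument, which for that case only needs the bound $g(x)=\mathcal{O}(x^{\frac12})$ to conclude square integrability.
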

\begin{proof}    
Obviously,
    the condition \eqref{bounda1} holds. Therefore,
 only the boundary conditions at zero matter.

We can assume that $\Re(m)\ge0$ and $\Re(k)\geq0$.
For  $m\neq0$, in
the space $\Ker(L_{m^2}+k^2)$ all
 elements proportional to $u_{m}(\cdot,k)$    behave as
 $x^{\frac12+m}$, 
 and all other elements of $\Ker(L_{m^2}+k^2)$, 
 by Proposition \ref{cor:u_not_L2a}, behave as $x^{\frac12-m}$.
For $m=0$ they behave respectively as  $x^{\frac12}$ and
 $x^{\frac12}\mathrm{ln}(x)$. Both are   square integrable iff
    $|\Re(m)|<1$. Hence
\begin{equation}
\begin{split}\label{pau1}
  &\dim\big\{f\in\Ker(L_{m^2}^{\max}+k^2)\ \mid\ f\text{ is square integrable near }0\big\}\leq1 \text{ for all $\Re(k)\geq0$ } \\
  &\text{ iff }\ |\Re(m)|\geq1;  
  \end{split}
  \end{equation}
and
\begin{equation}
\begin{split}  
  \label{pau2}
  &\dim\{f\in\Ker(L_{m^2}^{\max}+k^2)\ \mid \ f\text{ is square integrable near }0\}=2 \text{ for all $\Re(k)\geq0$}\\
  &\text{ iff }\ |\Re(m)|<1 .
  \end{split}
\end{equation}

Now  we apply Proposition \ref{prio}: $(i)$
  corresponds to (\ref{pau1})   and $(ii)$
corresponds to (\ref{pau2}).
\end{proof}

\subsection{Closed realizations of the unperturbed Bessel operator}

Let us recall the basic theory of closed realizations of $L_{m^2}^0$.
We will essentially follow \cite{DeRi17_01},
except that we will put  the superscript $0$ on the symbols of various operators.

Let $\cB_{m^2}^0$ denote  the boundary space of $L_{m^2}^0$. Below we
give natural bases of $\cB_{m^2}^0$:
\begin{align}
\Wr(x^{\frac12-m},\cdot;0),&\quad  \Wr(x^{\frac12+m},\cdot;0),\quad
                             m\neq0;\label{real4}\\
  \Wr(x^{\frac12},\cdot;0),&\quad
                             \Wr(x^{\frac12}\mathrm{ln}(x),\cdot;0),\quad m=0.\label{real5}
                             \end{align}
Note that for $|\Re(m)|<1$,
\begin{align}
  \Wr(x^{\frac12-m},x^{\frac12+m})&=2m, \label{eq:wronskian_mneq0} \\
  \Wr(x^{\frac12},x^{\frac12}\mathrm{ln}(x))&=1, \label{eq:wronskian_m=0}
\end{align}
which implies the linear independence of \eqref{real4} and \eqref{real5}.

Let us describe the basic families of closed realizations of 
Bessel operators. We will use two kinds of definitions of their
domains. 
In what follows,  $\xi$ is a smooth compactly supported function equal to $1$ near $x=0$.

We have the family of realizations with pure boundary conditions
defined for  $\Re(m)>-1$:
\begin{align}\label{dom1}
  \Dom(H_{m}^0)  :=& \, \cD(L_{m^2}^0)+\mathbb{C} x^{\frac12+m}\xi(x)\\
=& \, \Big\{f\in \Dom(L_{m^2}^{0,\max})\mid 
                 \, \Wr(x^{\frac12+m},f;0)=0\Big\},\label{wron1}
  \\\label{real1}
H_{ m}^0:=& \, L_{ m^2}^0\big|_{\Dom(H_{m}^0)}.
\end{align}

We have also two families with mixed boundary conditions: The first is the generic family defined for $-1<\Re(m)<1$,
 $m\neq0$,
$\kappa\in\C\cup\{\infty\}$:
\begin{align}\label{dom2}
  \Dom(H_{ m,\kappa}^0) &:
                          =\Dom(L_{m^2}^{0,\min})+\mathbb{C}\big(x^{\frac12+m}+
                          \kappa x^{\frac12-m}\big)\xi(x)\\
&= \Big\{f\in \Dom(L_{m^2}^{0,\max})\mid
\Wr\big(x^{\frac12+m} + \kappa x^{\frac12-m},f;0
\big)=0\Big\},\label{wron2}
  \\\label{real2}
\Dom(H_{m,\infty}^0) & := \Dom(H_{ -m}^0),\quad
H_{m,\kappa}^0:=L_{ m^2}^0\big|_{\Dom(H_{m,\kappa}^0)}.
\end{align}
The second family corresponds to $m=0$ and depends on
$\nu\in\C\cup\{\infty\}$:
\begin{align}\label{dom3}
  \Dom(H_{0}^{0,\nu}) & :
                         =\Dom(L_{m^2}^{0,\min})+\mathbb{C}\big(x^{\frac12}\mathrm{ln}(x)
                        + \nu x^{\frac12}\big)\xi(x)\\
& = \Big\{f\in \Dom(L_{0}^{0,\max})\mid
\Wr\big(\nu x^{\frac12} + x^{\frac12}\mathrm{ln}(x),f;0
\big)=0\Big\},\label{wron3}
  \\\label{real3}
\Dom(H_{ 0}^{0,\infty}) &: = \Dom(H_{0}^{0}),\quad
H_{ 0}^{0,\nu}:=L_{0}^0\big|_{\Dom(H_{0}^{0,\nu})}.
\end{align}
The families of closed operators defined in \eqref{real1},
\eqref{real2} and \eqref{real3} are clearly independent of the cutoff $\xi$. They
are holomorphic with respect to the parameters $m,\kappa,\nu$. They are
situated between $L_{m^2}^{0,\min}$ and  $L_{m^2}^{0,\max}$.

\subsection{Boundary functionals for perturbed Bessel operators}
\label{bounda-sec}

 In this subsection, as well as in Subsections \ref{ss0}, \ref{subsec:mixedI} and \ref{subsec:mixedII}, we 
analyze boundary conditions near zero and the corresponding closed
realizations of perturbed Bessel operators.  For definiteness, throughout these four 
subsections we assume that $Q \in \mathscr{L}^{(\infty)}_0$.

It does not seem practical to
define   boundary conditions for perturbed Bessel operators
analogously as in \eqref{real1}, \eqref{real2} and \eqref{real3}. In fact, even after imposing
stronger conditions on $Q$, such as in Proposition \ref{prop:reg1}, it
is not easy to describe explicitly sufficiently well  the behavior of
elements in $\cD(L^{\mathrm{max}}_{m^2})$  near zero.
In particular,
conditions of Theorem \ref{prop:reg10}
 in general do not allow us to conclude that $x^{\frac12\pm m}\xi(x)$ belongs
 to  $\cD(L_{m^2}^{\max})$ and $x^{\frac12}\mathrm{ln}(x)\xi(x)$ to
    $\cD(L_{0}^{\max})$.

    Fortunately, we can use the method of \eqref{wron1}, \eqref{wron2}
    and \eqref{wron3} involving  the Wronskian at $0$. 
    The precise choice of a boundary functional is in general
    more complicated than in the unperturbed case, as we explain
    below.
    
    Let us first describe some properties of the minimal domain.
In the next proof, we denote by $G_\rightarrow=G_{m^2,\rightarrow}(-k^2)$ the forward Green's
operator associated to $L_{m^2}$ defined in \eqref{right}.

\begin{lemma}\label{lemko}$ $ 
    \begin{enumerate}[label=(\roman*)]
    \item
    Let $0\leq\Re(m)<1$, $m\neq0$,
  $Q \in \mathscr{L}^{(0)}_0$ and   $h\in\cD(L_{m^2}^{\min})$. Then
  \begin{align}\label{iuy1}
    h(x)=o(x^{\frac32}),&\quad 
                          \partial_xh(x)=o(x^{\frac12}).
  \end{align}
  \item Let $m=0$, 
    $Q \in \mathscr{L}^{(0)}_{0,\mathrm{ln}}$ and
    $h\in\cD(L_{m^2}^{\min})$. Then
    \begin{align}
\label{iuy2}
    h(x)=o(x^{\frac32}\mathrm{ln}(x)),&\quad 
    \partial_xh(x)=o(x^{\frac12}\mathrm{ln}(x)).\end{align}\end{enumerate}
\end{lemma}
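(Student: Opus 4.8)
The plan is to exploit the characterization of the minimal domain through the vanishing of the Wronskian at $0$ (Proposition \ref{roro4}) together with the two families of solutions constructed earlier. Since $L_{m^2}$ depends only on $m^2$, I may assume $0\le\Re(m)<1$; fix any $k$ with $\Re(k)\ge0$. Under the stated hypotheses, Proposition \ref{prop:reg1h} produces the principal solution $u:=u_m(\cdot,k)\in\cN(L_{m^2}+k^2)$ with $u(x)\sim x^{\frac12+m}$, and Proposition \ref{cor:u_not_L2a} provides a linearly independent solution $\tilde u$ with $\tilde u(x)\sim x^{\frac12-m}$ (for $m=0$, $\tilde u(x)\sim x^{\frac12}\mathrm{ln}(x)$), together with the matching estimates on $\partial_x u,\partial_x\tilde u$. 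The point of the hypothesis $\Re(m)<1$ is that $\tilde u\in L^2$ near $0$: indeed $\int_0^a|\tilde u|^2\sim\int_0^a y^{1-2\Re(m)}\,\d y<\infty$ precisely when $\Re(m)<1$ (and $\int_0^a y(\mathrm{ln} y)^2\,\d y<\infty$ for $m=0$), while $u\in L^2$ near $0$ automatically. Consequently $\xi u,\xi\tilde u\in\cD(L_{m^2}^{\max})$ for a cutoff $\xi$ equal to $1$ near $0$, and since $h\in\cD(L_{m^2}^{\min})$ Proposition \ref{roro4} gives $\Wr(h,u;0)=\Wr(h,\tilde u;0)=0$ (the Wronskian at $0$ being insensitive to $\xi$).

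Next I would write $h$ in the basis $u,\tilde u$ near $0$. Setting $W_0:=\Wr(u,\tilde u)\neq0$ and solving the pointwise $2\times2$ system, one has exactly
\[
h=A\,u+B\,\tilde u,\qquad A(x)=\frac{\Wr(h,\tilde u;x)}{W_0},\qquad B(x)=-\frac{\Wr(h,u;x)}{W_0},
\]
and correspondingly $\partial_x h=A\,\partial_x u+B\,\partial_x\tilde u$. Writing $\psi:=(L_{m^2}+k^2)h\in L^2$ near $0$, a direct computation using $u''=(V_{m^2}+k^2)u$ and $h''=(V_{m^2}+k^2)h-\psi$ shows $\frac{\d}{\d x}\Wr(h,u;x)=\psi\,u$ and $\frac{\d}{\d x}\Wr(h,\tilde u;x)=\psi\,\tilde u$ almost everywhere; since $\psi u,\psi\tilde u\in L^1(0,a)$, these Wronskians are absolutely continuous and, using the boundary values just established,
\[
\Wr(h,u;x)=\int_0^x\psi\,u\,\d y,\qquad \Wr(h,\tilde u;x)=\int_0^x\psi\,\tilde u\,\d y .
\]
The Cauchy--Schwarz inequality, together with $\|\psi\|_{L^2(0,x)}\to0$, $\|u\|_{L^2(0,x)}=\mathcal{O}(x^{1+\Re(m)})$ and $\|\tilde u\|_{L^2(0,x)}=\mathcal{O}(x^{1-\Re(m)})$, then yields $A(x)=\smallO{x^{1-\Re(m)}}$ and $B(x)=\smallO{x^{1+\Re(m)}}$ (for $m=0$: $A(x)=\smallO{x|\mathrm{ln}(x)|}$, $B(x)=\smallO{x}$).

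Substituting these rates into $h=A u+B\tilde u$ and $\partial_xh=A\,\partial_xu+B\,\partial_x\tilde u$, and using $u=\mathcal{O}(x^{\frac12+\Re(m)})$, $\tilde u=\mathcal{O}(x^{\frac12-\Re(m)})$, $\partial_x u=\mathcal{O}(x^{-\frac12+\Re(m)})$, $\partial_x\tilde u=\mathcal{O}(x^{-\frac12-\Re(m)})$, every term is $\smallO{x^{\frac32}}$, resp. $\smallO{x^{\frac12}}$, giving \eqref{iuy1}; the $m=0$ case produces the extra factor $\mathrm{ln}(x)$, giving \eqref{iuy2}. I expect the only real bookkeeping to be the careful tracking of the $L^2$ rates via Cauchy--Schwarz and the logarithmic weights in the $m=0$ case, where one must keep the power of $\mathrm{ln}(x)$ consistent; the conceptual content—that membership in $\cD(L_{m^2}^{\min})$ forces the coefficients of \emph{both} the principal and non-principal homogeneous solutions to vanish at $0$ to higher order—is entirely encoded in the two Wronskian conditions.
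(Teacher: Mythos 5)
Your argument is correct, and it arrives at the same quantitative heart as the paper's proof: a Cauchy--Schwarz estimate of $\int_0^x(\text{solution})\cdot(L^2\text{-function})$ against the $L^2(0,x)$ norms of the two basis solutions, with $\|\psi\|_{L^2(0,x)}=o(1)$ supplying the small-$o$. The route to that point differs. The paper cites \cite[Propositions 7.3 and 7.5]{DeGe19_01} to write $h|_{]0,c[}=G_\rightarrow(-k^2)f$ for some $f\in L^2]0,c[$ and to bound $|G_\to(-k^2)f(x)|$ pointwise by $\tfrac12(|w_m|\|u_m\|_x+|u_m|\|w_m\|_x)\|f\|_x$, then feeds in the near-zero bounds on $u_m$ (Proposition \ref{prop:reg1h}) and on the Jost solution $w_m$ (Proposition \ref{prop:global_estim_Jost}). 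You instead derive the same representation from scratch: Proposition \ref{roro4} gives $\Wr(h,u;0)=\Wr(h,\tilde u;0)=0$ because $\xi u,\xi\tilde u\in\cD(L_{m^2}^{\max})$ (this is where $\Re(m)<1$ enters, making $\tilde u$ square integrable near $0$), and the identity $\tfrac{\d}{\d x}\Wr(h,u;x)=\psi u$ integrated from $0$ reconstructs $h=Au+B\tilde u$ as the forward Green's operator of the \emph{perturbed} equation applied to $\psi=(L_{m^2}+k^2)h$. Your version is self-contained modulo Propositions \ref{roro4}, \ref{prop:reg1h} and \ref{cor:u_not_L2a}, and it uses an arbitrary non-principal solution $\tilde u$ (only the $\mathcal{O}$-bounds of Proposition \ref{cor:u_not_L2a} are needed) rather than the Jost solution, so it avoids both the external citations and the global Jost estimates; the paper's version is shorter on the page because the representation and the kernel bound are imported wholesale. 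The logarithmic bookkeeping in your $m=0$ case ($A=o(x|\mathrm{ln}(x)|)$, $B=o(x)$) checks out and reproduces \eqref{iuy2} exactly.
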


\begin{proof}
Let $h\in\cD(L_{m^2}^{\min})$ and $c>0$. Let $\Re(k)\ge0$ be such that $\Wr_m(k)\neq0$ ($k$ exists by Corollary \ref{cor-asim}). Using e.g. \cite[Proposition 7.3]{DeGe19_01} we know that there exists
$f\in L^2]0,c[$ such
that
\begin{equation*}
h\big|_{]0,c[}=G_\rightarrow(-k^2) f\big|_{]0,c[}.
  \end{equation*}
  By e.g. \cite[Proposition 7.5]{DeGe19_01}
\begin{align*}
  |G_\to(-k^2) f(x)|&\leq \frac12\big(|w_m(x,k)|\|u_m(\cdot,k)\|_x+
                      | u_m(x,k)|\| 
                      w_m(\cdot,k)\|_x\big)\|f\|_x,\\
    |\partial_xG_\to(-k^2) f(x)|&\leq \frac12\big(|\partial_x w_m(x,k)|\|u_m(\cdot,k)\|_x+
                      |\partial_x u_m(x,k)|\| 
                      w_m(\cdot,k)\|_x\big)\|f\|_x,
  \end{align*}
where $\|f\|_x:=(\int_0^x|f(y)|^2\d y)^{\frac12}$. By Proposition \ref{prop:global_estim_Jost}, for small $x$,
\begin{align*}
&| w_m(x,k) | \lesssim x^{\frac12-\Re(m)} , \quad | \partial_x w_m(x,k) | \lesssim x^{-\frac12-\Re(m)} , \quad \text{if }m \neq 0 , \\
&| w_0(x,k) | \lesssim x^{\frac12}|\mathrm{ln}(x)| , \quad | \partial_x w_m(x,k) | \lesssim x^{-\frac12}|\mathrm{ln}(x)| , \quad \text{if }m = 0 ,
\end{align*}
while, by Proposition \ref{prop:reg1h},
\begin{align*}
&| u_m(x,k) | \lesssim x^{\frac12+\Re(m)} , \quad | \partial_x u_m(x,k) | \lesssim x^{-\frac12+\Re(m)} .
\end{align*}
This yields the estimates \eqref{iuy1}--\eqref{iuy2}.
\end{proof}

\pagebreak[2]

\begin{lemma}\label{lemma:wronsk}$ $
\begin{enumerate}[label=(\roman*)]
\item
Let $0\leq\Re(m)<1$, $m\neq0$ and $Q\in\mathscr{L}^{(0)}_0$. Let $g \in AC^1]0,\infty[$ be such that $g(x)=o(x^{\frac12+\Re(m)})$ and $\partial_xg(x)=o(x^{-\frac12+\Re(m)})$. Then
\begin{equation*}
\Wr(g,f;0)=0 \quad \text{for all } f \in \Dom ( L^{\mathrm{max}}_{m^2} ).
\end{equation*}
\item The same holds if $m=0$,
   $Q\in\mathscr{L}^{(0)}_{0,\mathrm{ln}}$ and $g \in
  AC^1]0,\infty[$ satisfies $g(x)=o(x^{\frac12} | \mathrm{ln}(x)
  |^{-1} )$ and $\partial_xg(x)=o(x^{-\frac12} | \mathrm{ln}(x) |^{-1}
  )$.
\end{enumerate}
\end{lemma}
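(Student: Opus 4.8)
The plan is to reduce the vanishing of $\Wr(g,f;0)$ to the known behaviour near $0$ of the two natural constituents of $\cD(L_{m^2}^{\max})$: functions in the minimal domain, controlled by Lemma \ref{lemko}, and genuine eigensolutions, controlled by Proposition \ref{cor:u_not_L2a}. Since the hypotheses place us in the range $|\Re(m)|<1$, Proposition \ref{prop:Lmin} (together with the standing assumption $Q\in\mathscr{L}^{(\infty)}_0$ and $Q\in\mathscr{L}^{(0)}_0$, resp. $\mathscr{L}^{(0)}_{0,\mathrm{ln}}$) puts us in alternative (ii) of Proposition \ref{prio}, so the direct sum decomposition \eqref{wroo1} is available. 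Concretely, I would fix $k>0$ (so that $\Re(k)>0$ and Proposition \ref{cor:u_not_L2a} applies) and take $\lambda=-k^2$. Then every $f\in\cD(L_{m^2}^{\max})$ can be written as $f=h+\xi\phi$ with $h\in\cD(L_{m^2}^{\min})$ and $\phi\in\Ker(L_{m^2}+k^2)$. Because $\xi\equiv1$ near $0$ and $\Wr(g,f;0)$ depends only on the germ of $f$ at $0$, near the origin $f=h+\phi$, and hence, once the two limits are shown to exist,
\begin{equation*}
\Wr(g,f;0)=\Wr(g,h;0)+\Wr(g,\phi;0),
\end{equation*}
so it suffices to show that each summand vanishes.

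For the minimal part in case (i), Lemma \ref{lemko}(i) gives $h(x)=o(x^{3/2})$ and $\partial_x h(x)=o(x^{1/2})$. Combined with $g(x)=o(x^{1/2+\Re(m)})$ and $\partial_x g(x)=o(x^{-1/2+\Re(m)})$, both products in $\Wr(g,h;x)=g(x)\partial_x h(x)-\partial_x g(x)\,h(x)$ are $o(x^{1+\Re(m)})$, which tends to $0$ since $\Re(m)>-1$. For the eigensolution part, Proposition \ref{cor:u_not_L2a}(i) gives $\phi(x)=\mathcal{O}(x^{1/2-\Re(m)})$ and $\partial_x\phi(x)=\mathcal{O}(x^{-1/2-\Re(m)})$, so both products in $\Wr(g,\phi;x)$ are $o(x^0)$ and tend to $0$. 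This yields $\Wr(g,f;0)=0$. Case (ii) is handled identically: I would use Lemma \ref{lemko}(ii) ($h(x)=o(x^{3/2}\mathrm{ln}(x))$, $\partial_x h(x)=o(x^{1/2}\mathrm{ln}(x))$) and Proposition \ref{cor:u_not_L2a}(ii) ($\phi(x)=\mathcal{O}(x^{1/2}\mathrm{ln}(x))$, $\partial_x\phi(x)=\mathcal{O}(x^{-1/2}\mathrm{ln}(x))$); the weight $|\mathrm{ln}(x)|^{-1}$ in the hypotheses on $g$ cancels the extra logarithms, giving again $\Wr(g,h;x)=o(x)$ and $\Wr(g,\phi;x)=o(1)$.

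The exponent bookkeeping above is entirely routine, so the only point genuinely requiring attention is the legitimacy of the splitting $f=h+\phi$. I would therefore make explicit that the range $0\le\Re(m)<1$ (resp. $m=0$) is exactly what Proposition \ref{prop:Lmin} needs to reach alternative (ii), and that the asymptotics of $\phi$ quoted from Proposition \ref{cor:u_not_L2a} require precisely $\Re(m)\ge0$, $\Re(k)\ge0$ and the assumed integrability of $Q$ — all ensured by the choice $k>0$. Thus the main obstacle is conceptual rather than computational: securing the decomposition and the sharp boundary asymptotics of both constituents; once these are in place the two Wronskian limits close immediately.
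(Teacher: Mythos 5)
Your proposal is correct and follows essentially the same route as the paper: both fix a $k$ with $\Re(k)\ge 0$, split $f\in\cD(L_{m^2}^{\max})$ via the decomposition \eqref{wroo1} into a minimal-domain part and an eigensolution part, and then kill each Wronskian limit using Lemma \ref{lemko} and Proposition \ref{cor:u_not_L2a} respectively. The paper leaves the exponent bookkeeping implicit; you have simply written it out, and your accounting is accurate.
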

\begin{proof} Fix any $k\in\C$. Every $f \in \Dom (
  L^{\mathrm{max}}_{m^2} )$ can be written as $f=\xi f_0+f_1$ where
  $f_0\in\mathcal{N}(L_{m^2}^{\max}+k^2)$ and $f_1
      \in\cD(L_{m^2}^{\min})$. Now (i)
follows by  Lemma \ref{lemko}(i) 
and Proposition \ref{cor:u_not_L2a}(i), and (ii) follows by
 Lemma \ref{lemko}(ii) 
and Proposition \ref{cor:u_not_L2a}(ii).
\end{proof}

In what follows, we will denote by $\cB_{m^2}$ the space of boundary 
functionals of $L_{m^2}$ with a given perturbation $Q$. 
We will describe
convenient bases of
$\cB_{m^2}$.
In other words, we will find
pairs of linearly independent functionals on $\cD(L_{m^2}^{\max})$
that vanish on  $\cD(L_{m^2}^{\min})$.

Note that cases (i)(a) and (iii)(a) of the next theorem have quite weak assumptions on the
perturbation, however  their non-principal boundary functionals depend
on an arbitrary parameter $a$.

\begin{theorem}
  \label{thm-bc}$ $
\begin{enumerate}[label=(\roman*)]
\item Let $0<\Re(m)<1$.\begin{enumerate}\item Assume $Q \in \mathscr{L}^{(0)}_0$.
  Let $a>0$ be small enough as in Proposition
  \ref{prop:ubowtie}.
  Then
\begin{equation}\label{ia}
  \Wr( u_{-m}^{\bowtie(a)}(\cdot,0),\cdot;0),\quad  \Wr(x^{\frac12+m},\cdot;0) ,
  \end{equation}
  is a basis of $\cB_{m^2}$.
\item
  Suppose that the assumption 
  is strengthened to $Q \in
  \mathscr{L}^{(0)}_\varepsilon$ for some $\varepsilon>0$ (but we drop
  the assumption on $a$). Let $n$ be a non-negative integer such that $\Re(m)\le\frac{(n+1)\varepsilon}{2}$. Then
\begin{equation}\label{ib}
\Wr(  u_{{-}m}^{0[n]},\cdot;0),\quad \Wr(x^{\frac12+m},\cdot;0)
\end{equation}
is a basis of $\cB_{m^2}$.
\item 
 If we assume
  $0<\varepsilon
  $, 
 $0<\Re(m)\leq\frac{\varepsilon }{2}$ and $Q \in 
 \mathscr{L}^{(0)}_\varepsilon$, then 
\begin{equation}\label{ic}
\Wr(x^{\frac12-m},\cdot;0),\quad\Wr(x^{\frac12+m},\cdot;0)
\end{equation}
 is a basis of $\cB_{m^2}$.
\end{enumerate}
\item Let $\Re(m)=0$, $m\neq0$. Assume $Q \in \mathscr{L}^{(0)}_0$. Then
\begin{equation}
  \Wr(x^{\frac12-m},\cdot;0),\quad  \Wr(x^{\frac12+m},\cdot;0)
\label{iia}  \end{equation}
  is a basis of $\cB_{m^2}$.
\item Let $m=0$.
  \begin{enumerate}\item
    Assume $Q \in \mathscr{L}^{(0)}_{0,\mathrm{ln}}$.   Let $a>0$ be small enough as in Proposition
  \ref{prop:ubowtie}.
 Then
\begin{equation}
\Wr(p_0^{\diamond(a)},\cdot;0),\quad \Wr( u_0(\cdot,0) ,\cdot;0)
\label{iiia}\end{equation}
is a basis of $\cB_{0}$.
\item
Suppose that the assumption on $Q$ is strengthened to $Q \in
\mathscr{L}^{(0)}_{0,\mathrm{ln}^2}$ (but we drop the condition on $a$). Then
\begin{equation}
\Wr(p_0,\cdot;0),\quad \Wr(x^{\frac12},\cdot;0)
\label{iiib}\end{equation}
is a basis of $\cB_{0}$.
\item
If the assumption is further strengthened to $Q \in \mathscr{L}^{(0)}_\varepsilon$ for some $\varepsilon>0$, then
\begin{equation}
\Wr(x^{\frac12}\mathrm{ln}(x),\cdot;0),\quad \Wr(x^{\frac12},\cdot;0)
\label{iiic}\end{equation}
is a basis of $\cB_{0}$.
\end{enumerate}\end{enumerate}
\end{theorem}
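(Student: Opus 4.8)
The plan is to reduce the whole statement to the isomorphism between the boundary space and the solution space of the eigenequation. In every case listed we have $|\Re(m)|<1$, so Proposition \ref{prop:Lmin}(ii) guarantees that alternative (ii) of Proposition \ref{prio} holds; consequently $\cB_{m^2}$ is two-dimensional and, fixing $\lambda=0$ (i.e. $k=0$) in \eqref{isom}, the map $f\mapsto\Wr(f,\cdot;0)$ is a linear isomorphism of $\cN(L_{m^2})$ onto $\cB_{m^2}$. Thus for each case it suffices to (a) check that each of the two listed functionals coincides with $\Wr(f,\cdot;0)$ for some solution $f\in\cN(L_{m^2})$, and (b) verify that the two solutions so obtained are linearly independent; a linearly independent pair in the two-dimensional space $\cN(L_{m^2})$ maps to a basis of $\cB_{m^2}$.

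For the functionals whose generating function is itself an exact solution of \eqref{eq:a1} at $k=0$ --- namely $u_{-m}^{\bowtie(a)}(\cdot,0)$ in (i)(a), $u_0(\cdot,0)$ and $p_0^{\diamond(a)}(\cdot,0)$ in (iii)(a), and $p_0(\cdot,0)$ in (iii)(b) --- step (a) is immediate, since these solutions are constructed in Propositions \ref{prop:ubowtie}, \ref{prop:reg1h}, \ref{prop:pdiamonda} and \ref{prop:reg1} respectively and belong to $\cN(L_{m^2})$ by construction. For the remaining ``simple'' functionals generated by $x^{\frac12\pm m}$, $x^{\frac12}$ and $x^{\frac12}\ln(x)$, the plan is to identify them with the functional of the corresponding distinguished solution by means of Lemma \ref{lemma:wronsk}: I would set $g$ equal to the difference between the relevant exact solution and the simple function, check that $g$ and $\partial_x g$ have the decay required in Lemma \ref{lemma:wronsk}, and conclude that the two functionals agree. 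For instance $\Wr(x^{\frac12+m},\cdot;0)=\Gamma(m+1)\Wr(u_m(\cdot,0),\cdot;0)$ because $u_m-u_m^0=o(x^{\frac12+\Re(m)})$ by Proposition \ref{prop:reg1h}; for $\Wr(x^{\frac12-m},\cdot;0)$ in (i)(c) and (ii) one uses $u_{-m}$, available since $\varepsilon\ge2\Re(m)$ and then $u_{-m}-u_{-m}^0=o(x^{\frac12-\Re(m)+\varepsilon})\subset o(x^{\frac12+\Re(m)})$; and for $\Wr(u_{-m}^{0[n]},\cdot;0)$ in (i)(b) one uses the exact solution $u_{-m}^{[n]}(\cdot,k)$ of Proposition \ref{boundcon}, invoking Proposition \ref{thm:k=0} (valid since $\Re(m)<1$) to pass to the $k=0$ boundary function.

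Linear independence in step (b) follows at once from the distinct leading powers of the two generating solutions near $0$: $x^{\frac12+m}$ against $x^{\frac12-m}$, which are linearly independent for $m\neq0$, in cases (i) and (ii), and $x^{\frac12}$ against $x^{\frac12}\ln(x)$ in case (iii). Because the isomorphism $f\mapsto\Wr(f,\cdot;0)$ carries linearly independent solutions to linearly independent functionals, this yields a basis in each case.

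The main obstacle will be the bookkeeping of asymptotic orders in the logarithmic cases, where one must realize the cleaner functionals while meeting the sharper decay $o(x^{\frac12}|\ln(x)|^{-1})$ demanded by Lemma \ref{lemma:wronsk}(ii) rather than the naive $o(x^{\frac12})$. This is precisely why the hypotheses are progressively strengthened from $\mathscr{L}^{(0)}_{0,\mathrm{ln}}$ in (iii)(a) to $\mathscr{L}^{(0)}_{0,\mathrm{ln}^2}$ in (iii)(b) and to $\mathscr{L}^{(0)}_\varepsilon$ in (iii)(c): the estimate $u_0-u_0^0=o(x^{\frac12}|\ln(x)|^{-1})$ needed to validate $\Wr(x^{\frac12},\cdot;0)$ comes from Proposition \ref{prop:reg1_0_bis}, while for $\Wr(x^{\frac12}\ln(x),\cdot;0)$ one checks that $p_0-p_0^0=o(x^{\frac12+\varepsilon})\subset o(x^{\frac12}|\ln(x)|^{-1})$ under $\mathscr{L}^{(0)}_\varepsilon$ with $\varepsilon>0$. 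Once these order comparisons are carried out, each simple functional is identified with an explicit solution functional and the argument closes as above.
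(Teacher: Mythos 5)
Your proposal is correct and follows essentially the same route as the paper's proof: the isomorphism \eqref{isom} between $\cN(L_{m^2})$ and $\cB_{m^2}$, the identification of each ``simple'' functional ($x^{\frac12\pm m}$, $x^{\frac12}$, $x^{\frac12}\mathrm{ln}(x)$) with the functional of a distinguished exact solution via Lemma \ref{lemma:wronsk} and the asymptotics from Propositions \ref{prop:reg1h}, \ref{prop:reg1}, \ref{prop:reg1_0_bis}, and linear independence read off from the distinct leading behaviors near $0$. The only (harmless) divergence is in case (i)(b), where the paper compares $u_{-m}^{\bowtie(a)}$ with $u_{-m}^{0[n]}$ through Proposition \ref{comui}, whereas you realize $\Wr(u_{-m}^{0[n]},\cdot;0)$ directly through the exact solution $u_{-m}^{[n]}(\cdot,0)$ of Proposition \ref{boundcon}; both arguments are valid.
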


\begin{proof}
(i)  Recall that  in Propositions \ref{prop:reg1h} and
\ref{prop:ubowtie} we introduced the
functions 
$
u_m(\cdot,0), u_{-m}^{\bowtie(a)}(\cdot,0)$
spanning $\mathcal{N}(L_{m^2})$.
Therefore,  by \eqref{isom}, 
\begin{equation}\label{wrow0}
  \Wr( u_{-m}^{\bowtie(a)}(\cdot,0),\cdot;0),\quad
  \Wr(u_m(\cdot,0),\cdot;0)\end{equation}
 is a basis of $\cB_{m^2}$.
 Using \eqref{uu1a3}--\eqref{uu1a4} and  Lemma \ref{lemma:wronsk}(i)
we see that
\begin{equation}\label{wroni}
  \Wr( u_m(\cdot,0),\cdot;0)=  \frac{1}{\Gamma(m+1)}\Wr(x^{\frac12+m},\cdot;0).\end{equation}
Therefore, we can replace $u_{ m}(\cdot,0)$ by $x^{\frac12+ m}$,
obtaining the basis \eqref{ia}.

Assume now that $Q \in \mathscr{L}^{(0)}_\varepsilon$ for some
$\varepsilon>0$
 and suppose  that $n$ is a positive integer,  $0<\varepsilon<\frac{2}{n+1}$, 
 $0<\Re(m)\leq\frac{\varepsilon (n+1)}{2}$ and $Q \in
 \mathscr{L}^{(0)}_\varepsilon$.  By Proposition \ref{comui}, we have
 \begin{align*}
   \Wr( u_{{-}m}^{\bowtie(a)}(\cdot,0),\cdot;0)
   =\Wr( u_{{-}m}^{0[n]},\cdot;0)+c_{m}^{(a)[n]}\Wr(u_m(\cdot,0),\cdot;0),
   \end{align*}
for some constant $c_{m}^{(a)[n]}$ depending on the parameters. Therefore,
\eqref{ib} is also a basis of $\cB_{m^2}$.

If $2\Re(m)\leq\varepsilon$, then we can take $n=0$:
\begin{equation}
  u_{{-}m}^{0[0]}(\cdot,0) =
  u_{-m}^0(\cdot,0)=\frac{x^{\frac12-m}}{\Gamma(1-m)}
\end{equation}
Then
 we can replace $\Wr( u_{-m}^{0[0]}(\cdot,0),\cdot;0)$ with
 $\Wr(x^{\frac12-m},\cdot;0)$
obtaing the basis \eqref{ic}.

\vspace{0,2cm}

(ii) By
Proposition \ref{prop:reg1h},  both $ u_m(\cdot,0)$ and
$u_{-m}(\cdot,0)$ are well defined and span $\mathcal{N}(L_{m^2})$.  By   \eqref{isom}, 
we obtain a basis of $\cB_{m^2}$
\begin{equation*}
  \Wr(u_m(\cdot,0),\cdot;0),\quad  \Wr(u_{-m}(\cdot,0),\cdot;0). 
  \end{equation*}
  Now
  \eqref{wroni} is still valid so that
  we can replace $u_{\pm m}(\cdot,0)$ by $x^{\frac12\pm m}$, obtaining
  the basis \eqref{iia}
\vspace{0,2cm}

(iii) In Propositions \ref{prop:reg1h} and \ref{prop:pdiamonda} we constructed
functions $ u_0(\cdot,0)$ and $p_0^{\diamond(a)}(\cdot,0)$ spanning
$\mathcal{N}(L_0)$.
It follows from   \eqref{isom} that \eqref{iiia}
is a basis of $\cB_{0}$.

 If we strengthen the assumption to 
 $Q\in\mathscr{L}^{(0)}_{0,\mathrm{ln}^2}$, then in Proposition \ref{prop:reg1} we defined
 $p_0(\cdot,0)\in\mathcal{N}(L_0)$.  The functions $p_0(\cdot,0)$ and $u_0(\cdot,0)$
 span $\mathcal{N}(L_0)$.
 Therefore, by   \eqref{isom}, 
 \begin{equation*}
   \Wr(u_0(\cdot,0),\cdot;0),\quad  \Wr(p_{0}(\cdot,0),\cdot;0). 
 \end{equation*}
 is a basis of $\cB_{0}$.
 Besides, by Proposition \ref{prop:reg1_0_bis} and  Lemma \ref{lemma:wronsk}(ii)
we have
 \begin{align*}
&   \Wr( u_0(\cdot,0),\cdot;0)=  \Wr(x^{\frac12},\cdot;0).
   \end{align*}
Therefore, \eqref{iiib} 
is a basis of $\cB_{0}$.

If the assumption is further strengthened to 
$Q\in\mathscr{L}^{(0)}_{\varepsilon}$ with $\varepsilon>0$, then
 by  Proposition \ref{prop:reg1} and   Lemma \ref{lemma:wronsk}(ii)
we have
 \begin{align*}
& \Wr(p_0,\cdot;0) =  \Wr(x^{\frac12} \mathrm{ln}(x),\cdot;0).
   \end{align*}
Therefore, \eqref{iiic}
is a basis of $\cB_{0}$.
\end{proof}

\begin{remark}
Let $Q(x) = - \frac{ \beta }{ x }\one_{]0,1]}(x)$ as in Remark \ref{rk:coulomb}. Taking $n=1$ in the previous theorem, it follows from that remark that for $0<\Re(m)<1$,
\begin{equation*}
\Wr\Big(x^{\frac12-m}\Big ( 1 -  \frac{\beta x}{1-2m} \Big ),\cdot;0\Big),\quad \Wr(x^{\frac12+m},\cdot;0)
\end{equation*}
 forms a basis of $\cB_{m^2}$.
\end{remark}

The next table summarizes the bases of $\cB_{m^2}$ that we constructed, depending on the values of $-1<\Re(m)<1$ and on the conditions on $Q$. 

\medskip

\renewcommand{\arraystretch}{1.7}
\begin{center}
\begin{tabular}{|Sc|Sc|Sc|Sc|Sc|}
  \hline
 & \multirow{2}{*}{$-1<\Re(m)<0$} & \multicolumn{2}{c|}{$\Re(m)=0$} & \multirow{2}{*}{$0<\Re(m)<1$} \\
 \cline{3-4}
 & & $m\neq0$ & $m=0$ & \\
  \hline
  $Q \in \mathscr{L}^{(0)}_0$ & $x^{\frac12-m}$, $u_{m}^{\bowtie(a)}$ & $x^{\frac12-m}$, $x^{\frac12+m}$ & ? & $u_{-m}^{\bowtie(a)}$, $x^{\frac12+m}$ \\
  \hline
    $Q \in \mathscr{L}^{(0)}_{0,\mathrm{ln}}$ & $x^{\frac12-m}$, $u_{m}^{\bowtie(a)}$ & $x^{\frac12-m}$, $x^{\frac12+m}$ & $p_0^{\diamond(a)}$, $x^{\frac12}$ & $u_{-m}^{\bowtie(a)}$, $x^{\frac12+m}$ \\
    \hline
        $Q \in \mathscr{L}^{(0)}_{0,\mathrm{ln}^2}$ & $x^{\frac12-m}$, $u_{m}^{\bowtie(a)}$ & $x^{\frac12-m}$, $x^{\frac12+m}$ & $p_0$, $x^{\frac12}$ & $u_{-m}^{\bowtie(a)}$, $x^{\frac12+m}$ \\
    \hline
      \multirow{2}{*}{$Q \in \mathscr{L}^{(0)}_{\varepsilon}$} &
                                                        $-\frac{(j+1)\varepsilon}{2} \le \Re(m)
               \leq                                                  0$ & \multirow{2}{*}{$x^{\frac12-m}$, $x^{\frac12+m}$} & \multirow{2}{*}{$x^{\frac12}\mathrm{ln}(x)$, $x^{\frac12}$}  & $0 \leq\Re(m) \le  \frac{(j+1)\varepsilon}{2}$ \\
       \cline{2-2} \cline{5-5}
 & $x^{\frac12-m}$, $ u_{m}^{0[j]}$ &  &  & $ u_{-m}^{0[j]}$, $x^{\frac12+m}$ \\
    \hline
\end{tabular}
\medskip
\captionof{table}{\textit{Bases of the boundary space $\cB_{m^2}$ of $L_{m^2}$}. In each case, we write $g_1$, $g_2$ if $\Wr(g_1,\cdot;0)$, $\Wr(g_2,\cdot;0)$ is a basis of $\cB_{m^2}$. To shorten notations, we write $u_{m}^{\bowtie(a)}$ for $u_{m}^{\bowtie(a)}(\cdot,0)$ and likewise for other functions. Note that $u_{m}^{\bowtie(a)}$ and $p_0^{\diamond(a)}$ depend on an arbitrary parameter $a$. Each line corresponds to a condition on $Q$, from the minimal one $Q \in \mathscr{L}^{(0)}_0$ to the strongest one $Q \in \mathscr{L}^{(0)}_{\varepsilon}$, with $\varepsilon>0$ (beside the condition $Q \in \mathscr{L}^{(\infty)}_0$ which we everywhere assume). In the last line, $j\in \{ 0 , \dots , n \}$, where $n$ is the smallest nonnegative integer such that $\frac{(n+1)\varepsilon}{2} \ge 1$.}
\end{center}

As mentioned above, in the last line, for $j=0$, $ u_{m}^{0[0]}$ can be replaced by $x^{\frac12+m}$ and $ u_{-m}^{0[0]}$ can be replaced by $x^{\frac12-m}$. For growing values or $\Re(m)>0$, the picture is then that, to pass from the region $R_j:=\{\frac{j\varepsilon}{2} < \Re(m) \le  \frac{(j+1)\varepsilon}{2}\}$ to $R_{j+1}$, one needs to add a further term to $ u_{-m}^{0[j]}$ in order to still have an element of $\cB_{m^2}$. Of course, we could also use $ u_{-m}^{0[n]}$ in the whole region $0 < \Re(m) < 1$ for any $n$ such that $\frac{(n+1)\varepsilon}{2} \ge 1$, but then all the terms of order $o(x^{\frac12+m})$ in $ u_{-m}^{0[n]}$ are irrelevant.

\subsection{The perturbed Bessel operator with pure boundary conditions}
\label{ss0}

In this subsection we introduce the most natural family
of perturbed Bessel operators.  It is parallel to what in the 
unperturbed case was called the family of homogeneous 
Bessel operators. (In the perturbed case the homogeneity is no longer 
true, therefore the name has to be changed).

Let $m\in\mathbb{C}$, $-1<\Re(m)$. We assume that
    \begin{align*}
&  Q \in \mathscr{L}^{(0)}_\varepsilon, \quad 
             m\neq0,\quad
\varepsilon=\max\big(0,-2\Re(m)\big);
\\
  & Q \in \mathscr{L}^{(0)}_{0,\mathrm{ln}^2} ,\quad m=0.
    \end{align*} 
We can then define
\begin{align*}
\Dom(H_{ m}) & := \Big\{f\in \Dom(L_{m^2}^{\max})\mid
\, \Wr(x^{\frac12+m},f;0)=0\Big\},\\
H_{ m}&:=L_{m^2}\big|_{\Dom(H_{m})},
\end{align*}
which we will call 
the {\em perturbed Bessel operator with pure boundary conditions}.

Using Theorem \ref{thm-bc} we see that the operator
  $H_{m}$ is
  closed,
\begin{align*}
  L^{\mathrm{min}}_{m^2} = H_{m} = L^{\mathrm{max}}_{m^2},\quad \Re(m)\geq1;\\
  L^{\mathrm{min}}_{m^2} \subset H_{m} \subset L^{\mathrm{max}}_{m^2} ,\label{pqo}\quad
 -1< \Re(m)<1;
\end{align*}
and both inclusions are of codimension $1$.

\begin{proposition}\label{prop:Hmres}
Suppose that the assumptions on $Q$ stated at the beginning of this subsection
hold.  
Let $\Re(k)>0$. Then 
$k^2\notin \sigma(H_{m})$
if and only if $\Wr_{m}(k)\neq0$. Moreover,
the operator  
$  G_{m,\bowtie}(-k^2)$ defined in \eqref{green-bowtie}  is
then bounded  
  and 
  \begin{equation*}
  G_{m,\bowtie}(-k^2)=(k^2+  H_{m})^{-1}.
\end{equation*}
\end{proposition}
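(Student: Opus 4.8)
The plan is to establish the proposition in three stages: first show that $G_{m,\bowtie}(-k^2)$ is a bounded operator on $L^2]0,\infty[$ whenever $\Wr_m(k)\neq0$; second verify that it is a genuine two-sided inverse of $k^2+H_m$; and third deduce the spectral equivalence. Throughout I would assume $\Re(m)\ge0$ without loss of generality, since $L_{m^2}$ depends only on $m^2$, and treat $\Re(k)>0$ so that $v_m(\cdot,k)$ decays exponentially by Proposition \ref{prop:reginfty} and $u_m(\cdot,k)$ is well defined near $0$ by Proposition \ref{prop:reg1h}.

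For the boundedness, I would use the global estimates already proven. Near $\infty$, Proposition \ref{prop:global_estim_Jost} gives $|v_m(x,k)|\lesssim \mu_k(x)^{\frac12-\Re(m)}\e^{-\Re(k)x}$ (up to the harmless normalization factor relating $v_m$ and $w_m$), while $u_m$ grows at most like $x^{\frac12+\Re(m)}$ near $0$ by Proposition \ref{prop:reg1h}. The kernel
\[
G_{m,\bowtie}(-k^2;x,y)=\frac{1}{\Wr_m(k)}\big(\theta(y-x)u_m(x,k)v_m(y,k)+\theta(x-y)v_m(x,k)u_m(y,k)\big)
\]
therefore has the same off-diagonal decay structure as the unperturbed kernel $G^0_{m,\bowtie}(-k^2)$, which is known to be bounded on $L^2$ for $\Re(m)>-1$, $\Re(k)>0$. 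The cleanest route is a Schur test: I would verify $\sup_x\int_0^\infty|G_{m,\bowtie}(-k^2;x,y)|\d y<\infty$ and the symmetric bound, using the weighted estimates on $u_m$ and $v_m$ together with the exponential factor $\e^{-\Re(k)|x-y|}$ that emerges from the product $\eta_k(x)\eta_{-k}(y)$ for $x<y$. The condition $\Re(m)>-1$ guarantees integrability of $x^{1-2\Re(m)}$-type factors near zero, exactly as in the unperturbed case.

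To show $G_{m,\bowtie}(-k^2)=(k^2+H_m)^{-1}$, I would check both that $(k^2+L_{m^2})G_{m,\bowtie}(-k^2)f=f$ for $f\in L^2$ and that the range of $G_{m,\bowtie}(-k^2)$ lands in $\Dom(H_m)$. The first is the standard Green's-operator computation: since $u_m,v_m$ solve $(L_{m^2}+k^2)g=0$ and their Wronskian is $\Wr(v_m,u_m)=\Wr_m(k)$, differentiating the kernel twice reproduces the delta function, so $G_{m,\bowtie}(-k^2)$ is a right inverse of $k^2+L_{m^2}$ on $L^2$. For membership in the domain, I would note that $G_{m,\bowtie}(-k^2)f$ behaves near $0$ like a multiple of $u_m(\cdot,k)$ (the contribution of $v_m$ is suppressed because the $\theta(y-x)$ part carries $u_m(x,k)$), hence $\Wr(x^{\frac12+m},G_{m,\bowtie}(-k^2)f;0)=0$ by \eqref{wroni}-type reasoning combined with Lemma \ref{lemma:wronsk}; near $\infty$ the exponential decay of $v_m$ places the image in $L^2$ and no boundary condition is needed since $Q\in\mathscr{L}^{(\infty)}_0$.

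Finally, the spectral statement follows formally once the two directions are in place: if $\Wr_m(k)\neq0$ then the bounded operator $G_{m,\bowtie}(-k^2)$ is the resolvent, so $k^2\notin\sigma(H_m)$; conversely if $\Wr_m(k)=0$ then $v_m(\cdot,k)$ is proportional to $u_m(\cdot,k)$, producing an $L^2$ eigenfunction of $H_m$ (square integrable near $0$ because it behaves like $u_m\sim x^{\frac12+\Re(m)}$ with $\Re(m)>-1$, and near $\infty$ because it equals the exponentially decaying Jost solution), whence $-k^2$, i.e. $k^2\in-\sigma$, giving $k^2\in\sigma(H_m)$. I expect the main obstacle to be the boundedness step: carefully controlling the Schur-test integrals uniformly, especially tracking how the weights $\mu_k,\eta_{\pm k}$ combine to yield genuine $L^2$-boundedness rather than mere local integrability, and handling the case $-1<\Re(m)<0$ where $u_m$ is the more singular solution. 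The eigenfunction argument in the converse direction is routine given Proposition \ref{cor:u_not_L2}, which guarantees that any solution not proportional to the Jost solution grows like $\e^{\Re(k)x}$ and is thus excluded.
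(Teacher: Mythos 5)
Your strategy is sound and the spectral bookkeeping at the end (nonvanishing Wronskian $\Rightarrow$ bounded two\mbox{-}sided inverse; vanishing Wronskian $\Rightarrow$ $u_m\propto v_m$ gives an $L^2$ eigenfunction, with Proposition \ref{cor:u_not_L2} excluding everything else) is exactly what is needed. But your route is genuinely different from the paper's: the paper's entire proof is a one-line appeal to the general theory of 1d Schr\"odinger operators, namely \cite[Proposition 7.7]{DeGe19_01}, which states that a two-sided Green's kernel built from a solution $u$ square integrable near $0$ and satisfying the boundary condition, and a solution $w$ square integrable near $\infty$, with $\Wr(w,u)\neq0$, is automatically bounded on $L^2$ and equals the resolvent of the realization defined by those boundary conditions. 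The only input the paper has to supply is the asymptotics of $u_m$ near $0$ from Proposition \ref{prop:reg1h}. You instead reconstruct that general proposition by hand; this is more self-contained but costs you the two points below.

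First, the reduction ``without loss of generality $\Re(m)\ge0$'' is not valid here: while $L_{m^2}$ depends only on $m^2$, the operator $H_m$ does not --- for $-1<\Re(m)<0$ the boundary condition $\Wr(x^{\frac12+m},\cdot\,;0)=0$ selects the \emph{more} singular solution, and $H_m\neq H_{-m}$. You acknowledge this case at the end, but it contradicts your opening reduction and must be treated, not set aside. Second, and more substantively, the unweighted Schur test you propose, $\sup_x\int_0^\infty|G_{m,\bowtie}(-k^2;x,y)|\,\mathrm{d}y<\infty$, actually fails for $-1<\Re(m)<-\tfrac12$: there $u_m(x)\sim x^{\frac12+m}$ with $\Re(\tfrac12+m)<0$ and $v_m(y)=O(y^{\frac12+\Re(m)})$ near $0$, so the row integral behaves like $x^{\frac12+\Re(m)}\int_x^1 y^{\frac12+\Re(m)}\,\mathrm{d}y\sim C\,x^{\frac12+\Re(m)}\to\infty$ as $x\to0$. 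You need either a weighted Schur test (weight $x^{-\delta}$ with $-\tfrac12-\Re(m)\le\delta<\tfrac32+\Re(m)$, which exists precisely because $\Re(m)>-1$), or the pointwise bound $|Gf(x)|\lesssim|v_m(x)|\,\|u_m\|_{L^2(0,x)}+|u_m(x)|\,\|v_m\|_{L^2(x,\infty)}$ used in \cite[Propositions 7.5 and 7.7]{DeGe19_01} (and quoted in the proof of Lemma \ref{lemko}), which only requires $u_m\in L^2$ near $0$ and $v_m\in L^2$ near $\infty$. With that repair the rest of your argument goes through.
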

  
\begin{proof}
 We use \cite[Proposition 7.7]{DeGe19_01} together with the asymptotic
 behavior near $0$ of $u_m(x,k)$ established in
 Propositions \ref{prop:reg1h}.
 \end{proof}

In
the following theorem we fix a perturbation $Q$ and consider the
operator valued family
$   H_{m}$.
In the definition of regularity we use the
concept of a holomorphic family of closed operators
recalled in 
Appendix \ref{Holomorphic families of closed operators}.
 Moreover, the continuity of a family of closed operators should be understood in the weak resolvent sense.

 \begin{theorem}\label{thm:hol_Hma}$ $
    \begin{enumerate}[label=(\roman*)]
   \item 
Let $2>\varepsilon>0$ and suppose that
$  Q \in \mathscr{L}^{(0)}_\varepsilon$.
 Then
 \begin{align}\label{kak1}
  \Big\{-\frac{\varepsilon}{2} \leq \Re(m)\Big\}\ni m&\mapsto
   H_{m}
 \end{align}
 is regular.
\item
  Let
$Q \in \mathscr{L}^{(0)}_0. $
  Then
 \begin{align}
  \Big\{\Re(m)\geq0,\, m\neq0\Big\}\ni m&\mapsto
   H_{m},\label{porto1k}
 \end{align}
 is regular.
If we strengthen the assumption  to
$Q \in \mathscr{L}^{(0)}_{0,\mathrm{ln}^2} $,
then we can include $m=0$ in \eqref{porto1k}.
   \end{enumerate}
   \end{theorem}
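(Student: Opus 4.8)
The statement to be proved is Theorem \ref{thm:hol_Hma}, asserting the regularity (in the sense of holomorphic/continuous families of closed operators) of the map $m\mapsto H_m$ on the relevant strips. My plan is to establish regularity by exhibiting, near any base point $m_0$, an explicit holomorphic (resp.\ continuous) family of bounded resolvents and then invoking the characterization of holomorphic families of closed operators from Appendix \ref{Holomorphic families of closed operators}. The crucial input is that, by Proposition \ref{prop:Hmres}, whenever $\Re(k)>0$ and $\Wr_m(k)\neq0$ we have
\begin{equation*}
(k^2+H_m)^{-1}=G_{m,\bowtie}(-k^2),
\end{equation*}
whose integral kernel \eqref{green-bowtie} is built from $u_m(\cdot,k)$, $v_m(\cdot,k)$ and the Jost function $\Wr_m(k)$. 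So the entire regularity question reduces to the joint regularity of these three objects in $m$, which we have already proven.

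\textbf{Key steps.} First I would fix an arbitrary $m_0$ in the relevant region and choose, using Corollary \ref{cor-asim}, a value $k_0$ with $\Re(k_0)>0$ such that $\Wr_{m_0}(k_0)\neq0$; by the regularity of $(m,k)\mapsto\Wr_m(k)$ (Proposition \ref{prop:analyticiy_Jost}) this non-vanishing persists on a neighborhood of $m_0$, so $k_0^2$ lies in the resolvent set of $H_m$ for all $m$ near $m_0$, and $(k_0^2+H_m)^{-1}=G_{m,\bowtie}(-k_0^2)$ there. Second, I would show that the map $m\mapsto G_{m,\bowtie}(-k_0^2)$ is holomorphic (resp.\ continuous) as a family of bounded operators on $L^2]0,\infty[$. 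Its kernel is
\begin{equation*}
G_{m,\bowtie}(-k_0^2;x,y)=\frac{1}{\Wr_m(k_0)}\bigl(\theta(y-x)u_m(x,k_0)v_m(y,k_0)+\theta(x-y)v_m(x,k_0)u_m(y,k_0)\bigr),
\end{equation*}
and by Propositions \ref{prop:anal_um2}, \ref{prop:reginfty-an} and \ref{prop:analyticiy_Jost} the functions $u_m$, $v_m$ and $\Wr_m$ are pointwise regular in $m$. To upgrade pointwise regularity to operator-norm (or weak) regularity I would use the global estimates \eqref{pio1}--\eqref{pio2} on $u_m^0$, Proposition \ref{prop:global_estim_Jost} on the Jost solutions $w_m$ (hence on $v_m$), together with the Neumann series representations of $u_m$ and $w_m$, which converge locally uniformly in $m$; this gives locally uniform bounds on the kernel allowing passage of the limit/derivative through the Hilbert--Schmidt (or weak matrix-element) norm. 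Third, having a locally holomorphic family of resolvents at a fixed point $k_0^2$ of the resolvent set, the definition recalled in Appendix \ref{Holomorphic families of closed operators} yields that $m\mapsto H_m$ is a holomorphic family of closed operators; continuity follows identically in the weak resolvent sense, which covers the boundary strips $\{-\tfrac{\varepsilon}{2}\le\Re(m)\}$ in part (i). For part (ii) the only additional point is the inclusion of $m=0$ under the strengthened hypothesis $Q\in\mathscr{L}^{(0)}_{0,\ln^2}$, where I would use Proposition \ref{prop:reg1_0_bis} to control $u_0$ and the regularity of $p_0(\cdot,k)$ from Proposition \ref{prop:anal_um2}(iii) so that the kernel of $G_{0,\bowtie}$ extends regularly across $m=0$.

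\textbf{Main obstacle.} The delicate step is the second one: converting the pointwise-in-$(x,y)$ regularity of the Green's kernel into regularity of the associated \emph{bounded operator}, uniformly near $m_0$ including on the real boundary lines. The subtlety is that the resolvent $G_{m,\bowtie}(-k_0^2)$ is only Hilbert--Schmidt (not trace class), and the behavior near $x=0$ degenerates as $\Re(m)\to0$ or as $m\to0$; one must check that the weighted $L^\infty$ bounds of Section \ref{section:solutions} translate into square-integrable kernel bounds that are locally uniform in $m$, so that difference quotients converge in the appropriate topology. Because we only claim regularity (continuity plus separate analyticity) rather than joint analyticity on the closed strip, it is enough to verify weak resolvent continuity up to the boundary, for which the uniform-in-$m$ estimates from Remark \ref{rk:313}, Proposition \ref{prop:global_estim_Jost}, and the convergence of the Neumann series suffice; interior analyticity then comes for free from the pointwise holomorphy and dominated convergence. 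I expect the boundary case $\Re(m)=-\tfrac{\varepsilon}{2}$ and the crossing $m=0$ to require the most care, handled exactly by the sharpened asymptotics in Propositions \ref{prop:reg1_0_bis} and \ref{prop:analyticiy_Jost}(iii).
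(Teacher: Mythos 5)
Your proposal is correct and follows essentially the same route as the paper: the paper's proof consists precisely of invoking Proposition \ref{prop:Hmres} to identify $(k^2+H_m)^{-1}$ with $G_{m,\bowtie}(-k^2)$ and then citing the regularity of $u_m$, $w_m$ and $\Wr_m$ (Propositions \ref{prop:anal_um2}, \ref{prop:reginfty-an} and \ref{prop:analyticiy_Jost}), together with the argument of Theorem 3.10 of \cite{DeFaNgRi20_01}, which is exactly the localization-at-a-resolvent-point and kernel-regularity argument you spell out. The only cosmetic slip is your appeal to the regularity of $p_0(\cdot,k)$ for the case $m=0$: the kernel of $G_{0,\bowtie}$ is built from $u_0$ and $v_0$ only, and the stronger hypothesis $Q\in\mathscr{L}^{(0)}_{0,\mathrm{ln}^2}$ enters through the definition of $H_0$ and the Jost-function representation rather than through $p_0$.
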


\begin{proof}
In view of Proposition \ref{prop:Hmres}, we can proceed as in the proof of Theorem 3.10 in \cite{DeFaNgRi20_01}: It suffices to use Propositions \ref{prop:anal_um2}, \ref{prop:reginfty-an} and \ref{prop:analyticiy_Jost}.
\end{proof}

Note that for $\Re(m)\geq1$ the operator $H_m$ is the unique
 closed realization
of $L_{m^2}$. Theorem \ref{thm:hol_Hma} shows that the holomorphic
function
\begin{equation*}
  \{\Re(m)>1\}\ni m\mapsto H_m
  \end{equation*}
has an analytic continuation to a larger region,
\eqref{kak1} or \eqref{porto1k}, where the width of the additional strip  depends on  the assumption on the potential.

\subsection{The perturbed Bessel operator with mixed boundary 
  conditions I}
\label{subsec:mixedI}

In this subsection we describe closed realizations of perturbed Bessel
operators with  mixed boundary conditions under sufficiently strong conditions on the perturbation,
which guarantee that these realizations are very similar to the
unperturbed case.

Let  $m\neq0$, $|\Re(m)|<1$, $\varepsilon=2|\Re(m)|$ and
$Q \in \mathscr{L}^{(0)}_\varepsilon$.
 For 
$\kappa\in\C\cup\{\infty\}$ we set 
\begin{align*}
  \Dom(H_{ m,\kappa}) & := \Big\{f\in \Dom(L_{m^2}^{\max})\mid
\Wr\big(x^{\frac12+m} + \kappa x^{\frac12-m},f;0
\big)=0\Big\},\quad\kappa\in\C, \nonumber \\
\Dom(H_{ m,\infty}) &: = \Big\{f\in \Dom(L_{m^2}^{\max})\mid
\Wr\big( x^{\frac12-m},f;0
\big)=0\Big\},\\
H_{ m,\kappa}&:=L_{ m^2}\big|_{\Dom(H_{ m,\kappa})}.
\end{align*}

If $m=0$ we assume $Q \in \mathscr{L}^{(0)}_{0,\mathrm{ln}^2} $.
For
$\nu\in\C\cup\{\infty\}$ we set
\begin{align*}
\Dom(H_{0}^{\nu}) &: = \Big\{f\in \Dom(L_{0}^{\max})\mid
                    \Wr\big(\nu x^{\frac12} + p_0
                    ,f;0
\big)=0\Big\},\quad\nu\in\C, \nonumber \\
\Dom(H_{ 0}^{\infty}) & := \Dom(H_{ 0}),\\
H_{ 0}^{\nu}&:=L_{ 0}\big|_{\Dom(H_{0}^{\nu})}.
\end{align*}
Note that if $Q \in \mathscr{L}^{(0)}_\varepsilon$ with $\varepsilon>0$, then 
\begin{align*}
\Dom(H_{0}^{\nu}) &: = \Big\{f\in \Dom(L_{0}^{\max})\mid
                    \Wr\big(\nu x^{\frac12} + x^{\frac12}\mathrm{ln}(x)
                    ,f;0
\big)=0\Big\}.
\end{align*}

Clearly, the operators
 $H_{m,\kappa}$,   $H_{0}^\nu$
  are closed,
\begin{align}
  L^{\mathrm{min}}_{m^2} \subset H_{m,\kappa} \subset 
  L^{\mathrm{max}}_{m^2} ,\label{pqo1a}\\
  L^{\mathrm{min}}_0 \subset H_{0}^\nu \subset 
  L^{\mathrm{max}}_0 ,\label{pqo1a-}
\end{align}
and both inclusions in (\ref{pqo1a}) and (\ref{pqo1a-})
are of codimension $1$.

One can compute the resolvents of $H_{m,\kappa}$ in the same way as
for $H_m$.

\begin{proposition}\label{prop:Hmres_mixed}
Let $\Re(k)>0$.
\begin{enumerate}[label=(\roman*)]
\item
Let  $m\neq0$, $|\Re(m)|<1$, $\varepsilon=2|\Re(m)|$,
$Q \in \mathscr{L}^{(0)}_\varepsilon$ and
$\kappa\in\C\cup\{\infty\}$.
We have $k^2\notin \sigma(H_{m,\kappa})$
if and only if $
\Wr_m(k)+\kappa\frac{\Gamma(1-m)}{\Gamma(1+m)}\frac{k^{2m}}{2^{2m}}\Wr_{-m}(k)\neq0$.
Besides, the operator  
$  G_{m,\bowtie,\kappa}(-k^2)$ defined in \eqref{bowtie-kappa}  is
then bounded  and
  \begin{equation*}
  G_{m,\bowtie,\kappa}(-k^2)=(k^2+  H_{m,\kappa})^{-1}.
\end{equation*}
\item  Let $m=0$,   $Q \in
  \mathscr{L}^{(0)}_{0,\mathrm{ln}^2}$ and $\nu \in \C\cup\{\infty\}$.
We have $k^2\notin \sigma(H_{m}^\nu)$ if and only if
$ \Wr( w_0 , \nu  u_0 + p_{0} ) 
\neq 0$. Besides,  the operator $G^\nu_{0}(-k^2)$ with kernel defined in \eqref {bowtie-zero}
is then bounded 
and  \begin{equation*}
  G^\nu_{0,\bowtie}(-k^2)=(k^2+  H^\nu_{0})^{-1}.
\end{equation*}
\end{enumerate}
\end{proposition}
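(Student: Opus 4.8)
The plan is to follow the standard recipe for computing resolvents of one-dimensional Schrödinger operators via their Green's functions, exactly as referenced in the proof of Proposition \ref{prop:Hmres}. The essential input is \cite[Proposition 7.7]{DeGe19_01}, which says that for a closed realization $L^\bullet$ lying between $L^{\min}_{m^2}$ and $L^{\max}_{m^2}$, a point $-k^2$ belongs to the resolvent set precisely when one can build a two-sided Green's operator out of two solutions of the eigenequation: one satisfying the boundary condition defining $L^\bullet$ at $0$, and one square integrable (i.e.\ satisfying the implicit boundary condition) at $\infty$, provided their Wronskian does not vanish. In our setting the solution regular near $\infty$ is always the Jost solution $w_m(\cdot,k)$ (equivalently $v_m(\cdot,k)$), and the solution respecting the boundary condition at $0$ is the appropriate combination dictated by the definition of the domain.

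For part (i), the boundary condition defining $H_{m,\kappa}$ is $\Wr(x^{\frac12+m}+\kappa x^{\frac12-m},f;0)=0$. Under the assumption $Q\in\mathscr{L}^{(0)}_{2|\Re(m)|}$, Theorem \ref{thm-bc}(i)(c) (and its mirror for $-m$) guarantees that both $u_m(\cdot,k)$ and $u_{-m}(\cdot,k)$ are well defined and that $\Wr(x^{\frac12\pm m},\cdot;0)$ are legitimate boundary functionals. By the asymptotics \eqref{uu1a3}--\eqref{uu1a4} together with Lemma \ref{lemma:wronsk}, the combination $u_m(\cdot,k)+\kappa\frac{\Gamma(1-m)}{\Gamma(1+m)}u_{-m}(\cdot,k)$ behaves near $0$ like $\frac{1}{\Gamma(1+m)}(x^{\frac12+m}+\kappa x^{\frac12-m})$ modulo lower-order terms, hence satisfies the defining boundary condition. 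First I would verify that $G_{m,\bowtie,\kappa}(-k^2)$ in \eqref{bowtie-kappa} is exactly the two-sided Green's operator associated with this boundary solution and the Jost solution; then the denominator in \eqref{bowtie-kappa} is, up to a nonzero constant, precisely the Wronskian of these two solutions. A direct Wronskian computation using \eqref{jost}, \eqref{eq:wronsk_m-m} and the normalization constants yields $\Wr(w_m,u_m+\kappa\frac{\Gamma(1-m)}{\Gamma(1+m)}u_{-m})$ proportional to $\Wr_m(k)+\kappa\frac{\Gamma(1-m)}{\Gamma(1+m)}\frac{k^{2m}}{2^{2m}}\Wr_{-m}(k)$, so nonvanishing of this quantity is equivalent to invertibility. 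Boundedness on $L^2]0,\infty[$ follows from the decay estimates (Proposition \ref{prop:global_estim_Jost} near $\infty$ and Proposition \ref{prop:reg1h} near $0$, noting $|\Re(m)|<1$ ensures square integrability of both solutions near $0$), via the standard Hilbert--Schmidt or Schur-test bound.

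For part (ii), the argument is structurally identical but adapted to $m=0$. The domain of $H_0^\nu$ is defined by $\Wr(\nu x^{\frac12}+p_0,f;0)=0$, and under $Q\in\mathscr{L}^{(0)}_{0,\ln^2}$ the functions $u_0(\cdot,k)$ and $p_0(\cdot,k)$ are both available (Propositions \ref{prop:reg1h} and \ref{prop:reg1}), with $\nu u_0(\cdot,k)+p_0(\cdot,k)$ satisfying the boundary condition by Proposition \ref{prop:reg1_0_bis} and Lemma \ref{lemma:wronsk}(ii). I would identify $G^\nu_{0,\bowtie}(-k^2)$ from \eqref{bowtie-zero} as the two-sided Green's operator built from $\nu u_0+p_0$ and $v_0$ (equivalently $w_0$), and compute that its denominator equals, up to normalization, $\Wr(v_0,\nu u_0+p_0)=\nu\Wr_0(k)+\Wr(v_0,p_0)$, which matches the stated condition.

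The main obstacle I expect is the bookkeeping of normalization constants in the Wronskian computations: tracking the factors $\frac{\Gamma(1-m)}{\Gamma(1+m)}\frac{k^{2m}}{2^{2m}}$ through the relation between $v_m$ and $w_m$ (via $v_m=\sqrt{\pi/(2k)}(k/2)^m w_m$) and through the asymptotic expansions of $u_{\pm m}^0(\cdot,k)$ near $0$, so that the computed Wronskian denominator matches exactly the expressions in \eqref{bowtie-kappa} and \eqref{bowtie-zero}. Once these constants are pinned down, the equivalence between vanishing of the Wronskian and the spectral condition, and the identification of the Green's operator with the resolvent, are routine consequences of \cite[Proposition 7.7]{DeGe19_01}. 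I would therefore structure the proof by first stating the general principle, then carrying out the two Wronskian identities in turn, relegating the constant-tracking to a short explicit calculation.
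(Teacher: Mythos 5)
Your proposal is correct and follows essentially the same route as the paper: the paper's proof of this proposition simply says "the argument is the same as in the proof of Proposition \ref{prop:Hmres}", which in turn invokes \cite[Proposition 7.7]{DeGe19_01} together with the boundary asymptotics of the distinguished solutions — exactly the general principle you state, combined with the Wronskian identification $\Wr(v_m,u_m+\kappa\tfrac{\Gamma(1-m)}{\Gamma(1+m)}u_{-m})=\Wr_m(k)+\kappa\tfrac{\Gamma(1-m)}{\Gamma(1+m)}\tfrac{k^{2m}}{2^{2m}}\Wr_{-m}(k)$ coming from $v_m=(k/2)^{2m}v_{-m}$. Your constant-tracking and the verification that the combination satisfies the defining boundary functional are exactly the details the paper leaves implicit.
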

  
\begin{proof}
The argument is the same as in the proof of Proposition \ref{prop:Hmres}.
 \end{proof}

Let us fix a perturbation $Q$ and consider the operator valued
families with mixed boundary conditions.

\begin{theorem}\label{thm:hol_Hma-mix}$ $
   \begin{enumerate}[label=(\roman*)]
   \item
     Let $2>\varepsilon > 0$ and $  Q \in \mathscr{L}^{(0)}_\varepsilon$.
 Then
 \begin{align}\label{holi}
  \Big\{(m,\kappa)\ |\ |\Re(m)|\leq\frac{\varepsilon}{2},\
   \kappa\in \mathbb{C}\cup\{\infty\}, \ (m,\kappa)\neq(0,-1)\Big\}\ni (m,\kappa)&\mapsto
   H_{m,\kappa}
 \end{align}
 is regular.
 Besides, $H_{m,\kappa}=H_{-m,\kappa^{-1}}$.
\item
  Let $m=0$. Suppose that $Q \in \mathscr{L}^{(0)}_{0,\mathrm{ln}^2} $.  Then
 \begin{align}
\mathbb{C}\cup\{\infty\}\ni\nu&\mapsto
   H_0^\nu,\label{porto1k-mix}
 \end{align}
 is   analytic.
   \end{enumerate}
   \end{theorem}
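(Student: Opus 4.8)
The plan is to prove Theorem \ref{thm:hol_Hma-mix} by following closely the strategy already used for Theorem \ref{thm:hol_Hma}, namely reducing the regularity of the operator family to the regularity of the associated resolvents, which in turn reduces to the regularity results for the distinguished solutions and Jost functions established earlier. The key input is Proposition \ref{prop:Hmres_mixed}, which identifies the resolvent $(k^2+H_{m,\kappa})^{-1}$ with the explicit two-sided Green's operator $G_{m,\bowtie,\kappa}(-k^2)$ from \eqref{bowtie-kappa}, and similarly $(k^2+H_0^\nu)^{-1}$ with $G^\nu_{0,\bowtie}(-k^2)$ from \eqref{bowtie-zero}. The regularity of a family of closed operators is understood in the weak resolvent sense, so it suffices to show that, for fixed $f,g\in L^2]0,\infty[$ and fixed $k$ with $\Re(k)>0$ large enough that $k^2$ lies in the resolvent set, the scalar functions $(m,\kappa)\mapsto\langle f\,|\,G_{m,\bowtie,\kappa}(-k^2)g\rangle$ and $\nu\mapsto\langle f\,|\,G^\nu_{0,\bowtie}(-k^2)g\rangle$ are regular (resp. analytic).

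First I would treat (i). The integral kernel \eqref{bowtie-kappa} is built from $u_m(\cdot,k)$, $u_{-m}(\cdot,k)$, $v_m(\cdot,k)$, the Jost functions $\Wr_m(k)$, $\Wr_{-m}(k)$, and the explicit prefactors $\frac{\Gamma(1-m)}{\Gamma(1+m)}\frac{k^{2m}}{2^{2m}}$. Under the hypothesis $|\Re(m)|\le\frac\varepsilon2$ with $Q\in\mathscr{L}^{(0)}_\varepsilon$, Propositions \ref{prop:anal_um2} and \ref{prop:reginfty-an} give the regularity of $(m,k)\mapsto u_{\pm m}(x,k)$ and $v_m(x,k)$ and their $x$-derivatives, while Proposition \ref{prop:analyticiy_Jost} gives the regularity of $(m,k)\mapsto \Wr_{\pm m}(k)$. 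The denominator $\Wr_m(k)+\kappa\frac{\Gamma(1-m)}{\Gamma(1+m)}\frac{k^{2m}}{2^{2m}}\Wr_{-m}(k)$ is jointly regular in $(m,\kappa)$, and by Proposition \ref{prop:Hmres_mixed}(i) it is nonvanishing precisely when $k^2\notin\sigma(H_{m,\kappa})$; excluding the singular point $(0,-1)$ guarantees (after choosing $\Re(k)$ large, using Corollary \ref{cor-asim} so that $\Wr_{\pm m}(k)\to 1$) a locally nonvanishing denominator, hence local regularity of the quotient. Integrating the kernel against $f,g\in L^2$ and dominating by the global estimates on the solutions preserves joint regularity, which establishes \eqref{holi}. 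The identity $H_{m,\kappa}=H_{-m,\kappa^{-1}}$ follows directly from the symmetry of the defining boundary functional $\Wr(x^{\frac12+m}+\kappa x^{\frac12-m},\cdot;0)$ under $(m,\kappa)\leftrightarrow(-m,\kappa^{-1})$, up to an overall scalar.

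For (ii) the argument is identical in structure, using the kernel \eqref{bowtie-zero}, the regularity of $p_0(\cdot,k)$ and $u_0(\cdot,k)$ from Proposition \ref{prop:anal_um2}(iii) under $Q\in\mathscr{L}^{(0)}_{0,\mathrm{ln}^2}$, and the fact that the denominator $\nu\Wr_0(k)+\Wr(v_0(\cdot,k),p_0(\cdot,k))$ is affine (hence entire) in $\nu$ and nonvanishing on the resolvent set by Proposition \ref{prop:Hmres_mixed}(ii). Since the dependence on $\nu$ enters only through this affine denominator and the affine numerator $(\nu u_0+p_0)$, the scalar resolvent matrix elements are analytic in $\nu$ away from the pole, giving analyticity of \eqref{porto1k-mix}.

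The main obstacle I anticipate is the careful handling of the exceptional point $(m,\kappa)=(0,-1)$ in part (i), where the prefactor $\frac{k^{2m}}{2^{2m}}$ and the Gamma-quotient $\frac{\Gamma(1-m)}{\Gamma(1+m)}$ degenerate as $m\to0$ and the boundary condition $x^{\frac12+m}+\kappa x^{\frac12-m}$ collapses to the trivial combination; near $m=0$ one must check that the $m$-dependence of the normalization is compatible with a regular reparametrization of $\kappa$, which is exactly the singularity already flagged for the unperturbed family in \eqref{closed2} and Proposition 3.11(ii) of \cite{DeFaNgRi20_01}. The remaining work—verifying that the weak resolvent matrix elements are genuinely jointly regular rather than merely separately regular in each variable—is routine via Hartog's theorem on the open part of the parameter domain, reducing joint analyticity to separate analyticity, exactly as invoked in the definition of regularity in the Notations subsection.
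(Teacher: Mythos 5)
Your proposal is correct and follows essentially the same route as the paper: the authors' proof is the one-line remark that the result ``follows as in Theorem \ref{thm:hol_Hma}'', which itself reduces regularity of the operator family to regularity of the resolvent kernels via Proposition \ref{prop:Hmres_mixed} and the regularity statements for $u_{\pm m}$, $v_m$, $p_0$ and the Jost functions (Propositions \ref{prop:anal_um2}, \ref{prop:reginfty-an}, \ref{prop:analyticiy_Jost}). Your write-up simply spells out the details that the paper leaves implicit, including the correct handling of the excluded point $(m,\kappa)=(0,-1)$ and the choice of a large $\Re(k)$ via Corollary \ref{cor-asim}.
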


\begin{proof}
This follows as in Theorem \ref{thm:hol_Hma}.
 \end{proof}

\begin{remark}
  Proposition 3.11(ii) in \cite{DeFaNgRi20_01} (in the case $Q=0$) shows that $(m,\kappa)\mapsto H_{m,\kappa}$ cannot be extended by continuity at $(0,-1)$.
\end{remark}


\subsection{The perturbed Bessel operator with mixed boundary 
  conditions II}\label{subsec:mixedII}

As discussed in Theorem \ref{thm-bc} we can define closed realizations of
$L_{m^2}$
under much weaker conditions on $Q$ than those in the previous
subsection. Let us choose a  
nonnegative integer $n$. A natural method to
describe them is by using the boundary functionals defined by
$ u_{-m}^{0[n]}$.

Let $0\leq\Re(m)<1$, $m\neq0$, $\varepsilon=\frac{2\Re(m)}{n+1}$ and
  $ Q \in \mathscr{L}^{(0)}_\varepsilon$.
For
$\kappa\in\C\cup\{\infty\}$ we set
\begin{align*}
\Dom(H_{ -m,\kappa}^{[n]}) & := \left\{f\in \Dom(L_{m^2}^{\max})\mid
\Wr\Big( u_{-m}^{0[n]} + \kappa \frac{\Gamma(1-m)}{\Gamma(1+m)} x^{\frac12+m},f;0
\Big)=0\right\},\quad\kappa\in\C, \nonumber \\
\Dom(H_{ -m,\infty}^{[n]}) &: = \Big\{f\in \Dom(L_{m^2}^{\max})\mid
\Wr\big( x^{\frac12+m},f;0
\big)=0\Big\},\\
H_{- m,\kappa}^{[n]}&:=L_{ m^2}\big|_{\Dom(H_{-m,\kappa})}.
\end{align*}
Clearly, the operators
 $H_{-m,\kappa}^{[n]}$
  are closed,
\begin{align}
  L^{\mathrm{min}}_{m^2} \subset H_{-m,\kappa} ^{[n]}\subset 
  L^{\mathrm{max}}_{m^2} \label{pqo1a+}
\end{align}
and both inclusions in (\ref{pqo1a+})
are of codimension $1$.

Note that in the particular case $n=0$ we have $H_{ -m,\kappa}^{[0]}=H_{-m,\kappa}$.

\begin{proposition}\label{prop:Hmres_mixed+}
 Let $0\leq\Re(m)<1$, $m\neq0$, $\varepsilon=\frac{2\Re(m)}{n+1}$ and
  $ Q \in \mathscr{L}^{(0)}_\varepsilon$.
We have $k^2\notin \sigma(H_{-m,\kappa}^{[n]})$
  if and only if  $ \Wr( v_m (\cdot,k),  u_{-m}^{[n]}(\cdot ,k)) +
  \kappa \frac{\Gamma(1-m)}{\Gamma(1+m)}\Wr_m(k)  \neq0$. Besides, 
the operator  
$  G_{m,\bowtie,\kappa}^{[n]}(-k^2)$ defined in \eqref{bowtie-kappa-n}
is then bounded and
  \begin{equation*}
  G_{m,\bowtie,\kappa}^{[n]}(-k^2)=(k^2+  H_{-m,\kappa}^{[n]})^{-1}.
\end{equation*}
\end{proposition}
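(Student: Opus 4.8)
The plan is to follow the template already established for the pure boundary conditions in Proposition \ref{prop:Hmres} and for mixed boundary conditions in Proposition \ref{prop:Hmres_mixed}, transferring the argument to the boundary functional defined through $u_{-m}^{0[n]}$. The key general tool is \cite[Proposition 7.7]{DeGe19_01}, which characterizes when $k^2$ lies in the resolvent set of a closed realization: this happens precisely when the two solutions defining the two-sided Green's operator --- here the Jost solution $v_m(\cdot,k)$ near $\infty$ and the solution satisfying the prescribed boundary condition near $0$ --- are linearly independent, that is, when their Wronskian is nonzero. The candidate resolvent kernel is then exactly the two-sided Green's operator \eqref{bowtie-kappa-n}, whose denominator is the relevant Wronskian.

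First I would identify the solution near $0$ that encodes the boundary condition defining $\Dom(H_{-m,\kappa}^{[n]})$. The domain is characterized by the vanishing of $\Wr(u_{-m}^{0[n]}+\kappa\frac{\Gamma(1-m)}{\Gamma(1+m)}x^{\frac12+m},\cdot;0)$. By Proposition \ref{boundcon}, the function $u_{-m}^{[n]}(\cdot,k)$ is a genuine solution of \eqref{eq:a1} in $AC^1]0,\infty[$ whose behavior near $0$ matches $u_{-m}^{0[n]}(\cdot,k)$ up to $o(x^{\frac12+\Re(m)})$, and by Lemma \ref{lemma:wronsk}(i) this error term is Wronskian-negligible against $\cD(L_{m^2}^{\max})$. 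Likewise $u_m(\cdot,k)$ realizes the boundary condition $x^{\frac12+m}$ via \eqref{wroni}. Hence the solution in $\cN(L_{m^2}+k^2)$ satisfying the correct boundary condition is, up to normalization, $u_{-m}^{[n]}(\cdot,k)+\kappa\frac{\Gamma(1-m)}{\Gamma(1+m)}u_m(\cdot,k)$ --- but since the numerator of \eqref{bowtie-kappa-n} uses $u_{-m}$ rather than $u_m$ in its $\kappa$-term, I would use Proposition \ref{prop:ubowtie} (relation \eqref{cem}) together with $u_{-m}=u_{-m}^{[n]}$ modulo the principal solution to reconcile the two expressions.

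Next I would compute the Wronskian of the near-$\infty$ solution $v_m(\cdot,k)$ against this near-$0$ solution, using bilinearity of the Wronskian and the two facts $\Wr(v_m,u_m)=\Wr_m(k)$ (the Jost function, from \eqref{jost}) and the refined identity $\Wr(v_m(\cdot,k),u_{-m}^{[n]}(\cdot,k))$ appearing in Proposition \ref{wronio} and Proposition \ref{rk:wronskian}. This yields
\begin{equation*}
\Wr\big(v_m(\cdot,k),u_{-m}^{[n]}(\cdot,k)\big)+\kappa\frac{\Gamma(1-m)}{\Gamma(1+m)}\frac{k^{2m}}{2^{2m}}\Wr_{-m}(k),
\end{equation*}
matching the denominator of \eqref{bowtie-kappa-n} after the normalization relating $v_{-m}$ to $v_m$ via the factor $\frac{k^{2m}}{2^{2m}}$. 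By \cite[Proposition 7.7]{DeGe19_01}, nonvanishing of this expression is equivalent to $k^2\notin\sigma(H_{-m,\kappa}^{[n]})$, and in that case the resolvent kernel coincides with \eqref{bowtie-kappa-n}. Boundedness of the resulting integral operator on $L^2]0,\infty[$ follows from the decay of $v_m(\cdot,k)$ at $\infty$ (Proposition \ref{prop:reginfty}) and the behavior $x^{\frac12-\Re(m)}$ of $u_{-m}^{[n]}$ near $0$ (valid and square-integrable since $\Re(m)<1$), exactly as in the proof of Proposition \ref{prop:Hmres}.

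The main obstacle I anticipate is bookkeeping the normalization constants correctly: the $\kappa$-term in \eqref{bowtie-kappa-n} is written with $u_{-m}$ and with the explicit factor $\frac{\Gamma(1-m)}{\Gamma(1+m)}\frac{k^{2m}}{2^{2m}}\Wr_{-m}(k)$, whereas the boundary condition defining the domain is phrased through $u_{-m}^{0[n]}$ and $x^{\frac12+m}$. Reconciling these requires carefully tracking the asymptotic coefficients $\frac{1}{\Gamma(1\pm m)}$ of $u_{\pm m}^0(\cdot,0)$ near $0$ together with the relation between $v_m$, $w_m$ and $w_{-m}$ in \eqref{eq:w_m^0}, and verifying that the extra $o(x^{\frac12+\Re(m)})$ corrections in $u_{-m}^{[n]}$ do not alter either the boundary functional (by Lemma \ref{lemma:wronsk}) or the Wronskian with $v_m$. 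Once these identifications are in place, the proof reduces to the same application of \cite[Proposition 7.7]{DeGe19_01} as in Propositions \ref{prop:Hmres} and \ref{prop:Hmres_mixed}, so I would simply state that the argument proceeds identically.
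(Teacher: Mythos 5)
Your proposal is correct and follows essentially the same route as the paper: the paper's proof of this proposition simply says the argument is the same as for Proposition \ref{prop:Hmres}, which in turn invokes \cite[Proposition 7.7]{DeGe19_01} together with the asymptotics of the distinguished solutions near $0$ and $\infty$ — exactly the strategy you describe. Your extra care with the normalization constants and with checking that the $o(x^{\frac12+\Re(m)})$ corrections are Wronskian-negligible (via Lemma \ref{lemma:wronsk} and Proposition \ref{boundcon}) is detail the paper leaves implicit, not a divergence in method.
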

  
\begin{proof}
The argument is the same as in the proof of Proposition \ref{prop:Hmres}.
 \end{proof}

   The following theorem can be compared with Theorem  \ref{thm:hol_Hma-mix}:
\begin{theorem}\label{thm:hol_Hma-mix.}
     Let $1>\frac{\varepsilon(n+1)}{2} > 0$ and $  Q \in \mathscr{L}^{(0)}_\varepsilon$.
 Then
 \begin{align}
  \Big\{-\frac{\varepsilon(n+1)}{2}\geq-\Re(m)\geq0,m\neq0\Big\}\times(\mathbb{C}\cup\{\infty\})\ni (-m,\kappa)&\mapsto
   H_{-m,\kappa}^{[n]}
 \end{align}
 is regular.
   \end{theorem}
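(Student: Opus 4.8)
The plan is to follow the same strategy used for Theorem \ref{thm:hol_Hma-mix}, whose proof reduces the regularity of the operator family to the regularity of the distinguished solutions and of the relevant Wronskian, via the resolvent formula of Proposition \ref{prop:Hmres_mixed+}. First I would invoke Proposition \ref{prop:Hmres_mixed+}, which gives the explicit bounded resolvent $(k^2+H_{-m,\kappa}^{[n]})^{-1}=G_{m,\bowtie,\kappa}^{[n]}(-k^2)$ whenever the denominator
\begin{equation*}
D_{m,\kappa}^{[n]}(k):=\Wr\big(v_m(\cdot,k),u_{-m}^{[n]}(\cdot,k)\big)+\kappa\frac{\Gamma(1-m)}{\Gamma(1+m)}\,\Wr_m(k)
\end{equation*}
does not vanish. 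Regularity of a family of closed operators, in the sense of Appendix \ref{Holomorphic families of closed operators}, is established by exhibiting, locally around each point $(-m_0,\kappa_0)$ of the parameter region, a fixed $k$ for which $D_{m,\kappa}^{[n]}(k)\neq0$ and showing that the integral kernel $G_{m,\bowtie,\kappa}^{[n]}(-k^2;x,y)$ depends regularly on $(m,\kappa)$ and defines a bounded operator on $L^2]0,\infty[$.

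The key inputs are already in place. Under the stated condition $0<\frac{\varepsilon(n+1)}{2}<1$ and $-\frac{\varepsilon(n+1)}{2}\le -\Re(m)\le 0$ with $Q\in\mathscr{L}^{(0)}_\varepsilon\cap\mathscr{L}^{(\infty)}_0$, the solutions $u_{-m}^{[n]}(\cdot,k)$ and $u_m(\cdot,k)=u_m^{[0]}$ are constructed in Propositions \ref{boundcon} and \ref{prop:reg1h}, and $v_m(\cdot,k)$ in Proposition \ref{prop:reginfty}. Their regularity in $(m,k)$ for fixed $x$ is furnished by Propositions \ref{prop:anal_um^n}, \ref{prop:anal_um2} and \ref{prop:reginfty-an}. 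The regularity of the Wronskian $\Wr\big(v_m(\cdot,k),u_{-m}^{[n]}(\cdot,k)\big)$ in $(m,k)$ comes from Proposition \ref{rk:wronskian}, and that of $\Wr_m(k)$ from Proposition \ref{prop:analyticiy_Jost}; the prefactor $\frac{\Gamma(1-m)}{\Gamma(1+m)}$ is entire in $m$ on the relevant strip. Hence $D_{m,\kappa}^{[n]}(k)$ is regular in all parameters, and for each fixed $(m_0,\kappa_0)$ one can choose $k$ with $\Re(k)>0$ large so that, by the asymptotics $\Wr_m(k)\to1$ (Corollary \ref{cor-asim}, valid since $\Re(m)>-1$ here) together with the analogous decay of the mixed Wronskian, the denominator is nonzero in a neighborhood.

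The only genuinely new point relative to Theorem \ref{thm:hol_Hma-mix} is the appearance of the $n$-dependent boundary data $u_{-m}^{0[n]}$ and of the compressed two-sided Green's operator inside $u_{-m}^{[n]}$; I would therefore record that the boundedness of $G_{m,\bowtie,\kappa}^{[n]}(-k^2)$ on $L^2]0,\infty[$ follows from Proposition \ref{prop:Hmres_mixed+} exactly as in Proposition \ref{prop:Hmres}, using the near-zero asymptotics $u_{-m}^{[n]}(x,k)=\mathcal{O}(x^{\frac12-\Re(m)})$ and $u_m(x,k)=\mathcal{O}(x^{\frac12+\Re(m)})$ from Propositions \ref{boundcon} and \ref{prop:reg1h}, both square integrable near $0$ since $|\Re(m)|<1$, and the exponential decay of $v_m$ near infinity. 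The main obstacle, as in the earlier theorem, is not any single estimate but the bookkeeping needed to verify uniform-on-compacts convergence of the Neumann series defining $u_{-m}^{[n]}$ jointly in $(m,k)$ across the whole strip $-\frac{\varepsilon(n+1)}{2}\le-\Re(m)\le0$; this is precisely what Propositions \ref{prop:anal_um^n} and \ref{rk:wronskian} package, so once they are cited the argument of \cite[Theorem 3.10]{DeFaNgRi20_01} applies verbatim. I would close by remarking that the identity $H_{-m,\kappa}^{[0]}=H_{-m,\kappa}$ shows consistency with Theorem \ref{thm:hol_Hma-mix}(i) at $n=0$.
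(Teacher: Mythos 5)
Your proposal is correct and follows essentially the route the paper intends: the paper states Theorem \ref{thm:hol_Hma-mix.} without an explicit proof, expecting the reader to repeat the argument of Theorems \ref{thm:hol_Hma} and \ref{thm:hol_Hma-mix}, i.e., combine the resolvent formula of Proposition \ref{prop:Hmres_mixed+} with the regularity statements of Propositions \ref{prop:anal_um^n}, \ref{prop:reginfty-an}, \ref{rk:wronskian} and \ref{prop:analyticiy_Jost} and the scheme of \cite[Theorem 3.10]{DeFaNgRi20_01}. Your write-up cites exactly these ingredients (plus the large-$|k|$ asymptotics of the Jost function to locate a point of the resolvent set), so it matches the paper's approach.
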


 \subsection{Scattering length}\label{scatlen}

Suppose that 
\begin{align*}
&Q\in\mathscr{L}^{(\infty)}_{\delta} , \quad \text{if }\ 0\leq\Re(m),\quad m\neq0,\quad \delta =1+2\Re(m); \\
&Q\in\mathscr{L}^{(\infty)}_{1,\mathrm{ln}^2}, \quad \text{if } m=0.
\end{align*}
(Note that we do not impose conditions on $Q$ near $0$ apart from the
usual 
local integrability)
By Propositions \ref{prop:reginfty+} and
\ref{prop:reginftylog}, under these assumptions the space
$\Ker(L_{m^2})$ possesses a distinguished basis
\begin{align*} q_{-m},\quad q_{m},&\quad m\neq0;\\
q_0,\quad q_{0,\mathrm{ln}},&\quad m=0,
\end{align*}
Therefore, the boundary space $\cB_{m^2}$ has a
basis
\begin{align}\label{wrio1}
  \Wr(q_{-m},\cdot;0),\quad \Wr(q_{m},\cdot;0),&\quad m\neq0;\\
  \label{wrio2}
  \Wr(q_0,\cdot;0),\quad \Wr(q_{0,\mathrm{ln}},\cdot;0),&\quad m=0. 
\end{align}

 Suppose that  $H_\bullet$ is one of the realizations of the 
 Bessel operator such that 
 \begin{equation*}
  L_{m^2}^{\min}\subsetneq H_\bullet\subsetneq L_{m^2}^{\max}. 
\end{equation*} 
As we discussed above, to define $H_\bullet$ we need to fix  a
non-zero 
boundary functional. So far, we tried to express boundary functionals 
in terms of the asymptotics of functions near zero, as  in Subsection \ref{bounda-sec}.

In quantum mechanics one often prefers to describe realizations of
perturbed Bessel 
operators using \eqref{wrio1} and \eqref{wrio2}. We say that the {\em scattering length} of $H_\bullet$ is
$a\in\mathbb{C}$ if
\begin{align}
  \cD(H_\bullet)&=\{f\in\cD(L_{m^2}^{\max})\ |\
                  \Wr(q_m-aq_{-m},f;0)=0\},\quad m\neq0;\\
                    \cD(H_\bullet)&=\{f\in\cD(L_{m^2}^{\max})\ |\
                                    \Wr(q_{0,\mathrm{ln}}-aq_0,f;0)=0\},\quad m=0. 
\end{align}
If 
\begin{align}
  \cD(H_\bullet)&=\{f\in\cD(L_{m^2}^{\max})\ |\
                  \Wr(q_{-m},f;0)=0\},\quad m\neq0;\\
                    \cD(H_\bullet)&=\{f\in\cD(L_{m^2}^{\max})\ |\
                                    \Wr(q_0,f;0)=0\},\quad m=0,
\end{align}
then we say that the scattering length of $H_\bullet$ is $a=\infty$.

\appendix

\section{Integral operators}

\label{volterra}

In this appendix we recall a well-known property of Volterra operators that was used in Section \ref{section:solutions}.
We start with the following easy lemma:

\begin{lemma}
  Suppose that $(x,y)\mapsto K(x,y)$ is a measurable function on $]0,\infty[\times]0,\infty[$ such that
  \begin{equation*}
  \sup_{x}\int_0^\infty |K(x,y)|\mathrm{d}y=:C<\infty.
  \end{equation*}
  Then the operator $K$ defined by
  \begin{equation*}
  Kf(x):=\int_0^\infty K(x,y)f(y)\mathrm{d}y
  \end{equation*}
  is bounded on $L^\infty]0,\infty[$ and $\|K\|\leq C$.
\end{lemma}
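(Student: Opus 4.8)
The plan is to prove the bound pointwise in $x$ and then pass to the supremum; this is the standard Schur-type argument for integral operators with uniformly integrable rows. First I would fix $f\in L^\infty]0,\infty[$ and use $|f(y)|\le\|f\|_\infty$ for almost every $y$. Then, for every $x$, the hypothesis gives the absolute bound
\begin{equation*}
\int_0^\infty |K(x,y)|\,|f(y)|\,\mathrm{d}y \le \|f\|_\infty\int_0^\infty |K(x,y)|\,\mathrm{d}y \le C\,\|f\|_\infty,
\end{equation*}
so the integral defining $Kf(x)$ converges absolutely and $Kf(x)$ is well defined.

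Next I would address the minor technical point of measurability of $x\mapsto Kf(x)$. Since $(x,y)\mapsto K(x,y)f(y)$ is jointly measurable (as a product of measurable functions) and is, for almost every $x$, integrable in $y$ by the estimate above, the Fubini--Tonelli theorem ensures that $Kf$ is a measurable function on $]0,\infty[$. Thus $Kf$ is a legitimate element of the space of measurable functions, and the preceding display shows $|Kf(x)|\le C\,\|f\|_\infty$ for every $x$.

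Finally, taking the essential supremum over $x$ yields $\|Kf\|_\infty\le C\,\|f\|_\infty$, which simultaneously shows that $K$ maps $L^\infty]0,\infty[$ into itself and that its operator norm satisfies $\|K\|\le C$. I do not expect any genuine obstacle here: the whole content is the single pointwise estimate, and the only items needing a word of justification are the absolute convergence of the defining integral and the measurability of the image, both of which are immediate from the uniform bound on $\int_0^\infty|K(x,y)|\,\mathrm{d}y$ together with routine measure theory.
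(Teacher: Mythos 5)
Your proof is correct. The paper states this lemma without proof (it is introduced as an ``easy lemma'' in Appendix \ref{volterra}), and your argument is precisely the standard one that is implicitly intended: the pointwise bound $|Kf(x)|\le \|f\|_\infty\int_0^\infty|K(x,y)|\,\mathrm{d}y\le C\|f\|_\infty$, together with the Fubini--Tonelli theorem to guarantee measurability of $x\mapsto Kf(x)$, after which taking the essential supremum gives $\|K\|\le C$. Nothing is missing.
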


Given an integral operator $K$ as in the previous lemma and $a>0$, the operator $K^{(a)}$ is defined as an
operator on  $L^\infty]0,a[$ by the kernel
\begin{equation*}
K^{(a)}(x,y):=\theta(a-x)\theta(a-y)K(x,y).
\end{equation*}
The operator $K^{(a)}$ is called {\em the compression of $K$ to $]0,a[$}.

We will say that the operator $K$ with the kernel $K(x,y)$ is a
{\em forward}, respectively {\em backward  Volterra operator} if $K(x,y)=0$ for $x<y$,
respectively $x>y$. The following proposition can be proven by an induction argument.

\begin{proposition}\label{volterra1}
 Suppose that $K$ is a forward Volterra operator and 
  \begin{equation*}
\int_0^\infty |K(x,y)|  \mathrm{d}y=:C(x)<\infty.\]
 Then for any $a>0$, $K^{(a)}$ is bounded 
 and $\one^{(a)}+K^{(a)}$ is invertible on
 $L^\infty]0,a[$. Besides, 
for all $x>0$,  $n\in\mathbb{N}$,
  \begin{equation*}
 |   ( K^nf)(x)|\leq\frac1{n!}C(x)^n\mathop{\mathrm{ess\, sup}}_{y<x} |f(y)|.
\end{equation*}
so that for  $f\in  L^\infty]0,a[$ the  series
 \begin{equation*}
(\one+K)^{-1}f(x)=\sum_{n=0}^\infty(-K)^n f(x)
\end{equation*}
is convergent.
\end{proposition}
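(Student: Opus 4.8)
The plan is to prove the final statement, Proposition~\ref{volterra1}, by establishing the pointwise bound on $K^nf$ via induction on $n$, and then reading off the convergence of the Neumann series and the invertibility of $\one^{(a)}+K^{(a)}$ as immediate consequences. First I would fix $a>0$ and note that since $K$ is a forward Volterra operator, its kernel satisfies $K(x,y)=0$ for $x<y$, so that for $0<x<a$ the value $(Kf)(x)=\int_0^x K(x,y)f(y)\,\mathrm{d}y$ depends only on values of $f$ on $]0,x[$. Boundedness of $K^{(a)}$ on $L^\infty]0,a[$ is clear from the preceding lemma, since $C(x)\le C(a)<\infty$ for $x\le a$.

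The heart of the argument is the estimate
\begin{equation*}
|(K^nf)(x)|\le\frac{1}{n!}C(x)^n\mathop{\mathrm{ess\,sup}}_{y<x}|f(y)|,
\end{equation*}
which I would prove by induction on $n$, crucially using that $C(x)=\int_0^x|K(x,y)|\,\mathrm{d}y$ is nondecreasing in $x$. The case $n=0$ is trivial. For the inductive step, write $(K^{n+1}f)(x)=\int_0^x K(x,y)(K^nf)(y)\,\mathrm{d}y$ and insert the inductive hypothesis, together with the monotonicity $\mathop{\mathrm{ess\,sup}}_{z<y}|f(z)|\le\mathop{\mathrm{ess\,sup}}_{z<x}|f(z)|$ for $y<x$, to obtain
\begin{equation*}
|(K^{n+1}f)(x)|\le\frac{1}{n!}\Big(\mathop{\mathrm{ess\,sup}}_{z<x}|f(z)|\Big)\int_0^x|K(x,y)|\,C(y)^n\,\mathrm{d}y.
\end{equation*}
The key observation is that $C$ is absolutely continuous with $C'(y)=|K(y,\cdot)|$-type derivative; more precisely, since $C(y)=\int_0^y|K(y,t)|\,\mathrm{d}t$ one does not directly get $\mathrm{d}C(y)=|K(x,y)|\,\mathrm{d}y$ because the first argument of $K$ also varies. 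The clean way around this is to bound $C(y)\le C(x)$ for $y<x$ is too lossy; instead I would use the standard trick of recognizing $\int_0^x|K(x,y)|C(y)^n\,\mathrm{d}y$ and comparing it to $\frac{1}{n+1}C(x)^{n+1}$.

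The main obstacle is precisely this last integral: the naive identity $\int_0^x|K(x,y)|\,C(y)^n\,\mathrm{d}y=\frac{1}{n+1}C(x)^{n+1}$ fails because $C(y)$ is built from the kernel $K(y,\cdot)$ with its first slot equal to $y$, whereas the integrand carries $K(x,y)$ with first slot $x$. To repair this I would introduce the auxiliary nondecreasing function $\tilde C(y):=\int_0^y|K(x,t)|\,\mathrm{d}t$ (with $x$ frozen), for which one genuinely has $\mathrm{d}\tilde C(y)=|K(x,y)|\,\mathrm{d}y$ and $\tilde C(x)=C(x)$, and run the induction with $\tilde C$ in place of $C$, using $\tilde C(y)\le C(y)$ is not automatic either—so the cleanest route is to prove the bound with the frozen-$x$ quantity $\tilde C$ throughout and only at the end use $\tilde C(x)=C(x)$. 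Then $\int_0^x|K(x,y)|\tilde C(y)^n\,\mathrm{d}y=\int_0^x \tilde C(y)^n\,\mathrm{d}\tilde C(y)=\frac{1}{n+1}\tilde C(x)^{n+1}=\frac{1}{n+1}C(x)^{n+1}$ gives exactly the $\frac{1}{(n+1)!}C(x)^{n+1}$ factor, completing the induction. Once the estimate holds, the bound $\|(K^{(a)})^n\|\le C(a)^n/n!$ shows $\sum_n\|(K^{(a)})^n\|<\infty$, so $\sum_{n=0}^\infty(-K^{(a)})^n$ converges in operator norm on $L^\infty]0,a[$ and furnishes a bounded two-sided inverse of $\one^{(a)}+K^{(a)}$; evaluating this series on $f$ yields the stated pointwise convergent representation $(\one+K)^{-1}f(x)=\sum_{n=0}^\infty(-K)^nf(x)$.
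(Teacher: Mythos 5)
The paper offers no written proof of this proposition (it merely asserts that it ``can be proven by an induction argument''), so the real question is whether your induction closes --- and it does not. You correctly isolate the central difficulty: in $\int_0^x|K(x,y)|\,C(y)^n\,\mathrm{d}y$ the quantity $C(y)=\int_0^y|K(y,t)|\,\mathrm{d}t$ carries $y$ in the \emph{first} slot of $K$, while the integrating factor carries $x$. But your repair --- running the induction with $\tilde C(y)=\int_0^y|K(x,t)|\,\mathrm{d}t$, $x$ frozen --- is circular. The inductive hypothesis at level $n$ must be a statement about $(K^nf)(y)$ for $y<x$, and $(K^nf)(y)$ is built from $K(y,\cdot)$ and $K(z,\cdot)$ with $z<y$; it cannot know about the outer evaluation point $x$. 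Already the base case fails: $|(Kf)(y)|\le C(y)\mathop{\mathrm{ess\,sup}}_{z<y}|f(z)|$, and $C(y)$ is not comparable to $\tilde C(y)$ in either direction. Your auxiliary claim that $C$ is nondecreasing is also unjustified: for $K(x,y)=x^{-2}\theta(x-y)$ one has $C(x)=1/x$.

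In fact no induction can close, because the statement as literally written is false. Take $K(x,y)=N\one_{[\max(x-1/N,0),\,x]}(y)$ and $f\equiv1$: then $C(x)=\min(Nx,1)\le1$, yet $(K^nf)(x)=1$ for all $x\ge n/N$, which contradicts the claimed bound $\frac1{n!}C(x)^n\le\frac1{n!}$ for $n\ge2$. The factorial gain requires an additional hypothesis, e.g.\ $|K(x,y)|\le g(y)\theta(x-y)$ with $g\in L^1_{\mathrm{loc}}$ and $C(x):=\int_0^xg(y)\,\mathrm{d}y$ (equivalently, it suffices that $x\mapsto|K(x,y)|$ be nondecreasing for a.e.\ $y$). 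This is precisely the situation in which the paper applies the proposition --- see \eqref{qrw} in the proof of Corollary \ref{lemma2_app}, where the kernel is dominated by $\mu_k(y)|Q(y)|\theta(x-y)$, a function of $y$ alone. Under that hypothesis your intended computation becomes legitimate: with $G(y)=\int_0^yg(t)\,\mathrm{d}t$ one genuinely has $\mathrm{d}G(y)=g(y)\,\mathrm{d}y$, hence $\int_0^xg(y)G(y)^n\,\mathrm{d}y=\frac1{n+1}G(x)^{n+1}$, the induction closes, and the operator-norm bound $\|(K^{(a)})^n\|\le C(a)^n/n!$ together with the Neumann series then gives the invertibility exactly as you describe. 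You should either add this domination hypothesis to the statement or restate the conclusion with $C(x)$ replaced by $\int_0^x\sup_{x'\ge y}|K(x',y)|\,\mathrm{d}y$.
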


\section{Holomorphic families of closed operators}
\label{Holomorphic families of closed operators}

In this appendix we recall the concept of a holomorphic family of
operators on a complex Banach space $\mathcal{H}$.

Let  $\Theta$ be an open subset of $\mathbb{C}^d$.
We say that a family $\{ B(z) \}_{ z \in \Theta }$ of bounded
operators on $\cH$ is a {\em holomorphic family of bounded
  operators}
if for any $f,g\in\cH$
\begin{equation}\label{rwr}
  \Theta\ni z\mapsto (f|B(z)g)\end{equation}
is holomorphic. Note that this is equivalent to a weaker condition:
$\{ B(z) \}_{ z \in \Theta }$ is locally bounded on $\Theta$ and 
there exists a dense subspace $\mathcal{D} \subset \mathcal{H}$ such
that, for all $f , g \in \mathcal{D}$, the map \eqref{rwr} is holomorphic.

One can also introduce another concept: that of {\em holomorphic  families of
closed operators}. We will not give here its general definition, which
can be found e.g. in \cite{Kato,BuDeGe11_01,DeWr} and will not be
used here.
We will restrict ourselves to defining this concept for families that
have nonempty resolvent set.

More precisely,
suppose that $\{ H(z) \}_{ z \in \Theta }$ is a function with values
in closed operators on $\cH$.
Suppose that  for any $z_0 \in \Theta$, there exist $\lambda  \in
\mathbb{C}$ and a neighborhood $\Theta_0 \subset \Theta$ of $z_0$ such
that, for all $z \in \Theta_0$, $\lambda $ is in the resolvent set of
$H(z)$. Then we say that  $\{ H(z) \}_{ z \in \Theta }$ is holomorphic
if for all such $\Theta_0$ the map $\Theta_0 \ni z \mapsto ( H(z) - \lambda  )^{-1} \in \mathcal{L}( \mathcal{H} )$ is holomorphic  as a family of bounded operators.

\section{Technical lemma}

The following easy lemma was used several times in the main part of the manuscript.
\begin{lemma}\label{lm:integr}
Let $a>0$ or $a=\infty$. Let $f\in L^1_{\mathrm{loc}}]0,a[$  and $h:]0,a[\to\mathbb{R}$ be a positive increasing function such that $h(x)\to0$ as $x\to0$ and
\begin{equation*}
\int_0^a h(x) |f(x)| \mathrm{d}x < \infty.
\end{equation*}
Then
\begin{equation*}
\int_x^a f(y) \mathrm{d}y = o \big ( h(x)^{-1} \big ) , \quad x \to 0.
\end{equation*}
\end{lemma}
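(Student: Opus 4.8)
The plan is to prove the equivalent statement that $h(x)\int_x^a f(y)\,\mathrm{d}y\to0$ as $x\to0$, which is precisely the assertion that $\int_x^a f=o(h(x)^{-1})$. The naive estimate, using that $h$ is increasing so that $h(x)\le h(y)$ for $y\ge x$, gives
\[
\Big| h(x)\int_x^a f(y)\,\mathrm{d}y\Big| \le \int_x^a h(y)|f(y)|\,\mathrm{d}y \le \int_0^a h(y)|f(y)|\,\mathrm{d}y,
\]
but the right-hand side is a fixed finite constant, so this one-line bound only yields boundedness. The whole point is therefore to split the integral at an intermediate scale $\delta$, so as to exploit separately the smallness of $h$ near $0$ and the integrability hypothesis away from $0$.

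First I would fix $\varepsilon>0$ and choose $\delta\in\,]0,a[$ small enough that $\int_0^\delta h(y)|f(y)|\,\mathrm{d}y<\varepsilon/2$; this is possible because $\int_0^a h|f|<\infty$, so by absolute continuity of the integral the contribution of $]0,\delta[$ tends to $0$ as $\delta\to0$. For $0<x<\delta$ I then decompose $\int_x^a f=\int_x^\delta f+\int_\delta^a f$ and control the near-origin part exactly as above, uniformly in $x$:
\[
\Big| h(x)\int_x^\delta f(y)\,\mathrm{d}y\Big| \le \int_x^\delta h(y)|f(y)|\,\mathrm{d}y \le \int_0^\delta h(y)|f(y)|\,\mathrm{d}y <\frac{\varepsilon}{2}.
\]

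It then remains to treat the away-from-origin part $h(x)\int_\delta^a f$. Here I would first observe that $\int_\delta^a f(y)\,\mathrm{d}y$ is a finite constant: since $h$ is increasing we have $h(y)\ge h(\delta)>0$ on $]\delta,a[$, whence $\int_\delta^a|f|\le h(\delta)^{-1}\int_\delta^a h|f|<\infty$, which also covers the case $a=\infty$. With $\delta$ now frozen, the hypothesis $h(x)\to0$ as $x\to0$ allows me to pick $x$ small enough that $\big|h(x)\int_\delta^a f\big|<\varepsilon/2$. Combining the two estimates gives $\big|h(x)\int_x^a f\big|<\varepsilon$ for all sufficiently small $x$, which is the claim.

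There is no serious obstacle in this argument; the only subtlety to keep in mind is that the single global bound does not by itself produce the little-$o$, so the proof must genuinely use the two-scale splitting—smallness of $\int_0^\delta h|f|$ for the inner part, and the decay $h(x)\to0$ tested against a fixed constant for the outer part.
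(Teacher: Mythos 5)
Your proof is correct. It takes a mildly different route from the paper's: the paper packages the same two ingredients into a single application of the dominated convergence theorem, setting $w(x,y):=h(x)\,|f(y)|\,\one_{[x,a[}(y)$, noting that $w(x,y)\to0$ pointwise as $x\to0$ (because $h(x)\to0$) and that $w(x,y)\le h(y)|f(y)|$ (because $h$ is increasing), so that $\int_0^a w(x,y)\,\mathrm{d}y=h(x)\int_x^a|f(y)|\,\mathrm{d}y\to0$. Your two-scale splitting at $\delta$ is essentially an unrolled, self-contained version of that argument: the absolute continuity of $\int_0^\delta h|f|$ plays the role of the tail control in DCT, and testing $h(x)\to0$ against the fixed constant $\int_\delta^a f$ plays the role of the pointwise limit. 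What your version buys is elementarity and the explicit observation that $\int_\delta^a|f|<\infty$ (via $h\ge h(\delta)>0$ on $]\delta,a[$), which handles $a=\infty$ transparently; what the paper's version buys is brevity. Both proofs correctly identify that the naive one-line bound only gives boundedness, not the little-$o$.
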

\begin{proof}
Let
\begin{equation*}
w(x,y) := h(x) |f(y)| \mathds{1}_{[x,a[}(y).
\end{equation*}
Clearly, for a.e. $y\in]0,a[$, $w(x,y) \to 0$ as $x\to0$. Moreover, since $h$ is increasing, 
\begin{equation*}
w(x,y) \le h(y) |f(y)| ,
\end{equation*}
for all $x,y\in]0,a[$. Since $y\mapsto h(y) |f(y)|$ is integrable on $]0,a[$ by assumption, the dominated convergence theorem implies that
\begin{equation*}
\int_0^a w(x,y) \mathrm{d}y \to 0 , \quad x \to 0.
\end{equation*}
This proves the lemma.
\end{proof}

\renewcommand{\abstractname}{Acknowledgements}

\begin{abstract}
 We thank K. Yajima for useful comments. J.D. acknowledges the support from the National Science Centre (NCN
  Project Nr. 2018/31/G/ST1/01166). 
\end{abstract}

\end{document}